\tikzset{
  symbol/.style={
    draw=none,
    every to/.append style={
      edge node={node [sloped, allow upside down, auto=false]{$#1$}}}
  }
}
\newcommand{\BC}{{\mathbb {C}}}
\newcommand{\BR}{{\mathbb {R}}}
\newcommand{\CC}{{\mathcal {C}}}
\newcommand{\FS}{{\mathfrak {S}}}
\newcommand{\Z}{\mathbb{Z}}
\newcommand{\R}{\mathbb{R}}
\newcommand{\Ind}{{\mathrm{Ind}}}
\newcommand{\Tau}{{\mathcal T}}
\newcommand{\ol}{\overline}
\newcommand{\SL}{\mathrm{SL}}
\newcommand{\GL}{\mathrm{GL}}
\newcommand{\SO}{\mathrm{SO}}
\newcommand{\Sp}{\mathrm{Sp}}
\newcommand{\St}{\mathrm{St}}
\newcommand{\RO}{\mathrm{O}}
\newcommand{\RG}{\mathrm{G}}
\newcommand{\Irr}{\mathrm{Irr}}
\newcommand{\temp}{\text{temp}}
\newcommand{\Rep}{\underline{\mathrm{Rep}}}
\newcommand{\half}[1]{\frac{#1}{2}}
\newcommand{\Temp}[3]{T_{#1,#3}^{#2}}
\newcommand{\comment}[1]{}
\newcommand{\EE}{\mathcal{E}}
\newcommand{\FF}{\mathcal{F}}
\newtheorem{thm}{Theorem}[section]
\newtheorem{cor}[thm]{Corollary}
\newtheorem{lemma}[thm]{Lemma}
\newtheorem{prop}[thm]{Proposition}
\newtheorem {conj}[thm]{Conjecture}
\newtheorem {ques/conj}[thm]{Question/Conjecture}
\newtheorem {ques}[thm]{Question}
\newtheorem{defn}[thm]{Definition}
\newtheorem{remark}[thm]{Remark}
\newtheorem{exmp}[thm]{Example}
\newtheorem{algo}[thm]{Algorithm}
\newtheorem{conv}[thm]{Convention}
\newtheorem*{globalcond*}{Global Condition}
\newtheorem*{localcond*}{Local Condition}
\newtheorem*{globalconj*}{Global Conjecture}
\newtheorem*{localconj*}{Local Conjecture}
\newtheorem*{nonzero*}{Conjecture on the non-vanishing of the normalized intertwining operators}
\newtheorem*{holo*}{Conjecture on the holomorphicity of the normalized intertwining operators}
\numberwithin{equation}{section}
\let\oldbullet\bullet
\renewcommand{\bullet}{{\vcenter{\hbox{\tiny$\oldbullet$}}}}
\begin{document}

\title[Arthur representation and the Unitary Dual]{On Arthur representations and the unitary dual}

\author[A. Hazeltine]{Alexander Hazeltine
}
\address{Department of Mathematics\\
University of Michigan\\
Ann Arbor, MI 48109, USA}
\email{ahazelti@umich.edu}

\author[D. Jiang]{Dihua Jiang}
\address{School of Mathematics, University of Minnesota, Minneapolis, MN 55455, USA}
\email{dhjiang@math.umn.edu}

\author[B. Liu]{Baiying Liu}
\address{Department of Mathematics\\
Purdue University\\
West Lafayette, IN, 47907, USA}
\email{liu2053@purdue.edu}

\author[C.-H. Lo]{Chi-Heng Lo}
\address{Department of Mathematics\\
National University of Singapore\\
119076, Singapore}
\email{{ch\_lo@nus.edu.sg}}

\author[Q. Zhang]{Qing Zhang}
\address{School of Mathematics and Statistics, Huazhong University of Science and Technology, Wuhan, 430074, China}
\email{qingzh@hust.edu.cn}

\subjclass[2020]{Primary 11F70, 22E50; Secondary 11F85, 22E55}

\date{\today}

\keywords{Admissible Representation, Local Arthur Packet, Local Arthur Parameter, Arthur representation, Unitary Dual, Classical Group, $p$-adic Local Field}

\thanks{The research of the second-named author is partially supported by the Simons Grants: SFI-MPS-SFM-00005659 and SFI-MPS-TSM-00013449. The research of the third-named author is partially supported by the NSF Grant DMS-1848058 {and the Simons Foundation: Travel Support for Mathematicians}. The research of the fifth-named author is partially supported by NSFC Grant 12371010.}

\begin{abstract}
In this paper, we propose a new conjecture describing the structure of the unitary dual in terms of Arthur representations for connected reductive algebraic groups defined over any non-Archimedean local field of characteristic zero. This conjecture provides a candidate set for the unitary dual, constructed from Arthur representations. 
For classical groups, we develop an explicit algorithm to generate this candidate set. Evidence for its exhaustiveness includes compatibility with the known generic unitary dual, unramified unitary dual, and low-corank representations. As further support, we verify the conjecture for the unitary dual of the exceptional group of type $\RG_2$. 
\end{abstract}

\maketitle

\section{Introduction}

For any locally compact topological group $G$, one of the fundamental problems in abstract harmonic analysis is the description of the set of equivalence classes of its irreducible unitary representations.
This set is known as the {\it unitary dual} of $G$ and is denoted by $\widehat{G}$ {or $\Pi_{u}(G)$}.
When $G$ is abelian, the relation between $G$ and $\widehat{G}$ is given by the Pontryagin duality. 
When $G$ is a compact connected Lie group, the unitary dual is understood by the highest weight theory of H. Weyl.
In general, the structure of the unitary dual is closely related to the theory of Quantum 
Mechanics and remains one of the major mysterious problems in Mathematics. 
The study of the unitary dual of reductive algebraic groups over locally compact topological fields 
(local fields) is also motivated by the Langlands Program, which aims to understand the theory of automorphic forms and their connections to Algebraic Geometry and Number Theory. 

Our objective is to understand the unitary dual problem by the 
classification and characterization of unitary representations of connected reductive algebraic groups over local fields. More precisely, let $F$ be a local field of characteristic zero and $\RG$ be a reductive algebraic group defined over $F$. Denote by $G=\RG(F)$, the $F$-rational points. Let $\Pi(G)$ be the 
admissible dual of $G$ which is the set of equivalence classes of its irreducible admissible representations. The conjectured local Langlands correspondence for $G$ asserts the parametrization of $\Pi(G)$ 
by means of the enhanced local $L$-parameters. There are two general goals related to the unitary dual problem for $G$:
\begin{enumerate}
    \item develop algorithms to determine the unitarity of representations in $\Pi(G)$ based on their enhanced local $L$-parameters, via the local Langlands conjecture; and 
    \item describe the structures of the unitary dual from the known classification theory for $\Pi(G)$. 
\end{enumerate}

The celebrated Langlands classification determines the admissible dual $\Pi(G)$ by means of the tempered dual $\Pi_\temp(G)$ of $G$, which is the subset of $\Pi(G)$ consisting of tempered members. 
When $G$ is a real reductive Lie group, there is a well-known Atlas computer program that determines 
the unitarity of representations in $\Pi(G)$ via an algorithm (see \cite{Atl}). However, 
there is no general approach available to determine the unitary dual $\Pi_u(G)$ based on the Langlands classification of the admissible dual $\Pi(G)$ when $F$ is non-Archimedean. On the other hand, the far-reaching  
endoscopic classification conjecture of J. Arthur (\cite{Art89}) produces local Arthur packets (Definition \ref{A parameters}), which are expected to be finite subsets of the unitary dual $\Pi_u(G)$. Denote by $\Pi_A(G)$ the subset of $\Pi(G)$ consisting of Arthur representations, also referred to as representations of Arthur type, which is the union of all local Arthur packets. 
It is strongly desirable that the whole unitary dual $\Pi_u(G)$ can be exhausted via various expected constructions from the Arthur representations $\Pi_A(G)$. 

In this paper, we assume that $F$ is a non-Archimedean local field of characteristic zero and define a set $\Pi_{\overline{A}}^{\lim}(G)$ from Arthur representations $\Pi_A(G)$, which is constructed from $\Pi_A(G)$ by applying the standard constructions of unitary representations: unitary parabolic induction, complementary series, and limits of complementary series, as explained in Definition \ref{def closure A}. Then, we conjecture that this set is exactly the unitary dual. 
Loosely speaking, the idea is that \emph{complimentary series can be obtained via irreducible continuous Hermitian deformations from irreducible unitary inductions} (see  Remark \ref{rmk Tadic construction}(2) for more precise statements).
Similar idea has been clearly reflected in the construction of the unitary dual for general linear groups (see \cite{Tad86}). More precisely, 
$\Pi_A(GL_n)$ consists of parabolic induction of generalized Speh representations (see \eqref{generalized Speh representation} and \eqref{u rho a b}):
$$\bigtimes_i u_{\rho_i}(a_i,b_i), \text{ where } \rho_i \text{ are unitary supercuspidal; }$$
while $\Pi_u(GL_n)$ consists of parabolic induction of irreducible continuous Hermitian deformations of generalized Speh representations as follows:
$$\bigtimes_i u_{\rho_i}(a_i,b_i) \times \bigtimes_j (u_{\rho_j}(a_j,b_j)\lvert \cdot \rvert^{x_j} \times u_{\rho_j}(a_j,b_j)\lvert \cdot \rvert^{x_j}), \text{ where } \rho_i, \rho_j \text{ are unitary supercuspidal, } 0 \leq x_j < \frac{1}{2}.$$
For general connected reductive groups, we carry out the above idea and inductively define the set 
$\Pi_{\overline{A}}^{\lim}(G)$ from Arthur representations.

The following is our main conjecture on the unitary dual, which is a modification and simplification of our previously announced conjecture 
(\cite[Conjecture 1.1]{HJLLZ24}).  

\begin{conj}[Unitary Dual]\label{conj A+ intro}
Let $G$ be a general connected reductive group defined over $F$. Assume that the local Arthur conjecture as in \cite[Conjecture 6.1]{Art89} holds for every Levi subgroup $M$ of $G$. Then the following two sets are equal:
    $$\Pi_{\overline{A}}^{\lim}(G) = \Pi_{u}(G).$$
\end{conj}

The significance of Conjecture \ref{conj A+ intro} can be explained as follows. 
The unitary dual $\Pi_{u}(G)$ admits a simple definition within representation theory, yet its classification and internal structure remain largely mysterious. By contrast, according to the conjectures of J. Arthur (\cite{Art89, Art13}), the set of Arthur representations $\Pi_{A}(G)$ 
is defined through local stability and local twisted endoscopic transfer, providing a rich and highly structured framework; however, its intrinsic representation-theoretic nature is not well understood. This conjecture mediates between these mysteries by proposing a candidate set $\Pi_{\overline{A}}^{\lim}(G)$ for the unitary dual, constructed from $\Pi_{A}(G)$ and designed to be structurally informative and algorithmically accessible. Indeed, a primary motivation of this paper is that $\Pi_{\overline{A}}^{\lim}(G)$ should be computable, opening the possibility of an algorithmic determination of the unitary dual itself. In support of this, we present an explicit algorithm (Algorithm \ref{alg A bar}) that generates $\Pi_{\overline{A}}^{\lim}(G)$ for symplectic and split odd special orthogonal groups. 
We also verify Conjecture \ref{conj A+ intro} for the unitary dual of the exceptional group of type $\RG_2$ (Proposition \ref{proposition:G2}). We summarize these results as our main theorem in the following. 

\begin{thm}\label{thm main}\
\begin{enumerate}
    \item Let $G=G_n$, the symplectic or split odd special orthogonal group of rank $n$. Then Algorithm \ref{alg A bar} generates 
    the set $\Pi_{\overline{A}}^{\lim}(G)$, hence a candidate set for $\Pi_{u}(G)$.
    \item Conjecture \ref{conj A+ intro} holds for the unitary dual of the exceptional group of type $\RG_2$ (Proposition \ref{proposition:G2}), if the local Arthur conjecture as in \cite[Conjecture 6.1]{Art89} holds.
\end{enumerate}
\end{thm}

Algorithm \ref{alg A bar} has been implemented using Sagemath. For classical groups, Conjecture \ref{conj A+ intro} is straightforward for the known generic unitary dual (\cite{LMT04}) and unramified unitary dual (\cite{BM96, Bar10, MT11}), based on their classifications. Following Algorithm \ref{alg A bar}, for symplectic and split odd special orthogonal groups $G_n$, in \cite{HJLLZ24}, the authors verified Conjectures \ref{conj A+ intro} for unitary representations of corank up to 3 (see Definition \ref{def corank}), based on the classification by M. Tadi\'c  in \cite{Tad23}. Recently, in \cite{LLW}, the third and fourth-named authors and B. Wen verified Conjecture \ref{conj A+ intro} for unitary representations of corank 4, which completes the classification of the unitary dual for $\Sp_8$ and split $\SO_9$.  

\begin{thm}[{\cite{LLW}}]
    For $\mathrm{G}=\Sp_8$ or split $\SO_9$, Conjecture \ref{conj A+ intro} holds, i.e.,
    $\Pi_{\overline{A}}^{\lim}(G) = \Pi_{u}(G).$
\end{thm}

Conjecture \ref{conj A+ intro} has the following interesting consequences. 

\begin{cor}[{Corollary \ref{cor implication of 1.1} and Remark \ref{rmk-general-classical}}]\label{cor implication of 1.1 intro}
Assume that Conjecture \ref{conj A+ intro} holds for a quasi-split classical group $G$. Then the following hold.
    \begin{enumerate}
    \item If $G=G_n$, the symplectic or split odd special orthogonal group of rank $n$, then Tadi{\'c}'s two hypotheses (Conjectures \ref{conj Tadic preservation}, \ref{conj independence}) hold for $\Pi_u(G_n)$.
\item The unitary dual $\Pi_{u}(G)$ is preserved under Aubert-Zelevinsky involution.
\end{enumerate}
\end{cor}

We remark that indeed for any classical group $G$, assuming that the local Arthur conjecture as in \cite[Conjecture 6.1]{Art89} holds, then Corollary \ref{cor implication of 1.1 intro}(2) also holds, with similar arguments.

Notice that by the work of  
C. M{\oe}glin (see Theorem \ref{thm red from nu to gp}), for classical groups, $\Pi_A(G)$ can be constructed from a special family of representations that are of good parity (Definition \ref{def critical type}). Let $\Pi_{gp}(G)$ denote the set of good parity representations of $G$. We define
\[
\Pi_{A, \, gp}(G)=\Pi_{gp}(G)\cap\Pi_A(G), \quad
{\rm and}
\quad
\Pi_{u, \, gp}(G)=\Pi_{gp}(G)\cap\Pi_u(G).
\]
Towards a representation-theoretical description of Arthur representations and a refinement of Conjecture \ref{conj A+ intro}, the following conjecture for classical groups (symplectic, orthogonal, and unitary) was proposed in \cite{HJLLZ24}.

\begin{conj}[{\cite[Conjecture 1.2]{HJLLZ24}}]\label{conj main}
For any classical group $G$, assuming that the local Arthur conjecture as in \cite[Conjecture 6.1]{Art89} holds, then the following holds:
\begin{equation*}
\Pi_{A, \, gp}(G) =\Pi_{u, \, gp}(G).
\end{equation*}
In other words, for any $\pi \in \Pi_{gp}(G)$, $\pi$ is of Arthur type if and only if it is unitary.
\end{conj}

This is an extension of a conjecture of M. Tadi{\'c} (\cite[Conjecture 1.1]{Tad22}). In \cite{HJLLZ24}, we discussed several applications of this conjecture and verified it for unitary representations of $G_n$ with corank up to 3.  
Recently, H. Atobe and A. M{\'i}nguez proved Conjecture \ref{conj main} for symplectic and split odd orthogonal groups using a different idea (\cite{AM25}). The case for even (special) orthogonal groups is proved in \cite{HLL25} as an application of the local theta correspondence. 

\begin{thm}[\cite{AM25, HLL25}]\label{thm AM}
Conjecture \ref{conj main} holds for symplectic, split odd special orthogonal, and even (special) orthogonal groups. 
\end{thm}

\subsection{More evidence for Conjecture \ref{conj A+ intro}}

Recall that $G_n=\Sp_{2n}$ or $\SO_{2n+1}$ (a split symplectic or odd special orthogonal group). Let $\Psi_{1/2}^+(G_n)$ be the set of complementary local Arthur parameters of $G_n$ (see 
Definition \ref{A parameters}) which may not be bounded and can be twisted up to $\frac{1}{2}$ when restricted to the Weil group, and we let $\Psi(G_n)$ denote the subset consisting of parameters that are bounded when restrict to Weil group.
For each $\psi \in \Psi_{1/2}^+(G_n)$, the corresponding local packet $\Pi_\psi$ can be defined as in \cite[Theorem 2.2.1, \S 1.5 and \S 8.3]{Art13}, which will be explained in \S \ref{local A packets}. 
We define $\Pi_{A}(G_n):=\cup_{\psi\in\Psi(G_n)}\Pi_{\psi}$ and $\Pi_{A+}(G_n):=\cup_{\psi\in\Psi^{+}_{1/2}(G_n)}\Pi_{\psi}$.
Representations in $\Pi_{A}(G_n)$ and $\Pi_{A+}(G_n)$ are called Arthur and complementary Arthur representations, respectively. It is clear by Definition \ref{def closure A} that, as a subset of the unitary dual, $ \Pi_{A}(G_n) \subset \Pi_{\overline{A}}^{\lim}(G_n)$. 

To guarantee that the complementary Arthur representations in $\Pi_{A+}(G_n)$ are Hermitian, it is necessary to introduce a smaller subset $\Psi^{+}_{\text{unit}}(G_n)$ of $\Psi^+_{1/2}(G_n)$ as in \cite[Chapter 8]{Art13} (see \eqref{eq def Psi^+_unit}). According to \cite[Conjecture 8.3.1]{Art13}, the complementary local packet $\Pi_{\psi}$, with $\psi \in \Psi^{+}_{\text{unit}}(G_n)$, 
should consist of irreducible unitary representations. However, our recently found examples indicate that this assertion is not generally true. We refer to Example \ref{ex A+}(1) for more details.  Therefore, we define
\[ 
\Pi_{A+,\, u}(G_n):=\Pi_{A+}(G_n) \cap \Pi_u(G_n).
\]
In \cite{HJLLZ25}, we gave a characterization of $\Pi_{A+,\, u}(G_n)$ (independently given in \cite{AM25}). 
As further evidence for Conjecture \ref{conj A+ intro}, we show that 
indeed $ \Pi_{A+,\, u}(G_n) \subset \Pi_{\overline{A}}^{\lim}(G_n)$ (Theorem \ref{cor closure A+,u=A,u}).

\subsection{New algorithms on Arthur representations} \label{new algorithms}

In this paper, we also give a framework to inductively classify $\Pi_{A}(G_n)$ (see Algorithm \ref{alg Pi A}), which is used in the development of Algorithm \ref{alg A bar}. 
The key ingredients are two new algorithms (Algorithms \ref{algo Arthur type} and \ref{algo Arthur type 2}) that determine whether a representation of $G_n$ is of Arthur type from its enhanced Langlands parameter. 
While 
the classification by corank of tempered representations is based on the local Langlands correspondence in \cite{Art13} (see Theorem \ref{thm Arthur tempered}) and the computation of derivatives for tempered representations in  
\cite[Theorem 3.6]{Ato22a}, the classification of non-tempered representations is based on the above two new algorithms and the algorithm in \cite[\S 8]{HLL22} on exhausting all the local Arthur packets containing a given Arthur representation. 

Previously, there are two other different algorithms for $G_n$, 
independently given by H. Atobe (\cite[Algorithm 3.3]{Ato23}),
 and by the first, third, and fourth-named authors (\cite[Algorithm 7.9]{HLL22}). More precisely, the algorithm given by Atobe constructed several representations $\pi_i$ of smaller ranks, and used the information from the derivatives of $\pi$ and $\pi_i$ together with the construction of $\Psi(\pi_i)=\{\psi \ | \ \pi_i \in \Pi_{\psi}\}$. The other algorithm given by the first, third, and fourth-named authors only used the derivative information of $\pi$ to construct a local Arthur parameter $\psi$ such that $\pi$ is of Arthur type if and only if $\pi \in \Pi_{\psi}$. We remark that both algorithms end up with constructing a possibly non-tempered packet $\Pi_{\psi}$ and checking whether $\pi$ or $\pi_i$ is in this packet.

On the other hand, the new algorithms (Algorithms \ref{algo Arthur type} and \ref{algo Arthur type 2}) only use the construction of $\Psi(\pi_i)$, where $\pi_i$ are several representations of smaller ranks constructed from $\pi$ without using its highest derivatives.
Moreover, there is no construction of any non-tempered local Arthur packet involved. This difference allows us to extend the new algorithms for {quasi-split special even orthogonal groups} in \cite{HLL25}, and so are the results in this paper, as we expect. \\

The unitary dual problem was previously known for the following cases. For $\GL_n$, the unitary dual was classified by M. Tadi{\'c} in \cite{Tad85a, Tad86, Tad09b} over local fields, and by D. Vogan in \cite{Vog86} for Archimedean local fields. For $\GL_n(\mathcal{D})$ with $\mathcal{D}$ a central division algebra over non-Archimedean local fields of characteristic zero, the unitary dual of $\GL_n(\mathcal{D})$ was classified by the work of A. Badulescu, D. Renard \cite{BR04} and  V. S\'echerre \cite{Sec09}, based on Tadi{\'c}'s work \cite{Tad90}. For the case of non-Archimedean local fields of positive characteristic, the unitary dual of $\GL_n(\mathcal{D})$ was  completed by  A. Badulescu,  G. Henniart,  B. Lemaire, and  V. S\'echerre in \cite{BHLS10}. 
For recent uniform and simplified approaches on the unitary dual of the general linear group and its inner forms over non-Archimedean local 
fields, see \cite{Tad15, LM16}. 
For these known cases, assuming that there is a theory of local Arthur packets, we expect that Conjecture \ref{conj A+ intro} holds. 
For instance, when $G=\GL_n(F)$ over a non-Archimedean local field $F$ of characteristic zero, as commented above, it is an easy exercise to show that the classification of the unitary dual by M. Tadi\'c (\cite{Tad86}) can be reformulated as the statement that $\Pi_{A+,\,u}(\GL_n(F))=\Pi_u(\GL_n(F))=\Pi_{\ol{A}}^{\lim}(\GL_n(F))$ holds. There are special families of unitary representations known through the work of D. Barbasch and D. Ciubotaru (\cite{Bar10, BC05, BC09, Ciu05, Ciu08}). It is an interesting problem to see the connection of these families with Conjecture \ref{conj A+ intro}. \\

The following is the structure of this paper. In \S \ref{sec nota and prel}, we recall some necessary notation and preliminaries.
In \S \ref{sec proof of main thm}, we discuss several conjectures of Tadi{\'c} about the unitary dual. In \S \ref{sec A-packet}, we recall the notation and construction of local Arthur packets. 
In \S \ref{sec A+}, we define the set $\Pi_{\overline{A}}^{\lim}(G)$ in Conjecture \ref{conj A+ intro} and explore the implications of this conjecture.
In \S \ref{sec new algorithm}, for symplectic and split odd special orthogonal groups $G_n$, we introduce the new algorithms (Algorithms \ref{algo Arthur type} and \ref{algo Arthur type 2}) to determine whether a representation $\pi$ of $G_n$ is of Arthur type.
In \S \ref{sec inductive approach}, based on the algorithms given in \S \ref{sec new algorithm}, we describe an inductive approach to classify  representations of Arthur type (Algorithm \ref{alg Pi A}).
In \S\ref{algorithm for A bar}, based on the algorithms given in \S \ref{sec new algorithm} and \S \ref{sec inductive approach}, we develop an algorithm to determine the candidate set $\Pi_{\overline{A}}^{\lim}(G_n)$ of the unitary dual (Algorithm \ref{alg A bar}).
Finally, in \S \ref{sec:G2}, we verify the unitary dual conjecture (Conjecture~\ref{conj A+ intro}) for the exceptional group of type $\RG_2$ (Proposition \ref{proposition:G2}).

\subsection*{Acknowledgements} 
The authors would like to thank Freydoon Shahidi for the interest and constant support. The authors would like to thank James Arthur and Bin Xu for helpful communications on Example \ref{ex A+}. The authors also would like to thank
Jeffrey Adams, Dan Ciubotaru, Chris Jantzen, Lucas Mason-Brown, Marko Tadi{\'c}, and David Vogan for helpful comments, discussions, and suggestions. 
Part of this paper was written when the third and fourth-named authors were attending the Relative Langlands Program workshop and conference at the Institute for Mathematical Sciences, the National University of Singapore (January 2026), where we discussed the result of this paper with Peter Sarnak. The authors appreciate very much the warm hospitality of the Institute and the mathematics department of NUS, the deep interest and the very inspiring discussions and suggestions from Peter Sarnak.

\section{Notation and Preliminaries}
\label{sec nota and prel}

\subsection{Langlands classification}

The Langlands classification is one of the fundamental classifications in the representation theory of 
reductive algebraic groups over local fields (\cite{Sil78,BW00,Kon03}). We recall a more explicit version of the 
Langlands classification for the classical groups considered in this paper over  
a non-Archimedean local field $F$ of characteristic zero, which may be called a $p$-adic local field in the rest of this paper.  

Let $n$ be a positive integer. We consider the general linear group $\GL_n(F)$ and the symplectic group or split odd special orthogonal group, which are denoted by $G_n=\RG_n(F)$ with $\RG_n=\Sp_{2n}$ or $\SO_{2n+1}$, respectively. Fix a Borel subgroup of $\GL_n(F)$ and consider a standard parabolic subgroup $P$ with Levi subgroup $M\cong \GL_{n_1}(F)\times\cdots\times \GL_{n_r}(F).$ For $\tau_i\in\Pi(\GL_{n_i}(F)),$ the normalized parabolic induction is denoted by 
\[
\tau_1\times\cdots\times\tau_r := \mathrm{Ind}_{P}^{\GL_n(F)}(\tau_1\otimes\cdots\otimes\tau_r).
\]
Let $\rho$ be an irreducible unitary supercuspidal representation of $\GL_n(F).$ 
A segment $[x,y]_\rho$ is the set of (not necessarily unitary) supercuspidal representations of 
the form
$$
[x,y]_\rho=\{\rho\vert\cdot\rvert^x, \rho\vert\cdot\rvert^{x-1},\dots,\rho\vert\cdot\rvert^y\},
$$
where $x,y\in\mathbb{R}$ such that $x-y$ is a non-negative integer. Here $\vert\cdot\rvert$ denotes the composition of the normalized $p$-adic absolute value of $F$ with the determinant of $\GL_n(F)$. The unique irreducible subrepresentation of $\rho\vert\cdot\rvert^x\times\cdots\times\rho\vert\cdot\rvert^y$ is called the ({essentially}) Steinberg representation attached to $[x,y]_\rho$ and is denoted by $\Delta_\rho[x,y].$ 
For convenience, we set $\Delta_\rho[x,x+1]$, with $y=x+1,$ to be the trivial representation of $\GL_0(F).$ When the context is unambiguous, we refer to both $[x,y]_\rho$ and $\Delta_\rho[x,y]$ simply as segments.

The Langlands classification for $\GL_n(F)$ classifies the admissible dual $\Pi(\GL_n(F))$. Specifically, any irreducible admissible representation $\tau$ of $\GL_n(F)$ can be realized as the unique irreducible subrepresentation of a parabolic induction of the form
\[\Delta_{\rho_1}[x_1,y_1]\times\cdots\times\Delta_{\rho_r}[x_r,y_r],\]
where $\rho_i$ is an irreducible unitary supercuspidal representation of $\GL_{n_i}(F),$ $[x_i,y_i]_{\rho_i}$ is a segment, and $x_1+y_1\leq\cdots\leq x_r+y_r.$ In this situation, we write
$$
\tau=L(\Delta_{\rho_1}[x_1,y_1],\dots,\Delta_{\rho_r}[x_r,y_r]).
$$
Let $(x_{i,j})_{1\leq i\leq s, 1\leq j \leq t}$ be real numbers such that $x_{i,j}=x_{1,1}-i+j.$ A generalized Speh representation is an irreducible representation of  the form
\begin{equation}\label{generalized Speh representation}
\begin{pmatrix}
x_{1,1} & \cdots & x_{1,t} \\
\vdots & \ddots & \vdots \\
x_{s,1} & \cdots & x_{s,t}
\end{pmatrix}_{\rho}:=L(\Delta_{\rho}[x_{1,1},x_{s,1}],\dots,\Delta_{\rho}[x_{1,t}, x_{s,t}]).
\end{equation}
These representations appear in the construction of local Arthur packets of bad parity (see \S\ref{local A packets}). 

Next, we discuss the Langlands classification of $G_n.$ Fix a Borel subgroup of $G_n$ and let $P$ be a standard parabolic subgroup of $G_n$ with Levi subgroup $M\cong \GL_{n_1}(F)\times\cdots\times \GL_{n_r}(F)\times G_{m},$ where $G_m$ is a group of the same type as $G_n$, but of possibly lower rank. Given smooth representations $\tau_i$ of $\GL_{n_i}(F)$ for $i=1,2,\dots,r$ and a smooth representation $\sigma$ of $G_{m}$, the normalized parabolic induction is denoted by
$$
\tau_1\times\cdots\times\tau_r\rtimes\sigma := \mathrm{Ind}_{P}^{G_n}(\tau_1\otimes\cdots\otimes\tau_r\otimes\sigma).
$$
The Langlands classification for $G_n$ states that every irreducible admissible representation $\pi$ of $G_n$ is a unique irreducible subrepresentation of 
\[\Delta_{\rho_1}[x_1,y_1]\times\cdots\times\Delta_{\rho_r}[x_r,y_r]\rtimes\pi_{temp},\]
where $\rho_i$ is an irreducible unitary supercuspidal representation of $\GL_{n_i}(F),$ $x_1+y_1\leq\cdots\leq x_r+y_r<0,$ and $\pi_{temp}$ is an irreducible tempered representation of $G_{m}.$ In this situation, we write
$$
\pi=L(\Delta_{\rho_1}[x_1,y_1],\dots,\Delta_{\rho_r}[x_r,y_r];\pi_{temp}),
$$
and say that the tuple $(\Delta_{\rho_1}[x_1,y_1],\dots,\Delta_{\rho_r}[x_r,y_r];\pi_{temp})$ is the Langlands data, or $L$-data, of $\pi.$  We refer to the multi-set $\{\Delta_{\rho_1}[x_1,y_1],\dots,\Delta_{\rho_r}[x_r,y_r]\}$ as the non-tempered portion of the $L$-data of $\pi.$ 

We also compare the $L$-data of various representations as follows. Given an irreducible admissible representation 
\[
\pi=L(\Delta_{\rho_1}[x_1,y_1],\dots,\Delta_{\rho_r}[x_r,y_r];\pi_{temp}),
\]
we can obtain a new representation by ``inserting" another segment $\Delta_{\rho_{r+1}}[x_{r+1},y_{r+1}]$ with $x_{r+1}+y_{r+1}<0$. That is, we consider the irreducible admissible representation
\[
L(\Delta_{\rho_1}[x_1,y_1],\dots,\Delta_{\rho_r}[x_r,y_r],\Delta_{\rho_{r+1}}[x_{r+1},y_{r+1}];\pi_{temp}),
\]
where the data is possibly re-indexed appropriately, i.e.,  $x_1+y_1\leq\cdots\leq x_r+y_r\leq x_{r+1}+y_{r+1}<0.$ We adopt the following convention.

\begin{conv}\label{conv L-data}
    We do not distinguish an irreducible admissible representation from its $L$-data. If $\pi'$ is obtained from $\pi$ by inserting a collection of segments $\{\Delta_{\rho_1}[x_1,y_1],\ldots, \Delta_{\rho_f}[x_f,y_f]\}$, then we write    
    \[\pi'=\pi+\{\Delta_{\rho_1}[x_1,y_1],\ldots, \Delta_{\rho_f}[x_f,y_f]\}.\]
\end{conv}

\subsection{Arthur's parametrization of the tempered spectrum 
}\label{sec temp spectrum}

We recall from \cite[\S8.2]{Bor79} the local Langlands correspondence and from \cite[Theorem 1.5.1]{Art13} the Arthur's parametrization for $G_n$. 

For a split reductive algebraic group $\RG$, the Langlands dual group of $\RG$ is denoted by $\RG^\vee(\BC)$. 
A local $L$-parameter of $G_n$ is a 
${\RG}^\vee_n(\BC)$-conjugacy class of an admissible homomorphism $\phi:W_F\times\SL_2(\BC)\rightarrow{\RG}^\vee_n(\BC)$ (more generally, see \cite[\S8.2]{Bor79}). We do not distinguish $\phi$ from its conjugacy class. We say that $\phi$ is tempered if its restriction to $W_F$ has bounded image. The component group of $\phi$ is defined by
\[
\mathcal{S}_{\phi}:= \pi_0( \mathrm{Cent}_{{\RG}^\vee_n(\BC)}(\mathrm{Im}(\phi))/Z({\RG}^\vee_n(\BC))^{\Gamma}).
\]
The Pontryagin dual $\widehat{\mathcal{S}}_\phi$ of $\mathcal{S}_{\phi}$ is described explicitly below. 

By the local Langlands correspondence for $\GL_n(F)$ (\cite{Hen00, HT01, Sch13}), we identify an irreducible supercuspidal representation $\rho$ of $\GL_n(F)$ with its local $L$-parameter $\phi_\rho$, which is an irreducible 
$n$-dimensional representation of the Weil group $W_F$. Following this convention, we write 
any tempered local $L$-parameter $\phi$ in the following form:
\[
\phi=\bigoplus_{i=1}^r \rho_i\otimes S_{a_i},
\]
where $\rho_i$ are irreducible unitary supercuspidal representations of $\GL_{n_i}(F)$ 
and $S_k$ denotes the unique irreducible representation of $\SL_2(\BC)$ of dimension $k$. {In order that $\phi$ has image in the complex dual group $G^{\vee}$, if $\rho_i\otimes S_{a_i}$ is not selfdual or selfdual but preserves a bilinear form of opposite parity from $G^{\vee}$, then there must exist another $j \in \{1,\ldots,r\}\setminus \{i\}$ such that $\rho_j\otimes S_{a_j} \cong (\rho_i\otimes S_{a_i})^{\vee}$. Let $\mathrm{Jord}(\phi)$ 
be the multi-set consisting of all the irreducible summands occurring in $\phi$ (counting multiplicities) that are selfdual and preserves a bilinear form of the same parity as $G^{\vee}$.
} Then, $\widehat{\mathcal{S}}_\phi$ is a finite abelian group consisting of characters which may be identified with functions $\varepsilon:\mathrm{Jord}(\phi)\rightarrow\{\pm 1\}$ such that
$\varepsilon(\rho_i\otimes S_{a_i})=\varepsilon(\rho_j\otimes S_{a_j})$ whenever $\rho_i\otimes S_{a_i}\cong\rho_j\otimes S_{a_j}$, and 
\[
\prod_{\rho\otimes S_a\in \mathrm{Jord}(\phi)} \varepsilon(\rho\otimes S_a)^{m_{\rho,a}}=1,
\]
where $m_{\rho,a}$ denotes the multiplicity of $\rho\otimes S_a$ in $\phi.$ 

The conjectural local Langlands correspondence for a general reductive group $G$ associates each local $L$-parameter $\phi$ a finite set of irreducible admissible representations satisfying certain properties (see \cite{Bor79}), which is called the local $L$-packet attached to $\phi$ and is denoted by $\Pi_\phi.$ 
Moreover, $\Pi_\phi$ should be in bijection with $\widehat{\mathcal{S}}_\phi.$ For $G_n$, this follows from Arthur's parameterization of tempered representations.

\begin{thm}[{\cite[Theorem 1.5.1]{Art13}}]\label{thm Arthur tempered}
Fix a choice of Whittaker datum for $G_n$ and let $\phi$ be a tempered local $L$-parameter. Then there is a bijective map between the tempered local $L$-packet $\Pi_{\phi}$ and $\widehat{\mathcal{S}}_\phi.$
\end{thm}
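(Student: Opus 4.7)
The plan is to follow Arthur's global strategy in \cite{Art13}: construct $\Pi_\phi$ and the bijection simultaneously by comparing the stable trace formula for $G_n$ with the $\theta$-twisted trace formula for $\GL_N$ (with $N=2n+1$ for $\Sp_{2n}$ and $N=2n$ for $\SO_{2n+1}$, and $\theta$ the standard outer involution), and then extracting the local statement by linear independence of distribution characters.

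The first step is to globalize $\phi$, realizing it as the localization at a place $v_0$ of a global self-dual cuspidal automorphic representation $\widetilde{\Pi}$ of $\GL_N$, with supercuspidal or Steinberg-type behavior at enough auxiliary places so that a simple form of the trace formula applies. This requires a delicate use of Poincar\'e series and the flexibility of the simple invariant trace formula, together with prescribed local conditions at a second auxiliary place to guarantee cuspidality and self-duality of $\widetilde{\Pi}$.

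The second and central step is to establish the twisted endoscopic character identity: for matching test functions $f$ on $G_n$ and $\tilde{f}$ on $\GL_N\rtimes\theta$,
\[
\tilde{f}_N(\phi)\;=\;\sum_{\pi \in \Pi_\phi}\langle 1,\pi\rangle \, f_{G_n}(\pi),
\]
together with its endoscopic analogues indexed by nontrivial semisimple $s \in \mathcal{S}_\phi$ and the associated elliptic endoscopic groups of $G_n$. These identities, combined with linear independence of distribution characters, both construct the finite set $\Pi_\phi$ and furnish an injection $\pi \mapsto \langle\cdot,\pi\rangle$ into $\widehat{\mathcal{S}}_\phi$; surjectivity is then forced by matching the spectral sides of the two trace formulas on the invariants of $\theta$. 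The Whittaker datum pins down the normalization via Shahidi's theorem: each tempered $L$-packet of a quasi-split classical group contains a unique representation generic for the fixed Whittaker datum, and this distinguished member is declared to correspond to the trivial character in $\widehat{\mathcal{S}}_\phi$.

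The principal obstacle is that this program cannot be executed at tempered parameters in isolation. Arthur's induction intertwines tempered with non-tempered parameters through Jacquet modules and normalized intertwining operators, so one is forced into a long joint induction on $N$ that simultaneously establishes the endoscopic classification for \emph{all} Arthur parameters. The essential analytic inputs--stabilization of the $\theta$-twisted trace formula for $\GL_N$ (M\oe glin--Waldspurger) and the twisted fundamental lemma (Ng\^o, Waldspurger, Chaudouard--Laumon)--are themselves major theorems. The tempered statement above is a corollary of Arthur's full classification rather than an independently provable result; any honest proof is essentially the content of \cite{Art13}.
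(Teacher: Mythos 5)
The paper does not prove this statement; it is quoted verbatim from Arthur's book as \cite[Theorem 1.5.1]{Art13} and used throughout as a black box. Your sketch is an accurate high-level account of Arthur's actual strategy (globalization, comparison of the stabilized and $\theta$-twisted trace formulas, endoscopic character identities, linear independence of characters, and the Whittaker normalization of the base point), and you correctly observe that the tempered case cannot be isolated from the full inductive classification over all Arthur parameters—so the honest content really is the whole of \cite{Art13}. Since the paper's "proof" is simply the citation, there is nothing to reconcile; your proposal is consistent with the paper's treatment.
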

Hereinafter, we implicitly fix a choice of Whittaker datum for $G_n.$ When $\phi$ is tempered, we write $\pi(\phi,\varepsilon)$ for the element of $\Pi_\phi$ corresponding to $\varepsilon\in\widehat{\mathcal{S}}_\phi$ via the bijection in Theorem \ref{thm Arthur tempered}.
Following the notation in \cite{AM20}, when 
\[
\phi=\bigoplus_{i=1}^m \rho_i\otimes S_{a_i},
\]
is tempered, we let $x_{i}=\frac{a_i-1}{2}$ for $i=1,\dots, m$, and rewrite $\pi(\phi,\varepsilon)$ 
as (we set $\varepsilon(\rho_i \otimes S_{a_i})=0$ if $ \rho_i \otimes S_{a_i} \not\in \textrm{Jord}(\phi)$)
\begin{equation*}
\pi((x_{1},\rho_1)^{\varepsilon(\rho_1 \otimes S_{a_1})},\dots, (x_m,\rho_m)^{\varepsilon(\rho_m \otimes S_{a_m})}).
\end{equation*}
When $\rho=\rho_1=\cdots=\rho_m$, we often ignore it in the notation and simply rewrite $\pi(\phi,\varepsilon)$ as
\begin{equation*}
\pi(x_{1}^{\varepsilon(\rho \otimes S_{a_1})},\dots,x_{m}^{\varepsilon(\rho \otimes S_{a_m})}).
\end{equation*}

\subsection{Supercuspidal representations and reducibility points}

In this subsection, we recall from M{\oe}glin's work (\cite{Moe11a}) the characterization of local $L$-parameters for irreducible supercuspidal representations of 
$G_n$, along with key results on the reducibility points of parabolic induction with supercuspidal data. Recall that $\mathcal{C}$ (resp. $\mathcal{C}_{cl}$) is the set of supercuspidal representations of general linear groups (resp. classical groups), and 
$\mathcal{C}^{u}:= \{\rho \in \mathcal{C}\ | \ \rho \text{ is unitary}\}$ and $
    \mathcal{C}^{sd}:= \{\rho \in \mathcal{C}^u\ | \ \rho \text{ is selfdual}\}.$
{
Let 
\[  \phi= \bigoplus_{i\in I} \rho_i \otimes S_{a_i}\]
be a tempered $L$-parameter. We write $\rho \otimes S_{a} \subset \phi$ if $\rho \otimes S_a= \rho_i\otimes S_{a_i}$ for some $i \in I$. We say $\phi$ is of \emph{good parity} if every summand of $\phi$ is selfdual and preserves a bilinear form of the same parity as $G^{\vee}$. We say $\phi$ is \emph{discrete} if the $\rho_i \otimes S_{a_i}$'s are selfdual and pairwise non-equivalent; hence automatically of good parity. We say $\phi$ is \emph{without gaps} if for $a \geq 1$, }
\[ \rho \otimes S_{a+2} \subset \phi \Rightarrow \rho \otimes S_{a} \subset \phi. \]
Applying Theorem \ref{thm Arthur tempered}, M{\oe}glin's parametrization of irreducible supercuspidal representations of $G_n$ is as follows.

\begin{thm}[{\cite[Theorem 1.5.1]{Moe11a}, \cite[Theorem 3.3]{Xu17a}}] \label{thm characterizatioin of supercuspidal}
An irreducible tempered representation $\pi(\phi,\varepsilon)$ of $G_n$ is supercuspidal if and only if the following hold.
\begin{enumerate}
    \item [$\oldbullet$] As a tempered local $L$-parameter, $\phi$ is discrete and without gaps.
    \item [$\oldbullet$] If both $\rho \otimes S_{a}\subset \phi$ and $\rho \otimes S_{a+2} \subset\phi$, then $\varepsilon(\rho \otimes S_{a}) \varepsilon(\rho \otimes S_{a+2})=-1.$
    \item [$\oldbullet$] If $\rho \otimes S_{2}\subset\phi$, then $ \varepsilon(\rho \otimes S_2)=-1$.
\end{enumerate}
\end{thm}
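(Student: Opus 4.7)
The plan is to apply Casselman's criterion: an irreducible admissible representation $\pi$ of $G_n$ is supercuspidal if and only if every $\rho|\cdot|^s$-derivative $D_{\rho|\cdot|^s}(\pi)$ vanishes, as $\rho$ ranges over supercuspidal representations of general linear groups and $s\in\mathbb{R}$. For a tempered member $\pi(\phi,\varepsilon)$, Moeglin--Tadi\'c type formulas reduce the computation of these derivatives to combinatorial operations on the Jordan block data $\mathrm{Jord}(\phi)$ and the character $\varepsilon$. Concretely, a non-vanishing derivative can only occur at a half-integer shift $s=(a-1)/2$ or $s=(a+1)/2$ adjacent to some $\rho\otimes S_a \subset \phi$, and whether it actually vanishes depends on the local shape of $\phi$ near $\rho\otimes S_a$ together with the values of $\varepsilon$ on the relevant neighboring blocks.

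The necessity of condition (1) is immediate from this description: if $\rho\otimes S_a$ has multiplicity $\geq 2$ in $\phi$, removing one copy produces a non-trivial Jacquet module contribution at $s=(a-1)/2$; if $\rho\otimes S_{a+2}\subset\phi$ with $a\geq 1$ but $\rho\otimes S_a\not\subset \phi$, then the $\rho|\cdot|^{(a+1)/2}$-derivative cannot be cancelled by a neighbor and is non-zero. Assuming (1) holds, the derivative at $s=(a+1)/2$ with both $\rho\otimes S_a$ and $\rho\otimes S_{a+2}$ present in $\phi$ is governed by a signed sum which vanishes precisely when $\varepsilon(\rho\otimes S_a)\varepsilon(\rho\otimes S_{a+2})=-1$; the analogous derivative at $s=1/2$, with $\rho\otimes S_2\subset\phi$ and no $\rho\otimes S_0$ block to pair with, vanishes precisely when $\varepsilon(\rho\otimes S_2)=-1$. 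These computations yield conditions (2) and (3), respectively.

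For sufficiency, assume all three conditions. The same formulas show that every candidate non-zero derivative falls into one of the three cases just analyzed, and each such contribution is killed by one of (1)--(3). Hence all Jacquet modules of $\pi(\phi,\varepsilon)$ vanish and the representation is supercuspidal by Casselman's criterion. The main obstacle is the precise sign bookkeeping in the derivative formulas, particularly the base case $\rho\otimes S_2$: its sign normalization is pinned down by the fixed choice of Whittaker datum in Theorem \ref{thm Arthur tempered}, and verifying that this normalization is exactly the one that produces $\varepsilon(\rho\otimes S_2)=-1$ in (3), rather than the symmetric $+1$, is what distinguishes the base case from the uniform condition (2). This compatibility, together with the explicit Moeglin--Tadi\'c derivative formulas for tempered $L$-packets, is the technical heart of the argument.
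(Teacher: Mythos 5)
This theorem is quoted in the paper with citations to M{\oe}glin and Xu and is not proved there; there is no in-paper proof to compare against. Your sketch, however, does identify the approach actually taken in those references: reduce supercuspidality to vanishing of all Jacquet modules (equivalently, all $\rho\lvert\cdot\rvert^s$-derivatives), and then compute those derivatives for tempered $\pi(\phi,\varepsilon)$ via the M{\oe}glin--Tadi\'c recursion on the Jordan block data. The observations that non-discreteness of $\phi$ or a gap $\rho\otimes S_{a+2}\subset\phi$, $\rho\otimes S_a\not\subset\phi$ immediately give a non-zero derivative, and that conditions (2)--(3) arise from sign constraints at the remaining half-integer points, are all sound. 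Your separation of the base case $\rho\otimes S_2$ from the general recursion (2) is also the right organizational choice, since there is no $\rho\otimes S_0$ block to pair with and the sign is fixed by the Whittaker-datum normalization in Theorem \ref{thm Arthur tempered}.

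That said, as submitted this is an outline rather than a proof. The entire content of conditions (2) and (3) lies in the explicit derivative/Jacquet-module computation together with the Arthur-normalization convention for $\varepsilon$, and you explicitly defer that computation (``the precise sign bookkeeping \ldots is the technical heart''). Nothing in the sketch pins down \emph{why} the vanishing at $s=(a+1)/2$ forces $\varepsilon(\rho\otimes S_a)\varepsilon(\rho\otimes S_{a+2})=-1$ rather than $+1$, or why the derivative at $s=1/2$ kills $\pi$ exactly when $\varepsilon(\rho\otimes S_2)=-1$; these are the statements to prove, and asserting that the M{\oe}glin--Tadi\'c formulas ``govern'' them does not establish them. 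A self-contained argument would need to state the relevant derivative formula (e.g. in the form recorded in \cite[Theorem 3.6]{Ato22a} or M{\oe}glin's earlier work), evaluate it at the half-integer points you list, and track the sign through the fixed Whittaker normalization. Until that is done the proposal is a correct plan of attack, not a proof.
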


Let $\rho\in\mathcal{C}^u$ and $\sigma\in\mathcal{C}_{cl}$. If $\rho$ is not selfdual, then $\rho\vert\cdot\rvert^x\rtimes\sigma$ is irreducible for all $x\in\mathbb{R}.$ If $\rho$ is selfdual, then there exists a unique $\alpha_{\rho,\sigma}\in\mathbb{R}_{\geq 0}$, known as the reducibility point, such that $\rho\vert\cdot\rvert^{\alpha_{\rho,\sigma}}\rtimes\sigma$ is reducible (\cite{Sil80}). Moreover, $\alpha_{\rho,\sigma}\in\frac{1}{2}\mathbb{Z}_{\geq 0}$.
According to \cite[Remark 4.5.2]{MW06}, we have an explicit description of $\alpha_{\rho,\sigma}$ based on the local $L$-parameter $\phi_{\sigma}$ of $\sigma$, as follows. 

By Theorem \ref{thm characterizatioin of supercuspidal}, we may write
\begin{align}\label{eq decomp phi_sc}
    \phi_{\sigma}= \bigoplus_{\rho \in R}  \bigoplus_{i=0}^{a_{\rho}} \rho \otimes S_{2(i+\epsilon_{\rho})+1},
\end{align}
where $R$ is a finite set of $\mathcal{C}^{sd}$ and $a_{\rho} \in \Z_{\geq 0}$.
Here $\epsilon_{\rho}\in \{0,\half{1}\}$ based on the parity of $\rho$. More explicitly, if $G_n$ is an orthogonal (resp. symplectic) group, then $\epsilon_{\rho}= 0$ (resp. $\epsilon_{\rho}=\half{1}$) if the image of the homomorphism $\rho: W_{F} \to \GL(V)$ preserves a symplectic bilinear form on $V$, and $\epsilon_{\rho}= \half{1}$ (resp. $\epsilon_{\rho}=0$) if the image of $\rho$ preserves a symmetric bilinear form on $V$. Set $a_{\rho}=-1$ if $\rho$ is not in the finite set $R$. Then the reducibility point $\alpha_{\rho,\sigma}$ is given by $a_{\rho}+\epsilon_{\rho}+1$ and the decomposition \eqref{eq decomp phi_sc} can be rewritten as 
\begin{equation}\label{eq-scparameter}
\phi_{\sigma}= \bigoplus_{\rho \in R}  (\rho\otimes S_{2 \epsilon_{\rho}+1}+\rho\otimes S_{2 \epsilon_{\rho}+3}+\cdots + \rho\otimes S_{2 (\alpha_{\rho,\sigma}-1)+1}).
\end{equation}

Most of our analysis proceeds based on the notion of the coranks of representations, whose definition is recalled below.

\begin{defn}\label{def corank}
A representation $\pi\in \Pi(G_n)$ is said to be of corank $r$ if there exists an injection
$\pi \hookrightarrow \rho_1 \times \cdots \times \rho_r \times \pi_{sc},$ 
where $\rho_i \in \mathcal{C}$ and $\pi_{sc}\in \mathcal{C}_{cl}$.
\end{defn}

\subsection{Fell topology on the unitary dual and properties}\label{Fell top}
In this subsection, for the convenience of the readers, we briefly recall the notion of Fell topology on $ \Pi_u(G)$ and useful properties for any locally compact topological group $G$ (see \cite{Fe60}). Our main references are \cite{Tad88, Fo95, Vog07}.

For $f\in L^1(G)$, define a norm (see \cite[Corollary 7.2]{Fo95} for example)
$$ \lVert f\rVert_*:=\sup_{\pi\in \Pi_u(G)} \lVert \pi(f)\rVert.$$
Here $\lVert \pi(f)\rVert $ is the norm of $\pi(f)$ as an operator from $V_\pi$ to $V_\pi$, where $V_\pi$ denotes the (Hilbert) space of $\pi$. It is known that $\lVert \pi(f)\rVert\le \lVert f\rVert_1.$ Thus, $\lVert f\rVert_*\le \lVert f\rVert_1.$ Let $C^*(G)$ be the completion of $L^1(G)$ with respect to the norm $\lVert \cdot\rVert_*$. The algebra operations on $L^1(G)$ can be extended to $C^*(G)$. This makes $C^*(G)$ a $C^*$-algebra, called the group $C^*$-algebra of $G$. Moreover, there is a one-to-one correspondence between unitary representations of $G$ and nondegenerate $*$-representations of $C^*(G)$. For $\pi\in \Pi_u(G)$, we can view it as a representation of $C^*(G)$ and define 
$$\ker(\pi)=\{f\in C^*(G)\mid \pi(f)=0\}.$$
Then $\ker(\pi)$ is a two-sided ideal of $C^*(G)$. Ideals of this form are called primitive ideals of $C^*(G)$ and set
$$\mathrm{Prim}(G)=\{\ker(\pi)\mid \pi\in \Pi_u(G)\}.$$

For a non-empty subset $U\subset \mathrm{Prim}(G)$, we set 
$$ \overline{U}=\left\{\mathcal{I}\in \mathrm{Prim}(G) \mid \bigcap_{\mathcal{J}\in U}\mathcal{J}\subset \mathcal{I} \right\}.$$
Moreover, set $\overline{\emptyset}=\emptyset$. Then there is a unique topology on $\mathrm{Prim}(G)$ such that the closure of $U$ with respect to this topology is $\overline{U}$ defined above. This is called the Jacobson topology or hull-kernel topology.  The Fell topology on $\Pi_u(G)$ is defined to be the topology on $\Pi_u(G)$ obtained by pulling back the Jacobson topology on $\mathrm{Prim}(G)$ through the natural surjective map $\Pi_u(G)\to \mathrm{Prim}(G)$. 

In the following, we assume that $\RG$ is a connected reductive group over a $p$-adic field $F$ and let $G=\RG(F)$. Then the Fell topology on $\Pi_u(G)$ is equivalent to one of the following (see \cite[Sections 2 and 4]{Tad88}):
\begin{enumerate}
\item A sequence of representations $(\pi_n)_n \subset \Pi_u(G)$ is convergent to $\pi\in \Pi_u(G)$ if and only if for every vector $\xi$ in the Hilbert space of $\pi$, there exists a sequence of vectors $(\xi_n)_n$, where $\xi_n$ is in the Hilbert space of $\pi_n$ such that $c_{\xi_n,\xi_n}$ converges to $c_{\xi,\xi}$ uniformly on any compact subset of $G$. Here $c_{\xi,\xi}$ (respectively, $c_{\xi_n,\xi_n}$) is the matrix coefficient $c_{\xi,\xi}(g)=(\pi(g)\xi,\xi)$ (respectively, $c_{\xi_n,\xi_n}(g)=(\pi_n(g)\xi_n,\xi_n)$). Here, $(\cdot,\cdot)$ denotes the inner product on the Hilbert space of $\pi$.
\item A sequence of representations $(\pi_n)_n \subset \Pi_u(G)$ is convergent to $\pi\in \Pi_u(G)$ if and only if for any two vectors $\xi, \eta\in \pi$, there  exist sequences $\xi_n,\eta_n\in \pi_n$ such that $c_{\xi_n,\eta_n}\to c_{\xi,\eta}$ uniformly on any compact subset of $G$. Here $c_{\xi,\eta}$ is the matrix coefficient $c_{\xi,\eta}(g)=(\pi(g)\xi,\eta)$.
\end{enumerate}

Let $\Pi(G)$ be the admissible dual of $G$. Then $\Pi_u(G)$ can be viewed as a subset of $\Pi(G)$. On $ \Pi(G)$ one can also define a topology using matrix coefficients as in (1) or (2) above. Then the Fell topology on $\Pi_u(G)$ can be regarded as a subspace topology on $\Pi(G)$ (see \cite[P.158]{Tad88}) and $\Pi_u(G)$ is a closed subset of $ \Pi(G)$ by \cite[Theorem 5.13]{Tad88}.
 Let $P=MN$ be a parabolic subgroup of $G$ with Levi $M$ and unipotent $N$. Let $\mathrm{Unr}(M)$ be the group of all unramified characters of $M$. It is naturally isomorphic to a direct product of finite copies of $\BC^\times$ as topological groups.  Then we have the following facts:

\begin{prop}\label{facts on unitary reps}\
Let $\tau$ be an irreducible admissible representation of $M$, and for any $\chi\in \mathrm{Unr}(M)$ let $I(\chi,\tau):=\Ind_P^G(\chi\otimes \tau)$.
    \begin{enumerate}

\item $($\cite[p.16]{MT11}$)$ If $\tau$ is unitary, then $I(1\otimes \tau)$ is unitary, where $1$ is the trivial character of $M$. Since $I(1\otimes \tau)$ is of finite length, it is semisimple and each of its direct summand is also unitary.
    
    \item $($\cite[Theorem 2.2]{Tad88}$)$ Let $\Omega(G)$ be the variety of cuspidal data of $G$. Let $\nu: \Pi_u(G)\to \Omega(G)$ be the map that sends $\pi$ to its cuspidal data. Then $\nu$ is continuous. 
\item $($\cite[Lemma 4.4]{Tad88}$)$ Let $I_\tau$ be the subset of $\mathrm{Unr}(M)$ consisting of $\chi$ such that $ I(\chi,\tau)$ is irreducible. Then the map $I_\tau\to \Pi(G), \chi\mapsto I(\chi,\tau)$ is continuous.

\item $($\cite[p.16]{MT11}$)$ Let $X\subset \mathrm{Unr}(M)$ be a connected set such that for any $\chi\in X $, the induced representation $I(\chi,\tau)$ is Hermitian and irreducible. If there exists $\chi_0\in M$ such that $I(\chi_0, \tau)$ is unitary, then $I(\chi,\tau)$ is unitary for any $\chi\in X.$

\item $($\cite[Theorem 2.5]{Tad88}$)$ The set $\chi\in\mathrm{Unr}(M)$ such that $I(\chi,\tau)$ has a unitarizable subquotient is compact.
\item $($\cite[Theorem 2.7]{Tad86}$)$ Let $S\subset \mathrm{Unr}(M)$ be a connected subset such that for all $\chi\in S$, $ I(\chi,\tau)$ is irreducible and unitarizable. Then for any $\chi\in \overline{S}$, any irreducible subquotient of $I(\chi,\tau)$ is unitarizable.
\item $($\cite[Theorem 3]{Vog07}$)$ Let $\sigma\in \Pi_u(G)$ be a non-isolated representation, and let $\{\pi_n\}\subset \Pi_u(G)$ be a sequence  distinct from $\sigma$ such that $\pi_n$ converges to $ \sigma$ in the Fell topology. Then there is a subsequence $\{\pi_{n_j}\}$ in $\Pi_u(G)$, a parabolic subgroup $P=MN$ of $G$, an irreducible admissible representation $\rho$ of $M$, and a sequence of unramified characters $\chi_j$ of $M$ with the following properties. 
\begin{enumerate}
\item The sequence $\chi_j$ converges to the trivial character.
\item The induced representation $\Ind_{P}^G(\rho\otimes \chi_j)$ is infinitesimally equivalent to $\pi_{n_j}$.
\item The representation $\sigma$ is a composition factor of $\Ind_P^G(\rho)$.
\end{enumerate}
In this case, the limit points of $\{\pi_{n_j}\} $ are all the composition series of $\Ind_P^G(\rho)$.
\end{enumerate}
\end{prop}

\section{Conjectures on the Unitary Dual} \label{sec proof of main thm} 

In this section, we recall a collection  of conjectures of Tadi{\'c} on the unitary dual problem.

\subsection{Representations of good parity or critical type}
 
First, we recall the definitions of representations being \emph{null-parity}, \emph{good parity}, \emph{bad parity}, and \emph{critical type}. 

\begin{defn}\label{def critical type}
    Let $\pi$ be an irreducible admissible representation of $G_n$, which is a subquotient of 
    $\rho_1\vert\cdot\rvert^{x_1} \times \cdots \times \rho_r\vert\cdot\rvert^{x_r} \rtimes \pi_{sc},$ 
    where $\rho_i \in \mathcal{C}^u$, $x_i\in \R_{\geq 0}$ and $\pi_{sc} \in \mathcal{C}_{cl}$.
    \begin{enumerate}
        \item  Define $\pi$ to be of {\emph{null parity}}
        $($named as ``ugly" in \cite{AM20}$)$ if there exists $1 \leq i \leq r$, such that either $\rho_i \not\in \mathcal{C}^{sd}$ or $x_i \not\in \half{1}\Z$.
        \item Define $\pi$ to be of \emph{good parity} if for any $1 \leq i \leq r$, $\rho_i \in \mathcal{C}^{sd}$ and $x_i \in \alpha_{\rho_i,\pi_{sc}}+ \Z$, where $\alpha_{\rho_i,\pi_{sc}} \in\mathbb{R}_{\geq 0}$ is the reducibility point of
    the pair $(\rho_i, \pi_{sc})$.
        \item Define $\pi$ to be of \emph{bad parity} if $\pi$ is of neither null-parity nor good parity.
        \item Define $\pi$ to be of \emph{critical type} if it is of good parity and for each $\rho \in \mathcal{C}^{sd}$, the set (not multi-set) 
        $\{ x_i \ | \ \rho_i \cong \rho \}$ 
        is either empty or
        forms a segment containing $\alpha_{\rho,\pi_{sc}}$. 
    \end{enumerate}
\end{defn}

Note that $\alpha_{\rho_i,\pi_{sc}}+\Z =\epsilon_{\rho_i}+\Z $ only depends on the type of $\rho_i$ and the group $G_n$, not on $\pi_{sc}$. 
Recall that $\epsilon_{\rho_i}$ is defined in \eqref{eq decomp phi_sc}. A local $L$-parameter $\phi$ of $G_n$ is defined to be of good parity if the local $L$-packet $\Pi_{\phi}$ contains one representation of good parity. {It is known that $\phi$ is of good parity if and only of all representations in $\Pi_{\phi}$ are of good parity.}

\subsection{Preservation and independence of unitarizability}

In this subsection, we recall the Jantzen decomposition from \cite{Jan97} and two problems on unitary representations proposed in \cite{Tad18}.

We begin by recalling the following notation for certain subsets of the set \( \mathcal{C} \) of irreducible supercuspidal representations of general linear groups (see, for example, \cite{Jan97, Tad18}).

\begin{defn}\label{def Jantzen decomposition}
Let $X$ be a subset of $\mathcal{C}$.
\begin{enumerate}
    \item [(1)] $X$ is self-contragredient if for any $\rho \in X$, the contragredient of $\rho$ is also in $X$.
    \item [(2)] For an irreducible admissible representation $\beta$ of $\GL_d(F)$, we say that $\beta$ is supported on $X$ if the supercuspidal support of $\beta$ is contained in $X$.
    \item [(3)] Let $\pi$ be an irreducible admissible representation of $G_n$ that appears as an irreducible subquotient of 
    $\theta_1 \times \cdots \times \theta_f \rtimes \sigma,$
    where $\theta_i \in \mathcal{C}$ and $\sigma \in \mathcal{C}_{cl}$.
     We say that $\pi$ is supported on $X\cup \{\sigma\}$ for some self-contragredient $X\subseteq \mathcal{C}$ if $\theta_i \in X$ for $i=1,\dots, r$. 
    \item [(4)] Fix a supercuspidal representation $\sigma$ of $G_m$ and a self-contragredient subset $X\subseteq \mathcal{C}$. We denote by $\Irr(X;\sigma)$ the set of irreducible admissible representations $\pi$ of $G_n$ with $n\geq m$ such that $\pi$ is supported on $X \cup \{\sigma\}$. 
    \item [(5)] Suppose that $X$ is self-contragredient. Let $X= X_1 \sqcup X_2$ be a partition of $X$. 
    Such a partition is called regular if $X_1$ is self-contragredient and 
    \[ \theta \in X_1 \Longrightarrow \theta \vert\cdot\rvert^{1} \not\in X_2.  \]
    That is, there is no reducibility among $X_1$ and $X_2$. 
\end{enumerate}
\end{defn}

Next, we recall certain results on the Jantzen decomposition.

\begin{thm}[{\cite[Theorem 9.3]{Jan97}}]\label{thm Jantzen} 
Let $X$ be a subset of $\mathcal{C}$ that is self-contragredient and $X=X_1 \sqcup X_2$ is a regular partition. Let $\sigma \in \mathcal{C}_{cl}$.
\begin{enumerate}
    \item [(i)] For any $\pi \in  \Irr(X;\sigma)$, there exist irreducible admissible representations $\beta_1, \beta_2$ of general linear groups and $ X_1(\pi),X_2(\pi)$ of lower rank classical groups such that 
    \begin{enumerate}
        \item [$\oldbullet$] $\beta_i$ is supported on $X_i$ and $X_i(\pi)$ is supported on $X_i \cup \{\sigma\}$; and
        \item [$\oldbullet$] there are injections
        \[ \pi \hookrightarrow \beta_1 \rtimes X_2(\pi)\quad {\rm and}\quad  \pi \hookrightarrow \beta_2 \rtimes X_1(\pi). \]
    \end{enumerate}
    The representations $X_i(\pi)$ are uniquely determined by $\pi$.
    \item [(ii)] The map 
    \begin{align*}
        \Irr(X; \sigma )\longrightarrow \Irr(X_1;\sigma) \times \Irr(X_2; \sigma ),\quad {\rm with}\ 
         \pi \mapsto  (X_1(\pi), X_2(\pi))
    \end{align*}
    is a bijection. We denote the inverse map by $\Psi_{X_1,X_2}$.
    \item [(iii)] For $i=1,2$, let $\beta_i$ be an irreducible admissible representation of a general linear group supported on $X_i$ and $\gamma_i\in \Irr(X_i; \sigma)$. Here we allow $\beta_i$ to be the trivial representation of $\GL_0(F)$. Then 
    \[ (\beta_1 \times \beta_2)\rtimes \Psi_{X_1,X_2}(\gamma_1,\gamma_2) \text{ is irreducible} \Longleftrightarrow \text{ both }\beta_i \rtimes \gamma_i \text{ are irreducible.} \]
    \item [(iv)] For any  $\pi \in \Irr(X;\sigma)$, $\pi$ is tempered (resp. square-integrable) if and only if $X_1(\pi), X_{2}(\pi)$ are both tempered (resp. square-integrable). 
\end{enumerate}
\end{thm}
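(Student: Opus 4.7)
My plan is to leverage the regularity of the partition $X=X_1\sqcup X_2$, which (together with self-contragredience of each $X_i$) prevents any reducibility or linkage between factors drawn from $X_1$ and $X_2$. Concretely, for $\theta_1\in X_1$ and $\theta_2\in X_2$ the inductions $\theta_1\times\theta_2$, $\theta_1\times\theta_2^\vee$, and $\theta_1\rtimes\sigma'$ passing through $\theta_2$ are all irreducible, so $X_1$-supported and $X_2$-supported factors ``commute'' freely under parabolic induction. This no-linkage principle will be the central mechanism for separating $\pi$ into its $X_1$- and $X_2$-components.

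For part (i), I would start from the Langlands classification $\pi=L(\Delta_1,\dots,\Delta_r;\pi_{\temp})$. Since regularity forbids a segment $[x,y]_\rho$ from crossing $X_1$ and $X_2$, each $\Delta_j$ has support entirely in one of $X_1$ or $X_2$. The tempered piece $\pi_{\temp}$ lies in an $L$-packet attached to a tempered parameter $\phi$ whose Jordan blocks split as $\mathrm{Jord}(\phi)=\mathrm{Jord}_1\sqcup\mathrm{Jord}_2$ according to $X_1\cup\{\sigma\}$ versus $X_2$-support; the product structure this induces on $\widehat{\mathcal{S}}_\phi$ lets me realize $\pi_{\temp}$ as a parabolic induction from $\pi_{\temp,1}\otimes\pi_{\temp,2}$ with supports in $X_1\cup\{\sigma\}$ and $X_2$ respectively, where the $\sigma$-factor is absorbed into the $X_1$-side by convention. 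Reassembling via the no-linkage commutativity produces the desired injections $\pi\hookrightarrow\beta_{3-i}\rtimes X_i(\pi)$. Uniqueness of $X_i(\pi)$ is then extracted from the $X_i$-isotypic component of the appropriate iterated Jacquet module of $\pi$, which by no-linkage receives contributions only from the $X_i$-part.

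For part (ii), injectivity is immediate from the uniqueness in (i); surjectivity is proved by constructing $\Psi_{X_1,X_2}(\gamma_1,\gamma_2)$ as the unique irreducible subrepresentation of $\beta_1\rtimes\gamma_2$, where $\beta_1$ is the non-$\sigma$ inducing piece of $\gamma_1$ as in (i); the no-linkage hypothesis guarantees this subrepresentation has Jantzen components $(\gamma_1,\gamma_2)$. Part (iii) then reduces to showing that reducibility of $(\beta_1\times\beta_2)\rtimes\Psi_{X_1,X_2}(\gamma_1,\gamma_2)$ is controlled purely by the reducibility of each $\beta_i\rtimes\gamma_i$ in isolation; this follows by iteratively pushing $X_1$-factors past $X_2$-factors using the irreducibility of $\theta_1\times\theta_2$ and $\theta_1\times\theta_2^\vee$, thereby rewriting the full induction as a composition of two independent inductions acting on disjoint strands.

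Finally, part (iv) is handled via Casselman's criterion: temperedness and square-integrability are governed by the central exponents of all standard Jacquet modules. By the geometric lemma combined with the no-linkage principle, every such exponent decomposes as a sum of an $X_1$-contribution coming entirely from $X_1(\pi)$ and an $X_2$-contribution coming entirely from $X_2(\pi)$, so Casselman's inequalities hold for $\pi$ if and only if they hold for both $X_i(\pi)$. The main obstacle I anticipate is a clean proof of well-definedness of $\pi\mapsto(X_1(\pi),X_2(\pi))$: one must verify that different embeddings of $\pi$ into induced representations produce the same $X_i(\pi)$ up to isomorphism. This requires a careful Bernstein--Zelevinsky style exchange argument propagating no-linkage through each step of the geometric lemma, together with an induction on the rank of $G_n$ to handle the nested parabolic subgroups that appear.
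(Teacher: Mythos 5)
Your decomposition strategy and the appeal to a ``no-linkage'' principle coming from regularity are the right starting points, and you correctly flag well-definedness as the main obstacle. There is, however, a genuine gap in how you phrase and then use the no-linkage principle. Regularity ensures $\theta_1\times\theta_2$ and $\theta_1\times\theta_2^\vee$ are irreducible for $\theta_1\in X_1,\theta_2\in X_2$, but it does \emph{not} make $\theta_1\rtimes\sigma'$ irreducible when $\sigma'$ is a classical-group representation supported on $X_2\cup\{\sigma\}$. If $\theta_1\in X_1$ is self-dual with $\alpha_{\theta_1,\sigma}=0$, then $\theta_1\rtimes\sigma$ is reducible, and the same $\alpha=0$ reducibility persists upon replacing $\sigma$ by a tempered $\sigma'$ built from $X_2\cup\{\sigma\}$ (the Type~V phenomenon of Theorem~\ref{thm temp algo}). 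So $X_1$-supported GL factors and $X_2\cup\{\sigma\}$-supported classical-group factors do \emph{not} ``commute freely'' under $\rtimes$: there is a real $\alpha=0$ interaction on both sides of the partition that GL-level no-linkage says nothing about. What regularity actually yields, and what the argument truly needs, is the weaker statement that the reducibility of $\theta_1\rtimes\sigma'$ and the resulting $\pm$ labels depend only on the $X_1\cup\{\sigma\}$-part of $\sigma'$; establishing this is precisely the tempered $R$-group bookkeeping, and it is the entire content of part~(iii), which therefore cannot be reduced to ``pushing $X_1$-factors past $X_2$-factors''.

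Related to this, the claim to ``realize $\pi_{\temp}$ as a parabolic induction from $\pi_{\temp,1}\otimes\pi_{\temp,2}$'' is too strong: a tempered representation of $G_m$ is in general only a \emph{subrepresentation} of such an induction, which can be reducible exactly because of the even-multiplicity/$\alpha=0$ phenomena just described. After splitting $\phi=\phi_1\oplus\phi_2$ one has an injection $\pi_{\temp}\hookrightarrow\tau_2\rtimes\pi_{\temp,1}$, and isolating the correct constituent requires tracking the character $\varepsilon$ through the (near-)product decomposition of $\mathcal{S}_\phi$. The outline is salvageable, and it is essentially the strategy of \cite[Theorem~9.3]{Jan97} (which the paper cites without reproducing a proof), but these two points have to be confronted directly rather than absorbed into a blanket no-linkage claim.
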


Recall that $\mathcal{C}^{sd}$ is the subset of $\mathcal{C}$ consisting of selfdual supercuspidal representations.
We consider the cuspidal lines $X_{\rho} \subseteq \mathcal{C}$ defined as follows. 

\begin{defn}\label{def X_rho}
 For each $\rho\in \mathcal{C}^{sd}$, define
    $X_{\rho}= \{ \rho\vert\cdot\rvert^{x} \ | \ x \in \R \},$ called \emph{the cuspidal line of $\rho$}. 
    Then $\mathcal{C}= X_{\rho} \sqcup (\mathcal{C}- X_{\rho})$ is a regular partition. For each 
    $\pi\in\Pi(G_n)$, define $X_{\rho}(\pi)$ as in Theorem \ref{thm Jantzen}(i).    
\end{defn}

Before stating the reduction of the unitary dual considered in \cite{Tad09a, Tad18}, we first recall the definition of weakly real.

\begin{defn}
    We say that $\pi \in \Pi(G_n)$ is \emph{weakly real} if there exists a finite subset $\{\rho_1,\dots, \rho_r\}$ of $\mathcal{C}^{sd}$ and a supercuspidal representation $\sigma$ of $G_m$ such that $\pi$ is supported on $(\sqcup_{i=1}^r X_{\rho_i}) \cup \{\sigma\}$.
\end{defn}

Let $\pi$ be a general irreducible admissible representation of $G_n$. Let $X^{sd}:= \sqcup_{\rho \in \mathcal{C}^{sd}} X_{\rho}$. We may apply Theorem \ref{thm Jantzen} to the regular decomposition $\mathcal{C}= X^{sd} \sqcup (\mathcal{C}- X^{sd})$ to obtain an injection
\[ \pi \hookrightarrow \theta \rtimes X^{sd}(\pi), \]
where $\theta $ is supported on $\mathcal{C}- X^{sd}$ and $X^{sd}(\pi)$ is weakly real. Indeed, the right hand side of the injection is irreducible (see \cite[Proposition 3.2(i)]{Tad09a}), i.e., $\pi= \theta \rtimes X^{sd}(\pi)$. Tadi{\'c} showed that the unitarity of $\pi$ is reduced to the unitarity of $X^{sd}(\pi)$ and a choice of $\theta$, as follows.

\begin{thm}[{\cite[Theorem 1.1]{Tad09a}}]\label{thm weakly real}
In the setting above, $\pi$ is unitary if and only if there is a choice of $\pi=\theta \rtimes X^{sd}(\pi)$ such that both $\theta$ and $X^{sd}(\pi)$ are unitary.    
\end{thm}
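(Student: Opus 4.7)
The plan is to leverage the Jantzen decomposition (Theorem \ref{thm Jantzen}) together with the standard theory of invariant Hermitian forms on irreducible parabolically induced representations.

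For the ``if'' direction, the partition $\mathcal{C} = X^{sd} \sqcup (\mathcal{C}-X^{sd})$ is regular, so Theorem \ref{thm Jantzen}(iii), applied with $\beta_1 = \theta$, $\gamma_2 = X^{sd}(\pi)$, and trivial $\beta_2, \gamma_1$, implies that $\theta \rtimes X^{sd}(\pi)$ is irreducible, hence equal to $\pi$. Since unitary parabolic induction preserves unitarity, if both $\theta$ and $X^{sd}(\pi)$ are unitary then so is $\pi$.

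For the converse, I would assume $\pi$ is unitary, hence Hermitian, and exploit the equality $\pi^\vee \cong \theta^\vee \rtimes X^{sd}(\pi)^\vee$. The uniqueness in Theorem \ref{thm Jantzen}(ii) applied to the isomorphism $\pi \cong \pi^\vee$ forces $X^{sd}(\pi) \cong X^{sd}(\pi)^\vee$, so $X^{sd}(\pi)$ is Hermitian; the ``choice'' of $\theta$ mentioned in the statement reflects the fact that $\theta$ may need to be interchanged with $\theta^\vee$ (or, more generally, swapped line-by-line with its dual on each non-self-dual cuspidal line) in order to be Hermitian on the nose. This swap is available because $\tau \rtimes X^{sd}(\pi) \cong \tau^\vee \rtimes X^{sd}(\pi)$ for $\tau$ supported outside $X^{sd}$, via the Weyl element that sends a $\GL$-block into its dual inside $G_n$. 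The key step is then to descend the positive-definite invariant form on $\pi$ to the inducing datum. For an irreducible parabolic induction $\pi = \Ind_P^{G_n}(\theta \otimes X^{sd}(\pi))$, a nonzero $G_n$-invariant Hermitian form on $\pi$ corresponds, via the standard intertwining operator (which is an isomorphism by irreducibility), to a nonzero $M$-invariant Hermitian form on $\theta \otimes X^{sd}(\pi)$ with positivity preserved in both directions. Because the supercuspidal supports of $\theta$ and $X^{sd}(\pi)$ lie in disjoint parts of $\mathcal{C}$ and the form is $(\GL \times G_m)$-equivariant, Schur's lemma forces it to factor as a tensor product of Hermitian forms on $\theta$ and $X^{sd}(\pi)$, so positivity transfers to each factor.

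The main obstacle is the descent of the Hermitian form and the justification of the tensor-product factorization. Concretely, one must identify the intertwining operator producing the Hermitian form on the inducing datum, verify its invertibility (using the regularity of the partition, which precludes reducibility between $X^{sd}$ and its complement), and then apply Schur-type reasoning to conclude the tensor splitting. Handling the sign conventions for the intertwining operator, and pinning down the specific involution $\theta \leftrightarrow \theta^\vee$ that realizes the correct ``choice'', are the technically delicate points; once these are settled, unitarity of both factors is a formal consequence of positivity of the form on $\pi$.
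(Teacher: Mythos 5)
The paper does not prove this statement; it is quoted verbatim from Tadi\'c's work (\cite[Theorem 1.1]{Tad09a}), so there is no in-house proof to compare against. Judged on its own, your sketch gets the easy direction right and identifies the correct strategy for the hard direction, but it leaves a substantive gap exactly where the theorem has real content.

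Your ``if'' direction is fine: irreducibility of $\theta \rtimes X^{sd}(\pi)$ follows from Theorem~\ref{thm Jantzen}(iii), and unitarity of an irreducible unitary induction from unitary data is standard. The issue is the converse. Your descent argument --- transporting the positive form on $\pi$ through the intertwining operator, invoking Schur's lemma to split it as a tensor product of forms on $\theta$ and $X^{sd}(\pi)$, and reading off positivity of each factor --- works only \emph{after} one knows that $\theta$ is Hermitian. You treat the existence of a Hermitian choice of $\theta$ as a bookkeeping matter resolved by the Weyl-element isomorphism $\tau \rtimes \sigma \cong \tau^\vee \rtimes \sigma$, but that map realizes $\theta \mapsto \theta^\vee$, not $\theta \mapsto \theta^h = \overline{\theta}^\vee$, and in any case neither $\theta$ nor $\theta^\vee$ is Hermitian in general (e.g.\ $\theta = \rho|\cdot|^x$ for a non-self-dual unitary $\rho$ and $x \neq 0$ has Hermitian dual $\rho|\cdot|^{-x}$, not $\rho^\vee|\cdot|^{-x}$). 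Producing a Hermitian $\theta'$ with $\pi = \theta' \rtimes X^{sd}(\pi)$ requires regrouping the $\GL$-factor line by line along conjugate pairs of cuspidal lines, arguing that the Langlands data on each pair can always be rearranged self-dually while preserving the irreducible induction, and this is precisely where the content of Tadi\'c's theorem sits. Without that step, Schur's lemma has no nonzero form on $\theta \otimes X^{sd}(\pi)$ to split, and the positivity transfer never gets off the ground. So the outline is plausible but incomplete: the ``pinning down the specific involution'' that you defer is not a sign-convention exercise, it is the theorem.
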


\begin{remark}
    Suppose that $\pi$ is unitary and $\pi=\theta \rtimes X^{sd}(\pi)$. We remark that not any choice of $\theta$ is unitary. Here is an example, take any $ \rho \in \mathcal{C}^{sd}$ and $\sigma \in \mathcal{C}_{cl}$, then 
    \[ \pi:= \rho\vert\cdot\rvert^{1/3} \times \rho\vert\cdot\rvert^{-1/3} \rtimes \sigma= \rho\vert\cdot\rvert^{1/3} \times \rho \vert\cdot\rvert^{1/3} \rtimes \sigma \]
    is irreducible and unitary with $X^{sd}(\pi)=\sigma$. Set 
    \[\theta_1= \rho\vert\cdot\rvert^{1/3} \times \rho\vert\cdot\rvert^{-1/3},\  \theta_2=\rho\vert\cdot\rvert^{1/3} \times \rho \vert\cdot\rvert^{1/3}.\] 
    Then $\theta_1$ is unitary, while $\theta_2$ is not unitary since it is not Hermitian. Indeed, applying the unitary parabolic reduction (see \cite[\S 3]{Tad93}), one can see that if $\pi=\theta \rtimes X^{sd}(\pi)$ with $\theta$ being Hermitian, then $\pi$ is unitary if and only if both $\theta$ and $X^{sd}(\pi)$ are unitary.
\end{remark}

Now suppose that $\pi$ is weakly real and $\pi \in \Irr( X_{\rho_1}\sqcup\cdots \sqcup X_{\rho_r}; \sigma )$. The next natural question is whether the unitarity of $\pi$ can also be reduced to $X_{\rho_i}(\pi)$. This is the preservation of unitarizability problem proposed by Tadi{\'c}, which can be easily generalized to arbitrary $\pi \in \Pi_u(G_n)$ by Theorem~\ref{thm weakly real}.

\begin{conj}[{\cite[\S1]{Tad18}}]\label{conj Tadic preservation}
 Assume that $\pi\in \Pi(G_n)$ is weakly real. Then $\pi$ is unitary if and only if $X_{\rho_i}(\pi)$ are unitary for all $i=1,\dots ,r$.
\end{conj}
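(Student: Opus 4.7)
The plan is to derive this conjecture as a consequence of Conjecture \ref{conj main} (for the good parity part) together with classical facts about bad parity representations. First, I would use the parities $\epsilon_{\rho_i}\in\{0,\half{1}\}$ and the reducibility points $\alpha_{\rho_i,\sigma}$ to partition the indices $\{1,\dots,r\}$ into $I_{gp}\sqcup I_{bp}$, corresponding to the cuspidal lines of good and bad parity relative to $\sigma$. Iteratively applying the Jantzen decomposition (Theorem \ref{thm Jantzen}) to the regular partitions $X_{\rho_i}\sqcup(\mathcal{C}-X_{\rho_i})$, I obtain an irreducible decomposition
\[\pi=\theta_{bp}\rtimes \pi_{gp},\]
where $\theta_{bp}$ is an irreducible representation of a general linear group assembled from $\{X_{\rho_i}(\pi)\}_{i\in I_{bp}}$, and $\pi_{gp}$ is a good parity representation of some $G_m$ with $X_{\rho_j}(\pi_{gp})=X_{\rho_j}(\pi)$ for $j\in I_{gp}$. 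This compatibility is immediate from the uniqueness and functoriality statements in Theorem \ref{thm Jantzen}(ii),(iii).

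For the \emph{forward} direction, assume $\pi$ is unitary. Tadi\'c's unitary parabolic reduction (\cite[\S3]{Tad93}) together with the irreducibility $\pi=\theta_{bp}\rtimes\pi_{gp}$ forces both factors to be Hermitian, and a choice of Hermitian $\theta_{bp}$ must be unitary; hence $\pi_{gp}$ is also unitary. By Conjecture \ref{conj main} applied on the smaller group, $\pi_{gp}\in \Pi_{A,\, gp}(G_m)$, and its local Arthur parameter splits as $\psi_{gp}=\bigoplus_{j\in I_{gp}}\psi_j$ along the cuspidal lines. The component group decomposes as $\mathcal{S}_{\psi_{gp}}=\prod_j \mathcal{S}_{\psi_j}$, and M{\oe}glin's construction of Arthur packets respects this splitting, so the Jantzen map sends $\pi_{gp}$ to the tuple $(\pi(\psi_j,\varepsilon_j))_{j\in I_{gp}}$; in particular, each $X_{\rho_j}(\pi_{gp})=X_{\rho_j}(\pi)$ is of Arthur type, hence unitary. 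For $i\in I_{bp}$, the bad parity factor $X_{\rho_i}(\pi)$ is (by \cite{Moe06a}-type results) irreducibly induced from a generalized Speh representation, and unitarity passes directly from $\theta_{bp}$.

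For the \emph{backward} direction, assume every $X_{\rho_i}(\pi)$ is unitary. For $i\in I_{bp}$, the unitarity of $X_{\rho_i}(\pi)$ yields that the corresponding GL factor of $\theta_{bp}$ is unitary (good parity issues do not arise on $X_{\rho_i}$). For $j\in I_{gp}$, the good parity case of Conjecture \ref{conj main} gives an Arthur parameter $\psi_j$ with $X_{\rho_j}(\pi)\in\Pi_{\psi_j}$. Setting $\psi_{gp}:=\bigoplus_{j\in I_{gp}}\psi_j$, the Jantzen compatibility above shows $\pi_{gp}\in\Pi_{\psi_{gp}}$, so $\pi_{gp}\in\Pi_{A,\,gp}(G_m)\subseteq\Pi_u(G_m)$. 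Therefore $\pi=\theta_{bp}\rtimes\pi_{gp}$ is a unitary parabolic induction from a unitary representation, and is itself unitary.

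The main obstacle is of course Conjecture \ref{conj main} itself, whose verification is the principal undertaking of this paper and its companions. Beyond that, the essential technical input is the compatibility between local Arthur packets and the Jantzen decomposition along cuspidal lines; while this is implicit in M{\oe}glin's construction (the Jordan blocks of $\psi$ are naturally indexed by cuspidal lines, and the characters of $\mathcal{S}_\psi$ factor accordingly), a clean formulation and proof in the form needed here — including handling of bad parity summands via generalized Speh representations — requires careful bookkeeping that we expect to develop in parallel with the algorithms of \S\ref{sec new algorithm} and \S\ref{sec inductive approach}.
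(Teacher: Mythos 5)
Your overall plan—derive the conjecture conditionally from the Arthur-type classification plus compatibility of Arthur packets with the Jantzen decomposition—is the paper's strategy in broad strokes, but several of your intermediate steps are either incorrect as stated or silently assume the hard technical content.

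First, the partition of the index set into $I_{gp}\sqcup I_{bp}$ "corresponding to cuspidal lines of good and bad parity" does not make sense. A single cuspidal line $X_{\rho_i}$ with $\rho_i\in\mathcal{C}^{sd}$ contains \emph{both} good parity exponents ($x\in\alpha_{\rho_i,\sigma}+\Z$) and bad parity exponents ($x\in\alpha_{\rho_i,\sigma}+\tfrac{1}{2}+\Z$), and a given $X_{\rho_i}(\pi)$ may have supports of both parities. The regular partition you need to separate good from bad parity is the refinement $X_\rho=X_\rho^{gp}\sqcup X_\rho^{bp}$ of each line, not a labeling of whole lines; this is why the paper organizes the reduction via the projection $p_{gp}$ of Remark \ref{rmk projection} rather than a partition of the index set. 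Second, your claim that iterating Jantzen gives an "irreducible decomposition $\pi=\theta_{bp}\rtimes\pi_{gp}$" is unjustified for general weakly real $\pi$: Theorem \ref{thm Jantzen}(i) only gives embeddings $\pi\hookrightarrow\beta\rtimes X_i(\pi)$, and there is no a priori reason for the induction to be irreducible unless you already know $\pi$ is of Arthur type (in which case irreducibility follows from Theorem \ref{thm red from nu to gp}). You use this irreducibility to invoke unitary parabolic reduction in the forward direction, so this is a real gap.

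Third, and most centrally, both directions of your argument hinge on the assertion that the Jantzen decomposition along cuspidal lines respects membership in Arthur packets—that $\pi_{gp}\in\Pi_{\psi_{gp}}$ if and only if each $X_{\rho_j}(\pi_{gp})\in\Pi_{\psi_j}$ for the cuspidal-line pieces $\psi_j$ of $\psi_{gp}$. This is exactly Theorem \ref{thm preservation and independence}(b) of the paper, proved via a careful analysis of extended multi-segments (Equations \eqref{eq preservation A-type 1} and \eqref{eq preservation A-type 2}); it does \emph{not} follow formally from "M{\oe}glin's construction respects the splitting," and you acknowledge as much at the end. The paper's order of operations is cleaner: first prove the unconditional Arthur-type analogue of Tadi\'c's conjecture (Theorem \ref{thm preservation and independence}(b)), then propagate it to the closures $\Pi_{\overline{A}}^{(k)}$ by an induction on $k$ (Corollary \ref{cor preseveration for Pi A bar}, using Theorem \ref{thm Jantzen}(iii) to match irreducibility loci), and only at the very end invoke $\Pi_{\overline{A}}(G_n)=\Pi_u(G_n)$ from Conjecture \ref{conj A+ intro}. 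Finally, note that invoking Conjecture \ref{conj main} alone gets you at best the good parity case of the statement (this is Corollary \ref{cor implication of gp}(1)); the full statement for arbitrary weakly real $\pi$ requires passing through the closure machinery and hence Conjecture \ref{conj A+ intro}, as in Corollary \ref{cor implication of 1.1}(3). Your handling of the non-good-parity contributions via "unitarity of $\theta_{bp}$" does not substitute for this, both because of the irreducibility gap above and because the $X_{\rho_i}(\pi)$ are classical-group (not $\GL$) representations.
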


We remark that the Jantzen decomposition for an arbitrary regular partition does not necessarily preserve the unitarity, as shown in the following example.

\begin{exmp}\label{ex Jantzen decomp not preserving unitarity}
Take $\rho \in \mathcal{C}^{sd}$ and $\sigma \in \mathcal{C}_{cl}$ such that $\alpha_{\rho,\sigma}=\half{1}$. According to Tadi{\'c}'s list for corank 3 unitary dual (see \cite[Theorem 8.13]{Tad23}), the following parabolic induction is irreducible and unitary for $0 \leq x <1$.
\[\Pi_x:=L(\Delta_{\rho}[x-1/2,x-1/2], \Delta_{\rho}[x+1/2,x+1/2]) \rtimes L(\Delta_{\rho}[-1/2,-1/2];\sigma).\]
On the other hand, $\Pi_x \in \Irr(X;\sigma)$ where $X=\{\pm (x+\half{1}),\pm (x-\half{1}),\pm \half{1}\}$ $($here we simply write $z$ for $\rho\lvert\cdot\rvert^{z}$ $)$. For $0<x <\half{1}$, we have a regular partition $X=X_1 \sqcup X_2$, where
$X_1=\{\pm 1/2\}$ and $ X_2= \{\pm (x+1/2),\pm (x-1/2)\}.$
In this case, it follows from the definition that
\begin{align*}
X_1(\Pi_x)&= L(\Delta_{\rho}[-1/2,-1/2];\sigma),\\
    X_{2}(\Pi_x)&= L(\Delta_{\rho}[x-1/2,x-1/2], \Delta_{\rho}[x+1/2,x+1/2]) \rtimes \sigma.     
\end{align*}
    Then $X_1(\Pi_x)$ is unitary but $X_2(\Pi_x)$ is not according to Tadi{\'c}'s list for corank 2 unitary dual (see \cite[Proposition 7.2]{Tad23}).
\end{exmp}

Next, suppose that $\pi_1 \in \Irr(X_{\rho_1},\sigma_1)$. Take any $\rho_2 \in \mathcal{C}^{sd}$ and $\sigma_2 \in \mathcal{C}_{cl}$ such that $\alpha_{\rho_1,\sigma_1}=\alpha_{\rho_2,\sigma_2}$. In \cite[\S 12]{Tad18}, Tadi{\'c} constructed a corresponding representation $E(\pi_1) \in \Irr(X_{\rho_2},\sigma_2)$, which we briefly recall below. Note that if $\alpha_{\rho_1,\sigma_1}=0$, we fix one of the choices that Tadi{\'c} constructed by Arthur's parametrization of tempered spectrum with respect to a fixed Whittaker datum. 

For $k\in\{1,2\}$, write $\sigma_k=\pi(\phi_{k,sc},\varepsilon_{k,sc})$. Let $\epsilon \in \{0,\half{1}\}$ such that $\epsilon+\alpha_{\rho_{k},\sigma_k} \in \Z$. By the classification of supercuspidal representations, we have $\phi_{k,sc}=(\phi_{k,sc})_{\rho_k} + (\phi_{k,sc})^{\rho_k}$ where
\[ (\phi_{k,sc})_{\rho_k}= \bigoplus_{i=0}^{r_k} \rho_k \otimes S_{2 (\epsilon+i)+1 },\  (\phi_{k,sc})^{\rho_k}= \bigoplus_{\rho' \not \cong \rho_k} \bigoplus_{i \in I_{\rho',k}} \rho' \otimes S_{a_i},  \]
with $r_k \in \Z_{\geq -1}$. Note that $\alpha_{\rho_k,\sigma_k}= \epsilon+r_k+1$; hence, $r_1=r_2$.

Suppose that $\pi_1$ is tempered and write $\pi_1=\pi(\phi_1,\varepsilon_1)$. The assumption that $\pi_1 \in \Irr(X_{\rho_1};\sigma)$ implies that we have
\[ \phi_1= (\phi_{1,sc})^{\rho_1}+ \bigoplus_{i \in I_{\rho_1}} \rho_1\otimes S_{a_i}. \]
Then define $E(\pi_1)= \pi(\phi_2,\varepsilon_2)$, where 
$\phi_2=(\phi_{2,sc})^{\rho_2}+ \bigoplus_{i \in I_{\rho_1}} \rho_2\otimes S_{a_i}$ 
and 
\begin{align*}
   \varepsilon_2( \rho\otimes S_{a_i} ):= \begin{cases}
       \varepsilon_{2,sc}( \rho\otimes S_{a_i} ) & \text{ if }\rho\not\cong \rho_2,\\
       \varepsilon_{1}(\rho_1\otimes S_{a_i}) \varepsilon_{1,sc}(\rho_1\otimes S_{2\alpha_{\rho_1,\sigma_1}-1})\varepsilon_{2,sc}(\rho_2\otimes S_{2\alpha_{\rho_1,\sigma_1}-1}) &\text{ if }\rho \cong \rho_2.
   \end{cases} 
\end{align*}
Here if $\alpha_{\rho_k,\sigma_k}=0$, then set $\varepsilon_{1,sc}(\rho_1\otimes S_{-1})\varepsilon_{2,sc}(\rho_2\otimes S_{-1})=1$. For non-tempered $\pi_1$, write
\[ \pi_1= L( \Delta_{\rho_1}[x_1,y_1],\ldots, \Delta_{\rho_1}[x_f,y_f]; \pi_{1,temp} ).\]
Then define
\[ E(\pi_1):=L( \Delta_{\rho_2}[x_1,y_1],\ldots, \Delta_{\rho_2}[x_f,y_f]; E(\pi_{1,temp}) ). \]

Now we state the independence of unitarizability question proposed by Tadi{\'c}.

\begin{conj}[{\cite[\S1]{Tad18}}]\label{conj independence}
In the setting above, $\pi_1$ is unitary if and only if $E(\pi_1)$ is unitary.
\end{conj}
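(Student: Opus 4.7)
The plan is to reduce Conjecture~\ref{conj independence} to its Arthur-type analogue, which should hold unconditionally, and then apply Conjecture~\ref{conj main} to translate ``of Arthur type'' into ``unitary'' on the good-parity locus. The key observation is that the map $E$ is a pure ``transport of structure'' from the cuspidal line $X_{\rho_1}$ to $X_{\rho_2}$: on the tempered level it substitutes $\rho_2$ for $\rho_1$ in the portion of $\phi_1$ supported on $X_{\rho_1}$ while leaving the companion parameter $(\phi_{1,sc})^{\rho_1}$ intact, with the sign character corrected by the factor $\varepsilon_{1,sc}(\rho_1\otimes S_{2\alpha-1})\varepsilon_{2,sc}(\rho_2\otimes S_{2\alpha-1})$; on the Langlands-data level it replaces each $\Delta_{\rho_1}[x_i,y_i]$ by $\Delta_{\rho_2}[x_i,y_i]$. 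Since a local Arthur parameter $\psi=\bigoplus\rho\otimes S_a\otimes S_b$ has exactly the same combinatorial shape as an $L$-parameter, this recipe extends to a transport $E^A$ between Arthur parameters supported on $X_{\rho_1}$ (with companion $(\phi_{1,sc})^{\rho_1}$) and those supported on $X_{\rho_2}$ (with companion $(\phi_{2,sc})^{\rho_2}$).

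The first and central step is to show that
\[ \pi_1\in\Pi_\psi \Longleftrightarrow E(\pi_1)\in\Pi_{E^A(\psi)}, \]
so that $E$ restricts to a bijection $\Pi_A(G_{n_1})\cap\Irr(X_{\rho_1};\sigma_1)\to\Pi_A(G_{n_2})\cap\Irr(X_{\rho_2};\sigma_2)$. This is the Arthur-type analogue of the conjecture, is the content of Theorem~\ref{thm preservation and independence}, and should follow from M\oe glin's parametrization of $\Pi_\psi$ by extended multi-segments supported on a single cuspidal line together with the comparison between the component groups $\mathcal{S}_\psi$ and $\mathcal{S}_\phi$ for $\pi\in\Pi_\psi$. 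Granting this, the hypothesis $\alpha_{\rho_1,\sigma_1}=\alpha_{\rho_2,\sigma_2}$ forces $E$ to preserve the good-parity condition, and we obtain, for $\pi_1\in\Irr(X_{\rho_1};\sigma_1)\cap\Pi_{gp}(G_{n_1})$,
\[ \pi_1\in\Pi_{u,\,gp} \Longleftrightarrow \pi_1\in\Pi_{A,\,gp} \Longleftrightarrow E(\pi_1)\in\Pi_{A,\,gp} \Longleftrightarrow E(\pi_1)\in\Pi_{u,\,gp}, \]
using Conjecture~\ref{conj main} at the first and last equivalences.

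For $\pi_1\in\Irr(X_{\rho_1};\sigma_1)$ of bad parity, the reduction to the good-parity case goes through the Jantzen decomposition. The partition $X_{\rho_1}=X_{\rho_1}^{gp}\sqcup X_{\rho_1}^{bp}$ is regular since the two halves differ by a shift of $\half{1}$ and hence contain no adjacent pairs, so Theorem~\ref{thm Jantzen} yields $\pi_1=\Psi_{X_{\rho_1}^{gp},X_{\rho_1}^{bp}}(\pi_1^{gp},\pi_1^{bp})$. Because $X_{\rho_1}^{bp}$ carries no reducibility against $\sigma_1$, the representation $\pi_1^{bp}$ is irreducibly induced as $\beta\rtimes\sigma_1$ with $\beta\in\Pi(\GL)$ supported on $X_{\rho_1}^{bp}$; by Tadi\'c's classification of the $\GL$-unitary dual, $\pi_1^{bp}$ is unitary iff $\beta$ is unitary, a condition depending only on the exponents in $X_{\rho_1}^{bp}$ and manifestly preserved by $\beta\mapsto E(\beta)$. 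Since $E$ acts separately on the two factors of the Jantzen decomposition, combining this with the good-parity case applied to $\pi_1^{gp}$ completes the argument.

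The hard part will be the first step, the compatibility of $E$ with Arthur packet membership. Intuitively this is an obvious ``transport of structure,'' but making it rigorous requires verifying that M\oe glin's combinatorial construction of $\Pi_\psi$ and the endoscopic character identities defining it are both invariant under $\rho_1\leftrightarrow\rho_2$. The delicate point is the correction factor in the definition of $\varepsilon_2$: one must show that this factor is precisely what is needed to match the signs on the Jordan blocks of $(\phi_{k,sc})^{\rho_k}$, so that the identification $\widehat{\mathcal{S}}_\psi\cong\widehat{\mathcal{S}}_{E^A(\psi)}$ coming from the natural bijection between the Jordan block multi-sets intertwines Arthur's parametrizations on the two sides.
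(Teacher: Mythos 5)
The statement you were asked to prove is a conjecture of Tadić, and the paper does not prove it unconditionally. What the paper does is: prove the \emph{Arthur-type analogue} for $\Pi_A(G_n)$ unconditionally (Theorem~\ref{thm preservation and independence}(c)); extend it to the closure $\Pi_{\overline{A}}(G_n)$ (Corollary~\ref{cor preseveration for Pi A bar}); derive the good-parity case of the conjecture from Conjecture~\ref{conj main} (Corollary~\ref{cor implication of gp}(1)); and derive the full conjecture from the stronger Conjecture~\ref{conj A+ intro} (Corollary~\ref{cor implication of 1.1}(3)). Your first two steps line up with this: the transport-of-structure argument via extended multi-segments, with the sign-correction factor chosen to intertwine the two component-group parametrizations, is exactly Theorem~\ref{thm preservation and independence}(c) (proved by the same device as Part~(b)), and combining it with Conjecture~\ref{conj main} gives Corollary~\ref{cor implication of gp}(1).

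The gap is your third step, the reduction from arbitrary parity to good parity via the Jantzen decomposition $X_{\rho_1}=X_{\rho_1}^{gp}\sqcup X_{\rho_1}^{bp}$. The crucial sub-claim — that $\pi_1^{bp}=\beta\rtimes\sigma_1$ is unitary if and only if $\beta$ is unitary — is false: $\rho_1\lvert\cdot\rvert^{x}\rtimes\sigma_1$ is a unitary complementary series for all $0\le x<\alpha_{\rho_1,\sigma_1}$, including bad-parity $x$, even though $\rho_1\lvert\cdot\rvert^{x}$ is not unitary for $x\ne 0$ (see Example~\ref{ex A+}(2)). More fundamentally, the Jantzen decomposition along a regular partition of a single cuspidal line does \emph{not} in general preserve unitarity, precisely because complementary series couple the two halves — this is the whole point of Example~\ref{ex Jantzen decomp not preserving unitarity}. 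This is why the paper cannot deduce the full conjecture from Conjecture~\ref{conj main} alone: one must go through the closure $\Pi_{\overline{A}}(G_n)$, showing by induction on the closure index that $E$ preserves irreducibility of the inductions $\bigtimes_{i}u_{\rho}(a_i,b_i)\lvert\cdot\rvert^{x_i}\rtimes\pi_A$ (Corollary~\ref{cor preseveration for Pi A bar}), and then invoke the stronger hypothesis $\Pi_{\overline{A}}(G_n)=\Pi_u(G_n)$, i.e.~Conjecture~\ref{conj A+ intro} rather than Conjecture~\ref{conj main}.
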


A natural question is whether Conjectures \ref{conj Tadic preservation} and \ref{conj independence} hold if we replace the unitarity condition by Arthur type, which may serve as evidence or an initial step towards proving these conjectures. We confirm this question positively in \S \ref{sec proof of preservation} and refer to Theorem \ref{thm preservation and independence} for precise statements.

\section{Local Arthur Packets and Extended Multi-Segments}\label{sec A-packet}

To parameterize local components of discrete automorphic representations, Arthur (\cite{Art13}) established the theory of local Arthur packets using character identities. 
In a series of papers (\cite{Moe06a, Moe06b, Moe09a, Moe10, Moe11a}), M{\oe}glin explicitly constructed each local Arthur packet $\Pi_{\psi}$ and showed that it is multiplicity free. In recent work (\cite{Ato20b}), 
Atobe gave a reformulation of M{\oe}glin's construction. 
In this section, we recall the parameterization of local Arthur packets using extended multi-segments (following \cite{Ato20b}) and the theory of their intersections (following \cite{HLL22}).

\subsection{Arthur representations and reduction to good parity}\label{local A packets}

Recall that $F$ is a non-Archimedean local field of characteristic zero, $\RG_n=\Sp_{2n}$ or split $\SO_{2n+1}$, and $G_n = \RG_n(F)$. Let $W_F$ be the Weil group and ${\RG}^\vee_n(\mathbb{C})$ be the Langlands dual group.

\begin{defn}\label{A parameters}

A \emph{complementary local  Arthur parameter} is a homomorphism
$$\psi: W_F \times \SL_2(\mathbb{C}) \times \SL_2(\mathbb{C}) \longrightarrow {{\RG}^\vee_n(\mathbb{C})},$$
\begin{equation}\label{lap}
  \psi = \bigoplus_{i=1}^r \phi_i\vert\cdot\rvert^{x_i} \otimes S_{a_i} \otimes S_{b_i},  
\end{equation}
satisfying the following conditions: 
\begin{enumerate}
    \item [(1)]$\phi_i(W_F)$ is bounded and consists of semi-simple elements, and $\dim(\phi_i)=d_i$;
    \item [(2)] $x_i \in \R$ and $|x_i|<\half{1}$;
    \item [(3)]the restrictions of $\psi$ to the two copies of $\SL_2(\mathbb{C})$ are algebraic, $S_k$ is the $k$-dimensional irreducible representation of $\SL_2(\mathbb{C})$, and 
    $$\sum_{i=1}^r d_ia_ib_i = N:= 
\begin{cases}
2n+1 & \text{ when } G_n=Sp_{2n},\\
2n & \text{ when } G_n=SO_{2n+1}.
\end{cases}
$$ 
\end{enumerate}
The first copy of $\SL_2(\mathbb{C})$ is called the Deligne-$\SL_2(\mathbb{C})$ and is denoted by $\SL_2^D(\mathbb{C})$. The second copy of $\SL_2(\mathbb{C})$ is called the Arthur-$\SL_2(\mathbb{C})$ and is denoted by $\SL_2^A(\mathbb{C})$. 
A complementary local  Arthur parameter $\psi$ given in \eqref{lap} is called a \emph{\textbf{local Arthur parameter}} if $x_i=0$ for each $i$, namely, $\psi|_{W_F}$ is bounded; it is called {\it generic} if $b_i=1$ for $i=1, \ldots, r$; and is called {\it tempered} if it is a local Arthur parameter and generic. 

Given a complementary local  Arthur parameter as in \eqref{lap}, Arthur defined a packet $\Pi_{\psi}$ in \cite[Theorem 2.2.1, \S 1.5 and \S 8.3]{Art13}, called a \emph{complementary local  Arthur packet}, which is a finite subset of $\Pi(G_n)$, satisfying certain twisted endoscopic character identities. 
We let $\Psi_{1/2}^+(G_n)$ be the set of all complementary local Arthur parameters. Let 
$$\Pi_{A+}(G_n)=\{\pi \in \Pi_{\psi} \ | \ \psi \in \Psi^+_{1/2}(G_n)\}.$$
Representations in $\Pi_{A+}(G_n)$ are called \emph{{complementary Arthur representations}}. For $\pi\in\Pi_{A+}(G_n),$ we let
\[ \Psi(\pi):= \{ \psi \in \Psi^+_{1/2}(G_n) \ | \ \pi \in \Pi_{\psi}\}.\]

A complementary local  Arthur packet $\Pi_{\psi}$ is called a \emph{\textbf{local Arthur packet}} (resp. generic packet, tempered packet) if the corresponding complementary local  Arthur parameter $\psi$ is so. Let $\Psi(G_n)$ be the subset of local Arthur parameters.
We say that a representation $\pi$ is of \emph{\textbf{Arthur type}} if $\pi\in\Pi_\psi$ for some local Arthur parameter $\psi \in \Psi(G_n)$.
Let 
$$\Pi_{A}(G_n)=\{\pi \in \Pi_{\psi} \ | \ \psi \in \Psi(G_n)\}.$$
Representations in $\Pi_{A}(G_n)$ are called \emph{\textbf{Arthur representations}}.
For $\pi\in\Pi_{A}(G_n),$ we let
\[ \Psi(\pi):= \{ \psi \in \Psi(G_n) \ | \ \pi \in \Pi_{\psi}\}.\]
\end{defn}
        
For each local parameter $\psi \in \Psi_{1/2}^+(G_n)$, Arthur associated a local $L$-parameter $\phi_{\psi}$ as follows
\begin{equation*}
\phi_{\psi}(w, x) = \psi\left(w, x, \begin{pmatrix}
        |w|^{\frac{1}{2}} & 0 \\
        0 & |w|^{-\frac{1}{2}}\\
\end{pmatrix}\right).
\end{equation*}
Arthur also showed that the map $\psi \mapsto \phi_{\psi}$ is injective and the local $L$-packet $\Pi_{\phi_{\psi}}$ is a subset of the local packet $\Pi_{\psi}$ (\cite[Proposition 7.4.1]{Art13}). The corresponding local $L$-packet $\Pi_{\phi_{\psi}}$ is called a local $L$-packet of Arthur type. A local $L$-parameter $\phi$ is said to be of Arthur type if $\phi=\phi_\psi$ for some local parameter $\psi \in \Psi_{1/2}^+(G_n).$ The {\it diagonal restriction} of $\psi$ is another local $L$-parameter associated to $\psi$ which is defined as 
\begin{align}\label{def diag rest}
\begin{split}
    \psi^{\Delta}:  W_F \times \SL_2(\BC)  \longrightarrow {\RG}^\vee_n(\mathbb{C}),\quad {\rm with}\ 
    (w,x) \mapsto \psi(w,x,x).
\end{split}
\end{align}
For each supercuspidal representation $\pi_{sc}$, the set $\Psi(\pi_{sc})$ is completely determined by the local $L$-parameter of $\pi$ via the diagonal restriction, as follows.

\begin{thm}[{\cite[Theorem 7.5]{Moe09a}}]\label{thm GGP20}
Let $\pi_{sc}=\pi(\phi_{sc},\varepsilon_{sc})$ be a supercuspidal representation of $G_n$. Then $\psi \in \Psi(\pi_{sc})$ if and only if 
$\psi^{\Delta}= \phi_{sc} \otimes S_{1}.$
\end{thm}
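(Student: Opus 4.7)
My plan is to reduce the theorem to M{\oe}glin's explicit description of each local Arthur packet $\Pi_{\psi}$ via extended multi-segments, which will be reviewed in the rest of the section. Writing $\psi=\bigoplus_{i=1}^{r}\phi_{i}|\cdot|^{x_{i}}\otimes S_{a_{i}}\otimes S_{b_{i}}$, the Clebsch--Gordan formula gives
\[
\psi^{\Delta}=\bigoplus_{i=1}^{r}\phi_{i}|\cdot|^{x_{i}}\otimes \bigoplus_{k=0}^{\min(a_{i},b_{i})-1} S_{a_{i}+b_{i}-1-2k}.
\]
Since $\phi_{sc}\otimes S_{1}=\phi_{sc}$ is tempered and discrete without gaps (Theorem \ref{thm characterizatioin of supercuspidal}), the equality $\psi^{\Delta}=\phi_{sc}\otimes S_{1}$ will force $x_{i}=0$ for all $i$ and identify the Jordan blocks of the two sides piece by piece.

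For the forward direction, assume $\pi_{sc}\in\Pi_{\psi}$. Under M{\oe}glin's parametrization, $\pi_{sc}$ corresponds to some extended multi-segment $\mathcal{E}$ lying over $\psi$, and the Langlands data of $\pi(\mathcal{E})$ can be read off from $\mathcal{E}$ together with its derivatives. Supercuspidality forces all proper Jacquet modules of $\pi(\mathcal{E})$ to vanish, which by the Jacquet module formulas constrains each multi-segment appearing in $\mathcal{E}$ to be of minimal ``single point'' form with signs governed by the alternation condition on consecutive Jordan blocks coming from Theorem \ref{thm characterizatioin of supercuspidal}. Reading off the Jordan blocks that actually survive this degeneration, one recovers exactly the Clebsch--Gordan decomposition displayed above and matches it to $\phi_{sc}$, giving $\psi^{\Delta}=\phi_{sc}\otimes S_{1}$.

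For the converse direction, assume $\psi^{\Delta}=\phi_{sc}\otimes S_{1}$. By Theorem \ref{thm characterizatioin of supercuspidal}, the Jordan blocks of $\phi_{sc}$ partition uniquely, for each $\rho\in\mathcal{C}^{sd}$, into a single run of consecutive odd or even dimensions, and each summand $\phi_{i}\otimes S_{a_{i}}\otimes S_{b_{i}}$ of $\psi$ contributes via Clebsch--Gordan a run of length $\min(a_{i},b_{i})$ to $\phi_{sc}$. Using this correspondence, one directly writes down an extended multi-segment $\mathcal{E}_{sc}$ over $\psi$ whose image under M{\oe}glin's algorithm is supercuspidal, and then verifies that the associated character on the component group $\mathcal{S}_{\psi}$ restricts to $\varepsilon_{sc}$ on $\mathcal{S}_{\phi_{sc}}$. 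The main obstacle is precisely this sign verification: the alternation condition on $\varepsilon_{sc}$ across consecutive blocks and the constraint $\varepsilon_{sc}(\rho\otimes S_{2})=-1$ from Theorem \ref{thm characterizatioin of supercuspidal} must be matched against M{\oe}glin's sign conventions attached to each box of $\mathcal{E}_{sc}$, and this requires careful bookkeeping across all runs of Jordan blocks and all supercuspidals $\rho$ appearing in $\phi_{sc}$.
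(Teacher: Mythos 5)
The paper does not prove this theorem at all; it is cited verbatim as \cite[Theorem 7.5]{Moe09a} and used as a black box, so there is no paper proof to compare your sketch against.

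That said, your forward direction contains a concrete error. You claim that supercuspidality ``constrains each multi-segment appearing in $\mathcal{E}$ to be of minimal `single point' form.'' This is false. Example~\ref{ex sc} of this very paper exhibits extended multi-segments $\EE$ with $\pi(\EE)=\pi_{sc}$ supercuspidal for which $\EE_\rho$ contains extended segments such as $([\alpha-1,\epsilon_\rho]_\rho,0,\eta)$, and indeed segments of the form $([x,-x]_\rho,l,\eta)$, none of which are single points. What is true is that the \emph{tempered} member $\psi=\phi_{sc}\otimes S_1$ of $\Psi(\pi_{sc})$ corresponds to a single-point extended multi-segment, but the whole content of the theorem is that $\Psi(\pi_{sc})$ is much larger than $\{\phi_{sc}\otimes S_1\}$, and the non-tempered members are parametrized precisely by non-trivial groupings of the Jordan blocks of $\phi_{sc}$ into segments. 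A vanishing-of-Jacquet-modules argument cannot conclude that the segments collapse to points; rather it should conclude that the \emph{multiset of supercuspidal supports} $\Omega(\EE)$ (Definition~\ref{def omega E}), equivalently the diagonal restriction $\psi_\EE^\Delta$, is forced to agree with $\phi_{sc}$. You also conflate $\psi^\Delta$ with the $L$-parameter $\phi_\psi$ of the representation: these are different twistings of the Arthur $\SL_2$ (one collapses both $\SL_2$'s, the other replaces the Arthur $\SL_2$ by the unramified Weil group), and only coincide when $\psi$ is tempered. The converse-direction outline (build $\mathcal{E}_{sc}$ and verify signs) is the right shape, but the sign bookkeeping you flag as the ``main obstacle'' is precisely the substance of Mœglin's proof and is not carried out here, so as written this is a plan rather than a proof.
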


Now we recall the decomposition of local parameters and the reduction of the construction of local Arthur packets to the good parity case. By the Local Langlands Correspondence for $\GL_{d_i}(F)$, an irreducible   bounded representation $\phi$ of $W_F$ can be identified with an irreducible unitary supercuspidal representation $\rho$ of $\GL_{d_i}(F)$ (\cite{Hen00, HT01, Sch13}). Consequently, we may write \eqref{lap} as
\begin{equation}\label{A-param decomp}
  \psi = \bigoplus_{i \in I } \rho_i\vert\cdot\rvert^{x_i} \otimes S_{a_i} \otimes S_{b_i},  
\end{equation}
where $\rho_i$'s are irreducible unitary supercuspidal representations of $\GL_{d_i}(F)$. With this decomposition, we say that $\psi$ is of \emph{good parity} if the following holds: $\rho_i$ is selfdual, $x_i=0$, and $\half{a_i+b_i}\in \alpha_{\rho_i,\sigma} +\Z$ for any $i \in I$ and any irreducible supercuspidal representation $\sigma$ of $G_n$.
Equivalently, $\psi$ is of good parity if and only if any representation in the local $L$-packet $\Pi_{\phi_{\psi}}$ is of good parity (Definition \ref{def critical type}(2)). We remark that by Theorem \ref{thm red from nu to gp} below and the construction of good parity local Arthur packets (see the next subsection), $\psi$ is of good parity if and only if any representation in the local packet $\Pi_{\psi}$ is of good parity.

Consider a complementary local Arthur parameter $\psi \in \Psi_{1/2}^+(G_n).$ Since $\psi$ factors through $\RG_n^\vee(\BC)$, we may rewrite the decomposition \eqref{A-param decomp} as 
\begin{align*}
    \psi= & \bigoplus_{i \in I_{>0}} (\rho_i \vert\cdot\rvert^{x_i} \otimes S_{a_i}\otimes S_{b_i} + \rho_i^{\vee} \vert\cdot\rvert^{-x_i} \otimes S_{a_i}\otimes S_{b_i})\\
    &\qquad\qquad\qquad\oplus\bigoplus_{i \in I_{ngp}} (\rho_i \otimes S_{a_i} \otimes S_{b_i} +\rho_i^{\vee} \otimes S_{a_i} \otimes S_{b_i}) \oplus\bigoplus_{i \in I_{gp}} \rho_i \otimes S_{a_i} \otimes S_{b_i},
\end{align*}
where 
\begin{enumerate}
    \item [$\oldbullet$] for any $i \in I_{>0}$, $0<x_i<\half{1}$; 
    \item [$\oldbullet$] for any $i \in I_{ngp}$, either $\rho_i\not\cong \rho_i^{\vee}$, or, $\rho_i\cong \rho_i^{\vee}$ and $\half{a_i+b_i}\not\in \alpha_{\rho_i,\sigma} +\Z$;
    \item [$\oldbullet$] for any $i \in I_{gp}$, $\rho_i \cong \rho_i^\vee$ and $\half{a_i+b_i}\in \alpha_{\rho_i,\sigma} +\Z$.
\end{enumerate}
Set $x_i:=0$ for $i \in I_{ngp}\sqcup I_{gp}$. For $\ast \in  \{ >0, \ ngp,\ gp  \}$, we define subrepresentations $\psi_{\ast}$ of $\psi$ by 
\[ \psi_{\ast}:= \bigoplus_{i \in I_{\ast}} \rho_i\vert\cdot\rvert^{x_i} \otimes S_{a_i} \otimes S_{b_i}.\]
Thus $\psi_{gp}$ is of good parity and 
\begin{align}\label{eq decomp red to gp}
    \psi=( \psi_{>0}+ \psi_{>0}^{\vee}) +(\psi_{ngp} + \psi_{ngp}^{\vee})+ \psi_{gp}.
\end{align}
Also set $\psi_{u}:= (\psi_{ngp}+\psi_{ngp}^{\vee}) + \psi_{gp}$. Then $\psi_{u}\in \Psi(G_m)$ for some $m \leq n$.

For a unitary supercuspidal representation $\rho$ of $\GL_d(F)$ and $a, b \in \Z_{>0}$, we let 
\begin{equation}\label{u rho a b}
u_{\rho}(a,b):= \begin{pmatrix}
\frac{a-b}{2} & \cdots & \frac{a+b}{2}-1 \\
\vdots & \ddots & \vdots \\
\frac{-a-b}{2}+1 & \cdots & \frac{b-a}{2}
\end{pmatrix}_{\rho}, 
\end{equation}
be the corresponding unitary generalized Speh representation (see \eqref{generalized Speh representation}). This is the unique member of the local Arthur packet $\Pi_{\rho \otimes S_{a} \otimes S_b} (\GL_{abd}(F))$.

For each $i \in I_{>0} \sqcup I_{ngp}$, define $\tau_i$ to be the generalized Speh representation
$u_{\rho_i}(a_i,b_i)\lvert \cdot \rvert^{x_i}$. 
Then we set 
\[ \tau_{\psi_{>0}}:=\bigtimes_{i \in I_{>0}} \tau_i\quad {\rm and}\quad \tau_{\psi_{ngp}}:= \bigtimes_{i \in I_{ngp}} \tau_i,\]
which are irreducible. We take this chance to recall a smaller subset $ \Psi_{\text{unit}}^{+}(G_n) \subset \Psi^+_{1/2}(G_n)$ defined by
\begin{align}\label{eq def Psi^+_unit}
    \Psi_{\text{unit}}^{+}(G_n):= \{ \psi\in \Psi^+_{1/2}(G_n)  \ | \ \bigtimes_{i \in I_{>0}}  \tau_i \times \tau_i^{\vee} \text{ is unitary} \}.
\end{align}
Based on the classification of unitary dual of general linear groups in \cite{Tad86}, the set $\Psi_{\text{unit}}^{+}(G_n)$ is equal to the collection of $\psi \in \Psi^+_{1/2}(G_n)$ such that the index set $I_{>0}$ can be equipped with a duality map
\begin{align*}
    (\cdot)^{\vee}: I_{>0} \longrightarrow I_{>0},\quad \   i  \mapsto i^{\vee},
\end{align*}
such that $\rho_{i^{\vee}} \cong (\rho_i)^{\vee}$.
The disjoint union $\Psi(G_n) \cup \Psi_{\text{unit}}^{+}(G_n)$ is equal to the set $\widetilde{\Psi}^+_{\text{unit}}(G_n)$ defined in \cite[\S 8.3]{Art13},

For $\psi \in \Psi^{+}_{1/2}(G_n)$ (i.e., $I_{>0}$ is non-empty), 
the local Arthur packet $\Pi_{\psi}$ is defined to be the set of representations (\cite[(1.5.1)]{Art13})
\begin{align}\label{eq def packet A+}
    \Pi_{\psi}:=\{ \tau_{\psi_{>0}} \rtimes \pi_{u}\ | \ \pi_u \in \Pi_{\psi_u}   \}.
\end{align}
 Indeed, Arthur only defined $\Pi_{\psi}$ for $\psi \in \Psi^{+}_{\text{unit}}(G_n)$, but the definition can be extended naturally for $\psi \in \Psi^{+}_{1/2}(G_n)$. In \cite{Moe11b}, M{\oe}glin showed that for $\psi \in \Psi^+_{1/2}(G_n)$, the parabolic inductions on the right hand side of \eqref{eq def packet A+} are all irreducible. Moreover, $\pi_u$ can be further written as a parabolic induction according to the decomposition of $\psi_{u}$ as follows.

\begin{thm}[{\cite[Proposition 5.1]{Moe11b}}]\label{thm red from nu to gp}
Let $\psi\in\Psi_{1/2}^+(G_n)$ with decomposition \eqref{eq decomp red to gp}. Then, for any $\pi_{gp}\in\Pi_{\psi_{gp}},$ the parabolic induction $\tau_{\psi_{>0}}\times\tau_{\psi_{ngp}}\rtimes\pi_{gp}$ is irreducible. As a consequence, we obtain
\begin{equation}\label{non-unitary A-packet}
    \Pi_\psi=\{\tau_{\psi_{>0}}\times\tau_{\psi_{ngp}}\rtimes\pi_{gp}  | ~ \pi_{gp}\in\Pi_{\psi_{gp}}\}.
\end{equation}
\end{thm}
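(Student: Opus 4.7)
The plan is to establish the irreducibility assertion; the description \eqref{non-unitary A-packet} of $\Pi_{\psi}$ then follows by unfolding definitions. Indeed, writing $\psi_u = (\psi_{ngp}+\psi_{ngp}^\vee)+\psi_{gp}$, the definition \eqref{eq def packet A+} of $\Pi_\psi$ gives
\[
\Pi_\psi = \{\tau_{\psi_{>0}} \rtimes \pi_u \mid \pi_u \in \Pi_{\psi_u}\},
\]
so once we show that $\Pi_{\psi_u} = \{\tau_{\psi_{ngp}} \rtimes \pi_{gp} \mid \pi_{gp} \in \Pi_{\psi_{gp}}\}$ and that the outer induction $\tau_{\psi_{>0}} \rtimes (\tau_{\psi_{ngp}} \rtimes \pi_{gp})$ is irreducible, the equality \eqref{non-unitary A-packet} drops out.

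I would carry out the irreducibility in two steps, corresponding to the two ``non-good-parity'' pieces $\psi_{ngp}$ and $\psi_{>0}$.

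\textbf{Step 1: Irreducibility of $\tau_{\psi_{ngp}} \rtimes \pi_{gp}$.} The supercuspidal support of $\pi_{gp}$ lies in $\{\rho\lvert\cdot\rvert^{y} : \rho \in \mathcal{C}^{sd},\ y \in \alpha_{\rho,\sigma_{sc}} + \mathbb{Z}\} \cup \{\sigma_{sc}\}$, where $\sigma_{sc}$ is the supercuspidal support of $\pi_{gp}$ on the classical-group side. By definition of $I_{ngp}$, the support of $\tau_{\psi_{ngp}}$ lies on cuspidal lines $X_{\rho_i}$ with $i \in I_{ngp}$ where either $\rho_i \not\cong \rho_i^\vee$ (so $X_{\rho_i}$ is disjoint from the good-parity lines entirely) or $\rho_i$ is self-dual but $\tfrac{1}{2}(a_i+b_i) \notin \alpha_{\rho_i,\sigma_{sc}} + \mathbb{Z}$ (so the support sits on the opposite $\mathbb{Z}$-translate of $X_{\rho_i}$). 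In both cases, one obtains a regular partition $\mathcal{C} = Y_1 \sqcup Y_2$ such that the support of $\tau_{\psi_{ngp}}$ is contained in $Y_1$ and that of $\pi_{gp}$ is contained in $Y_2 \cup \{\sigma_{sc}\}$. Applying the multiplicativity criterion of the Jantzen decomposition (Theorem~\ref{thm Jantzen}(iii)) with $\beta_1 = \tau_{\psi_{ngp}}$, $\beta_2$ trivial, $\gamma_1 = \sigma_{sc}$, $\gamma_2 = \pi_{gp}$, we reduce the irreducibility to the irreducibility of $\tau_{\psi_{ngp}} \rtimes \sigma_{sc}$ (as a representation on the $Y_1$-line). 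But this latter representation factors through $Y_1$-lines where the reducibility points are known, and the fact that $\tau_{\psi_{ngp}}$ is itself a product of unitary generalized Speh representations (mutually on distinct cuspidal lines or on cuspidal lines without common reducibility) forces irreducibility.

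\textbf{Step 2: Irreducibility of $\tau_{\psi_{>0}} \rtimes \pi_u$.} Write $\pi_u = \tau_{\psi_{ngp}} \rtimes \pi_{gp}$, a unitary representation by Step 1 and the unitarity within good-parity packets. The representation $\tau_{\psi_{>0}} = \bigtimes_{i \in I_{>0}} u_{\rho_i}(a_i,b_i)\lvert\cdot\rvert^{x_i}$ is a product of generalized Speh representations twisted by $x_i$ with $0 < x_i < \tfrac{1}{2}$, so along each cuspidal line $X_{\rho_i}$ its exponents lie strictly off the half-integer lattice inhabited by the supercuspidal support of $\pi_u$. This is precisely the complementary-series range for unitary parabolic induction \`a la Tadi\'c, and on each line one can check the Muić–Tadić reducibility criterion for induction from $\GL$ to a classical group: the conditions for reducibility of $u_{\rho_i}(a_i,b_i)\lvert\cdot\rvert^{x_i} \rtimes \sigma$ force $x_i \in \tfrac{1}{2}\mathbb{Z}$, contradicting $0 < x_i < \tfrac{1}{2}$. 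Combined with Zelevinsky's irreducibility criterion for products of (twisted) Speh representations in the $\GL$-factor (which relies on $\lvert x_i - x_j\rvert < 1$ and the parameters factoring through $\RG_n^\vee(\mathbb{C})$), this yields the irreducibility.

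The main obstacle, I expect, is the bookkeeping in Step 2 when a cuspidal line $X_{\rho_i}$ for some $i \in I_{>0}$ coincides with a cuspidal line appearing in $\pi_{u}$: the shift $x_i$ alone separates supercuspidal supports within the line, but verifying the absence of reducibility in the induction requires applying the precise classification of reducibility points for $u_\rho(a,b)\lvert\cdot\rvert^x \rtimes \sigma$ due to Muić and Tadi\'c. Once this is in hand, the assertion \eqref{non-unitary A-packet} follows by iterating over $\pi_{gp} \in \Pi_{\psi_{gp}}$ and tracking the bijection $\pi_u \leftrightarrow \pi_{gp}$ induced by the decomposition.
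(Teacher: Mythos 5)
The paper does not prove this statement: Theorem~\ref{thm red from nu to gp} is cited verbatim from M{\oe}glin (\cite[Proposition 5.1]{Moe11b}), so there is no in-paper argument against which to compare your proposal. That said, your outline does roughly track M{\oe}glin's strategy of separating the parameter into pieces supported on disjoint or incommensurable cuspidal loci and applying Jantzen-type factorization; here I will flag where your sketch falls short of an actual proof.

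The most serious gap is at the very start. You write that once irreducibility of the inductions is established, ``the equality \eqref{non-unitary A-packet} drops out'' after also showing $\Pi_{\psi_u} = \{\tau_{\psi_{ngp}} \rtimes \pi_{gp}\ |\ \pi_{gp}\in\Pi_{\psi_{gp}}\}$. But this last identity is precisely where the hard content lives, and your Step~1 never addresses it: Step~1 only argues that each induction $\tau_{\psi_{ngp}}\rtimes\pi_{gp}$ is irreducible, which tells you that these inductions form a well-defined set of irreducible representations, not that this set coincides with $\Pi_{\psi_u}$. The packet $\Pi_{\psi_u}$ is defined via Arthur's twisted-endoscopic character identities (or via M{\oe}glin's construction, which one then matches against Arthur's). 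To conclude that $\Pi_{\psi_u}$ equals your family of inductions you need multiplicativity of the defining character identities under parabolic induction from a Levi, together with the fact that $\Pi_{\psi_{ngp}\oplus\psi_{ngp}^\vee}(\GL)$ is a singleton $\{\tau_{\psi_{ngp}}\}$; irreducibility of the induction alone gives neither direction of the set equality. This is where M{\oe}glin's real work is, and it is absent from your sketch.

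A second, more technical gap sits inside both steps. In Step~1 you reduce, correctly, via Theorem~\ref{thm Jantzen}(iii) to showing $\tau_{\psi_{ngp}}\rtimes\sigma_{sc}$ is irreducible, and then assert that the bad/null-parity location of the support ``forces irreducibility.'' This is not a formality: one must actually know that for $\rho\in\mathcal{C}^{sd}$ with $\tfrac{1}{2}(a+b)\notin\alpha_{\rho,\sigma_{sc}}+\Z$, the induction $u_\rho(a,b)\rtimes\sigma_{sc}$ is irreducible (and likewise for non-self-dual $\rho$, and for the pairwise products of the $u_{\rho_i}(a_i,b_i)$ among themselves). The same issue recurs in Step~2, where you correctly identify the obstacle but leave it unresolved: for $0<x_i<\tfrac{1}{2}$ you need the irreducibility of $u_{\rho_i}(a_i,b_i)\lvert\cdot\rvert^{x_i}\rtimes\sigma_{sc}$, which requires the explicit M{\oe}glin--Tadi\'c reducibility criterion rather than the handwave that ``$x_i\notin\tfrac{1}{2}\Z$ avoids reducibility.'' In particular one must also verify that the $\GL$-product $\tau_{\psi_{>0}}$ itself is irreducible, and that no two cuspidal lines $X_{\rho_i}$, $X_{\rho_j}$ for $i,j\in I_{>0}$ with $\rho_i\cong\rho_j$ and $0<|x_i\pm x_j|<1$ produce linked Speh segments --- this is where the strict inequality $|x_i|<\tfrac{1}{2}$ is used, but the verification should be spelled out rather than cited vaguely to ``Zelevinsky's irreducibility criterion.'' Neither gap is a wrong turn, but the proof as written asserts the two nontrivial ingredients (reducibility classification and multiplicativity of the packet under parabolic induction) rather than establishing them.
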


\begin{remark}\label{rmk projection}
We may rephrase the above theorem as a \emph{reduction to good parity}.
First, there is a projection
     \begin{align*}
        p_{+}: \Pi_{A+}(G_n) \longrightarrow\bigsqcup_{0 \leq d \leq n} \Pi_{A}(G_{n-d}).
    \end{align*}
    For any $\sigma \in \Pi_{A}(G_{n-d})$, the fiber $p_{+}^{-1}(\sigma)$ consists of $\tau \rtimes \sigma$ such that {$\tau \times {\tau}^{\vee}$} is a product of unitary complementary series. 
Secondly, there is a projection
    \begin{align*}
        p_{gp}: \Pi_{A}(G_n) \longrightarrow \bigsqcup_{0 \leq d \leq n} \Pi_{A, \, gp}(G_{n-d}),
    \end{align*}
    such that for any $\sigma \in \Pi_{A, \, gp}(G_{n-d})$, the fiber $p_{gp}^{-1}(\sigma)$ can be described explicitly as
    \[ p_{gp}^{-1}(\sigma)= \{ \tau \rtimes \sigma \ | \ \tau \in \Pi_{A}(\GL_d(F)) \text{ is of opposite parity from }G_n \}.\]    
\end{remark}

\subsection{Arthur representations and critical type}\label{sec critical type and Arthur type}
In this subsection, we propose several questions on the relation between $\Pi_{A}(G_n)$ and $\Pi_{A, \, crit}(G_n)$.

Let $\pi \in \Pi(G_n)$ and realize it as an irreducible subquotient of
$\rho_1\vert\cdot\rvert^{x_1} \times \cdots \times \rho_f\vert\cdot\rvert^{x_f} \rtimes \pi_{sc},$
where $\rho_i \in \mathcal{C}^u $, $x_i \geq 0$ and $\pi_{sc} \in \mathcal{C}_{cl}$. For simplicity, we assume that $\rho_1=\cdots=\rho_f=\rho \in \mathcal{C}^{sd}$ and let $\alpha=\alpha_{\rho,\pi_{sc}}$ be the reducibility point. Let $X=\{\rho\vert\cdot\rvert^{x_i}, \rho\vert\cdot\rvert^{-x_i}\}_{i=1,\ldots,f}$, then $\pi \in \Irr(X; \pi_{sc})$. There is a unique regular decomposition $X=X_{crit}\sqcup X_{non-crit}$ such that $X_{crit}$ is a union of a segment $\Delta_{crit}$ with its contragredient such that $\alpha \in \Delta_{crit}$. Then according to Jantzen decomposition (Theorem \ref{thm Jantzen}(ii)), there is an injection
$\pi \hookrightarrow \beta \rtimes X_{crit}(\pi),$
where $X_{crit}(\pi)$ is of critical type, supported on $X_{crit} \cup \{\pi_{sc}\}$, and is uniquely determined by $\pi$, while there may be multiple choices for $\beta$. This defines a projection
\begin{align*}
     p_{crit}: \Pi(G_n) \longrightarrow \bigsqcup_{0 \leq d \leq n} \Pi_{crit}(G_{n-d}), \quad     \pi\mapsto X_{crit}(\pi).
\end{align*}
Here are some natural questions about whether the behavior of $p_{crit}$ is similar to those of $p_{+}$ and $p_{gp}$ defined in Remark \ref{rmk projection}.

\begin{ques}\label{ques crit} \
    \begin{enumerate}
        \item Does the projection $p_{crit}$ preserve Arthur type? Namely, is the following map well-defined?
        \begin{align*}
            p_{crit}|_{\Pi_A(G_n)}\colon \Pi_A(G_n) \longrightarrow\sqcup_{0\leq d \leq n} \Pi_{A}(G_{n-d}).
        \end{align*}
        \item Assume that the previous question is affirmative. For any $\pi \in \Pi_{A}(G_n)$, how to characterize the set
        $\{ \beta \ | \ \pi= \beta \rtimes p_{crit}(\pi)\}$? 
        In particular, is this set always non-empty? In other words, can we always realize a non-critical Arthur representation as an irreducible parabolic induction?
        \end{enumerate}
\end{ques}

Note that if Part (2) is true, then for any $\pi \in \Pi_{A}(G_n)$, which is not of critical type,
we may form a complementary series 
$\Pi_{s}:= \beta\vert\cdot\rvert^{s} \rtimes p_{crit}(\pi),$ for $s \in \mathbb{R}$. Note that $\Pi_{s}$ is irreducible when 
 $|s|$ small, Hermitian when it is irreducible since it is weakly real, and unitary when $s=0$ since $\Pi_0=\pi$. It follows that any such $\pi$ is not isolated in the unitary dual. In other words, Part (2) above implies that
\[ \Pi_{A}(G_n)\cap \Pi_{iso}(G_n) \subseteq \Pi_{A, \, crit}(G_n), \]
which may be a crucial step towards Conjecture \cite[Theorem 1.1]{Tad22}. 
Here $\Pi_{iso}(G_n)$ is the set of isolated representations in the unitary dual $\Pi_u(G_n)$.

On the other hand, even if the above questions have affirmative answers, it is still hard to construct $\Pi_{A}(G_n)$ from $\bigsqcup_{0 \leq d \leq n}\Pi_{A, \, crit}(G_{n-d})$ and the projection $p_{crit}$, compared to the cases of $p_{+}$ and $p_{gp}$. The main reasons are that $\pi= \beta \rtimes p_{crit}(\pi) $ is usually not a unitary parabolic induction and that it seems hard to list 
$\Psi(\pi)= \{ \psi \ | \ \pi \in \Pi_{\psi}\}$ 
from $\Psi(p_{crit}(\pi))$ and $\beta.$ The following example provides an illustration.

\begin{exmp}\label{ex non-critical type}
Let $\rho$ be the trivial representation of $\GL_1(F)$. Consider the following representation of $Sp_{16}$:
\[ \pi= L (  \Delta_{\rho}[-1,-1], \Delta_{\rho}[0,-2]; \pi(0^-,1^+,2^-)). \]
Applying \cite[Algorithm 7.9]{HLL22}, one can check that it is of Arthur type and lies in three local Arthur packets $\{\Pi_{\psi_i}\}_{i=1,2,3}$, where
\begin{align*}
    \psi_1&= \rho\otimes S_1\otimes S_3+\rho\otimes S_3\otimes S_3+\rho\otimes S_5\otimes S_1, \\
    \psi_2&= \rho\otimes S_2\otimes S_4+\rho\otimes S_2\otimes S_2+\rho\otimes S_5\otimes S_1, \\
    \psi_3&= \rho\otimes S_1\otimes S_5+\rho\otimes S_1\otimes S_3+\rho\otimes S_2\otimes S_2+\rho\otimes S_5\otimes S_1.
\end{align*}
The representation $\pi$ is an irreducible subquotient of
$\rho\vert\cdot\rvert^{2} \times \rho\vert\cdot\rvert^{1}\times \rho\vert\cdot\rvert^{1} \times \rho\vert\cdot\rvert^{0} \rtimes \pi_{sc},$ 
where $\pi_{sc}=\pi(0^-,1^+,2^-) $ and $\alpha=\alpha_{\rho,\pi_{sc}}=3$. Thus, we have $X_{crit}(\pi)= \pi_{sc}$. We list below two choices of $\beta$ such that
$\pi \hookrightarrow \beta \rtimes X_{crit}(\pi).$

A naive choice is $\beta_1= \Delta_{\rho}[0,-2] \times \Delta_{\rho}[-1,-1] $, which realizes $\pi$ as the unique irreducible subrepresentation of its standard module. However, we have the following claim on $\beta_1 \rtimes \pi_{sc}$:

\textbf{Claim}: $\beta_1 \rtimes \pi_{sc}$ is not irreducible. More precisely, $\beta_1 \rtimes \pi_{sc}= \pi+ \pi'$ in the Grothendieck group, where 
$\pi'=L(\Delta_{\rho}[1,-2];\pi_{sc}).$ 

First, we use the following fact to list possible candidates of the irreducible subquotients: If $\pi''\not \cong \pi$ is a subquotient of $\beta_1 \rtimes \pi_{sc}$, which is the standard module of $\pi$, then the local $L$-parameter of $\pi''$ must be greater than the local $L$-parameter of $\pi$ in terms of closure ordering (for the definition of closure ordering, see \cite[\S4]{HLLZ25}). Thus, there are only three choices for the local $L$-parameter of $\pi''$:
    \begin{align*}
        \phi_1&= \rho\otimes S_{3} + \rho \otimes S_5 +\phi_{sc},\\
        \phi_2&= (\rho\vert\cdot\rvert^{-1} \otimes S_1+\rho\vert\cdot\rvert^{1} \otimes S_1)+ \rho\otimes S_{1} + \rho \otimes S_5 + \phi_{sc},\\
        \phi_3&=(\rho\vert\cdot\rvert^{-1/2}\otimes S_4+\rho\vert\cdot\rvert^{1/2}\otimes S_4)+ \phi_{sc}.
    \end{align*}
    It can be checked that $\pi'$ is the only representation in $\Pi_{\phi_1} \sqcup \Pi_{\phi_2} \sqcup \Pi_{\phi_3}$ that shares the same supercuspidal support as $\pi$. Therefore, we have 
    $\beta_{1} \rtimes \pi_{sc}= \pi+ m \pi'$ 
    for some $m\geq 0$.
    
Next, it is not hard to see that $\pi'$ appears in the parabolic induction, i.e., $m \geq 1$. Indeed, in the Grothendieck group, we have
\begin{align*}
    \beta_1 \rtimes \pi_{sc}&= \Delta_{\rho}[0,-2] \times \Delta_{\rho}[-1,-1] \rtimes \pi_{sc}\\
    &= \Delta_{\rho}[0,-2] \times \Delta_{\rho}[1,1] \rtimes \pi_{sc}\\
    &= \Delta_{\rho}[1,-2] \rtimes \pi_{sc}+ L( \Delta_{\rho}[0,-2] \times \Delta_{\rho}[1,1]) \rtimes \pi_{sc}.
\end{align*}
    Here $\beta_2:= L( \Delta_{\rho}[0,-2] \times \Delta_{\rho}[1,1])$ is the unique irreducible subrepresentation of 
    $\Delta_{\rho}[0,-2] \times \Delta_{\rho}[1,1].$ 
    Thus, $\pi'$, the unique irreducible subrepresentation of $\Delta_{\rho}[1,-2] \rtimes \pi_{sc}$, appears in $\beta_1\rtimes \pi_{sc}.$ Moreover, one can apply the results in \cite[\S 3]{Ato22d} to check that $ \Delta_{\rho}[1,-2] \rtimes \pi_{sc}$ is irreducible; hence, it is equal to $\pi'$.
    
    Finally, it remains to show that $m=1$. Indeed, this follows from the results in \cite[\S 3]{Ato22d}. More precisely, we have 
    \begin{enumerate}
        \item [$\oldbullet$] The parabolic induction $\sigma= \Delta_{\rho}[0,-2] \rtimes \pi_{sc}$ is irreducible.
        \item [$\oldbullet$] The representation $\pi$ (resp. $\pi'$) is the unique irreducible subrepresentation of the parabolic induction
        $\Delta_{\rho}[-1,-1] \rtimes \sigma$ (resp $\Delta_{\rho}[1,1] \rtimes \sigma$). Hence, they appear in the semisimplification of $\beta_1 \rtimes \pi_{sc}$ of multiplicity one (\cite[Corollary 5.1]{Ato22d}).
    \end{enumerate}
    This completes the verification of the claim. Also, we realize $\pi$ as an irreducible parabolic induction
    \[ \pi= \beta_2 \rtimes \pi_{sc}=L( \Delta_{\rho}[0,-2] \times \Delta_{\rho}[1,1]) \rtimes \pi_{sc}. \]

    Note that the supercuspidal representation $\pi_{sc}$ lies in 9 local Arthur packets, which are listed in \cite[Example 7.6]{HLL22}. It is not straightforward to relate the three local Arthur packets containing $\pi$ to these 9 local Arthur packets and $\beta_2$.
\end{exmp}

\subsection{Extended multi-segments}\label{ssec-EMSKO}

In this subsection, we revisit the concept of extended multi-segments, which plays an important role in the construction of local Arthur packets, along with the associated theorems and operators. 

\begin{defn} [{\cite[Definition 3.1]{Ato20b}}]
(Extended multi-segments)\label{def multi-segment}

\begin{enumerate}
\item
An \emph{extended segment} is a triple $([A,B]_\rho, l, \eta)$,
where
\begin{itemize}
\item
$[A,B]_\rho = \{\rho\vert\cdot\rvert^A, \rho\vert\cdot\rvert^{A-1}, \dots, \rho\vert\cdot\rvert^B \}$ is a segment 
for an irreducible unitary supercuspidal representation $\rho$ of some $\GL_d(F)$; 
\item
$l \in \Z$ with $0 \leq l \leq \frac{b}{2}$, where $b = \#[A,B]_\rho = A-B+1$; 
\item
$\eta \in \{\pm1\}/E$, where $E=\{\pm 1\}$ if $b=2l$ and $E=\{+1\}$ if $b>2l$. 
\end{itemize}
In the statements and formulas in this section, we regard $\eta \in \{\pm 1\}$ by fixing any of its preimages except in Definition \ref{dual segment}, where the choice of the preimage is specified.
\item Consider a multi-set of extended segments of the form $\{([A_i,B_i]_{\rho},l_i,\eta_i)\}_{i \in I_{\rho}}.$
We say that a total order $>$ on $I_{\rho}$ is \emph{admissible} (or \emph{satisfies $(P)$}) if
\[ A_i< A_j, B_i< B_j\Longrightarrow i<j. \]
We say that an admissible order $>$ \emph{satisfies ($P'$)} if
$B_i< B_j\Longrightarrow i<j. $

\item
An \emph{extended multi-segment} for $G_n$ is a union of multi-sets of extended segments indexed by a collection of total ordered sets $(I_{\rho},>)$:
$\EE = \cup_{\rho}\{ ([A_i,B_i]_{\rho}, l_i, \eta_i) \}_{i \in (I_\rho,>)}$ 
such that 
\begin{enumerate}
\item
$I_\rho$ is a totally ordered finite set with a fixed total order $>$ satisfying (P);

\item
$A_i + B_i \geq 0$ for all $\rho$ and $i \in I_\rho$; 

\item
as a representation of $W_F \times \SL_2(\BC) \times \SL_2(\BC)$, 
$\psi_{\EE} = \bigoplus_\rho \bigoplus_{i \in I_\rho} \rho \otimes S_{a_i} \otimes S_{b_i},$
where $(a_i, b_i) = (A_i+B_i+1, A_i-B_i+1)$,
is a local Arthur parameter for $G_n$ of good parity. We denote by $\psi_{\EE}$ the local Arthur parameter associated with $\EE$. 
\item The sign condition
\begin{align*}
\prod_{\rho} \prod_{i \in I_\rho} (-1)^{[\frac{b_i}{2}]+l_i} \eta_i^{b_i} = 1
\end{align*}
holds.
\end{enumerate}
\item For each extended multi-segment $\EE$, we denote by $\pi(\EE)$ the representation associated with $\EE$ as in \cite[\S 3.2]{Ato20b}, which is either irreducible or zero. We denote by $\Rep$ the set of extended multi-segments that give nonzero representations, and by $\Rep^{(P')}$ the subset of $\Rep$ that consists of extended multi-segments whose total order on any $I_{\rho}$ satisfies $(P')$.
For a given 
$\EE = \cup_{\rho}\{ ([A_i,B_i]_{\rho}, l_i, \eta_i) \}_{i \in (I_\rho,>)} \in \Rep,$
define
$$\EE_{\rho}=\{([A_i,B_i]_{\rho},l_i,\eta_i)\}_{i \in (I_{\rho},>)}\quad {\rm and}\quad 
\EE^{\rho}=\cup_{\rho' \ncong \rho}
\{([A_i,B_i]_{\rho'},l_i,\eta_i)\}_{i \in (I_{\rho'},>)}.$$
Then $\EE=\EE^{\rho} \cup \EE_{\rho}$.
\item Sometimes we write $\EE_{\rho}=\{([A_1,B_1]_{\rho},l_1,\eta_1),\ldots,([A_k,B_k]_{\rho},l_k,\eta_k) \}$, implying that the elements in $\EE_{\rho}$ are listed increasingly with respect to the admissible order of $\EE_{\rho}$. Assume that 
\[ \FF=\{([A_i,B_i]_{\rho},l_i,\eta_i)\}_{i \in (I_{\rho},>)}\quad {\rm and}\quad \FF'=\{([A_i,B_i]_{\rho},l_i,\eta_i)\}_{i \in (I_{\rho}',>)}.\]
Then we let $\FF+\FF'=\{ ([A_i,B_i]_{\rho},l_i,\eta_i)\}_{i \in (I_{\rho}\sqcup I_{\rho}',\gg})$ be the extended multi-segment, 
where the admissible order $\gg$ is defined by $i\gg j$ if and only if $(i,j)\in I_{\rho}\times I_{\rho} \sqcup I_{\rho}' \times I_{\rho}' $ and $i> j$, or, $(i,j) \in I_{\rho}' \times I_{\rho}$.
\item Suppose that $\EE \in \Rep^{(P')}$ and denote $$\FF=\EE_{\rho}= \{([A_i,B_i]_{\rho},l_i,\eta_i)\}_{ i \in (I_{\rho},>)}.$$ 
Let $ x \in \R$. We define
\begin{align*}
\FF_{ >x}&:=\{ ([A_i,B_i]_{\rho},l_i,\eta_i)\}_{i \in I_{\rho}, B_i>x},\\
    \FF_{=x}&:=\{ ([A_i,B_i]_{\rho},l_i,\eta_i)\}_{i \in I_{\rho}, B_i=x},\\
    \FF_{ <x}&:=\{ ([A_i,B_i]_{\rho},l_i,\eta_i)\}_{i \in I_{\rho}, B_i<x},
\end{align*}
with the admissible order inherited from $(I_{\rho}, >)$. Note that $\FF= \FF_{<x}+\FF_{=x}+ \FF_{>x}$. We also write $\FF_{\leq x}= \FF_{<x} + \FF_{=x}$ and $\FF_{\geq x}= \FF_{=x} + \FF_{>x}$. 
\end{enumerate}
\end{defn}

Atobe attached a symbol to an extended multi-segment $\EE$ (\cite[\S3]{Ato20b}), to understand examples. When $\EE=\{([A,B]_\rho,l,\eta)\}$ is a singleton, we denote
\[
\EE= 
\left(
\begin{array}{rcl}
\underbrace{\overset{B}{\lhd} \lhd \cdots \overset{B+l-1}{\lhd}}_l 
&
\overset{B+l}{\odot} \odot \cdots \odot \overset{A-l} \odot 
&
\underbrace{\overset{A-l+1}{\rhd} \cdots \rhd \overset{A}{\rhd}}_l
\end{array}
\right)_\rho.
\] 
Here, $\odot\cdots\odot$ represents an alternating sequence of $\oplus$ and $\ominus$, 
starting with $\oplus$ if $\eta = 1$ (resp. $\ominus$ if $\eta = -1$). When $\EE$ is not a singleton, we stack each row vertically.
We give an example as follows.

\begin{exmp}
 Let $\rho$ be the trivial representation of $\GL_1(F)$. The symbol
\[\EE= \scalebox{0.8}{\bordermatrix{
  & 0 &1 & 2 &3 \cr
  &\lhd & \oplus & \ominus & \rhd  \cr
  & &  & \lhd &\rhd  \cr
  & &  &  &  \ominus \cr
}}_{\rho}\]
corresponds to the extended multi-segment
$\EE= \{ ([ 3,0]_{\rho},1,1),([3,2]_{\rho},1,1),([3,3]_{\rho},0,-1)\}$
of $Sp_{34}$. The associated local Arthur parameter is
    $\psi_{\EE}= \rho \otimes S_{4}\otimes S_{4} + \rho \otimes S_{6}\otimes S_{2} + \rho \otimes S_7 \otimes S_1.$
\end{exmp}

In this paper, we do not use Atobe's definition to compute $\pi(\EE)$. Instead, we give new algorithms (Algorithms \ref{algo rep(E)} and \ref{algo rep(E) 2}) to compute $\pi(\EE)$ based on the previous result of the first, third, fourth, and fifth-named authors in \cite{HLLZ25}. See \S \ref{sec new algorithm} for more details. The algorithms reduce the computation to the following setting.

\begin{defn}[{\cite[Definition 9.1]{HLL22}}]\label{def (L)}
An extended multi-segment 
\[\EE=\cup_{\rho} \{([A_i,B_i]_{\rho},l_i,\eta_i)\}_{i \in (I_{\rho},>)}\]
is said to \emph{satisfy the condition} $(L)$ if it satisfies the following conditions.
For all $\rho$ and $i<j \in I_{\rho}$,
\begin{enumerate}
    \item [(i)] $ A_i +B_i \leq A_{j}+B_{j}.$
    \item [(ii)] $(A_i-B_i+1)-2l_i \leq 1$.
    \item [(iii)] If $A_i+B_i=A_{j}+B_{j}$ and $A_i-B_i+1$ is odd, then $\eta_i=\eta_{j}$.
\end{enumerate}
\end{defn}

The representations attached to the extended multi-segments that satisfy the condition $(L)$ are always nonzero. Moreover, their $L$-data have simple descriptions. 

\begin{prop}[{\cite[Theorem 9.5]{HLL22}}]\label{prop $L$-data L-packet}
If $\EE=\cup_{\rho} \{([A_i,B_i]_{\rho},l_i,\eta_i)\}_{i \in (I_{\rho},>)}$ is an extended multi-segment satisfying the condition $(L)$, then $0 \neq \pi(\EE) \in \Pi_{\phi_{\psi_{\EE}}}$. Moreover, its $L$-data can be described as follows. Write
\[ \pi(\EE)= L\left( \Delta_{\rho_1}[x_1,-y_1],\dots,\Delta_{\rho_t}[x_t,-y_t]; \pi\left(\sum_{i=t+1}^{m} \rho_i \otimes  S_{2z_i+1},\varepsilon \right) \right). \]
Then the following hold.
\begin{enumerate}
    \item [(a)] We have an equality of multi-sets
    \begin{align*}
        \{ [x_i,-y_i]_{\rho_i} \}_{i=1}^t&= \bigcup_{\rho}\sum_{i\in I_{\rho}} \{ [B_i,-A_i]_{\rho},\dots,[B_i+l_i-1,-A_i+l_i-1]_{\rho} \}.
    \end{align*}
    \item [(b)] Let $I_{\rho}^{odd}:= \{j \in I_{\rho}\ | \ A_j-B_j+1 \text{ is odd}\}$. Then we have an equality of multi-sets
    \[\\
        \{(\rho_i, z_i)\}_{i=t+1}^m =\bigcup_{\rho} \left\{ \left(\rho,\frac{A_j+B_j}{2}\right)\ \middle| \ j \in I_{\rho}^{odd} \right\}. \]
    If $z_i= \half{A_j+B_j}$, the character $\varepsilon$ takes value $\eta_j$ on the summand $\rho\otimes S_{2z_i+1}$.
\end{enumerate}
\end{prop}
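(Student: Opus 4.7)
My plan is to verify all three assertions (non-vanishing of $\pi(\EE)$, membership in the $L$-packet $\Pi_{\phi_{\psi_{\EE}}}$, and the explicit $L$-data of parts (a), (b)) simultaneously by induction on the total size $\sum_\rho |I_\rho|$, following Atobe's recursive construction of $\pi(\EE)$ from \cite[\S 3.2]{Ato20b}. A preliminary reduction along the decomposition $\EE = \EE^\rho \cup \EE_\rho$, which is compatible with Atobe's construction and with parabolic induction on the classical group, reduces matters to the case of a single cuspidal line, so we may assume $\EE = \EE_\rho$.

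For the base case $|I_\rho| = 1$, the singleton $\EE = \{([A,B]_\rho, l, \eta)\}$ satisfies $(L)$(ii), i.e., $b - 2l \in \{0,1\}$. When $b - 2l = 0$, unwinding the construction yields the Langlands quotient attached to the $l$ segments $[B+j, -A+j]_\rho$ for $j = 0, \ldots, l-1$, with no tempered contribution, matching part (a) with $I_\rho^{odd}$ empty. When $b - 2l = 1$, the same $l$ non-tempered segments appear together with a single tempered summand $\rho \otimes S_{A+B+1}$ on which the relevant character takes value $\eta$; a direct comparison with the diagonal restriction $\psi_{\EE}^{\Delta}$ then confirms that the associated $L$-parameter is exactly $\phi_{\psi_{\EE}}$. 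For the inductive step, I would peel off the largest extended segment with respect to the admissible order, which by $(L)$(i) has maximal $A_i + B_i$, so that its contribution sits at the top of both the non-tempered $L$-data and the tempered support. Condition $(L)$(ii) makes the corresponding step in Atobe's construction a single-stage, irreducible parabolic induction from the representation attached to the remaining extended multi-segment (to which the inductive hypothesis applies), yielding the claimed multi-set equalities in parts (a) and (b).

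The main obstacle will be controlling the character on the tempered part when several extended segments contribute to the same summand $\rho \otimes S_a$ with $a$ odd: condition $(L)$(iii) is precisely the hypothesis ensuring that the signs $\eta_i, \eta_j$ attached to indices with $A_i + B_i = A_j + B_j$ and $a_i = a_j$ agree, so that the character $\varepsilon$ appearing in (b) is consistently defined on the multi-set of tempered summands. The remaining technical checks --- irreducibility of each parabolic induction step (via the Zelevinsky criterion on the $\GL$-side and the reducibility-point analysis on the classical side, both enforced by $(L)$(i)--(ii)) and non-vanishing of $\pi(\EE)$ (immediate once every stage yields the unique irreducible subrepresentation of a standard module) --- should then be routine.
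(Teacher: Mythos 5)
The paper itself provides no proof of this result; it is stated as a citation to \cite[Theorem 9.5]{HLL22}, so there is no internal argument to compare against. Evaluating your proposal on its own terms, the overall strategy (reduce to a single cuspidal line, induct on $\sum_\rho|I_\rho|$) is a reasonable shape for such a proof, but the crucial inductive step as you describe it is not correct and hides the real content of the theorem.

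The central claim --- that peeling off the extended segment with maximal $A_i+B_i$ corresponds to ``a single-stage, irreducible parabolic induction'' in Atobe's construction --- is not how $\pi(\EE)$ is built in \cite[\S 3.2]{Ato20b}. Atobe's (M{\oe}glin's) construction proceeds by shifting all segments far into the negative region, realizing the result as a tempered representation, and then iterating socle operators $S_{\rho|\cdot|^x}$ as the shifts are undone; there is no stage corresponding to your ``remove the top segment and induce.'' Moreover, even where a one-segment peeling \emph{does} hold --- compare Proposition~\ref{prop max triangle} (which removes the segment of maximal \emph{width} $b_\rho$, not maximal $A_i+B_i$) and Proposition~\ref{prop reduction along min x} --- the resulting relation is an \emph{insertion of a segment into the Langlands data}, i.e.\ $\pi(\EE)=\pi(\EE^-)+\{\Delta_\rho[B_j,-A_j]\}$, not a parabolic induction $\Delta_\rho[B_j,-A_j]\rtimes\pi(\EE^-)$. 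Passing from Langlands data to an irreducible parabolic induction requires an irreducibility argument that you assert but do not supply, and the two notions differ exactly when the induction is reducible. Finally, one also has to establish that the removed segment's tempered contribution (when $b-2l=1$) lands on the correct $\rho\otimes S_{A+B+1}$ with the correct sign $\eta$, and that the remaining $\EE'$ still satisfies $(L)$ with the correct book-keeping of signs $\alpha_i$ in Theorem~\ref{thm non-vanishing}(i)(b); condition $(L)$(iii) is one ingredient, but it is not the only compatibility needed. In short, the skeleton is plausible, but the heart of the argument --- what operation actually corresponds to peeling off a row, and why it produces the stated $L$-data --- is missing.
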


Atobe showed that extended multi-segments parameterize local Arthur packets of good parity.
\begin{thm}[{\cite[Theorem 3.4]{Ato20b}}]\label{thm Atobe's reformulation}
Let $\psi= \bigoplus_{\rho} \bigoplus_{i \in I_{\rho}} \rho \otimes S_{a_i} \otimes S_{b_i}$ be a local Arthur parameter of good parity of $G_n$. Choose an admissible order $>_{\psi}$ on $I_{\rho}$ for each $\rho$ that satisfies ($P'$) if $\half{a_i-b_i}<0$ for some $i \in I_{\rho}$. Then
\[ \bigoplus_{\pi \in \Pi_{\psi}} \pi= \bigoplus_{\EE} \pi(\EE),\]
where $\EE$ runs over all extended multi-segments with $\psi_{\EE}=\psi$ with admissible order $>_{\psi}$ such that $\pi(\EE) \neq 0$. 
\end{thm}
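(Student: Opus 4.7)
Since the statement is Atobe's reformulation \cite[Theorem 3.4]{Ato20b} of M\oe glin's construction, my plan is to establish an explicit bijection between the parameterizing set of extended multi-segments $\EE$ with $\psi_{\EE}=\psi$ (satisfying the sign condition in Definition \ref{def multi-segment}(3)(d)) and M\oe glin's original parameterizing data for $\Pi_\psi$. Once the bijection is in place, the job reduces to verifying that Atobe's construction $\pi(\EE)$ from \cite[\S3.2]{Ato20b} produces the same representation as the one M\oe glin attaches to the corresponding datum.

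\emph{Step 1 (Recall M\oe glin's side).} For $\psi$ of good parity, M\oe glin (\cite{Moe06a, Moe06b, Moe11a}) constructs $\Pi_\psi$ by first ordering the Jordan blocks of $\psi$ (an ordering of the same combinatorial nature as admissibility and $(P')$) and then, for each summand $\rho \otimes S_{a_i}\otimes S_{b_i}$, choosing a pair consisting of a partial ``level of descent'' $l_i$ together with a sign $\eta_i$, subject to the global parity relation prescribed by the component group $\widehat{\mathcal{S}}_\psi$. She then defines a representation via an iterated sequence of Jacquet modules, socle extractions, and partial Aubert–Zelevinsky involutions, and shows (a) the resulting representation is either irreducible or zero, (b) the nonzero representations obtained exhaust $\Pi_\psi$, and (c) $\Pi_\psi$ is multiplicity free.

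\emph{Step 2 (Matching data).} I would then write down the bijection
\[
\{\EE \,:\, \psi_{\EE}=\psi,\ \EE \text{ satisfies the sign condition}\} \;\longleftrightarrow\; \{\text{M\oe glin data for } \psi\},
\]
sending $([A_i,B_i]_\rho,l_i,\eta_i)$ to the summand $\rho\otimes S_{a_i}\otimes S_{b_i}$ together with the pair $(l_i,\eta_i)$. The equivalence relation $\eta \in \{\pm 1\}/E$ when $b_i = 2l_i$ matches the fact that in M\oe glin's setup a maximally descended block is insensitive to sign, while the global parity $\prod (-1)^{[b_i/2]+l_i}\eta_i^{b_i}=1$ matches the character identity for $\widehat{\mathcal{S}}_\psi$. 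The admissibility and $(P')$ requirements on $>_\psi$ reflect exactly the orderings for which M\oe glin's inductive recipe is well-defined.

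\emph{Step 3 (Matching the representations).} Next I would compare Atobe's operator-theoretic definition of $\pi(\EE)$ with M\oe glin's recipe. Atobe expresses $\pi(\EE)$ as an alternating sum of parabolic inductions of representations attached to sub-segments $[B_i{+}l_i, A_i{-}l_i]$ together with Steinberg-type pieces $\Delta_\rho[B_i+j,-A_i+j]$ for $0\le j<l_i$; this is precisely the image of M\oe glin's construction after commuting the relevant Jacquet modules past socle filtrations, using standard derivative identities. The vanishing locus $\pi(\EE)=0$ corresponds exactly to M\oe glin's ``incompatible'' descent data. Verifying this step-by-step commutation is where the real work lies; it is essentially the content of \cite[\S\S3.2--3.3]{Ato20b}, and I would carry it out by induction on the number of extended segments, peeling off the smallest $i$ in the admissible order at each stage.

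\emph{Main obstacle.} The principal difficulty is showing that the reformulation is genuinely \emph{independent} of the choice of admissible order $>_\psi$ satisfying $(P')$. Different admissible orders reshuffle the derivative/socle sequence used in Atobe's construction, and one must show this reshuffling does not change $\pi(\EE)$ up to isomorphism (and preserves vanishing). This is an exchange argument: any two admissible orders on $I_\rho$ differ by a sequence of transpositions of adjacent elements $i<j$ with $B_i=B_j$ and $A_i \le A_j$, and one must verify that the corresponding two realizations of $\pi(\EE)$ coincide. Once this invariance is established, the exhaustiveness and multiplicity-freeness of the decomposition $\bigoplus_{\pi \in \Pi_\psi}\pi = \bigoplus_\EE \pi(\EE)$ follows by counting against M\oe glin's known enumeration and invoking her multiplicity-one theorem.
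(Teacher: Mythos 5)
The statement is quoted from \cite[Theorem 3.4]{Ato20b} and is not proved in the paper you are working from; the paper merely cites Atobe's result, so there is no internal proof for comparison. As a description of Atobe's original argument your outline is broadly faithful: Atobe does set up a bijection between extended multi-segments modulo the sign identification $E$ and M\oe glin's $(l,\eta)$-parameterization of $\Pi_\psi$, proves that his $\pi(\EE)$ agrees with M\oe glin's representation by an induction that commutes Jacquet modules past socle extractions, and deals with different choices of admissible order by an explicit row-exchange calculus (which is what Definition \ref{def row exchange} in the present paper is recalling).

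That said, what you have submitted is a plan, not a proof, and the heart of the matter is left unverified. In Step 2 you assert that the sign condition $\prod (-1)^{[b_i/2]+l_i}\eta_i^{b_i}=1$ ``matches the character identity for $\widehat{\mathcal{S}}_\psi$'' without deriving it, and without noticing that the translation involves a nontrivial normalization (M\oe glin's signs are defined relative to the sign of the summand $\rho\otimes S_{2\epsilon_\rho+1}$, whereas Atobe's are normalized so that the sign condition takes the displayed multiplicative form; getting the dictionary right is not a formality). In Step 3 you explicitly concede that the commutation of Jacquet modules past socle filtrations ``is where the real work lies'' and then do not attempt it. The order-independence obstacle you identify is real, but it is resolved by proving the row-exchange formulas, not merely by observing that admissible orders differ by adjacent transpositions; you would also need to verify that vanishing $\pi(\EE)=0$ is preserved under row exchange, which is a separate and somewhat delicate non-vanishing statement (cf. Theorem \ref{thm non-vanishing}). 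So the strategy is correctly identified, but none of the substantive steps is carried out.
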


\subsection{Operators on extended multi-segments}

Since local Arthur packets may have intersections, it is natural to ask how to use their parameterization to describe the intersections. In other words, given any $\pi \in \Pi_{gp}(G_n)$, we are interested in the set 
$\Psi(\pi):= \{ \psi \ | \ \pi \in \Pi_{\psi} \}.$ 
In terms of extended multi-segments, given any $\EE \in \Rep$, we would like to understand the set
\[ \mathscr{E}(\EE):= \{ \EE' \  | \ \pi(\EE')=\pi(\EE) \}.\]
This problem was independently answered by Atobe (\cite[Theorem 1.3]{Ato23}), and by the first, third, and fourth-named authors (\cite[Theorem 1.4]{HLL22}). The set $\mathscr{E}(\EE)$ can be constructed from $\EE$ by applying a sequence of operators, which we recall now.

The first operator is called the \emph{row exchange}. It is used to change the admissible order of an extended multi-segment, but does not affect the corresponding local Arthur parameters. We say that $k<k+1$ are \emph{adjacent} in a total order set $(I_{\rho},>)$ if there does not exist $i \in I_{\rho}$ such that $k< i <k+1$.

\begin{defn}[{\cite[Section 4.2]{Ato20b}}, Row exchange]\label{def row exchange} 
Let $\EE\in\Rep^{(P')}$ with
$$\EE_{\rho}=\{([A_i,B_i]_{\rho},l_i,\eta_i)\}_{i \in (I_{\rho},>)}.$$
Assume that $k<k+1$ are adjacent in $(I_{\rho},>)$. Let $\gg$ be the total order on $I_\rho$ defined by $k\gg k+1$ and if $(i,j)\neq (k,k+1)$, then $ i \gg j$ if and only if $
i >j .$ 

If $\gg$ is not an admissible order on $I_{\rho}$, then we define $R_k(\EE)=\EE$. Otherwise, we define 
\[R_{k}(\EE_{\rho})=\{([A_i,B_i]_{\rho},l_i',\eta_i')\}_{i \in (I_{\rho},\gg)},\]
where $( l_i',\eta_i')=(l_i,\eta_i)$ for $i \neq k,k+1$, and $(l_k',\eta_k')$ and $(l_{k+1}', \eta_{k+1}')$ are given as follows.  Denote $\epsilon=(-1)^{A_k-B_k}\eta_k\eta_{k+1}$.
\begin{enumerate}
    \item [(1)] Assume that $[A_k,B_k]_{\rho} \supset [A_{k+1},B_{k+1}]_{\rho}$.    
    We set $(l_{k+1}',\eta_{k+1}')=(l_{k+1}, (-1)^{A_k-B_k}\eta_{k+1})$ in this case. 
    \begin{enumerate}
    \item [(a)] If $\epsilon=1$ and $b_k- 2l_k < 2(b_{k+1}-2l_{k+1})$, then
    \[ (l_k', \eta_{k}')= (b_k-(l_k+ (b_{k+1}-2l_{k+1})), (-1)^{A_{k+1}-B_{k+1}} \eta_k);  \]
    \item [(b)] If $\epsilon=1$ and $b_k- 2l_k \geq  2(b_{k+1}-2l_{k+1})$, then
    \[ (l_{k}', \eta_{k}')= (l_k+ (b_{k+1}-2l_{k+1}), (-1)^{A_{k+1}-B_{k+1}+1} \eta_k);  \]
    \item [(c)] If $\epsilon=-1$, then
    $(l_{k}', \eta_{k}')= (l_k- (b_{k+1}-2l_{k+1}), (-1)^{A_{k+1}-B_{k+1}+1} \eta_k).$
\end{enumerate}
    \item [(2)] Assume that $ [A_k,B_k]_{\rho} \subseteq [A_{k+1},B_{k+1}]_{\rho}$.     
    We set $(l_{k}',\eta_{k}')=(l_{k}, (-1)^{A_{k+1}-B_{k+1}}\eta_{k})$ in this case. 
    \begin{enumerate}
   \item [(a)] If $\epsilon=1$ and $b_{k+1}- 2l_{k+1} < 2(b_{k}-2l_{k})$, then
    \[ (l_{k+1}', \eta_{k+1}')= (b_{k+1}-(l_{k+1}+ (b_{k}-2l_{k})), (-1)^{A_{k}-B_{k}} \eta_{k+1});  \]
    \item [(b)] If $\epsilon=1$ and $b_{k+1}- 2l_{k+1} \geq  2(b_{k}-2l_{k})$,
    then
    \[ (l_{k+1}', \eta_{k+1}')= (l_{k+1}+ (b_{k}-2l_{k}), (-1)^{A_{k}-B_{k}+1} \eta_{k+1});  \]
    \item [(c)] If $\epsilon=-1$, then
    $(l_{k+1}', \eta_{k+1}')= (l_{k+1}- (b_{k}-2l_{k}), (-1)^{A_{k}-B_{k}+1} \eta_{k+1}).$
\end{enumerate}
\end{enumerate}
Finally, we define $R_{k}(\EE)= \EE^{\rho} \cup R_{k}(\EE_{\rho})$.

If $\gg$ is another admissible order on $I_{\rho}$, then we deform  $\EE_{\rho}$ into
$\{([A_i,B_i]_{\rho},(l_i)_{\gg}, (\eta_i)_{\gg})\}_{i \in (I_{\rho}, \gg)}$ 
by applying a sequence of row exchanges on $\EE_{\rho}$. We shall denote the resulting extended multi-segment by $\EE_{\rho, \gg}$.

\end{defn}

Next, we recall the definition of the operators $sh_j^{d}, add_j^{d}$ on extended multi-segments. These operators do not preserve representations, but are useful in constructing new extended multi-segments.

\begin{defn}[Shift, Add]
Let $\EE = \cup_{\rho}\{ ([A_i,B_i]_{\rho}, l_i, \eta_i) \}_{i \in (I_\rho,>)}$ be an extended multi-segment. For $j \in I_{\rho'}$ and $d \in \Z$, we define the following operators. 

\begin{enumerate}
    \item [1.] Define $sh_j^{d}(\EE)= \cup_{\rho}\{ ([A_i',B_i']_{\rho}, l_i, \eta_i) \}_{i \in (I_\rho,>)}$ with 
    \[ [A_i',B_i']_{\rho}= \begin{cases}
    [A_i+d,B_i+d]_{\rho} & \text{ if }\rho=\rho' \text{ and } i = j,\\
     [A_i,B_i]_{\rho} & \text{ otherwise. }\end{cases} \]
    Set $sh^d_{\rho'}=\sum_{j\in I_{\rho'}} sh_j^{d}$ and $sh^d:=\sum_{\rho'} sh^d_{\rho'}$.
     \item [2.] Define $add_j^{d}(\EE)= \cup_{\rho}\{ ([A_i',B_i']_{\rho}, l_i', \eta_i) \}_{i \in (I_\rho,>)}$ with 
    \[ ([A_i',B_i']_{\rho},l_i')= \begin{cases}
    ([A_i+d,B_i-d]_{\rho},l_i+d) & \text{ if }\rho=\rho' \text{ and } i = j,\\
     ([A_i,B_i]_{\rho},l_i) & \text{ otherwise. }\end{cases} 
    \]
    Set $add^d_{\rho'}=\sum_{j\in I_{\rho'}} add_j^{d}$ and $add^d:=\sum_{\rho'} add^d_{\rho'}$.
\end{enumerate}
It is immediate that the operators commute with each other, so, we denote the composition by summation. 
We only use this notation in the case that the resulting object is still an extended multi-segment.
\end{defn}

With the row exchange, shift, and add operators, we recall the following explicit criterion on an extended multi-segment $\EE$ such that $\pi(\EE)\neq 0$, which is a reformulation of \cite[\S8]{Xu21b}.

\begin{thm}[{\cite[Theorems 3.6, 4.4]{Ato20b}}]\label{thm non-vanishing}
Let $\EE=\cup_{\rho} \{ ([A_i,B_i]_{\rho},l_i,\eta_i)\}_{i \in (I_{\rho},>)}$ be an extended multi-segment such that for any $\rho$, if there exists $i \in I_{\rho}$ with $B_{i}<0$, then the admissible order on $I_{\rho}$ satisfies ($P'$).
\begin{enumerate}
    \item [(i)] We have $\pi(\EE) \neq 0$ if and only if both of the following conditions hold.
    \begin{enumerate}
        \item [(a)] For any integer $d $ such that $ B_i+d \geq 0$ for any $i \in \sqcup_{\rho} I_{\rho}$, we have $ \pi(sh^d (\EE))\neq 0$.
        \item [(b)] For any $i \in \sqcup_{\rho} I_{\rho}$, we have
        \[ B_i+l_i \geq \begin{cases}  0 & \text{ if }B_i \in \Z, \\
    \frac{1}{2} & \text{ if } B_i \not\in \Z \text{ and }\eta_i= (-1)^{\alpha_i+1},\\
        -\frac{1}{2} & \text{ if } B_i \not\in \Z \text{ and }\eta_i= (-1)^{\alpha_i},
        \end{cases} \]
        where 
        $\alpha_i:= \sum_{j < i }A_j+B_j+1.$
    \end{enumerate}
    \item [(ii)] Assume that $B_i\geq 0$ for any $i \in \sqcup_{\rho} I_{\rho}$. Then $\pi(\EE)\neq 0$ if and only if the following hold for any admissible order $\gg$ on $I_{\rho}$. Write
    $\EE_{\rho,\gg}= \{ ([A_i,B_i]_{\rho},l_i,\eta_i)\}_{i \in (I_{\rho},>)}$.
    For any adjacent $k \ll k+1$, let $\epsilon= (-1)^{A_k-B_k}\eta_k \eta_{k+1}.$ 
    \begin{enumerate}
        \item [(1)] If $ A_k\leq  A_{k+1}$, $B_k \leq B_{k+1}$, then
        $$ \begin{cases}
        \epsilon=1 &\Rightarrow \,\,\,\, B_k+l_k \leq B_{k+1}+l_{k+1}, A_k-l_k \leq A_{k+1}-l_{k+1},\\
        \epsilon=-1 &\Rightarrow \,\,\,\,  A_k-l_k < B_{k+1}+l_{k+1}.
        \end{cases} $$
        \item [(2)] If $[A_k,B_k]_{\rho}\subset [A_{k+1},B_{k+1}]_{\rho}$, then
        $$ \begin{cases}
        \epsilon=1 &\Rightarrow \,\,\,\, 0 \leq l_{k+1} -l_{k} \leq b_{k+1}-b_k,\\
        \epsilon=-1 &\Rightarrow \,\,\,\,  l_k+l_{k+1} \geq b_k.    \end{cases} $$
        \item [(3)] If $[A_k,B_k]_{\rho}\supset [A_{k+1},B_{k+1}]_{\rho}$, then
        $$ \begin{cases}
        \epsilon=1 &\Rightarrow \,\,\,\, 0 \leq l_{k} -l_{k+1} \leq b_{k}-b_{k+1},\\
        \epsilon=-1 &\Rightarrow  \,\,\,\, l_k+l_{k+1} \geq b_{k+1}.    \end{cases} $$
    \end{enumerate} 
\end{enumerate}
\end{thm}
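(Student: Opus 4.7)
My plan is to follow the strategy of \cite[Theorems 3.6, 4.4]{Ato20b} which itself reformulates \cite[\S8]{Xu21b}. I would organize the argument in three stages: first establish the pairwise criterion in (ii) when all $B_i\geq 0$; then use the shift operator to reduce the general case in (i) to this well-positioned case; finally explain how the auxiliary condition (i)(b) captures the obstructions arising near the boundary $B_i=0$ (including the half-integer case).

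For stage one (part (ii)), the key point is that row exchanges (Definition \ref{def row exchange}) preserve $\pi(\EE)$ while permuting the indexing among all admissible orders. Therefore, any necessary vanishing/non-vanishing criterion can be checked on adjacent pairs under an arbitrary admissible order $\gg$, and three geometric configurations exhaust the possibilities for two adjacent segments: comparable but non-nested, or nested in either direction. These correspond exactly to cases (1)--(3) in part (ii). For each configuration, the sign $\epsilon=(-1)^{A_k-B_k}\eta_k\eta_{k+1}$ measures whether the interaction between the $l$-parameters is ``additive'' or ``subtractive''; the explicit bounds then come from Xu's pair-by-pair non-vanishing analysis (equivalently, from Moeglin's Jacquet module computation applied to the iterated derivative construction of $\pi(\EE)$). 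My first task would be to verify that the conditions listed in (1)--(3) agree exactly with Xu's pair conditions after the bookkeeping transition to the Atobe-style extended multi-segment notation.

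For stage two (part (i)(a)), the shift $sh^d$ uniformly translates all segments by $d$, which on the representation side corresponds to twisting by a character and taking an appropriate Jacquet restriction. Because this restriction is exact and the construction of $\pi(\EE)$ from $\pi(sh^d(\EE))$ factors through it, I would expect $\pi(\EE)\neq 0\Rightarrow \pi(sh^d(\EE))\neq 0$ to be essentially formal, while the converse---that sufficient shifts being nonzero already implies $\pi(\EE)\neq 0$---requires ruling out the specific boundary cancellations that arise as some $B_i$ approaches or crosses zero. This is where condition (b) enters.

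The hard part will be stage three, namely showing that (a) together with (b) is necessary and sufficient when some $B_i<0$. The bound
\[ B_i+l_i \geq \begin{cases} 0 & B_i\in\Z,\\ \tfrac{1}{2} & B_i\notin\Z,\ \eta_i=(-1)^{\alpha_i+1},\\ -\tfrac{1}{2} & B_i\notin\Z,\ \eta_i=(-1)^{\alpha_i}, \end{cases} \]
encodes a parity-dependent positivity for the sign $\eta_i$ when the segment crosses zero, with $\alpha_i=\sum_{j<i}(A_j+B_j+1)$ tracking cumulative parities from earlier indices. Necessity would follow by directly computing, at the first obstructing index $i$ in admissible order, the Jacquet module appearing in Xu's inductive construction and showing it vanishes precisely when (b) fails. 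Sufficiency requires checking that once (a) and (b) hold, the construction may be executed without cancellation: here the subtle point is tracking the sign $(-1)^{\alpha_i+1}$ through the iteration, since each prior segment contributes a parity depending on its block dimensions. I would expect this bookkeeping---matching the combinatorial bound in (b) to the algebraic non-cancellation in the construction---to be the main technical obstacle, and the step most likely to reveal hidden assumptions about the admissible order satisfying $(P')$ stipulated in the hypothesis.
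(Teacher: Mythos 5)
This statement is recalled from the literature—the paper attributes it to \cite[Theorems 3.6, 4.4]{Ato20b}, which in turn reformulate Xu's non-vanishing criterion \cite[\S 8]{Xu21b}—and the paper itself gives no proof. So there is no in-paper argument to measure your attempt against; the only yardstick is whether your proposal would stand on its own.

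As a roadmap your three stages line up with how Atobe organizes the argument (reduce the general case to $B_i\geq 0$ via shifts, prove a pairwise criterion under row exchange, isolate the half-integer boundary behavior). But what you have written is a plan, not a proof: the load-bearing steps are flagged as things you ``would expect,'' ``would verify,'' or identify as ``the hard part,'' without carrying them out. Concretely, in stage one you never derive the explicit inequalities in (ii)(1)--(3)---you defer entirely to ``Xu's pair-by-pair analysis'' and the translation into the extended-multi-segment bookkeeping, which is precisely the content of Atobe's Theorem 4.4. In stage two you call the forward implication $\pi(\EE)\neq 0 \Rightarrow \pi(sh^d(\EE))\neq 0$ ``essentially formal,'' but $sh^d$ alters the underlying Arthur parameter, and the actual comparison passes through partial Jacquet modules in M{\oe}glin's construction; that implication must be spelled out, not assumed. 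In stage three you identify the obstacle without resolving it, and that is where the proof actually lives. So the proposal as written has a genuine gap at every stage.

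One further conceptual caution on part (ii). The theorem requires the pair inequalities to hold for \emph{every} admissible order $\gg$, not for one conveniently chosen order. Your phrasing (``can be checked on adjacent pairs under an arbitrary admissible order'') reads as if a single order suffices. That is the wrong quantifier: row exchanges can move the obstruction to a different adjacency, and the universal quantification over admissible orders is exactly what rules out such escapes. Any proof attempt must respect this, and in particular the reduction to ``adjacent pairs'' cannot be justified merely by the observation that row exchanges preserve $\pi(\EE)$; one also needs to show that checking all admissible orders of the \emph{original} multi-segment captures all obstructions produced along the iterated construction.
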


The next operator is called the \emph{union-intersection}, which would change the corresponding local Arthur parameters if acting non-trivially. 

\begin{defn}[{\cite[Section 5.2]{Ato20b}}, Union-intersection]\label{ui def}
 Let $\EE$ be an extended multi-segment with
$\EE_{\rho}=\{([A_i,B_i]_{\rho},l_i,\eta_i)\}_{i \in (I_{\rho},>)}.$
For $k< k+1$ adjacent in $(I_{\rho},>)$, we define an operator $ui_k$, called the \emph{union-intersection}, on $\EE$ as follows. 
  Denote $\epsilon=(-1)^{A_k-B_k}\eta_k \eta_{k+1}.$ If $A_{k+1}>A_k$, $B_{k+1}>B_k$ and any of the following cases holds:
\begin{enumerate}
    \item [{Case 1}.] $ \epsilon=1$ and $A_{k+1}-l_{k+1}=A_k-l_k,$
    \item [{Case 2}.] $ \epsilon=1$ and $B_{k+1}+l_{k+1}=B_k+l_k,$
    \item [{Case 3}.] $ \epsilon=-1$ and $B_{k+1}+l_{k+1}=A_k-l_k+1,$
\end{enumerate}
we define that 
$ui_{k}(\EE_{\rho})=\{ ([A_i',B_i']_{\rho},l_i',\eta_i')\}_{i \in (I_{\rho}, >)},$
where $ ([A_i',B_i']_{\rho},l_i',\eta_i')=([A_i,B_i]_{\rho},l_i,\eta_i)$ for $i \neq k,k+1$, and $[A_k',B_k']_{\rho}=[A_{k+1},B_k]_{\rho}$, $[A_{k+1}',B_{k+1}']_{\rho}=[A_k,B_{k+1}]_{\rho}$, and $( l_k', \eta_k', l_{k+1}',\eta_{k+1}' )$ are given case by case as follows:
\begin{enumerate}
    \item[$(1)$] in Case 1, $( l_k', \eta_k', l_{k+1}',\eta_{k+1}' )= (l_k,\eta_k, l_{k+1}-(A_{k+1}-A_k), (-1)^{A_{k+1}-A_k}\eta_{k+1})$;
    \item [$(2)$] in Case 2, if $b_k-2l_k \geq A_{k+1}-A_k$, then
    \[( l_k', \eta_k', l_{k+1}',\eta_{k+1}' )= (l_k+(A_{k+1}-A_k),\eta_k, l_{k+1}, (-1)^{A_{k+1}-A_k}\eta_{k+1}),\]
    and if $b_k-2l_k < A_{k+1}-A_k$, then
    $( l_k', \eta_k', l_{k+1}',\eta_{k+1}' )= (b_k-l_k,-\eta_k, l_{k+1}, (-1)^{A_{k+1}-A_k}\eta_{k+1});$
    \item [$(3)$] in Case 3, if $l_{k+1} \leq  l_k$, then
    $( l_k', \eta_k', l_{k+1}',\eta_{k+1}' )= (l_k,\eta_k, l_{k+1}, (-1)^{A_{k+1}-A_k}\eta_{k+1})$, and 
    if $l_{k+1}> l_{k}$, then
    $( l_k', \eta_k', l_{k+1}',\eta_{k+1}' )= (l_k,\eta_k, l_{k}, (-1)^{A_{k+1}-A_k+1}\eta_{k+1});$
    \item [$(3')$] if we are in Case 3 and $l_k=l_{k+1}=0$, then we delete $ ([A_{k+1}',B_{k+1}']_{\rho},l_{k+1}',\eta_{k+1}')$ from $ui_k(\EE_{\rho})$.
\end{enumerate}
\end{defn}

The union-intersection operator can be extended to non-adjacent extended segments as follows.

\begin{defn} \label{def ui}
Let $\EE$ be an extended multi-segment with
$\EE_{\rho}=\{ ([A_i,B_i]_{\rho},l_i,\eta_i)\}_{i\in (I_{\rho,>})}.$
Given $i,j \in I_{\rho}$, we define that $ui_{i,j}(\EE_{\rho})=\EE_{\rho}$ unless the following conditions hold:
\begin{enumerate}
    \item [1.]$ A_i< A_j$, $B_i <B_j$, and $j\gg i$ are adjacent under some admissible order $\gg$ on $I_{\rho}$. 
    \item [2.] The operator $ui_i$ is applicable on $\EE_{\rho,\gg}$.
\end{enumerate}
In this case, we define that $ui_{i,j}(\EE_{\rho}):=(ui_{i}(\EE_{\rho,\gg}))_{>}$, so that the admissible order of $ui_{i,j}(\EE_{\rho})$ and $\EE_{\rho}$ are the same (if the $ui_i$ is of type $3'$, then we delete the $j$-th row). Finally, we define $ui_{i,j}(\EE):= \EE^{\rho} \cup ui_{i,j}(\EE_{\rho})$.

We say that $ui_{i,j}$ is applicable on $\EE$ if $ui_{i,j}(\EE) \neq \EE$. Furthermore, we say that $ui_{i,j}$ is of type 1, 2, 3, or $3'$ if the corresponding operator $ui_i$ is of type 1, 2, 3, or $3'$, respectively, in Definition \ref{ui def}.
\end{defn}

Let $\pi$ be an irreducible admissible representation of $G_n.$ Aubert showed that there exists $\varepsilon\in\{\pm 1\}$ such that the virtual representation defined by
$$
\widehat{\pi}:=\varepsilon\sum_P (-1)^{\mathrm{dim}(A_P)}[\mathrm{Ind}_{P}^{G_n}(Jac_P(\pi))]
$$
is an irreducible representation (\cite{Aub95}). The above sum is taken over all standard parabolic subgroups $P$ of $G_n$, $A_P$ denotes the maximal split torus in the center of the Levi subgroup of $P$, and $Jac_P$ denotes the Jacquet module along $P.$ We say that $\widehat{\pi}$ is the Aubert-Zelevinsky dual or Aubert-Zelevinsky involution of $\pi.$ 

The next operator is known as the dual, which sends a representation corresponding to an extended multi-segment to its Aubert-Zelevinsky dual.

\begin{defn}[{\cite[Definition 6.1]{Ato20b}}, Dual]\label{dual segment}
Let $\EE= \cup_\rho \{([A_i,B_i]_{\rho},l_i,\eta_i)\}_{i\in (I_\rho, >)}$ be an extended multi-segment such that the admissible order $>$ on $I_{\rho}$ satisfies $(P')$ for all $\rho$. We define 
$$dual(\EE):=\cup_{\rho}\{([A_i,-B_i]_{\rho},l_i',\eta_i')\}_{i\in (I_\rho, >')},$$ as follows:
\begin{enumerate}
    \item The order $>'$ is defined by $i>'j$ if and only if $j>i.$ 
    \item Set \begin{align*}
l_i'=\begin{cases}
l_i+B_i  & \mathrm{if} \, B_i\in\mathbb{Z},\\
 l_i+B_i+\frac{1}{2}(-1)^{\alpha_{i}}\eta_i  & \mathrm{if} \, B_i\not\in\mathbb{Z},
\end{cases}\quad {\rm and}\quad 
\eta_i'=\begin{cases}
(-1)^{\alpha_i+\beta_i}\eta_i  & \mathrm{if} \, B_i\in\mathbb{Z},\\
 (-1)^{\alpha_i+\beta_i+1}\eta_i  & \mathrm{if} \, B_i\not\in\mathbb{Z},
\end{cases}
\end{align*}
where $\alpha_{i}=\sum_{j\in I_\rho, j<i}a_j,$ and $\beta_{i}=\sum_{j\in I_\rho, j>i}b_j,$ $a_j=A_j+B_j+1$, $b_j=A_j-B_j+1$.
\item When $B_i\not\in\mathbb{Z}$ and $l_i=\frac{b_i}{2}$, set $\eta_i=(-1)^{\alpha_i+1}.$
\end{enumerate}
Finally, we define $dual(\EE_{\rho}):= (dual(\EE))_{\rho}$.
\end{defn}


\begin{thm}[{\cite[Theorem 6.2]{Ato20b}}]\label{thm dual}
    Let $\EE \in \Rep^{(P')}$. Then $\pi(dual(\EE))= \widehat{\pi(\EE)}$ holds.
\end{thm}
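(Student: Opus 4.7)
The plan is to verify that the combinatorial operation $dual$ on extended multi-segments realizes the Aubert-Zelevinsky involution by matching it against the known parameter-level description of this involution on Arthur packets.

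First I would compute the effect of $dual$ at the level of local Arthur parameters. From Definition~\ref{dual segment}, the segment $[A_i,B_i]_\rho$ is sent to $[A_i,-B_i]_\rho$, so the pair $(a_i,b_i)=(A_i+B_i+1, A_i-B_i+1)$ is sent to $(A_i+(-B_i)+1, A_i-(-B_i)+1)=(b_i,a_i)$. Hence $\psi_{dual(\EE)}$ is obtained from $\psi_\EE$ by swapping the Deligne-$\SL_2(\BC)$ and Arthur-$\SL_2(\BC)$ factors in every summand. By the results of M\oe{}glin and Xu (compatibility of Aubert-Zelevinsky duality with the Arthur character identities), the involution carries $\Pi_{\psi_\EE}$ bijectively onto $\Pi_{\psi_{dual(\EE)}}$; in particular $\widehat{\pi(\EE)}\in\Pi_{\psi_{dual(\EE)}}$. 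Thus the theorem reduces to identifying the preimage of $\widehat{\pi(\EE)}$ under Atobe's parametrization (Theorem~\ref{thm Atobe's reformulation}).

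Next I would establish the base case where $\EE$ satisfies condition $(L)$ of Definition~\ref{def (L)}. Here Proposition~\ref{prop $L$-data L-packet} gives an explicit description of $\pi(\EE)\in\Pi_{\phi_{\psi_\EE}}$ via its Langlands data and its character on the component group of $\phi_{\psi_\EE}$. On the parameter side, the swap $(a_i,b_i)\mapsto(b_i,a_i)$ interchanges the roles of the ``Steinberg part'' and the ``Speh part'' of the construction, so the Aubert-Zelevinsky dual of $\pi(\EE)$ admits an explicit Langlands datum obtained by the classical Zelevinsky involution on the $\GL$-part of the inducing data. A direct comparison with the Langlands data read off from $dual(\EE)$ (again via Proposition~\ref{prop $L$-data L-packet}, applied to a suitable row-exchange of $dual(\EE)$ that satisfies $(L)$) verifies the base case, including the precise signs $\eta_i'$.

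For the general case I would induct on a complexity invariant such as $\sum_i(b_i-2l_i)$, which measures the distance of $\EE$ from satisfying $(L)$. The inductive step relies on showing that $dual$ commutes (up to the reversed admissible order) with row exchange $R_k$ and with union-intersection $ui_{i,j}$, reducing any $\EE$ to one for which the base case applies. Since row exchanges preserve $\pi(\EE)$ and union-intersection operators preserve $\pi(\EE)$ only when applicable (Definition~\ref{def ui}), a careful commutation analysis (already encoded in the case distinctions of the three types of $ui$-operators in Definition~\ref{ui def}) is needed.

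The main obstacle will be the verification of the sign formulas for $(l_i',\eta_i')$ in Definition~\ref{dual segment}, in particular the split between $B_i\in\Z$ and $B_i\notin\Z$ and the asymmetric running-sum exponents $\alpha_i=\sum_{j<i}a_j$ and $\beta_i=\sum_{j>i}b_j$. These signs encode the precise bijection $\widehat{\mathcal{S}_{\psi_\EE}}\leftrightarrow\widehat{\mathcal{S}_{\psi_{dual(\EE)}}}$ induced by Aubert-Zelevinsky duality, and tracking them through each commutation with $R_k$ and $ui_{i,j}$ is the most delicate bookkeeping step; the special convention for $B_i\notin\Z$ with $l_i=b_i/2$ (item~(3) of Definition~\ref{dual segment}) is needed exactly to make this bookkeeping consistent at the boundary.
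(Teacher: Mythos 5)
The paper cites this theorem directly from \cite[Theorem 6.2]{Ato20b} and does not reprove it, so there is no in-paper proof to compare against; I assess your outline on its own merits. Your opening reduction is sound: $dual$ swaps $(a_i,b_i)\mapsto(b_i,a_i)$ at the level of parameters, and the M{\oe}glin--Xu compatibility of Aubert--Zelevinsky duality with Arthur packets (as cited in the paper via \cite[\S A]{Xu17b}) gives $\widehat{\pi(\EE)}\in\Pi_{\psi_{dual(\EE)}}$, so the problem reduces to identifying which member it is.

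The base case, however, is broken as stated. You want to apply Proposition~\ref{prop $L$-data L-packet} both to $\EE$ (satisfying $(L)$) and to ``a suitable row-exchange of $dual(\EE)$ that satisfies $(L)$.'' No such row-exchange need exist, and generically it cannot: if $\EE$ satisfies $(L)$ then $\pi(\EE)$ lies in the $L$-packet $\Pi_{\phi_{\psi_{\EE}}}$, whereas its Aubert--Zelevinsky dual is typically \emph{not} in the $L$-packet $\Pi_{\phi_{\psi_{dual(\EE)}}}$ (for instance, if $\psi_{\EE}$ is tempered then $\widehat{\pi(\EE)}$ is anti-tempered and so lies outside $\Pi_{\phi_{\psi_{dual(\EE)}}}$). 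Concretely, under $dual$ the admissible order reverses and $B_i\mapsto -B_i$, so $(L)$(i) for $dual(\EE)$ becomes the requirement that $A_i-B_i$ be non-increasing in the original order, which $(L)$(i) for $\EE$ (non-decrease of $A_i+B_i$) does not imply. The inductive step is also under-specified: $\sum_i(b_i-2l_i)$ does not clearly decrease under the $ui$ operators, which change the $b$'s and $l$'s simultaneously, and the commutation $dual\circ ui_{i,j}=ui_{j,i}^{-1}\circ dual$ you would invoke is Lemma~\ref{lem equalities of operators}(3), established in \cite{HLL22}, which postdates \cite{Ato20b}; a reconstruction of Atobe's proof cannot assume it. In the cited source the verification instead runs through the compatibility of $dual$ with M{\oe}glin's derivative-and-socle construction of $\pi(\EE)$, which is where the running-exponent signs $\alpha_i,\beta_i$ and the $B_i\notin\Z$ convention are actually tracked.
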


The last operator we need to exhaust $\Psi(\EE)$ is known as the partial dual. 

\begin{defn}[{\cite[Definition 6.5]{HLL22}}, Partial dual]\label{def partial dual}
Let $\EE\in \Rep^{(P')}$, and let $$\FF:=\EE_{\rho}= \{([A_i,B_i]_{\rho},l_i,\eta_i)\}_{i \in (I_{\rho},>)}.$$
For $i \in I_{\rho}$, denote 
\[\alpha_i= \sum_{ j<i} (A_j+B_j+1)\quad {\rm and}\quad {\beta_i= \sum_{j> i} (A_j-B_j+1)}.\]
Assume that there exists $k \in I_{\rho}$ such that
\begin{enumerate}
    \item [(1)] $B_k=\half{1},l_k=0$;
    \item [(2)] $(-1)^{\alpha_k}\eta_k= -1$;
    \item [(3)] for any $i < k$, $B_i < \half{1}$.
\end{enumerate}
Then we define $dual_k^{+}(\FF)$ as follows. We write the decomposition
\[ \FF= \FF_1 + \{([A_k,1/2]_{\rho},0,\eta_k)\} + \FF_2,\]
where $\FF_1=\FF_{<1/2}$ and $\FF_2=\FF_{>1/2}$, and then write
\[dual(\FF)= \widetilde{\FF_2} + \{([A_k,-1/2]_{\rho},0,(-1)^{\beta_k}) \}+ \widetilde{\FF_1},\]
where $\widetilde{\FF_2}=(dual(\FF))_{<-1/2}$ and $\widetilde{\FF_1}=(dual(\FF))_{>-1/2}$. Next, write
\[ dual(\FF'):=dual(\widetilde{\FF_2} + \{([A_k,1/2]_{\rho},0,(-1)^{\beta_k+1})\} + \widetilde{\FF_1})= \widetilde{\widetilde{\FF_1}} + \{([A_k,-1/2]_{\rho},0,-\eta_k)\}+ \widetilde{\widetilde{\FF_2}},\]
where $\widetilde{\widetilde{\FF_1}}=(dual(\FF'))_{<-1/2}$ and $\widetilde{\widetilde{\FF_2}}=(dual(\FF'))_{>-1/2}$. Then we define
\[ dual_k^{+}( \FF):= \widetilde{\widetilde{\FF_1}} + \{([A_k,-1/2]_{\rho},0,-\eta_k)\} + \FF_2, \]
and say that $dual_k^{+}$ is applicable on $\FF$.

Assume that $dual(\FF)$ satisfies above conditions (1) -- (3). Then we define
\[ dual_k^{-}(\FF):= dual \circ dual_{k}^{+} \circ dual (\FF), \]
and say that $dual_k^{-}$ is applicable on $\FF$.
We call this operators $dual_k^{+}$, $dual_k^{-}$ the \emph{partial dual}, and denote by $dual_k$ if there is no ambiguity. 

Finally, we define $dual_k(\EE):= \EE^{\rho} \cup dual_k(\EE_{\rho}).$ 
\end{defn}

Next, we introduce certain collections of the operators defined above.

\begin{defn}\label{def raising operator}\ 
\begin{enumerate}
    \item We say that an operator $T$ defined above is a \emph{raising operator} if it is of the form  
$ ui_{i,j}^{-1}$, $dual \circ ui_{j,i} \circ dual,$ or $dual_k^{-}$.
\item We say that an extended multi-segment $\EE \in \Rep$ is \emph{absolutely maximal} if there is no raising operator applicable on $\EE$.
\end{enumerate}
\end{defn}

 The following is part of the main results in \cite{HLL22}.

\begin{thm}[{\cite[Theorems 1.4, 1.7]{HLL22}}]\label{thm operator}
Assume that $\EE,\EE' \in \Rep$.
\begin{enumerate}
    \item That $\pi(\EE)=\pi(\EE')$ holds if and only if there exists a sequence of raising operators $\{T_i\}_{i=1}^r$, such that
\[ \EE= T_1^{-1} \circ \cdots \circ T_m^{-1} \circ T_{m+1}\circ \cdots \circ T_{r}(\EE'),\]
up to row exchanges.
\item There exists a unique (up to row exchanges) absolutely maximal extended multi-segment $\EE^{|max|}$ such that $\pi(\EE^{|max|})= \pi(\EE)$.
\end{enumerate}
\end{thm}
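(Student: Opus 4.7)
The plan is to establish part (1) first, then deduce part (2) as a consequence of its proof.

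For the ``if'' direction of (1), I would verify directly that each raising operator preserves $\pi(\EE)$. Row exchanges preserve the representation by construction (Definition \ref{def row exchange} is set up for this purpose). For the union--intersection $ui_{i,j}$, I would first reduce via row exchanges along an admissible order to the adjacent case $ui_i$, then unpack Atobe's construction of $\pi(\EE)$ in each of the three sub-cases of Definition \ref{ui def} and match the resulting representations. The conjugated operator $dual \circ ui_{j,i} \circ dual$ is then handled by combining the previous step with Theorem \ref{thm dual}. Finally, $dual_k^{-}$ preserves $\pi(\EE)$ since by definition it equals $dual \circ dual_k^{+} \circ dual$, and the two expressions for $dual(\FF)$ built into Definition \ref{def partial dual} allow one to trace the effect of $dual_k^{+}$ back to the original representation.

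For the ``only if'' direction of (1), the strategy is to show that any $\EE$ can be brought to a canonical absolutely maximal form $\EE^{|max|}$ by iteratively applying raising operators. I would attach to $\EE$ a numerical invariant—for instance a lexicographic ordering on the multi-set $\{(A_i, B_i, l_i, \eta_i)\}_i$, or a weighted sum such as $\sum_{\rho,i} (2 l_i - (A_i - B_i))$—designed so that each raising operator strictly decreases it. This forces termination in finitely many steps. If $\pi(\EE) = \pi(\EE')$, then both $\EE$ and $\EE'$ can be raised to absolutely maximal forms, and the existence of the required sequence of operators follows once one knows (from part (2)) that these maximal forms coincide up to row exchange.

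For (2), the target is to prove the equivalence: $\EE$ is absolutely maximal if and only if $\EE$ satisfies condition (L) of Definition \ref{def (L)}. The ``maximal $\Rightarrow$ (L)'' direction requires checking case by case that every failure of (L) exhibits an applicable raising operator: a violation of (i) yields an applicable $ui_{i,j}^{-1}$ or its dual conjugate; a violation of (ii) or (iii) yields an applicable $dual_k^{-}$. Once (L) is in hand, Proposition \ref{prop $L$-data L-packet} reads the Langlands data and the tempered character of $\pi(\EE)$ off $\EE$ in a combinatorially invertible way, so $\EE$ is determined by $\pi(\EE)$ up to row exchange.

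The main obstacle will be the case analysis showing that the three types of raising operators together cover every failure of (L). The union--intersection (and its dual conjugate) handles most structural violations among pairs of segments, but the partial dual $dual_k^{-}$ is restrictive—it requires $B_k = \tfrac{1}{2}$ together with a particular sign pattern—so verifying that every half-integer configuration outside (L) can be unwound by $dual_k^{-}$, possibly after row exchanges, demands careful sign-tracking through Definition \ref{def partial dual}. I would expect the combinatorial core of the proof to reside in this step, and would likely separate it into the integer and half-integer sub-cases for $B_i$.
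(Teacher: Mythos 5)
This theorem is taken over verbatim from [HLL22, Theorems 1.4 and 1.7]; the present paper supplies no proof of its own, so I can only judge your proposal against what is internally consistent with the paper's statements and examples.

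Your plan has a fatal flaw in the treatment of part (2): you assert that an extended multi-segment is absolutely maximal \emph{if and only if} it satisfies condition (L) of Definition~\ref{def (L)}, and then use Proposition~\ref{prop $L$-data L-packet} to read off the $L$-data and conclude uniqueness. This equivalence is false, and the paper itself exhibits a counterexample. In Example~\ref{ex algorithm 5.1}, $\EE= \{([3,-3]_{\rho},3,1),([1,-1]_{\rho},1,-1),([0,0]_{\rho},0,-1)\}$ is stated to equal $\EE^{|max|}$, hence is absolutely maximal; yet (L)(iii) fails for it: the first two extended segments have $A_1+B_1 = 0 = A_2+B_2$ and $A_1-B_1+1 = 7$ odd, but $\eta_1 = 1 \neq -1 = \eta_2$. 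Algorithm~\ref{algo rep(E)} is precisely designed around this possibility---Step~2 opens with ``If $\EE^{|max|}$ does not satisfy the condition (L)''---so the two notions genuinely come apart. Once the equivalence is dropped, your argument for uniqueness of $\EE^{|max|}$ loses its backbone: you can no longer invoke Proposition~\ref{prop $L$-data L-packet} to recover $\EE^{|max|}$ from $\pi(\EE)$, because that proposition only applies under (L). The ``only if'' direction of part~(1) then also collapses, since you route it through the uniqueness in~(2).

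There are secondary concerns as well. Your proposed termination invariants are not verified: the lexicographic ordering on $\{(A_i,B_i,l_i,\eta_i)\}$ is not an invariant on an unordered multi-set that obviously decreases under raising operators, and the weighted sum $\sum (2l_i - (A_i-B_i))$ changes in non-monotone ways under union--intersection, which merges two segments of different shapes into two new ones. The correct argument (as in [HLL22]) has to control the parameter $\psi_\EE$ directly: raising operators move $\psi_\EE$ strictly upward in a suitable partial order on Arthur parameters with fixed diagonal restriction, and it is this partial order---not condition (L)---that characterizes the absolutely maximal representative. Your part~(1) ``if'' direction (each operator preserves $\pi$) is sound in outline but is also the easy half; the entire difficulty lies in uniqueness, and your route to it does not work.
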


By Part (2) of the above theorem, for any $\pi \in \Pi_{A, \, gp}(G_n)$, we define ``the" local Arthur parameter $\psi^{max}(\pi)$ of $\pi$ by
$\psi^{max}(\pi):= \psi_{\EE^{|max|}},$ 
where $\EE^{|max|}$ is the unique (up to row exchanges) absolutely maximal extended multi-segment such that $ \pi= \pi(\EE^{|max|})$. This definition extends to any $\pi \in \Pi_{A}(G_n)$ by Theorem \ref{thm red from nu to gp}. The parameter $\psi^{max}(\pi)$ is the ``most tempered" member inside $\Psi(\pi)$. See \cite[\S 11]{HLL22} for more details. This distinguished member plays an important role in \cite{HLLZ25} and \S \ref{sec new algorithm} below. 

We remark that given $\EE$, the set 
\[ \mathscr{E}(\EE)/\text{(row exchanges)}:= \{ \EE' \ | \ \pi(\EE') \cong \pi(\EE)  \}/\text{(row exchanges)} \]
can be computed by applying \cite[Theorem 7.4]{HLL22}, and the distinguished member $\EE^{|max|}$ can be computed by applying \cite[Corollary 10.7(ii)]{HLL22}. 
The corresponding sage codes have been implemented and are available here: 

\quad

\url{https://github.com/ChiHengLo/Intersection-of-local-Arthur-packets}

\quad

The following are some identities between these operators.

\begin{lemma}[{\cite[Lemma 3.31(i)(ii), and Corollary 5.5]{HLL22}}]\label{lem equalities of operators}
Let 
$\EE=\cup_{\rho} \{([A_i,B_i]_{\rho},l_i,\eta_i)\}_{i \in (I_\rho, >)}$
be an extended multi-segment. For $t,s \in \Z$ and $i,j,k \in I_{\rho}$, the following hold.
\begin{enumerate}
  \item Let $T \in \{sh_i^{t}, add_i^{s}\}$. Suppose $R_{k}(\EE) \neq \EE$ and $R_{k}(T(\EE))\neq T(\EE)$. Then $T \circ R_k (\EE)= R_k \circ T(\EE).$
    In other words, $sh_i^{t}$, $add_i^{s}$ commute with $R_{k}$.
    \item We have that  
        $dual \circ sh_{k}^t(\EE)= add_{k}^t \circ dual(\EE)$ 
        if any side of the equation is still an extended multi-segment.
        \item Assume that $\EE \in \Rep^{(P')}$ and $ ui_{i,j}$ is applicable on $\EE$ of type not $3'$. Then we have 
        \[ dual \circ ui_{j,i} \circ dual \circ ui_{i,j}(\EE)=\EE. \]
\end{enumerate}
\end{lemma}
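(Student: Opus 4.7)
The plan is to prove each of the three identities by direct inspection of the explicit piecewise formulas for the operators involved, exploiting the fact that shift, add, and row exchange depend only on certain combinatorial invariants of the extended segments. Since all three assertions are citations from \cite{HLL22}, the goal here is to verify them symbolically rather than to produce new mathematics.

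For (1), I would split into cases on whether $i \in \{k, k+1\}$. If $i \notin \{k,k+1\}$, the operators act on disjoint rows and commute tautologically. If $i \in \{k,k+1\}$, a glance at Definition \ref{def row exchange} shows that the new data $(l_k',\eta_k',l_{k+1}',\eta_{k+1}')$ depend on the input only through $A_k - B_k$, $A_{k+1}-B_{k+1}$ (which determine the sign $\epsilon$), $b_k$, $b_{k+1}$, $l_k$, $l_{k+1}$, $b_k-2l_k$, and $b_{k+1}-2l_{k+1}$. Under $sh_i^t$, every one of these quantities is unchanged; under $add_i^s$, the quantity $A_i - B_i$ shifts by $2s$ (so its parity, hence $\epsilon$, is unchanged), $b_i - 2l_i$ is preserved, and $b_i$ shifts by $2s$, which is absorbed consistently by the corresponding shift in $l_i$ to $l_i+s$. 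Chasing through the nine subcases of Definition \ref{def row exchange} then yields the commutation.

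For (2), note that $sh_k^t$ sends $(A_k,B_k,l_k)$ to $(A_k+t,B_k+t,l_k)$, after which $dual$ yields (in the integral case) $(A_k+t,-B_k-t,l_k+B_k+t)$; conversely, applying $dual$ first gives $(A_k,-B_k,l_k+B_k)$, and $add_k^t$ then produces $(A_k+t,-B_k-t,l_k+B_k+t)$. The two outputs agree. For the half-integral case one uses the explicit $\eta$-dependent formula in Definition \ref{dual segment}; the point is that $sh_k^t$ changes $a_k=A_k+B_k+1$ by $2t$ (an even integer), so the indices $\alpha_i$ and $\beta_i$ change only by even amounts, leaving the resulting $\eta_i'$ invariant. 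Identity (3) is equivalent, via $dual\circ dual = \mathrm{id}$ on $\mathrm{Rep}^{(P')}$, to $ui_{j,i}\circ dual\circ ui_{i,j}(\EE) = dual(\EE)$. After $ui_{i,j}$ replaces the pair $([A_i,B_i],[A_j,B_j])$ by $([A_j,B_i],[A_i,B_j])$, applying $dual$ reverses the order of indices and sends the pair to $([A_i,-B_j],[A_j,-B_i])$; the strict inequalities of the $B$'s flip, so in this dual picture the roles of $i$ and $j$ are interchanged, and $ui_{j,i}$ is indeed the operator that swaps the pair back, producing $([A_i,-B_i],[A_j,-B_j])$, which is $dual(\EE)$.

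The main obstacle is the bookkeeping in (3): the operator $ui_{i,j}$ has three types (with Case 2 and Case 3 each subdividing further on the sign of $b_k - 2l_k - (A_{k+1}-A_k)$ or on $l_{k+1} \leq l_k$ versus $l_{k+1} > l_k$), and the sign updates in $dual$ depend on the parities of $\alpha_i,\beta_i$, which are altered by the reordering. Organizing the argument case by case via the closed-form updates in Definition \ref{ui def} and Definition \ref{dual segment}, and carefully tracking how each $\epsilon = (-1)^{A_k - B_k}\eta_k\eta_{k+1}$ transforms under $dual$, is the most efficient route; since the hypothesis excludes type $3'$, no row is deleted, and the symbolic matching goes through uniformly in each remaining case.
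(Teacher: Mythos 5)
This lemma is quoted from \cite{HLL22} and the present paper gives no proof of it, so there is nothing internal to compare against; I will evaluate the proposal directly. Your strategy of verifying each identity by symbolic inspection of the piecewise definitions is the natural one. Your treatment of part (2) is sound: in the integral case the computation of $l_k'$ is exactly as you say, and the parity observation (that $\alpha_i$ changes only by $2t$ under $sh_k^t$, leaving $(-1)^{\alpha_i}$ and $(-1)^{\alpha_i+\beta_i}$ intact) handles both the $\eta_i'$ update and the half-integral correction term correctly.

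Part (1), however, has a genuine gap. You claim the update formulas in Definition~\ref{def row exchange} depend only on $A_k-B_k$, $b_k$, $l_k$, $b_k-2l_k$ and their $(k+1)$-analogues, all of which you check are compatible with $sh$ and $add$. But the definition also branches on the \emph{containment} of $[A_k,B_k]_\rho$ and $[A_{k+1},B_{k+1}]_\rho$ (Case 1 versus Case 2), and $add_k^s$ can flip this. For instance, if $[A_k,B_k]=[2,1]$, $[A_{k+1},B_{k+1}]=[3,1]$ (Case 2), then $add_k^1$ gives $[3,0]\supsetneq[3,1]$ (Case 1), while both row exchanges remain nontrivial. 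In Case 2 the complex update falls on $l_{k+1}$; in Case 1 it falls on $l_k$. So after $add_k^1\circ R_k$ the $l_{k+1}$-entry carries the Case-2 value $b_{k+1}-l_{k+1}-(b_k-2l_k)$, whereas after $R_k\circ add_k^1$ it carries the untouched $l_{k+1}$. These generically differ (they already differ in the example above), so the cross-case compatibility must be established separately, or ruled out by a hypothesis in \cite{HLL22} that your proposal (and the paper's quotation) does not surface. Your "chasing through the nine subcases" does not engage with this cross-case matching at all. For part (3), the reduction via $dual^2=\mathrm{id}$ is a good first step, but your argument only tracks the segments $[A_\bullet,B_\bullet]$ and ignores $(l_\bullet,\eta_\bullet)$, which are exactly where the delicacy lives; you acknowledge this but do not carry it out, so that part remains an outline rather than a proof.
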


Lemma \ref{lem equalities of operators}(3) shows that $dual\circ ui_{j,i}\circ dual$ is the inverse operator to $ui_{i,j}$ when it is of type not $3'.$ We recall the inverse when it is of type $3'$ below. 

\begin{lemma}[{\cite[Corollary 5.8]{HLL22}}]\label{cor ui inverse type 3'}
Assume that $\EE=\EE^{\rho} \cup \EE_{\rho} \in \Rep$. Suppose there exists an admissible order $\gg$ of $\EE_{\rho}$ such that 
$\EE_{\rho,\gg}= \{( [A_i,B_i]_{\rho},l_i', \eta_i')\}_{i \in (I_{\rho}, \gg)}$
with $l_j'=0$ for some $j \in I_{\rho}$. Let 
    \[\mathcal{F}_1=\{ ([A_i,B_i]_{\rho},l_i',\eta_i')\}_{i \ll j }\quad {\rm and}\quad  \mathcal{F}_2=\{ ([A_i,B_i]_{\rho},l_i',\eta_i')\}_{i \gg j}.\]
    For $0 \leq r \leq A_j-B_j -1$ such that $2 B_j+r \geq 0$, set
    \begin{align*}
         \EE_{\rho,r}:= \mathcal{F}_1 + \{([B_j+r, B_j]_{\rho},0,\eta_j')\} + \{([A_j,B_j+r+1]_{\rho},0, (-1)^{r+1}\eta_j')\} +\mathcal{F}_2.
    \end{align*}
    If the total order of $\EE_{\rho,r}$ satisfies ($P$), then for any admissible order $\gg'$ of $\EE_{\rho,r}$ satisfying $(P')$, we have  
    \[ \pi(\EE^{\rho} \cup (\EE_{\rho, r})_{\gg'}) \cong \pi(\EE). \]
\end{lemma}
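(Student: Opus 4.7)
The plan is to exhibit $\EE$ as the image of $\EE_{\rho,r}$ under a single application of the union-intersection operator of type $3'$. Since union-intersection operators preserve the associated representation (established in \cite{Ato20b}; see also the discussion around Lemma \ref{lem equalities of operators}), this yields $\pi(\EE_{\rho,r})\cong\pi(\EE)$.

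First, I would equip $\EE_{\rho,r}$ with the admissible order inherited from $\gg$, placing the two new extended segments
\[ e_k:=([B_j+r,B_j]_\rho,\,0,\,\eta_j'), \qquad e_{k+1}:=([A_j,B_j+r+1]_\rho,\,0,\,(-1)^{r+1}\eta_j') \]
at consecutive positions $k,k+1$ in place of the row $([A_j,B_j]_\rho,0,\eta_j')$. The inequalities $B_j+r<A_j$ and $B_j<B_j+r+1$, which follow from $0\le r\le A_j-B_j-1$, show that $e_k$ and $e_{k+1}$ are compatibly ordered; combined with the hypothesis that the total order of $\EE_{\rho,r}$ satisfies $(P)$, this produces a bona fide admissible order.

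Next, I would compute the effect of $ui_{k,k+1}$ on $\EE_{\rho,r}$. Since
\[ \epsilon=(-1)^{A_k-B_k}\eta_k\eta_{k+1}=(-1)^r\cdot\eta_j'\cdot(-1)^{r+1}\eta_j'=-1 \]
and $B_{k+1}+l_{k+1}=B_j+r+1=A_k-l_k+1$, we are in Case 3 of Definition \ref{ui def}; the extra condition $l_k=l_{k+1}=0$ places the operator into the type $3'$ subcase. By the explicit formula, the $(k+1)$-th row is deleted and the $k$-th row becomes $([A_{k+1},B_k]_\rho,l_k,\eta_k)=([A_j,B_j]_\rho,0,\eta_j')$, which is precisely the original row at position $j$ in $\EE_\rho$. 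Therefore $ui_{k,k+1}(\EE_{\rho,r})=\EE$ (up to row exchanges among the other rows), and the representation-preserving property of $ui$ yields $\pi(\EE_{\rho,r})\cong\pi(\EE)$ under the inherited order.

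Finally, any other admissible order $\gg'$ on $\EE_{\rho,r}$ satisfying $(P')$ is obtained from the inherited one by a sequence of row exchanges (Definition \ref{def row exchange}), which preserve the associated representation. This extends the identification to all such $\gg'$. The main delicate point is the sign tracking in the type $3'$ formula, namely the verification that $(l_k',\eta_k')=(0,\eta_j')$ under the subcase $l_{k+1}\le l_k$ of Case 3; a secondary check is that $\pi(\EE_{\rho,r})\neq 0$, which follows a posteriori from $\pi(\EE)\neq 0$ once the representation identification is in place.
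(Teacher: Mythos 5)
Your proof is correct and takes the natural route: you exhibit $\EE$ as the image of $\EE_{\rho,r}$ under a single union--intersection of type $3'$, and then invoke the fact that $ui$ preserves the associated representation. Since the paper imports this lemma wholesale from \cite[Corollary 5.8]{HLL22} and gives no proof of its own, there is nothing to compare against directly, but your reconstruction is the expected argument. All the sign-bookkeeping checks out: with $A_k=B_j+r,\ B_k=B_j,\ \eta_k=\eta_j'$ and $A_{k+1}=A_j,\ B_{k+1}=B_j+r+1,\ \eta_{k+1}=(-1)^{r+1}\eta_j'$, you correctly get $\epsilon=-1$, verify $B_{k+1}+l_{k+1}=A_k-l_k+1$, land in subcase (3) with $l_{k+1}\le l_k$ so $(l_k',\eta_k')=(l_k,\eta_k)=(0,\eta_j')$, and then the $3'$ deletion leaves exactly $([A_j,B_j]_\rho,0,\eta_j')$.

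One small caveat deserves to be stated more carefully than your ``a posteriori'' remark. What you actually need is that the $ui$ operator preserves $\pi(\cdot)$ \emph{unconditionally}, i.e.\ $\pi(ui_k(\EE'))=\pi(\EE')$ regardless of whether the input already gives a nonzero representation; only then can you propagate $\pi(\EE)\neq 0$ backwards to conclude $\pi(\EE_{\rho,r})\neq 0$. If the result you invoke from \cite{Ato20b} were stated only for inputs with nonvanishing $\pi$, your conclusion would be circular: you'd need $\pi(\EE_{\rho,r})\neq 0$ as a hypothesis, not a consequence. The unconditional form is indeed what is established in the literature (the $ui$ operations are formal and preserve $\pi$ including the zero case), but it is worth pinning down the precise citation rather than asserting ``a posteriori.'' Alternatively, one can directly check the non-vanishing criterion of Theorem~\ref{thm non-vanishing} for $\EE_{\rho,r}$: condition (i)(b) for $e_k$ is identical to the condition for the original row $j$ in $\EE$ (same $\alpha$), and for $e_{k+1}$ one has $B_{k+1}+l_{k+1}=B_j+r+1\ge B_j+1>0$ given $\EE\in\Rep$; the adjacency condition between $e_k$ and $e_{k+1}$ reduces to $B_j+r<B_j+r+1$. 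That direct verification removes any dependence on the precise form of the imported preservation statement.
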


\subsection{A technical lemma}
At the end of this section, we state an observation from the definitions of the operators above which will be used in \S\ref{sec new algorithm}.


\begin{lemma}\label{lem add^1 raising}
    Let $\EE\in \Rep^{(P')}$. Suppose that $\EE_{\rho}= \EE_{\rho,1}+ \EE_{\rho,2}+ \EE_{\rho,3}$ where $\EE_{\rho,i}= \{([A_j,B_j]_{\rho}, l_j, \eta_j)\}_{j \in (I_{\rho,i},>)}$ satisfying the following conditions:
    \begin{enumerate}
        \item There exists a $B$ such that $B_{j_1}< B$ (resp. $B_{j_2}=B$, $B_{j_3}>B$) for any $j_1 \in I_{\rho,1}$ (resp. $j_2 \in I_{\rho,2}$, $j_3 \in I_{\rho,3}$).
        \item For any $j_1 \in I_{j_1}$ and $j_2 \in I_{j_2}$, we have $[A_{j_1}, B_{j_1}]_{\rho} \not\supseteq [A_{j_2}, B_{j_2}]_{\rho}$.
    \end{enumerate}   
    Let $\EE^+:=\EE^{\rho} \cup ( \EE_{\rho,1}+ add^1(\EE_{\rho,2})+ \EE_{\rho,3} )$. Suppose that $\pi(\EE^+)\neq 0$. Then any raising operator that is applicable on $\EE^+$ is also applicable on $\EE$.
\end{lemma}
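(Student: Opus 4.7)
The plan is to proceed by a case analysis on the three types of raising operators listed in Definition \ref{def raising operator}. The foundational observation is that the operator $add^1$, when applied to an extended segment $([A,B]_{\rho}, l, \eta)$, produces $([A+1, B-1]_{\rho}, l+1, \eta)$, and therefore preserves the quantities
\[ A - l,\quad B + l,\quad \eta,\quad b - 2l,\]
where $b = A - B + 1$. These are precisely the quantities that appear in the three cases of the union-intersection operator in Definition \ref{ui def}, together with the sign $\epsilon = (-1)^{A-B}\eta_i \eta_j$. Since $(-1)^{A-B}$ is preserved by $add^1$ (both $A$ and $B$ change by $1$), the case classification (Cases 1, 2, 3) of $ui_{i,j}$ is unchanged when a row gets perturbed by $add^1$.

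First I would handle raising operators of type $T = ui_{i,j}^{-1}$. If neither $i$ nor $j$ lies in $I_{\rho,2}$, the rows are unchanged between $\EE^+$ and $\EE$, so applicability transfers trivially. Otherwise, using the preservation of $A-l$, $B+l$, $\eta$, $b-2l$ above, the local numerical conditions for $ui_{i,j}^{-1}$ applicability at rows $(i,j)$ carry over to $\EE$. What remains is to verify the configuration hypotheses $A_i < A_j$, $B_i < B_j$ and the containment conditions between the two segments. Hypothesis (1) guarantees that the admissible order between the three blocks $I_{\rho,1}, I_{\rho,2}, I_{\rho,3}$ is preserved under $add^1$, and hypothesis (2) ensures that no new containment $[A_{j_1}, B_{j_1}]_{\rho} \supseteq [A_{j_2}+1, B_{j_2}-1]_{\rho}$ is created between $\EE_{\rho,1}$ and $add^1(\EE_{\rho,2})$. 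A direct comparison of each of the three types shows that the required configuration for $ui_{i,j}^{-1}$ is already present in $\EE$.

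For raising operators of type $T = dual \circ ui_{j,i} \circ dual$, I would apply Lemma \ref{lem equalities of operators}(2), which gives the interchange $dual \circ sh^{t} = add^{t} \circ dual$. Under this duality, the $add^1$ perturbation of $\EE_{\rho,2}$ corresponds to a $sh^1$ perturbation of the appropriate rows of $dual(\EE)$, and the argument of Step 1 applied to $dual(\EE)$ in place of $\EE$ shows $ui_{j,i}$ applicability is transferred. For the final type $T = dual_k^{-}$, which is triggered only at rows with $B_k = 1/2$, I would observe that $add^1$ changes $B$ to $B-1$ only on the middle block, so the set of rows with $B_k = 1/2$ and the relevant signs are unaffected unless $B \in \{1/2, 3/2\}$; these two boundary cases follow from a direct check of Definition \ref{def partial dual} using the preserved data $(A-l, B+l, \eta)$. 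The main obstacle will be the meticulous case-by-case verification inside the $ui_{i,j}^{-1}$ step, especially when the two rows straddle the boundaries between $I_{\rho,1}, I_{\rho,2}$, and $I_{\rho,3}$, where containment relations and admissible orderings must be tracked carefully.
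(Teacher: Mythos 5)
Your foundational observation that $add^1$ preserves $A-l$, $B+l$, $\eta$, and $b-2l$ is correct, and it is genuinely relevant, but the proposal has two real gaps.

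The more serious one is in your treatment of raising operators of the form $dual\circ ui_{j,i}\circ dual$. You say that $dual\circ sh^t=add^t\circ dual$ turns the $add^1$ perturbation of $\EE_{\rho,2}$ into a $sh^1$ perturbation of the corresponding rows of $dual(\EE)$, and then you claim that ``the argument of Step 1 applied to $dual(\EE)$ in place of $\EE$'' transfers applicability of $ui_{j,i}$. But Step 1 rests precisely on $add^1$ preserving $A-l$ and $B+l$, and $sh^1$ does \emph{not} preserve these quantities: under $sh^1$ we have $A-l\mapsto A+1-l$ and $B+l\mapsto B+1+l$. Since the Cases $1$--$3$ of Definition~\ref{ui def} are exactly conditions on $A-l$ and $B+l$, they do not simply ``carry over'' for a row of $\widetilde{\EE_{\rho,2}}$ that has been $sh^1$-ed. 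The paper instead proves that the indices $j,k$ involved in the union-intersection on $dual(\EE^+)$ can never lie in $\widetilde{I_{\rho,2}}$ at all: for $(j,k)\in I_{\rho,2}^+\times I_{\rho,1}^+$ this is excluded by condition (2), and for $(j,k)\in I_{\rho,3}^+\times I_{\rho,2}^+$ applicability on $dual(\EE^+)$ would force the unshifted pair in $dual(\EE)$ to violate Theorem~\ref{thm non-vanishing}(ii)(1), contradicting $\EE\in\Rep^{(P')}$. This ``exclusion'' argument, not an invariance argument, is the actual content of the proof, and it uses the nonvanishing hypothesis in an essential way, whereas your outline never invokes $\pi(\EE^+)\neq 0$ or $\EE\in\Rep^{(P')}$.

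The second gap is conceptual but affects the type $3'$ case as well. You phrase the goal as showing the ``numerical conditions carry over,'' but for a row $j\in I_{\rho,2}^+$ the type-$3'$ inverse requires an admissible order under which $(l_j)_\gg=0$; since $add$ commutes with row exchanges (Lemma~\ref{lem equalities of operators}(1)), one always has $(l_j)_\gg\geq 1$ for $j\in I_{\rho,2}^+$. So the condition cannot carry over for such a row; rather, it rules out that row. The conclusion you want is that the raising operator necessarily acts on rows outside $I_{\rho,2}^+$, so that it is literally the same operator on $\EE$, and this needs to be stated and proved rather than absorbed into a heuristic about preserved quantities. Finally, your discussion of $dual_k^-$ conflates $B$-values in $\EE$ with $B$-values in $dual(\EE)$: $dual_k^-$ is applicable on $\EE^+$ iff $dual_k^+$ is applicable on $dual(\EE^+)$, so the relevant condition $B_k=\tfrac{1}{2}$ is checked after taking dual (where the middle block has been $sh^1$-ed, not $add^1$-ed); and the resolution is again that conditions (1)--(3) of Definition~\ref{def partial dual} for the shifted row in $dual(\EE^+)$ force a violation of Theorem~\ref{thm non-vanishing}(i)(b) for the corresponding row of $dual(\EE)$, hence $k\notin I_{\rho,2}^+$.
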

\begin{proof}
    We verify it case by case for $ui^{-1}$, $dual \circ ui_{j,k} \circ dual$ and $dual_k^{-}$.  Let $I_{\rho}:= I_{\rho,1} \sqcup I_{\rho,2} \sqcup I_{\rho,3}$ in the rest of the proof. We let $I_{\rho}^+= I_{\rho,1}^+ \sqcup I_{\rho,2}^+ \sqcup I_{\rho,3}^+$ to denote the index set of $\EE^{+}_{\rho}$. It is identical to $I_{\rho}$, but the admissible orders on them (Definition \ref{def multi-segment}(2)) are different.
   
    First, suppose there is a $ui^{-1}$ of type 3$'$ applicable on $\EE^+$, which is applied to some $k \in I_{\rho}^+$ such that $(l_k)_\gg=0$, where $(\EE^+)_{\rho, \gg}= \{([A_j, B_j]_{\rho}, (l_j)_{\gg}, (\eta_j)_{\gg})\}_{j \in (I_{\rho}^+,\gg)}$ for some admissible order $\gg$ on $I^+_{\rho}$, as described in Lemma \ref{cor ui inverse type 3'}. Then since $add$ commutes with row exchanges (Lemma \ref{lem equalities of operators}(1)), we must have $(l_j)_\gg \geq 1$ for any $j \in I_{\rho,2}^+$. We conclude that $k \not\in I_{\rho,2}^+$. Hence, this $ui^{-1}$ of type 3$'$ can be applied to $\EE$ as well.

    Next, we deal with $dual \circ ui_{j,k} \circ dual$, which also covers $ui^{-1}$ not of type 3$'$ by Lemma \ref{lem equalities of operators}(3).
    Write
     \begin{align}\label{eq raising operator add^1}
         dual(\EE)_{\rho}= \widetilde{\EE_{\rho,3}}+ \widetilde{\EE_{\rho,2}} + \widetilde{\EE_{\rho,1}},\ \ dual(\EE^+)_{\rho}= \widetilde{\EE_{\rho,3}}+ sh^1(\widetilde{\EE_{\rho,2}}) + \widetilde{\EE_{\rho,1}},
     \end{align}
        by Lemma \ref{lem equalities of operators}(2), where $\widetilde{\EE_{\rho,i}}= \{([A_j,-B_j]_{\rho}, l_j', \eta_j')\}_{j \in (I_{\rho,i}, >')}$ as in Definition \ref{dual segment}. Suppose that $ ui_{j,k}$ is applicable on $dual(\EE^+)$. We claim that $j,k \not\in I_{\rho,2}^+$, or equivalently, $(j, k) \not\in I_{\rho,2}^+ \times I_{\rho, 1}^+  \sqcup I_{\rho,3}^+ \times I_{\rho, 2}^+$. Indeed, if $(j, k) \in I_{\rho,2}^+ \times I_{\rho, 1}^+ $, then Condition (2) implies that $[A_{j},-B_j]_{\rho} \supseteq [A_{k}, -B_k]_{\rho}$; hence $ui_{j,k}$ is not applicable on $dual(\EE^+)$. If $(j, k) \in I_{\rho,3}^+ \times I_{\rho, 2}^+ $. Let $\gg$  be an admissible order on $I_{\rho}^+$ such that $j$ and $k$ are adjacent under $\gg$. Then $\gg$ is also an admissible order for $I_{\rho}$ by Condition (1). Then under $\gg$, the adjacent extended segments of $dual(\EE^+)$ (resp. $dual(\EE)$) are of the form
    \[ \{([A_j, -B_j]_{\rho}, l_j'', \eta_j''),([A_k+1, -B_k+1]_{\rho}, l_k'', \eta_{k}'')\}\ \ \text{(resp. }\{([A_j, -B_j]_{\rho}, l_j'', \eta_j''),([A_k, -B_k]_{\rho}, l_k'', \eta_{k}'')\} \text{)}.\]
        According to Definition \ref{ui def}, if $ui$ is applicable on the first pair above,
        then the second pair violates Theorem \ref{thm non-vanishing}(ii)(1), a contradiction. We conclude that $ui_{j,k}$ is not applicable on $dual(\EE^+)$ for any $(j, k) \in I_{\rho,3}^+ \times I_{\rho, 2}^+ $ as well. This completes the verification for the operator  $dual \circ ui_{j,k} \circ dual$.

    Finally, suppose that $dual_k^{-}$ is applicable on $\EE^+$, or equivalently, $dual_k^+$ is applicable on $dual(\EE^+)$. We claim that $k \not\in I_{\rho,2}^+$. Suppose the contrary. We may assume that $k$ is the largest element in $(I_{\rho,2}^+,>)$ after row exchanges if necessary, which becomes the smallest element in $(I_{\rho,2}^+,>')$ after taking dual. The corresponding extended segments in  $sh^1(\widetilde{\EE_{\rho,2}})$ and $\widetilde{\EE_{\rho,2}}$ in \eqref{eq raising operator add^1} are 
    \[\{([A_{k}+1, -B_k+1]_{\rho}, l_{k}', \eta_k'),([A_{k}, -B_k]_{\rho}, l_{k}', \eta_k')\}.\]
    If the conditions (1) and (2) in Definition \ref{def partial dual} hold for the first extended segment, then Theorem \ref{thm non-vanishing}(i)(b) fails for the second extended segment, a contradiction. We conclude that $k \not\in I_{\rho,2}^+$. This completes the verification of this case and the proof of the lemma.
\end{proof}

\section{\texorpdfstring{A candidate set for the unitary dual}{}}\label{sec A+}

Let $G$ be any connected reductive group defined over $F$ and $M$ be any Levi subgroup. 
An unramified character of $M$ is a character $\chi:M\rightarrow\mathbb{C}^\times$ that is trivial on all compact subgroups.
An unramified character $\chi$ of $M$ is called \emph{real} if it is positively real-valued. Let $X_{un,\R}(M)$ be the set of all real unramified characters of $M.$
In this section, we define a candidate set for the unitary dual of $G$ as follows. 

\begin{defn}\label{def closure A}
Let $G$ be any connected reductive group defined over $F$. Assume that the local Arthur conjecture as in {\cite[Conjectures 6.1 and 6.2]{Art89}} holds for any Levi subgroup $M$ of $G$. Let $\Pi_{{A}}(G)$ (resp. $\Pi_{{A}}(M)$) be the set of all Arthur representations of $G$ (resp. $M$).

\begin{enumerate}
    \item [(a)] For $k \in \Z_{\geq 0}$, we define $ \Pi_{\overline{A}}^{(k)}(G)$ (and similarly $ \Pi_{\overline{A}}^{(k)}(M)$ for any Levi subgroup $M$ of $G$) inductively as follows. Set $ \Pi_{\overline{A}}^{(0)}(G):= \Pi_{A}(G)$. For $k \geq 1$, and any $\pi \in \Pi(G)$, we define $\pi \in \Pi_{\overline{A}}^{(k)}(G)$ if there exists a quadruple
    \begin{equation}\label{quadruple}
        (M, \tau\in\Pi_A(M), \chi_0 \in X_{un,\R}(M), \chi_1 \in X_{un,\R}(M)),
    \end{equation}
    satisfying the following conditions.
    \begin{enumerate}
        \item [(1)] The character $\chi_{0}$ lies in the set 
        \[{U:= \{\chi\in X_{un,\R}(M) \ | \ \mathrm{Ind}_P^G\tau \otimes \chi \ \mathrm{is \ irreducible\ and \ Hermitian}\},}\]
        and there exists a Levi subgroup $M'$ of $G$ and $\tau_{M'} \in \Pi_{\overline{A}}^{(k-1)}(M')$ such that       
        $\Ind_{P}^G \tau \otimes \chi_0 = \Ind_{P'}^G \tau_{M'}$, where $P$ and $P'$ are parabolic subgroups with Levi subgroups $M$ and $M',$ respectively. 
        \item [(2)] The character $\chi_1$ lies in the connected component of $U$ containing $\chi_0$ and $\pi= \Ind_{P}^{G} \tau \otimes \chi_1$.
    \end{enumerate}
Finally, we let $ \Pi_{\overline{A}}(G):= \bigcup_{k \in \Z_{\geq 0}} \Pi_{\overline{A}}^{(k)}(G)$. 

\item [(b)] We say that $\pi \in \Pi_{\overline{A}}^{\lim}(G)$ if there exists a quadruple as in \eqref{quadruple} satisfying the following conditions.
\begin{itemize}
    \item [($1'$)] The character $\chi_{0}$ lies in the open set $U$ and $\Ind_{P}^G \tau \otimes \chi_0 \in \Pi_{\ol{A}}(G)$.
    
    \item [($2'$)] The character $\chi_1$ lies in closure of the connected component of $U$ containing $\chi_0$ and $\pi$ is a subquotient of $\Ind_{P}^{G} \tau \otimes \chi_1$.
\end{itemize}

\item [(c)] We define another set $\Pi_{\overline{A}}^{\lim'}(G)$ following the procedure defining 
$\Pi_{\overline{A}}(G)$ except replacing $(2)$ by $(2)'$ in each step. 
\end{enumerate}
\end{defn}

First we obtain the following. 

\begin{thm}\label{AbarAlim}
The following inclusion relations hold.
\[
    \Pi_{{A}}(G)\subset\Pi_{\overline{A}}(G)\subset\Pi_{\overline{A}}^{\lim}(G)\subset \Pi_{\overline{A}}^{\lim'}(G)\subset \Pi_u(G)
    \]
\end{thm}
\begin{proof}
 {First, we show that $\Pi_{\overline{A}}^{(k)}(M)\subset\Pi_u(M)$ for all Levi subgroups $M$ of $G$, by applying induction on $k$. This implies the inclusion $\Pi_{\overline{A}}(M)\subset\Pi_u(M)$.

  When $k=0$, the theory of local Arthur packets we assumed implies that $\Pi_{\overline{A}}^{(0)}(M)= \Pi_A(M) \subset \Pi_u(M)$. For $k >0$, let $(M, \tau, \chi_0, \chi_1)$ be a quadruple as in \eqref{quadruple} corresponding to an arbitrary representation $\pi \in \Pi_{\overline{A}}^{(k)}(G)$, and follow the notation in Definition \ref{def closure A}(a). By induction hypothesis, $ \tau_{M'}$ is unitary, and so is the irreducible unitary induction $  \Ind_{P'}^G \tau_{M'}=\Ind_P^G \tau \otimes \chi_0$ by Proposition \ref{facts on unitary reps}(1). Thus, by Proposition \ref{facts on unitary reps}(4), every representation of the form $\Ind_{P}^G \tau \otimes \chi$ where $\chi$ lies in the same connected component of $U$ containing $\chi_1$ is unitary. In particular, the representation $\pi = \Ind_{P}^G \tau \otimes \chi_1$ is unitary. This completes the induction process that $\Pi_{\overline{A}}^{(k)}(G)\subset\Pi_u(G)$ for all $k$.

The inclusion  $\Pi_{\overline{A}}^{\lim}(G)\subset \Pi_{\overline{A}}^{\lim'}(G)$ is straightforward by definition. 
  The proof that $\Pi_{\overline{A}}^{\lim'}(G)\subset\Pi_u(G)$ is similar as above except that we consider subquotients of the induction $\Ind_{P}^G \tau \otimes \chi$, where $\chi$ lies in the boundary of the connected component of $U$ containing $\chi_0$, which are unitary by Proposition \ref{facts on unitary reps}(5). This completes the proof of the theorem.
  }
\end{proof}

\begin{remark}\label{rmk Tadic construction} \

(1) We recall four types of constructions of unitary representations (see \cite[\S 3]{Tad93} and also Proposition \ref{facts on unitary reps}):
    \begin{enumerate}
        \item [(a)] Unitary parabolic induction.
        \item [(b)] Complementary series.
        \item [(c)] Irreducible subquotients of ends of complementary series.
        \item [(d)] Unitary parabolic reduction.
    \end{enumerate}
    Note that constructions of type (a), (b) and (c) result in non-isolated representations.    Clearly, Definition \ref{def closure A} already includes constructions of type (a), (b) and (c) (restricting to complementary series from Arthur representations). 
    It also covers construction of type (d) in the following sense. Let $\pi= \theta \rtimes \sigma$ be an irreducible parabolic induction where both $\theta$ and $\sigma$ are Hermitian. Assume that $\pi \in \Pi_{\overline{A}}^{(k)}(G_n)$.  Then one can check that $\sigma \in \Pi_{\overline{A}}^{(k')}(G_m)$ for some $k'\leq k$ and $m<n$ by applying induction on $k$. {For $k=0$, see Theorem \ref{thm preservation and independence}(a) below for example.} 

    (2) As a rephrase of 
    Definition \ref{def closure A}, 
    $\Pi_{\overline{A}}^{(k)}(G_n)$ can be inductively obtained as follows: for any parabolic subgroup $P'=M'N'$, consider all irreducible unitary induction $\Ind_{P'}^G \tau_{M'}$ with inducing data $\tau_{M'} \in \Pi_{\overline{A}}^{(k-1)}(M')$. Then $\Pi_{\overline{A}}^{(k)}(G_n)$ consists of all irreducible continuous Hermitian deformations of $\Ind_{P'}^G \tau_{M'}=\Ind_P^G\tau\otimes\chi_0$ via varying $\chi_0$. 
This specifies the form of the complementary series representations we need to consider. 

    (3) By Theorem \ref{AbarAlim}, one sees that $\Pi_{\overline{A}}^{\lim}(G) $ is indeed a subset of $\Pi_u(G)$. We expect that it contains the closure of $\Pi_{\ol{A}}(G)$ under the Fell topology of the unitary dual. We will discuss some examples in the following subsections (see Examples \ref{ex A+}, \ref{ex A+ unit ind}, and \ref{ex corank 3 limit}), which inspire the definition of $\Pi_{\ol{A}}^{\lim}(G)$ and motivate   
    us to conjecture that $\Pi_{\ol{A}}^{\lim}(G)$ is a candidate set of $\Pi_u(G)$ (see Conjecture \ref{conj A+} below).
    
(4) Instead of taking limits at the final step only, in the definition of  $\Pi_{\ol{A}}^{\lim'}(G)$, we take limits at every step. 
As shown in Theorem \ref{AbarAlim}, by definition, $\Pi_{\ol{A}}^{\lim}(G) \subseteq \Pi_{\ol{A}}^{\lim'}(G)$. However, we expect from examples that indeed $\Pi_{\ol{A}}^{\lim}(G) = \Pi_{\ol{A}}^{\lim'}(G)$ (see Conjecture \ref{conj A+} below).
\end{remark}

By Tadi{\'c}'s classification of the unitary dual for $\GL_n$ (\cite{Tad86}), one directly obtains the following theorem.

\begin{thm}[Tadi{\'c}]\label{thm GL Pi A bar = Pi u}
    For $\RG=\GL_n$, we have that $\Pi_{\ol{A}}(G)=\Pi_u(G)=\Pi_{\ol{A}}^{\lim}(G)=\Pi_{\ol{A}}^{\lim'}(G)$.
\end{thm}

\subsection{Definition \ref{def closure A} for classical groups} 

In this section, we show that Definition \ref{def closure A} for split symplectic and odd special orthogonal groups $G_n$ has the following more explicit form. We remark that the analogues for other classical groups (e.g. unitary groups) would follow similarly.

\begin{defn}\label{def closure A+}\

\begin{enumerate}
    \item [(a)] For $k \in \Z_{\geq 0}$, we define subsets $ \Pi_{\overline{A}}^{(k)}(G_n)$ of $\Pi_u(G_n)$ inductively as follows. Set $ \Pi_{\overline{A}}^{(0)}(G_n):= \Pi_{A}(G_n)$. For $k \geq 1$, and any $\pi \in \Pi(G_n)$, we say that $\pi \in \Pi_{\overline{A}}^{(k)}(G_n)$ if there exists a triple 
    \begin{align}\label{eq triple A bar Gn}
        ( \Pi_{\underline{x}}= \Pi_{x_1,\ldots, x_s}= u_{\rho_1}(a_1,b_1)\vert\cdot\rvert^{x_1}\times \cdots \times u_{\rho_s}(a_s,b_s)\vert\cdot\rvert^{x_s} \rtimes \pi_A, \underline{y} \in \R^s, \underline{z} \in \R^s),
    \end{align}
satisfying the following conditions.
\begin{enumerate}
    \item  [(1)]
    $\pi_A \in\Pi_{A, \, gp}(G_{m})$ for some $m <n$, and $\rho_i$'s are irreducible unitary supercuspidal representations of general linear groups (not necessarily selfdual).
    \item [(2)] The point $\underline{y}$ lies in the set 
    \[U:=\{\underline{w} \in \R^s\ | \ \Pi_{\underline{w}} \text{ is irreducible and Hermitian}\}\]
    and { $\Pi_{\underline{y}}= \tau \rtimes \pi^{(k-1)}$  for some unitary representation $\tau$ of $\GL_d(F)$ and $\pi^{(k-1)}\in \Pi_{\overline{A}}^{(k-1)}(G_{n-d})$.} 
        \item [(3)] The point $\underline{z}$ lies in the unique connected component of $U$ containing $\underline{y}$ and $\pi \cong \Pi_{\underline{z}}$.
\end{enumerate}
Finally, we let $ \Pi_{\overline{A}}(G_n):= \bigcup_{k \in \Z_{\geq 0}} \Pi_{\overline{A}}^{(k)}(G_n)$.

\item [(b)] We say that $\pi \in \Pi_{\overline{A}}^{\lim}(G_n)$ if there exists a triple as in \eqref{eq triple A bar Gn} satisfying Condition (1) in Part (a) and
\begin{itemize}
    \item [($2'$)] The point $\underline{y}$ lies in the open set $U$ and  $\Pi_{\underline{y}} \in \Pi_{\overline{A}}(G_n)$.
    
    \item [($3'$)] The point $\underline{z}$ lies in the \textbf{closure} of the unique connected component of $U$ containing $\underline{y}$ and $\pi $ is a subquotient of $ \Pi_{\underline{z}}$.
\end{itemize}

    \item [(c)] We say that $\pi \in \Pi_{\overline{A}}^{\lim'}(G_n)$ if there exists a quadruple as in \eqref{quadruple} satisfying the conditions $(1), (2)$, and $(3')$ above.
\end{enumerate}
\end{defn}

For the groups $G_n$, we explain in the following that $ \Pi_{\overline A}^{\lim} (G_n)$ in Definition \ref{def closure A} and $ \Pi_{\overline A}^{\lim}(G_n)$ in  Definition \ref{def closure A+} are indeed the same sets. Similarly, the sets $ \Pi_{\overline A}^{\lim'} (G_n)$ in Definition \ref{def closure A} and $ \Pi_{\overline A}^{\lim'}(G_n)$ in  Definition \ref{def closure A+} are also the same sets. In the following paragraphs, we temporarily write $\Pi_{\overline{A},1}^{(k)}(G_n)$, $\Pi_{\overline{A},1}(G_n)$ and $\Pi_{\overline{A},1}^{\lim}(G_n)$ for the sets defined in Definition \ref{def closure A}, and write $\Pi_{\overline{A},2}^{(k)}(G_n)$, $\Pi_{\overline{A},2}(G_n)$ and $\Pi_{\overline{A},2}^{\lim}(G_n)$ for the sets defined in Definition \ref{def closure A+}.

For a general quadruple as in \eqref{quadruple}, the Levi subgroup $M$ and the representation $\tau$ take the form
\begin{align}
 \label{general M}  M&=\bigtimes_{i=1}^s \GL_{d_i}(F) \times G_m,\\
   \label{general tau} \tau& = \bigotimes_{i=1}^s\left( u_{\rho_{(i,1)}}(a_{(i,1)},b_{(i,1)})\times \cdots \times u_{\rho_{(i, j_i)}}(a_{(i,j_i)},b_{(i,j_i)})\right) \otimes \pi_A.
\end{align}
Without loss of generality, we may assume that $\pi_A\in\Pi_{A,\,gp}(G_m)$ by Theorem \ref{thm red from nu to gp}.
On the other hand, the triple in \eqref{eq triple A bar Gn} only includes the cases that $j_i=1$ for $i=1,\ldots, s$. Therefore, we have $\Pi_{\overline{A},2}(G_n) \subseteq \Pi_{\overline{A},1}(G_n)$. 

Now we illustrate the opposite inclusion. Take any 
\[\pi= \bigtimes_{i=1}^s\left( u_{\rho_{(i,1)}}(a_{(i,1)},b_{(i,1)})\times \cdots \times u_{\rho_{(i, j_i)}}(a_{(i,j_i)},b_{(i,j_i)})\right)\lvert\cdot\rvert^{x_i} \rtimes \pi_A \in \Pi_{\overline{A},1}^{(k)}(G_n).\]
If $k=0$, then $\pi \in \Pi_A(G)= \Pi_{\overline{A},2}^{(0)}(G_n)$. Thus, we assume that $k\geq 1$. By definition, there exists a path $\{\gamma(t)=( \gamma_1(t),\ldots,\gamma_s(t)) \ | \ t \in [0,1]\} \subset \BR^s$ such that $\gamma(0)= (x_1,\ldots, x_s)$, and 
\[ \pi_t:=   \bigtimes_{i=1}^s\left( u_{\rho_{(i,1)}}(a_{(i,1)},b_{(i,1)})\times \cdots \times u_{\rho_{(i, j_i)}}(a_{(i,j_i)},b_{(i,j_i)})\right)\lvert\cdot\rvert^{\gamma_i(t)} \rtimes \pi_A  \]
is irreducible for all $t \in [0,1]$, and $\pi_{1} = \tau \rtimes \pi_1'$ such that $\tau$ is unitary (hence lies in $\Pi_{\overline{A}}(\GL_{n-m}(F))$ by Theorem \ref{thm GL Pi A bar = Pi u}) and $\pi_1' \in \Pi_{\overline{A},1}^{(k-1)}(G_m)$, for some $m \leq n$. Applying induction on both $k$ and $n$, we assume that $\pi_1' \in \Pi_{\overline{A},2}^{(k')}(G_m)$ for some $k'$.

Let $I:= \bigsqcup_{i=1}^s \{ (i,1),\ldots, (i,j_{i})\}$ and set $x_{(i,l)}:= x_i$. We may rewrite
\begin{align*}
    \pi& = \bigtimes_{i=1}^s\left( u_{\rho_{(i,1)}}(a_{(i,1)},b_{(i,1)})\lvert\cdot\rvert^{x_i}\times \cdots \times u_{\rho_{(i, j_i)}}(a_{(i,j_i)},b_{(i,j_i)})\lvert\cdot\rvert^{x_i}\right) \rtimes \pi_A\\
    &= \bigtimes_{l \in I} u_{\rho_{l}}(a_l, b_l)\lvert\cdot\rvert^{x_l} \rtimes \pi_A.
\end{align*}
For $\epsilon= (\epsilon_l)_{l \in I}\in \BR^{|I|}$, we consider 
\[\Pi_{\epsilon}:=\bigtimes_{l \in I} u_{\rho_{l}}(a_l, b_l)\lvert\cdot\rvert^{x_l+\epsilon_l} \rtimes \pi_A.\]
Then by the irreducibility criterion for $u_{\rho_l}(a_l,b_l) \lvert\cdot\rvert^{y_l} \times u_{\rho_{l'}}(a_{l'},b_{l'}) \lvert\cdot\rvert^{y_{l'}}$  (\cite[Theorem 1.1]{Tad14}) and $ u_{\rho_l}(a_l,b_l) \lvert\cdot\rvert^{y_l} \rtimes \pi_A$ (\cite{Ato22d}) and \cite[Theorem 1.2]{BS25}, it follows that $\Pi_{\epsilon}$ is irreducible for any $\epsilon$ in a small neighborhood of $(0,\ldots, 0)$, since $\pi= \Pi_{(0,\ldots,0)}$ is irreducible. Moreover, we see that $\pi_1= \Pi_{\epsilon}$ for some $\epsilon$ in the connected component of $\{\epsilon' \ | \ \Pi_{\epsilon'} \text{ is irreducible}\}$ that contains $(0,\ldots, 0)$. Therefore, we conclude by definition that $\pi \in \Pi_{\overline{A},2}^{(k'+1)}(G_n) \subseteq \Pi_{\overline{A},2}(G_n)$, since $\pi_1' \in \Pi_{\overline{A},2}^{(k')}(G_m)$. Hence, $\Pi_{\overline{A},1}(G_n) = \Pi_{\overline{A},2}(G_n)$. 
Using similar arguments, we can show that $\Pi_{\overline{A},1}^{\lim}(G_n) = \Pi_{\overline{A},2}^{\lim}(G_n)$.

\subsection{Motivating examples}
In this subsection, we give several examples on various subsets of $\Pi_u(G)$ for $G=G_n$, which motivate our definitions of $\Pi_{\overline{A}}(G)$ and $\Pi_{\overline{A}}^{\lim}(G).$

 We begin by presenting some examples of representations of corank $1$ of $G_n$ that demonstrate the \emph{proper containment} of both: 
$$
\Pi_{A+, \, u}(G_n):=\Pi_{A+}(G_n)\cap\Pi_u(G_n) \subsetneq \Pi_{A+}(G_n) \quad \text{and} \quad \Pi_{A+, \, u}(G_n) \subsetneq \Pi_u(G_n).
$$
Recall that $
\Pi_{A+}(G_n):=\cup_{\psi\in\Psi^+_{1/2}(G)}\Pi_{\psi}$ (see Definition \ref{A parameters}). On the other hand, to guarantee that the representations in $\Pi_{A+}(G_n)$ are Hermitian, it is necessary to introduce a smaller subset $\Psi^{+}_{\text{unit}}(G_n)$, see \eqref{eq def Psi^+_unit}.
Indeed, for any $\psi \in \Psi^{+}_{1/2}(G_n) \setminus \Psi^+_{\mathrm{unit}}(G_n)$, the representations in $\Pi_\psi$ are \emph{non-unitary}, as they fail to be Hermitian by \cite[Theorem 4.2(ii)]{Tad09a}. While it is conjectured that $\Pi_\psi$ consists of unitary representations when $\psi \in \Psi^+_{\mathrm{unit}}(G_n)$, as in \cite[Conjecture 8.3.1]{Art13}, Part (1) of the following example shows that this \emph{need not hold} in general.

\begin{exmp}\label{ex A+}
Let $\rho \in \mathcal{C}^{sd}$ and $\sigma \in \mathcal{C}_{cl}$. 
Consider the representation
$\Pi_x := \rho\vert\cdot\rvert^x \rtimes \sigma$
of $G_n$ with $x \in \mathbb{R}_{\geq 0}$. Let $\alpha = \alpha_{\rho, \sigma} \geq 0$ be the unique reducibility point. Then
\begin{itemize}
    \item $\Pi_x$ is irreducible if and only if $x \neq \alpha$;
    \item $\Pi_x$ is both unitary and irreducible if and only if $0 \leq x < \alpha$.
\end{itemize}
This follows from \cite[\S 7.1]{Tad23}.

\begin{enumerate}
    \item [(1)] Assume that $\alpha=0$. Then for $x \in (0,\infty)$, $\Pi_x$ is irreducible but not unitary. Now, let $\psi$ be a tempered local Arthur parameter such that $\sigma \in \Pi_{\psi}$. For $x \in (0, \half{1})$, we have $\Pi_x\in \Pi_{\psi_x}$, where
    $\psi_x= (\rho \vert\cdot\rvert^{x} + \rho\vert\cdot\rvert^{-x})+ \psi \in \Psi^{+}_{\text{unit}}(G_n).$
    Consequently, for $x \in (0,\half{1})$, we have 
    $$\Pi_x \in \Pi_{A+}(G_n) \setminus \Pi_{A+,\, u}(G_n),$$
    which provides a counter-example to \cite[Conjecture 8.3.1]{Art13}. When $\sigma$ is generic, this case is already covered in \cite[Theorem 8.1]{Sha90}.

    \item [(2)] Assume that $\alpha\geq  \half{1}$. For any $x \in \R_{\geq 0} \setminus \{\alpha\}$, $\Pi_x$ is irreducible, and 
    $$ \Pi_{x} \in \begin{cases}
            \Pi_{u}(G_n) &\text{ if and only if }x \in [0,\alpha),\\
            \Pi_{A+,\, u}(G_n)& \text{ if and only if }x \in [0,\half{1}) \cup ([0,\alpha) \cap \alpha+\Z) ,\\
            \Pi_{A, \, gp}(G_n)=\Pi_{u, \, gp}(G_n)& \text{ if and only if }x \in [0,\alpha) \cap \alpha+\Z,\\
            \Pi_{u, \, bp}(G_n)& \text{ if and only if }x \in [0,\alpha) \cap \alpha+ \half{1}+\Z,\\
            \Pi_{A, \, bp}(G_n)& \text{ if and only if }x \in  \{0\} \cap \alpha + \half{1}+\Z.
        \end{cases}$$
    Here $\Pi_{u, \, bp}(G_n)$ $($resp. $\Pi_{A, \, bp}(G_n))$ is the set of unitary (resp. Arthur) representations of $G_n$ which are of bad parity. The case of $\Pi_{A, \, bp}(G_n)$ follows from Theorem \ref{thm red from nu to gp}, and the case for $\Pi_{A, \, gp}(G_n)$ follows from Theorem \ref{thm AM}. 
This example implies that the inclusion $\Pi_{A,\, bp}(G_n) \subset \Pi_{u,\, bp}(G_n)$ can also be strict.
\end{enumerate}  
\end{exmp}

Example \ref{ex A+} (2) above implies that one reason for the containment $\Pi_{A+,\, u}(G_n) \subset \Pi_{u}(G_n)$ being strict is that the endpoints of the complementary series contained in $\Pi_{A+}(G_n)$ may still be irreducible, and hence could be extended further (see the following example).
This example also suggests that, besides considering the complementary series, we need to consider the unitary parabolic induction as well.

\begin{exmp}\label{ex A+ unit ind}
Assume that $\alpha_{\rho,\sigma}=\half{3}$. The parabolic induction
$\Pi_{x,y}:= \rho\vert\cdot\rvert^{x} \times \rho\vert\cdot\rvert^{y} \rtimes \sigma$, with $(x,y) \in \R_{\geq 0}^2$, 
is irreducible and unitary in the following regions $($Figure \ref{Figure corank 2}, \cite[\S 7.4]{Tad23}$)$: 

\begin{figure}[H]
\begin{center}
\scalebox{.7}{\small
\begin{tikzpicture}[>=stealth]
\filldraw[fill=gray!30, dashed] (0,0) -- (2,0) -- (0,2) -- (0,0) -- cycle;
\filldraw[fill=gray!30, dashed] (0,2) -- (1,3) -- (0,3) -- (0,2) -- cycle;
\filldraw[fill=gray!30, dashed] (2,0) -- (3,1) -- (3,0) -- (2,0) -- cycle;
\node at (0.7,0.7) {$C_0$};
\node at (0.3,2.7) {$C_1$};
\node at (2.7,0.3) {$C_1'$};
\draw[dashed] (0,2) --  (2,4) node[above] {$ y=x+1 $};
\draw[dashed] (2,0) --  (4,2) node[above] {$ y=x-1 $};
\draw[dashed] (0,3) --  (4,3) node[right] {$ y=\half{3} $};
\draw[dashed] (3,0) --  (3,4) node[right] {$ x=\half{3} $};
\draw[dashed] (-0.5,2.5) --  (2.5,-0.5) node[right] {$ x+y=1$};
\draw[->] (0,-0.5) --  (0,4) node[above] {$ y $};
\draw[->] (-0.5,0) --  (4,0) node[right] {$ x $};

\node at (1,1)[black]{$\oldbullet$} ;
\node at (1.5,1)[black]{$P_1$} ;
\node at (0,2)[black]{$\oldbullet$} ;
\node at (0.5,2)[black]{$P_2$} ;
\node at (1,3)[black]{$\oldbullet$} ;
\node at (1,3.5)[black]{$P_3$} ;

\node at (0,3)[black]{$\oldbullet$} ;
\node at (-0.5,3)[black]{$P_4$} ;

\end{tikzpicture}}
\end{center}
\caption{Unitary subquotients of $\Pi_{x,y}$}\label{Figure corank 2}
\end{figure}
\noindent where $C_0= \{ (x,y) \in \R^2_{\geq 0} \ | \  x+y < 1 \},$ $C_1= \{ (x,y) \in \R^2_{\geq 0} \ | \  x+1 < y<3/2 \},$ and $C_1'= \{ (x,y) \in \R^2_{\geq 0} \ | \  y+1 < x<3/2\}.$
We now claim that $\Pi_{x,y}\in \Pi_{\overline{A}}^{(1)}(G_n)$ for $(x,y)\in C_0$, and $\Pi_{x,y}\in \Pi_{\overline{A}}^{(2)}(G_n)$ for $(x,y)\in C_1\cup C_1'$. The first assertion is clear since the representation corresponding to the point $(0,0)$ is of Arthur type. To see that $\Pi_{x,y}\in \Pi_{\overline{A}}^{(2)}(G_n) $ when $(x,y)\in C_1$, we consider the representation 
$\pi_y:= \rho\vert\cdot\rvert^{y} \rtimes \sigma.$ Since $\pi_y$ is irreducible for $y\in (0,\alpha)$ and $\pi_0$ is of Arthur type,  $\pi_y$ lies in ${\Pi_{\overline{A}}^{(1)}}(G_{n-\dim(\rho)})$ by definition. Then consider the parabolic induction 
$\Pi_x:= \rho\vert\cdot\rvert^{x} \rtimes \pi_y$ with $x \in \R$, 
which is irreducible when $x$ lies in the open set  $ U :=\R \setminus \{ \pm \alpha, \pm (y+1), \pm(y-1)\}$ of $\R$. The unique connected component of $U$ that contains the point $x=0$, which corresponds to a unitary parabolic induction, is $\Sigma=(-y+1, y-1)$. Thus, we realize $\Pi_{x,y}$ as the irreducible representation $\Pi_{x}$, where $x \in \Sigma$. This implies that $ \Pi_{x,y}\in \Pi_{\overline{A}}^{(2)}(G_n)$. The above argument is exactly Tadi{\'c}'s proof of the unitarity of $\Pi_{x,y}$ for $(x,y) \in C_1$.  Similarly, for $(x,y)\in C_1'$, we have $\Pi_{(x,y)}\in \Pi^{(2)}_{\overline{A}}(G_n)$. 

  Following Definition \ref{def closure A+}, we explain that the representations on the boundary of $C_0$, $C_1$ are actually in $\Pi_{\overline{A}}(G_n)$. On the dashed line from $P_1$ to $P_2$, we have $\Pi_{x,y}$ is of length 2 and 
\[\Pi_{x,y}= u_{\rho}(1,2)\lvert \cdot\rvert^{\half{y-x}} \rtimes \sigma + u_{\rho}(2,1)\lvert \cdot\rvert^{\half{y-x}} \rtimes \sigma. \]
In particular, these subquotients lie in $ \Pi_{\overline{A}}^{(1)}(G_n)$. Similarly, on the dashed line from $P_2$ to $P_3$ but not including $P_3$, we have 
\[ \Pi_{x,y}= u_{\rho}(1,2)\lvert \cdot\rvert^{\half{x+y}} \rtimes \sigma + u_{\rho}(2,1)\lvert \cdot\rvert^{\half{x+y}} \rtimes \sigma, \]
and these subquotient representations also lie in $ \Pi_{\overline{A}}^{(1)}(G_n)$. The irreducible representations at the point $P_3$ are all of Arthur type. Finally, the induction $\rho\lvert\cdot\rvert^{\half{3}} \rtimes \sigma= \pi_1 + \pi_2$ where $\pi_1,\pi_2$ are both of Arthur type. For $(x,y)$ on the dashed line from $P_3$ to $P_4$ excluding $P_3$, we have
\[ \Pi_{x,y}= u_{\rho}(1,1)\lvert \cdot\rvert^{x} \rtimes \pi_1 + u_{\rho}(1,1)\lvert \cdot\rvert^{x} \rtimes \pi_2.  \]
Hence, these subquotient representations 
 again lie in $ \Pi_{\overline{A}}^{(1)}(G_n)$.

In conclusion, for an irreducible subquotient $\pi$ of $\Pi_{x,y}$ in this example, we see that $\pi\in \Pi_u(G_n)$ if and only if $\pi\in \Pi_{\overline{A}}(G_n).$
\end{exmp}

Motivated by the above example, we define $\Pi_{ind}(G)$ to be the subset of $\Pi(G)$ consisting of representations that are irreducibly induced from an ``essentially" Arthur type representation of a Levi subgroup. Namely,
\begin{align}\label{eq def of cand}
    \Pi_{ind}(G):= \{ \pi \in \Pi(G)\ | \ \pi= \Ind_{P}^{G} \sigma \otimes \nu,\ \text{ for some }\sigma \in \Pi_A(M) \text{ and }\nu \in X_{un,\R}(M)  \}.
\end{align}
Here we recall that $X_{un,\R}(M)$ is the set of all real unramified characters of $M$. 
It is clear from the definition that $\Pi_{\overline{A}}(G) \subset \Pi_{u}(G) \cap 
\Pi_{ind}(G)$.  
 In \S \ref{sec algo cand}, we will provide an algorithm to determine whether a representation is in $\Pi_{ind}(G)$ for $G=G_n$.
From Example \ref{ex A+ unit ind}, it is attempting to guess that the inclusion $\Pi_u(G) \subseteq \Pi_{ind}(G)$ always holds. However, the following example shows that this fails already for representations of corank 3.

\begin{exmp}\label{ex corank 3 limit}
    Write $\alpha= \alpha_{\rho,\sigma}$ for short.    Consider the one-parameter family of representations 
    \[ \Pi_x= L(\rho\lvert\cdot\rvert^{-x-1},\Delta_{\rho}[-x+1,-x])\rtimes \sigma  \]
    for $x \geq 0$. By \cite[Lemma 8.2(1)]{Tad23}, $\Pi_x$ is irreducible if and only if $x \not\in \{ |\alpha-1|, \alpha, \alpha+1 \}$. Moreover, it is unitary if and only if $0 \leq x \leq \alpha-1$.
If $x- \alpha \in \Z$, then one can check that $\Pi_x$ is of Arthur type (see Proposition \cite[Proposition 13.23]{HJLLZ24}). However, if $x -\alpha \not\in \Z$ and $0 \leq x < \alpha-1$, then $\Pi_x$ does not lie in $\Pi_{ind}(G_n)$. 

On the other hand, consider the two-parameter family
\[ \Pi_{x,y}= \rho\lvert\cdot\rvert^{-y} \times \Delta_{\rho}[-x+1,-x]\rtimes \sigma.  \]
Assume for simplicity that $\alpha=\half{5}$. Then $\Pi_{x,y} $ is irreducible and unitary in the following open regions 
\begin{align*}
    U_1&= \{(x,y)\in \R^2\ | \ \max(-x+2,x+1) <y<\alpha,  \},\\
    U_2&=\{(x,y)\in \R^2\ | \ \max(-x-1,x-2 ) < y < \min(-x+2,x+1)\}.
\end{align*}
\begin{figure}[H]
\begin{center}
\scalebox{.7}{\small
\begin{tikzpicture}[>=stealth]
\filldraw[fill=gray!30, dashed] (-.5,2.5) -- (1.5,2.5) -- (.5,1.5) -- (-.5,2.5) -- cycle;
\filldraw[fill=gray!30, dashed] (.5,1.5) -- (-1,0) -- (.5,-1.5) -- (2,0) -- (.5,1.5) -- cycle;
\node at (0.5,2.1) {$U_1$};
\node at (.5,0.5) {$U_2$};
\draw[dashed] (-2,-1) --  (2,3) node[above] {$ y=x+1 $};
\draw[dashed] (-1,3) --  (3,-1) node[right] {$ y=-x+2 $};
\draw[dashed] (-2,2.5) --  (3,2.5) node[right] {$ y=\half{5} $};
\draw[dashed] (0,-2) --  (2.5,0.5) node[right] {$ y=x-2 $};
\draw[dashed] (-2,1) --  (1,-2) node[right] {$ y=-x-1$};
\draw[->] (0,-2) --  (0,3) node[above] {$ y $};
\draw[->] (-2,0) --  (4,0) node[right] {$ x $};
\end{tikzpicture}}
\end{center}
\caption{Regions where $\Pi_{x,y}$ is irreducible and unitary}
\end{figure}
Thus, the one-parameter family $\{\Pi_x\ | \ 0 \leq x < \alpha-1\}$ is a family of irreducible subquotients of $\Pi_{x,y}$ where $(x,y)$ lies in the closures of $U_1$ and $U_2$ intersecting with $y=x+1$. Note that the representations $\Pi_{x,y} $ for $(x,y)\in U_1\cup U_2$ are in $\Pi_{\overline{A}}(G_n)$ by definition. This shows that the unitary representations $\Pi_x$ lie in $\Pi_{\overline{A}}^{\lim}(G_n)$, when $0\le x<\alpha-1$.
\end{exmp}

\subsection{Main conjecture}

Recall that, by definition, we naturally have an inclusion $\Pi_{\overline{A}}^{\lim}(G) \subseteq \Pi_u(G)$. 
Based on the discussion of those examples in the previous subsection, we conjecture that the inclusion is indeed an equality.

\begin{conj}[{Conjecture \ref{conj A+ intro}}]\label{conj A+}
Let $G$ be a connected reductive group and let $\Pi_{\overline{A}}(G), \Pi_{\overline{A}}^{\lim}(G), \Pi_{\ol{A}}^{\lim'}(G)$, $\Pi_{ind}(G)$ be the sets defined in Definition \ref{def closure A+} and \eqref{eq def of cand}. Then
\begin{enumerate}
    \item [(a)] $ \Pi_{\overline{A}}(G)= \Pi_{u}(G) \cap \Pi_{ind}(G)$.
    \item [(b)] $ \Pi_{\overline{A}}^{\lim}(G)=\Pi_{\ol{A}}^{\lim'}(G)= \Pi_{u}(G)$.
\end{enumerate}
\end{conj} 

In \S \ref{algorithm for A bar} below, for $G=G_n$, we will develop an explicit algorithm to determine the set $\Pi_{\overline{A}}(G) $ (Algorithm \ref{alg A bar}), which in turn generates the candidate set $\Pi_{\overline{A}}^{\lim}(G)$ of the unitary dual. Evidence for the exhaustion of this candidate set, and hence for the above conjecture, includes that it holds for many known cases of unitary duals, as discussed in the introduction. We remark that from the definitions of these sets, this conjecture implies that isolated unitary representations are contained in $\Pi_A(G)$, as conjectured by M. Tadi{\'c} (\cite[Conjecture 1.1]{Tad22}).

\subsection{More evidence for Conjecture \ref{conj A+}} Let $G=G_n$ in this subsection.
As further evidence for Conjecture \ref{conj A+}, we show that $\Pi_{{A+,\,u}}(G_n) \subset \Pi_{\overline{A}}(G_n)$. Recall that $\Pi_{A+, \, u}(G_n)=\Pi_{A+}(G_n)\cap\Pi_u(G_n)$, where $
\Pi_{A+}(G_n)=\cup_{\psi\in\Psi^+_{1/2}(G)}\Pi_{\psi}$ (see Definition \ref{A parameters}).

First, we recall the characterization of 
$\Pi_{A+,\, u}(G_n)$. 
Let $\psi\in \Psi^{+}_{\text{unit}}(G_n)$ with decomposition
\[ \psi= \bigoplus_{i \in I_{>0}} (\rho_i \vert\cdot\rvert^{x_i} \otimes S_{a_i} \otimes S_{b_i}+\rho_i^{\vee} \vert\cdot\rvert^{-x_i} \otimes S_{a_i} \otimes S_{b_i}) + \psi_{u}, \]
where $0< x_i< \half{1} $ for any $i \in I_{>0}$ and  $\psi_{u} \in \Psi(G_m)$ for some $m \leq n$. Every representation $\pi$ in $\Pi_{\psi}$ is of the form
\[ \pi=\bigtimes_{i \in I_{>0}} u_{\rho_i}(a_i, b_i) \vert\cdot\rvert^{x_i} \rtimes \pi_u\]
for some $\pi_u \in \Pi_{\psi_u}$ by Theorem \ref{thm red from nu to gp}. 
In \cite{HJLLZ25}, we proved the following characterization of $\Pi_{A+,\, u}(G_n)$, which is also independently proved in \cite{AM25}.

\begin{thm}[{\cite[Theorem 1.3]{HJLLZ25}}]\label{thm A+,u}
In the above notation, the representation $\pi$ is unitary if and only if for any $i \in I_{>0}$ such that $u_{\rho_i}(a_i,b_i)\rtimes \pi_u$ is reducible, the set
\[ \{j \in I_{>0}\ | \ \rho_j \cong \rho_i, a_j=a_i,\ b_j=b_i\}\]
has even cardinality.
\end{thm}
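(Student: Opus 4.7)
The plan is to prove the theorem by a deformation argument. Consider the one-parameter family
\[ \pi(s) = \bigtimes_{i \in I_{>0}} u_{\rho_i}(a_i, b_i)\lvert \cdot \rvert^{s x_i} \rtimes \pi_u, \qquad s \in [0,1], \]
so that $\pi(0) = \bigtimes_i u_{\rho_i}(a_i, b_i) \rtimes \pi_u$ is a unitary parabolic induction of the unitary representation $\pi_u$ and hence unitary, while $\pi(1) = \pi$. Because $\psi \in \Psi^+_{\text{unit}}(G_n)$, every $\pi(s)$ is Hermitian, so unitarity along the path is controlled by the signature of the invariant form at reducibility points. After an initial reduction via the Jantzen decomposition (Theorem \ref{thm Jantzen}) to a single self-dual cuspidal line, I would group the indices in $I_{>0}$ by the triple $(\rho, a, b)$ and handle each group $J$ separately.

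If $u_\rho(a,b) \rtimes \pi_u$ is irreducible, the reducibility locus of $\pi(s)$ in the $|J|$-dimensional parameter space comes only from the GL-type reducibilities $u_\rho(a,b)\lvert \cdot \rvert^{x_i} \times u_\rho(a,b)\lvert \cdot \rvert^{x_j}$, which by Tadi\'c's work occur only at $|x_i-x_j|=1$. Since $0 < x_i < \tfrac12$, the entire deformation region is free of such reducibility, so unitarity at $\underline{s}=0$ propagates to $\underline{s}=\underline{1}$ regardless of the parity of $|J|$. If instead $u_\rho(a,b) \rtimes \pi_u$ is reducible and $|J|$ is even, the plan is to use a Speh-pairing trick: pair the indices in $J$ into $|J|/2$ pairs and, for each pair $(i,i')$, deform the two-parameter subfamily toward the corner $x_i = x_{i'} = \tfrac12$. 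At this endpoint the summand $\rho\lvert \cdot \rvert^{1/2}\otimes S_a \otimes S_b + \rho\lvert \cdot \rvert^{-1/2}\otimes S_a\otimes S_b$, now appearing with even multiplicity, gets absorbed into a good-parity Arthur parameter in $\Psi(G_n)$ via the identity replacing two copies of this pair by a combination of the form $\rho\otimes S_{a\pm 1}\otimes S_{b\pm 1}$; the resulting representation lies in $\Pi_A(G_n)\subseteq \Pi_u(G_n)$, anchoring unitarity at the corner. Iterating over all pairs and checking irreducibility along the deformation path then yields unitarity of $\pi$.

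For the necessity direction, assume $|J|$ is odd and $u_\rho(a,b) \rtimes \pi_u$ is reducible. I would first pair off $|J|-1$ of the indices via the argument above, verifying en route that the base reducibility of $u_\rho(a,b) \rtimes (\text{new base})$ is preserved by the pairing, using extended-multi-segment analysis from \S\ref{ssec-EMSKO} (specifically stability of the relevant Jordan block under the row-exchange and union-intersection operators). This reduces to the case $|J|=1$, i.e.\ a single complementary series $u_\rho(a,b)\lvert \cdot \rvert^{sx}\rtimes\pi_u'$ with $u_\rho(a,b)\rtimes\pi_u'$ reducible at $s=0$. Here a Jacquet-module and intertwining-operator signature computation (generalizing \cite[Theorem~8.1]{Sha90}, which treats the case $(a,b)=(1,1)$) shows that the Hermitian form becomes indefinite for $s>0$, giving non-unitarity.

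The main obstacle is the Speh-pairing identity at the corner in the even case, namely the identification of a double copy of $\rho\lvert \cdot \rvert^{1/2}\otimes S_a \otimes S_b+\rho\lvert \cdot \rvert^{-1/2}\otimes S_a\otimes S_b$ with a good-parity Arthur summand, together with the corresponding rearrangement of parabolic induction. This should be carried out using the extended multi-segment operators of \S\ref{ssec-EMSKO} combined with the reducibility criteria for Speh $\rtimes$ tempered representations from \cite{Ato22d}. A secondary but important technical point is the persistence of the base reducibility of $u_\rho(a,b)\rtimes\pi_u$ under the iterated pairing reduction, which I expect follows from a direct supercuspidal-support calculation.
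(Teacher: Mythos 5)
The paper defers the proof of Theorem~\ref{thm A+,u} to \cite{HJLLZ25} (and notes the independent proof in \cite{AM25}), so no direct comparison with the paper's own argument is possible here. Your deformation framework — scaling all $x_i$ toward $0$, tracking the signature of the invariant Hermitian form, and splitting into the three subcases (irreducible base; reducible base with even multiplicity; reducible base with odd multiplicity) — is the natural strategy and correctly organizes the analysis. However, there are concrete gaps at exactly the places where the theorem's content actually lives. First, the ``Speh-pairing identity at the corner'' is not correct as stated: $\lvert\cdot\rvert^{\pm 1/2}$ is a Weil-group twist, not an Arthur-$\SL_2$ factor, so two copies of $\rho\lvert\cdot\rvert^{\pm 1/2}\otimes S_a\otimes S_b$ do not regroup into $\rho\otimes S_{a\pm 1}\otimes S_{b\pm 1}$ via Clebsch--Gordan. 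The relevant mechanism is Tadi\'c's decomposition of $u_\rho(a,b)\lvert\cdot\rvert^{1/2}\times u_\rho(a,b)\lvert\cdot\rvert^{-1/2}$ into a sum of Speh products on the GL side; one must then check which constituents of the classical-group induction at the corner are of good parity and Arthur type, verify that the deformation path from $\underline{x}$ to the corner avoids all reducibility hyperplanes (not just the GL ones), and then argue that positivity of the form propagates back inward from a reducible boundary point — this last step requires a subquotient/signature argument and is not a formal consequence of ``the corner is anchored in $\Pi_A(G_n)$.''

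Second, the odd-multiplicity necessity direction rests on two unproved claims. (i) After pairing off and absorbing $\lvert J\rvert - 1$ indices, the new base $\pi_u''$ must still satisfy that $u_\rho(a,b)\rtimes\pi_u''$ is reducible. You acknowledge this, but it is not a routine supercuspidal-support check: absorbing Speh factors genuinely changes the reducibility behavior as seen from Atobe's criterion \cite[Theorem~4.4]{Ato22d}, and one has to track the effect of the Speh products added at each corner. (ii) The crucial $\lvert J\rvert=1$ non-unitarity — that $u_\rho(a,b)\lvert\cdot\rvert^{x}\rtimes\pi_u'$ is non-unitary for $0<x<\tfrac12$ whenever $u_\rho(a,b)\rtimes\pi_u'$ is reducible — is the analytic heart of the theorem. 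The cited \cite[Theorem~8.1]{Sha90} covers only $(a,b)=(1,1)$ for generic representations, where the Plancherel-measure machinery applies; for general $u_\rho(a,b)$ and a general (typically non-generic) Arthur representation $\pi_u'$, the required normalized-intertwining-operator signature computation, or a reduction to the generic case, is a substantial piece of work with no shortcut. Finally, the opening reduction to a single self-dual cuspidal line via Theorem~\ref{thm Jantzen} implicitly uses a unitarity-preservation property of the Jantzen decomposition (essentially Conjecture~\ref{conj Tadic preservation}), which is not available unconditionally; for the $\Pi_{A+}$ family a correct reduction should instead run through the explicit structure of Theorem~\ref{thm red from nu to gp}, and this needs to be spelled out.
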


Note that an algorithm was given in \cite[Theorem 4.4]{Ato22d} to determine the irreducibility of $u_{\rho_i}(a_i,b_i)\rtimes \pi_u$.

\begin{thm}\label{cor closure A+,u=A,u}
    For the groups $G_n$, we have $\Pi_{{A+,\,u}}(G_n) \subset \Pi_{\overline{A}}(G_n)$.
\end{thm}

\begin{proof}

    Let $\pi\in\Pi_{A+,\, u}.$ Then, by Theorem \ref{thm A+,u}, $\pi$ is of the following form
    \begin{align}\label{eq pi closure A= closure A+}
         \pi= \left(\bigtimes_{i \in I_{red}} u_{\rho_i}(a_i,b_i)\vert\cdot\rvert^{x_i} \times u_{\rho_i}(a_i,b_i)\vert\cdot\rvert^{y_i}\right) \times 
    \left(\bigtimes_{i \in I_{irr}}u_{\rho_i}(a_i,b_i)\vert\cdot\rvert^{z_i} \right) \rtimes \pi_{A,\, gp},
    \end{align}
    where
    \begin{itemize}
        \item $\pi_{A,\, gp}$ is an Arthur representation of good parity;
        \item For any $i \in I:= I_{red} \sqcup I_{irr}$, the parabolic induction $u_{\rho_i}(a_i,b_i) \rtimes \pi_{A,\, gp}$ is reducible if and only if $i \in I_{red}$;
        \item For any $i \in  I_{red}$, $0 < |x_i|, |y_i| < \half{1}$. For any $i \in  I_{irr}$, $0 \leq  |z_i| < \half{1}$.
    \end{itemize}
    Let $\underline{x}=(x_i)_{i \in I_{red}}, \underline{y}=(y_i)_{i \in I_{red}}, \underline{z}=(z_i)_{i \in I_{red}}$.

We set up more notation. Consider the family of parabolic induction of $\GL_d(F)$
\[ \Tau_{\underline{\alpha},\underline{\beta}, \underline{\gamma}}:= \left(\bigtimes_{i \in I_{red}} u_{\rho_i}(a_i,b_i)\vert\cdot\rvert^{\alpha_i} \times u_{\rho_i}(a_i,b_i)\vert\cdot\rvert^{\beta_i}\right) \times 
    \left(\bigtimes_{i \in I_{irr}}u_{\rho_i}(a_i,b_i)\vert\cdot\rvert^{\gamma_i} \right),\]
where $\alpha_i, \beta_i ,\gamma_i \in \R$, $\underline{\alpha}= (\alpha_i)_{i \in I_{red}}, \underline{\beta}= (\beta_i)_{i \in I_{red}}$, and $\underline{\gamma}= (\gamma_i)_{i \in I_{irr}}$.
Let  $\Pi_{\underline{\alpha},\underline{\beta}, \underline{\gamma}} := \Tau_{\underline{\alpha},\underline{\beta}, \underline{\gamma}} \rtimes \pi_{A,\, gp}$, representations of $G_n$. 
     For any tuple $\delta= (\delta_i)_{i \in I_{red}}$, we let
    $|\delta|:= (|\delta_i|)_{i \in I_{red}}$ and $-|\delta|:= (-|\delta_i|)_{i \in I_{red}}.$ 
Since $\pi = \Pi_{\underline{x},\underline{y},\underline{z}}= \Pi_{{|\underline{x}|},{-|\underline{y}|}, \underline{z}}.$
    Also, we let $\underline{0}:=(0)_{i \in I_{irr}}$ or $(0)_{i \in I_{red}}$, we may assume $x_i>0$ and $y_i<0$ without loss of generality.
    
First, the induction $\Tau_{{|\underline{x}|},{-|\underline{x}|}, \underline{0}} $ is irreducible if $0 \leq  |x_i| <\half{1}$ and $\Tau_{{\underline{0}},{\underline{0}}, \underline{0}} \otimes \pi_{A,\, gp}$ is an irreducible unitary induction of an Arthur representation from a Levi subgroup 
 of $\GL_d(F)\times G_{n-d}$ {(\cite{Tad86})}. Thus, we see that $\Tau_{{|\underline{x}|},{-|\underline{x}|}, \underline{0}} \otimes \pi_{A,\, gp}  \in  \Pi_{\overline{A}}^{(1)}(\GL_d(F) \times G_{n-d} ).$ Next, note that the connected component of
 \[ U=\{ (\underline{\alpha}, \underline{\beta}, \underline{\gamma})\in \R^{|I|} \ | \ \Tau_{\underline{\alpha},\underline{\beta}, \underline{\gamma} \rtimes \Pi_{A,\, gp}}\text{ is irreducible} \}\]
 that $(\underline{x},\underline{y}, \underline{z})$ lies in must contain the following open set 
 \[ \Omega:=\{  (\underline{\alpha}, \underline{\beta}, \underline{\gamma})\in \R^{|I|}\ | \ 0< \alpha_i <\half{1}, \ 0> \beta_i> -\half{1},\ |\gamma_i|<\half{1} \}. \]
 As $(|\underline{x}|, -|\underline{x}|, \underline{0})$ is also in $\Omega$, we see that $\pi= \Tau_{\underline{x},\underline{y}, \underline{z} \rtimes \Pi_{A,\, gp}}$ lies in $\Pi_{\overline{A}}^{(2)}(G_n)$. This completes the proof of the theorem.
\end{proof}

\subsection{Some interesting consequences}
In this subsection, we prove some implications of
Conjecture \ref{conj A+ intro} for $G_n$.

\begin{cor}\label{cor implication of 1.1}
Assume that Conjecture \ref{conj A+ intro} holds for $G_n$. 
\begin{enumerate}
\item We have an inclusion: $\Pi_{iso}(G_n) \subset \Pi_{A,\, gp}(G_n).$ 
Assume further that both Parts of Question \ref{ques crit} are affirmative. Then
$\Pi_{iso}(G_n) \subset \Pi_{A,\, crit}(G_n),$
             which is a part of Conjecture \cite[Conjecture 1.1]{Tad22}. Recall that $\Pi_{iso}(G_n)$ is the of isolated representations in the unitary dual $\Pi_u(G_n)$. 
\item The unitary dual $\Pi_{u}(G_n)$ is preserved under the Aubert-Zelevinsky involution.
\item Conjectures \ref{conj Tadic preservation}, \ref{conj independence} hold for $G_n$.
\end{enumerate}
\end{cor}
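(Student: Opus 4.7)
The plan is to exploit the inductive construction of $\Pi_{\overline{A+,\,u}}(G_n)$ via the layers $\Pi_{\overline{A+,\,u}}^{(k)}(G_n)$ (Definition \ref{def closure A+}), in combination with the known structural results for $\Pi_A(G_n)$ from Theorem \ref{thm red from nu to gp}, Theorem \ref{thm A+,u}, and \S \ref{main results}.

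For Part (1), assuming Conjecture \ref{conj A+ intro} I would show that a representation in $\Pi_{u}(G_n)\setminus \Pi_{A,\,gp}(G_n)$ always admits a non-trivial unitary deformation, hence cannot be isolated. Concretely, for $\pi\in\Pi_{iso}(G_n)=\Pi_{iso}(G_n)\cap \Pi_{\overline{A+,\,u}}(G_n)$, I would argue in three stages: (i) if $\pi\in\Pi_{\overline{A+,\,u}}^{(k)}(G_n)$ with $k\geq 1$, then by Definition \ref{def closure A+} $\pi=\Pi_{\underline{z}}$ lies in a connected open component $\Sigma$ of irreducible parabolic inductions, whose points near $\underline{z}$ give a continuous family of irreducible representations; in the sub-component where each twist is imaginary the family is unitary, and the deformation can be propagated to a complementary series through $\underline{z}$, contradicting isolation; (ii) if $\pi\in\Pi_{A+}(G_n)\setminus \Pi_A(G_n)$, the parameter $\psi$ contains a summand $\rho_i\lvert\cdot\rvert^{x_i}\otimes S_{a_i}\otimes S_{b_i}$ with $0<x_i<1/2$ that can be deformed within $(0,1/2)$ by Theorem \ref{thm A+,u}; (iii) if $\pi\in\Pi_A(G_n)\setminus\Pi_{A,\,gp}(G_n)$, Theorem \ref{thm red from nu to gp} yields $\pi=\tau\rtimes\pi_{gp}$ where $\tau$ is a non-trivial product of generalized Speh representations of opposite parity, which can be twisted by $\lvert\cdot\rvert^{s}$ to produce an irreducible unitary complementary series. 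The refinement to $\Pi_{A,\,\mathrm{crit}}(G_n)$ then follows by Question \ref{ques crit}(2): any non-critical $\pi\in\Pi_{A,\,gp}(G_n)$ is an irreducible induction $\beta\rtimes p_{\mathrm{crit}}(\pi)$, and $\beta\lvert\cdot\rvert^{s}\rtimes p_{\mathrm{crit}}(\pi)$ provides the required deformation.

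For Part (2), I would proceed by induction on the level $k\geq 0$ in Definition \ref{def closure A+}. The base case $\pi\in\Pi_{A+,\,u}(G_n)$ uses three ingredients: the Aubert--Zelevinsky involution preserves $\Pi_{A,\,gp}(G_n)$ (known; cf. \S \ref{main results}); the involution interchanges $u_\rho(a,b)$ and $u_\rho(b,a)$; and the unitarity criterion in Theorem \ref{thm A+,u} is symmetric under $(a_i,b_i)\leftrightarrow (b_i,a_i)$. For the inductive step, given $\pi=\Pi_{\underline{z}}$ constructed from an intermediate triple $(\Pi_{\underline{x}},\underline{y},\underline{z})$ and a representation $\pi^{(k-1)}\in\Pi_{\overline{A+,\,u}}^{(k-1)}(G_{n-d})$, I would apply the Aubert--Zelevinsky involution to the entire family. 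Using compatibility of the involution with parabolic induction and preservation of the reducibility locus (the open set $U$ of Definition \ref{def closure A+}), the dualised family $\widehat{\Pi_{\underline{x}}}$ is parametrised over the same $U$; by induction $\widehat{\pi^{(k-1)}}\in\Pi_{\overline{A+,\,u}}^{(k-1)}(G_{n-d})$, and the intermediate point $\underline{y}$ still realises a unitary induction. Hence $\widehat{\pi}\in\Pi_{\overline{A+,\,u}}^{(k)}(G_n)$.

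For Part (3), I would combine Conjecture \ref{conj A+ intro} with the Arthur-type analogue of Tadi\'c's hypotheses (Theorem \ref{thm preservation and independence}) and the good parity case of Corollary \ref{cor implication of gp}(1). Unfolding the inductive construction, any $\pi\in\Pi_u(G_n)=\Pi_{\overline{A+,\,u}}(G_n)$ is an irreducible induction of twisted generalised Speh representations against a good parity Arthur representation. Since each Speh representation $u_\rho(a,b)\lvert\cdot\rvert^{x}$ is supported on a single cuspidal line $X_\rho$, the Jantzen decomposition of Theorem \ref{thm Jantzen} commutes with this construction. Preservation of unitarisability (Conjecture \ref{conj Tadic preservation}) thus reduces to the good parity case, which is the content of Corollary \ref{cor implication of gp}(1) applied to the good parity restriction of Conjecture \ref{conj A+ intro} (that is, Conjecture \ref{conj main}); independence of unitarisability (Conjecture \ref{conj independence}) is handled analogously, since the map $E$ acts only on the self-dual supercuspidal label and thus commutes with the induction of Speh factors, while the good parity core is transferred by Theorem \ref{thm preservation and independence}. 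The principal obstacle is Part (3): one must verify rigorously that the Jantzen decomposition of a representation in $\Pi_{\overline{A+,\,u}}^{(k)}(G_n)$ can be read off from the inductive construction without interactions between distinct cuspidal lines, and that the $E$-map intertwines complementary series constructions starting from different supercuspidal representations of the same reducibility depth. A secondary technical point in Part (2) is that Aubert--Zelevinsky should carry the open set $U$ bijectively to the corresponding open set of the dualised family; this reduces to standard reducibility criteria for products of Speh representations but requires careful bookkeeping.
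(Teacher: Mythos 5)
Your overall strategy is correct in spirit, but it is noticeably more labored than the paper's argument, and on Part (3) you flag (correctly) an obstacle that you never resolve, even though the paper has an intermediate result — Corollary \ref{cor preseveration for Pi A bar} — that handles it cleanly.

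The main structural difference is that you work exclusively with $\Pi_{\overline{A+,\,u}}(G_n)$ and its layers $\Pi_{\overline{A+,\,u}}^{(k)}$, whereas the paper works with $\Pi_{\overline{A}}(G_n)$ and its layers $\Pi_{\overline{A}}^{(k)}$ (these coincide by Theorem \ref{cor closure A+,u=A,u}, so both are legal starting points, but the choice matters for the base case). In Part (1) your three-case decomposition matches the paper's idea that complementary series and unitary parabolic inductions are never isolated (citing \cite[\S 3]{Tad93}), together with Theorem \ref{thm red from nu to gp} for the bad-parity part; the argument is essentially the same, only phrased in more steps. In Part (2), because your base case is $\Pi_{A+,\,u}(G_n)$ rather than $\Pi_{A}(G_n)$, you are forced to invoke the $(a,b)\leftrightarrow(b,a)$ symmetry of the criterion in Theorem \ref{thm A+,u}; this is correct but unnecessary work — the paper reduces immediately to $\Pi_A(G_n)$ being AZ-stable, which is already known (\cite[\S A]{Xu17b}, \cite[Lemma 4.4.4]{AGIKMS24}), and observes that Definition \ref{def closure A} is manifestly AZ-equivariant once the GL-side unitary dual and $\Pi_A$ are AZ-stable. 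You should also note that the "secondary technical point" you raise about the open set $U$ being carried bijectively by the involution is exactly what AZ-equivariance of Definition \ref{def closure A+} gives for free once one uses that AZ commutes with parabolic induction and preserves both $\Pi_u(\GL_d(F))$ and irreducibility — this does not require a separate verification.

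The genuine gap is in Part (3). You attempt to unfold the inductive construction and commute it with the Jantzen decomposition and the map $E$, and you candidly admit that the required compatibility ("one must verify rigorously that the Jantzen decomposition of a representation in $\Pi_{\overline{A+,\,u}}^{(k)}(G_n)$ can be read off from the inductive construction without interactions between distinct cuspidal lines") is unresolved. This is not a minor bookkeeping point: it is the whole content of the claim that the analogues of Conjectures \ref{conj Tadic preservation} and \ref{conj independence} hold for the closure $\Pi_{\overline{A}}(G_n)$, which is precisely what Corollary \ref{cor preseveration for Pi A bar} asserts. That corollary is proved by an induction on $k$ using Theorem \ref{thm preservation and independence} as the base case and Theorem \ref{thm Jantzen}(iii) together with irreducibility-matching results (\cite[Theorem 1.2]{BS25}, \cite[Theorem 1.1]{Tad14}, \cite[Corollary 1.3]{Ato22d}) to propagate the compatibility through the closure operation. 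Without either citing Corollary \ref{cor preseveration for Pi A bar} or reproving its content, your Part (3) is incomplete. The fix is simple: once you accept $\Pi_u(G_n)=\Pi_{\overline{A}}(G_n)$ from Conjecture \ref{conj A+ intro}, Part (3) is an immediate consequence of Corollary \ref{cor preseveration for Pi A bar}, and no further Jantzen-decomposition analysis is needed at the level of this corollary.
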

\begin{proof}
Assume that Conjecture \ref{conj A+ intro} holds for $G_n$, 
We just need to verify that these properties hold for $\Pi_{\overline{A}}^{\lim}(G_n)$.

First, we prove Part (1). Recall that  complementary series and unitary parabolic inductions result in non-isolated representations $($\cite[\S 3]{Tad93}$)$. Therefore, $\Pi_{\overline{A}}^{\lim}(G_n) \setminus \Pi_{\overline{A}}^{(0)}(G_n)$ does not contain isolated representations. 
            Moreover, if $\pi \in \Pi_{A}(G_n) \setminus \Pi_{A,\, gp}(G_n)$, then Theorem \ref{thm red from nu to gp} implies that $\pi$ is a unitary parabolic induction; hence, it is also not isolated. This establishes the first inclusion.  Assume further that both Parts of Question \ref{ques crit} are affirmative. Then any representation in $\Pi_{A,\, gp}(G_n) \setminus \Pi_{A,\, crit}(G_n)$ is not isolated. This proves the second inclusion.

For Part (2), it follows from the definition that $\Pi_{\overline{A}}^{\lim}(G_n)$ is preserved under Aubert-Zelevinsky involution if and only if so is $\Pi_{\overline{A}}(G_n)$. Since $\Pi_{u}(\GL_d(F))$ and $\Pi_{A}(G_n)$ are preserved under Aubert-Zelevinsky involution, it follows from the definition that $\Pi_{\overline{A}}^{(k)}(G_n)$ is preserved under Aubert-Zelevinsky involution if and only if the same holds for $\Pi_{\overline{A}}^{(k-1)}(G_n)$. Thus, the problem is reduced to $\Pi_{\overline{A}}^{(0)}(G_n)=\Pi_{A}(G_n)$, which follows from \cite[\S A]{Xu17b} and \cite[Lemma 4.4.4]{AGIKMS24}. 

Part (3) follows immediately from the assumption of Conjecture \ref{conj A+ intro} and Corollary \ref{cor preseveration for Pi A bar} below. This completes the proof of the corollary.
\end{proof}

\begin{remark}\label{rmk-general-classical} { For $\GL_n$, Bernstein conjectured  that $\Pi_u(\GL_n(F))$ is preserved under the Zelevinsky involution in \cite[Conjecture 8.10]{Ber83}. This was proved in \cite{Tad85b}.}
For a general reductive group $G$, we remark that Conjecture \ref{conj A+} implies that $\Pi_u(G)$ is preserved by the Aubert-Zelevinsky dual if we assume that $ \Pi_A(G)$ is stable under Aubert-Zelevinsky dual, see the proof of Corollary \ref{cor implication of 1.1}. In particular, if $G$ is quasi-split $\RO_{2n}$ or $ \mathrm{U}_n$, Conjecture \ref{conj A+} implies that $\Pi_u(G)$ is preserved under Aubert-Zelevinsky involution since the same is true for $\Pi_A$ following from  \cite[\S A]{Xu17b} and \cite[Lemma 4.4.4]{AGIKMS24}.
\end{remark}

\section{New Algorithms for Arthur Representations}\label{sec new algorithm}

In this section, we generalize \cite[Proposition 6.3]{HLLZ25} (restated in Proposition \ref{prop max triangle} below) and introduce new algorithms (Algorithms \ref{algo Arthur type} and \ref{algo Arthur type 2}) to determine whether a representation is of Arthur type, which will be used in \S
\ref{sec inductive approach} and \S \ref{algorithm for A bar}. Unlike the algorithms \cite[Algorithm 3.3]{Ato23} and \cite[Algorithm 7.9]{HLL22}, these new algorithms avoid computing highest derivatives and constructing non-tempered local Arthur packets, making them both theoretically and practically valuable.

\begin{prop}[{\cite[Proposition 6.3]{HLLZ25}}]\label{prop max triangle}
Suppose that 
\[\EE= \cup_{\rho} \{([A_i,B_i]_{\rho},l_i,\eta_i)\}_{i \in (I_{\rho},>)} \in \Rep^{(P')}\]
is absolutely maximal. Fix a $\rho$ such that $b_{\rho} := \max \{A_i-B_i+1\ | \ i \in I_{\rho}\} >1 $. Let $ j = \min\{ i \in I_{\rho}\ | \ A_i-B_i+1=b_{\rho} \}$, and assume that $j= \max\{ i \in I_{\rho} \ | \ B_i=B_j \}$ by applying row exchanges if necessary. Let $\EE^{-}:=add_j^{-1}(\EE)$, which satisfies $(P')$ by the assumptions. Then, $\pi( \EE^{-}) \neq 0$ and 
\[\pi(\EE)= \pi(\EE^-)+\{\Delta_{\rho}[B_j,-A_j]\}.\]
\end{prop}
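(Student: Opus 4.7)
My plan is to verify both assertions by exploiting the absolute maximality of $\EE$ together with Theorem \ref{thm non-vanishing} and the explicit Langlands-data formula of Proposition \ref{prop $L$-data L-packet}. The argument generalizes \cite[Proposition 6.3]{HLLZ25} by accommodating the full setting with multiple supercuspidal representations $\rho$.

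First, I would verify that $\EE^- \in \Rep^{(P')}$ and $\pi(\EE^-) \neq 0$. The operator $add_j^{-1}$ is well-defined precisely when $l_j \geq 1$, which I expect follows from absolute maximality: if $l_j = 0$, the normalization of $j$ (minimal index with $A_i - B_i + 1 = b_\rho$, maximal among indices sharing the value $B_j$) should force either the partial dual $dual_j^-$ or some inverse union-intersection $ui_{k,j}^{-1}$ to act nontrivially on $\EE$, contradicting maximality. The sign condition in Definition \ref{def multi-segment}(3)(d) is preserved because decreasing $b_j$ by $2$ and $l_j$ by $1$ leaves $(-1)^{[b_j/2]+l_j}$ unchanged, while $\eta_j^{b_j}$ depends only on the parity of $b_j$. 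For the non-vanishing via Theorem \ref{thm non-vanishing}(ii), the key observation is that both $B_j + l_j$ and $A_j - l_j$ are invariant under $add_j^{-1}$, so the adjacency inequalities involving the $j$-th row of $\EE^-$ are inherited from the corresponding constraints on $\EE$ imposed by the absence of raising operators.

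Next, to establish $\pi(\EE) = \pi(\EE^-) + \{\Delta_\rho[B_j, -A_j]\}$, I would reduce to the case where both $\EE$ and $\EE^-$ satisfy condition $(L)$ by applying a sufficiently large shift $sh^d$, which commutes with row operations by Lemma \ref{lem equalities of operators}(1). In the $(L)$-regime, Proposition \ref{prop $L$-data L-packet}(a) gives the non-tempered Langlands data as a union of explicit segments indexed by rows. A row-by-row comparison shows that the $j$-th row of $sh^d(\EE)$ contributes exactly one additional segment $\Delta_\rho[B_j + d, -A_j - d]$ relative to the $j$-th row of $sh^d(\EE^-)$, while all other rows contribute identically. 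Since $sh^d$ corresponds on the representation side to a character twist compatible with Langlands classification, unshifting recovers the claim $\pi(\EE) = \pi(\EE^-) + \{\Delta_\rho[B_j, -A_j]\}$.

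The main obstacle will be making the shift-reduction rigorous: one must verify $sh^d(\EE^-) = add_j^{-1}(sh^d(\EE))$ at the level of extended multi-segments and that condition $(L)$ is eventually satisfied by both $sh^d(\EE)$ and $sh^d(\EE^-)$ for $d \gg 0$, using the absolute maximality constraints on the other rows of $\EE$. The edge case $b_\rho = 2$, where $\EE^-$ may contain an empty row at position $j$, must be treated separately but should follow from the underlying construction of $\pi(\EE)$ in \cite{Ato20b} together with the row-deletion conventions built into the definition. Once these technicalities are resolved, the argument reduces to a direct multi-set comparison of Langlands data.
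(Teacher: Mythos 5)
Your proposal has a fatal flaw in the second step, the reduction to condition $(L)$ via shifting. Condition $(L)$ (Definition \ref{def (L)}) is invariant under $sh^d$: it depends only on the quantities $A_i+B_i$, $A_i-B_i$, $l_i$, $\eta_i$, and the sums $A_i+B_i$ all change by the same constant $2d$, while $A_i-B_i$ and $l_i$ are unchanged. Thus the condition $(A_i-B_i+1) - 2l_i \leq 1$, which is the genuinely restrictive part of $(L)$ and which fails exactly on the row $j$ you are trying to shrink, can never be achieved by shifting. Your remark that one must verify ``condition $(L)$ is eventually satisfied by both $sh^d(\EE)$ and $sh^d(\EE^-)$ for $d\gg 0$'' is therefore unfixable rather than a technicality: it is never satisfied unless it already was. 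Moreover, the assertion that ``$sh^d$ corresponds on the representation side to a character twist compatible with Langlands classification'' is false for classical groups: in Atobe's construction the shift operator is undone by taking socles and highest derivatives (see \cite[\S 3.2 and Theorem 5.1]{Ato20b} and Remark \ref{rmk socle Atobe} of the present paper), not by twisting by a character, and the effect of those operations on the $L$-data is precisely what needs to be proved rather than assumed. A correct argument should go through the derivative/socle description of $\pi(\EE)$ directly, showing that passing from $\EE^-$ to $\EE$ corresponds to a single additional socle $S_{\rho|\cdot|^{B_j}}$ whose effect is to insert $\Delta_\rho[B_j,-A_j]$ into the Langlands data.

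The first step also has a gap. The observation that $B_j+l_j$ and $A_j-l_j$ are invariant under $add_j^{-1}$ handles Theorem \ref{thm non-vanishing}(ii)(1), but parts (ii)(2) and (ii)(3) involve differences of the form $b_{k+1}-b_k$, $l_{k+1}-l_k$, and inclusions $[A_k,B_k]_\rho \subset [A_{k+1},B_{k+1}]_\rho$, all of which do change when $b_j$ drops by $2$ and $l_j$ drops by $1$. In particular, if $j$ plays the role of $k+1$ in case (ii)(2), the required inequality $0\leq l_{k+1}-l_k$ becomes $0\leq l_{k+1}-1-l_k$, which is strictly stronger than before and needs a separate argument using the minimality/maximality normalization of $j$. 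Similarly, the claim that $l_j\geq 1$ follows from absolute maximality is plausible and in the spirit of Lemma \ref{len E max lower minus}, but as written it omits the boundary cases (e.g.\ $B_j=-\tfrac12$ and $b_j=2$, where neither a type-$3'$ $ui^{-1}$ nor the applicability of $dual_j^-$ is automatic) that require explicit verification.
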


Proposition \ref{prop max triangle} will be generalized in Proposition \ref{thm pi_rho_minus} and Proposition \ref{prop reduction along min x} below. Based on these generalizations, we give two new algorithms to determine whether a good parity representation $\pi$ is of Arthur type. The first algorithm reduces the problem from $\pi$ to $\pi^{\rho,-}$ (see Algorithm \ref{algo Arthur type} below), while the second reduces from $\pi$ to $\pi_{\rho,-}$ (see Algorithm \ref{algo Arthur type 2} below), where $\pi^{\rho,-}$ and $\pi_{\rho,-}$ are representations of smaller rank defined as follows.

\begin{defn}\label{def pi minus}
Let $\pi\in \Pi(G_n)$ be non-tempered. Write
\[ \pi= L(\Delta_{\rho_1}[x_1,-y_1], \dots , \Delta_{\rho_f}[x_f,-y_f]; \pi(\phi, \varepsilon)).\]
\begin{enumerate}
    \item [(1)] We define $\pi^{\rho,-}$ to be the representation whose $L$-data is obtained by removing all copies of $\Delta_{\rho}[x,-y]$ from $\pi$ such that
  $$ x= \min \{ x_i \ | \ x_i -y_i= \min\{ x_j-y_j \ | \  \rho_j \cong \rho \} \}, \quad y= x- \min\{ x_j-y_j \ | \  \rho_j \cong \rho \}. $$
    We shall write $\pi=\pi^{\rho,-}+\{(\Delta_{\rho}[x,-y])^r\}$, where $r$ indicates the multiplicity. 
    \item [(2)]We define $\pi_{\rho,-}$  to be the representation whose $L$-data is obtained by removing all $\Delta_{\rho_i}[x_i,-y_i]$'s from $\pi$ such that $\rho_i \cong \rho$ and $x_i= \min \{ x_j \ | \ \rho_j \cong \rho \}$.
\end{enumerate}
\end{defn}

\subsection{\texorpdfstring{Reduction along $\pi^{\rho,-}$}{}}  

In this subsection, we give an algorithm to determine whether $\pi$ is of Arthur type based on the information of $\Psi(\pi^{\rho,-})$.

First, applying Proposition \ref{prop max triangle}, we give a new algorithm to compute the $L$-data of $\pi(\EE)$. Currently, there are two algorithms to compute the $L$-data of $\pi(\EE)$, which are given in \cite[\S3.4 and  \S5]{Ato20b}. The first requires the computation of positive highest derivatives, while the second uses deformation on extended multi-segments via operators $ui_{i,j}$, $dual_k^{-}$,  and the computation of positive socles. In contrast, the new algorithm below does not need the computation of highest derivatives or socles, but needs the computation of $\EE^{|max|}$ instead. Recall that $\EE^{|max|}$ is the unique (up to row exchanges) absolutely maximal extended multi-segment such that $\pi(\EE^{|max|})= \pi(\EE)$, see Theorem \ref{thm operator}.

\begin{algo}[Algorithm to compute the $L$-data of $\pi(\EE)$ applying $(\EE^{|max|})^{-}$]\label{algo rep(E)}
Given an extended multi-segment $\EE \in \Rep$, proceed as follows.
\begin{enumerate}
     \item [\textbf{Step 1:}]Compute $\EE^{|max|}= \cup_{\rho} \{([A_i,B_i]_{\rho},l_i,\eta_i) \}_{i \in (I_{\rho},>)}$. If $\EE^{|max|}$ satisfies the condition $(L)$ (see Definition \ref{def (L)}), then the $L$-data of $\pi(\EE)$ is given by Proposition \ref{prop $L$-data L-packet}.
     
     \item [\textbf{Step 2:}]If $\EE^{|max|}$ does not satisfy the condition $($L$)$, then choose a $\rho$ such that $\EE_{\rho}$ does not satisfy the condition $(L)$. Construct $(\EE^{|max|})^{-}= add_j^{-1}(\EE^{|max|})$ as in Proposition \ref{prop max triangle}. Repeat the algorithm for $(\EE^{|max|})^{-}$, which is of smaller rank than $\EE$. Then 
     \[\pi(\EE)=\pi((\EE^{|max|})^{-})+\{ \Delta_{\rho}[B_j,-A_j] \}.\]
\end{enumerate}
\end{algo}

We remark that repeating Step 2 multiple times results in an extended multi-segment $\EE'$ such that $\psi_{\EE'}$ is tempered. Then $\EE'$ must satisfy the condition ($L$), and the procedure terminates. On the other hand, if $\EE^{|max|}$ satisfies the condition $(L)$ but $\psi_{\EE^{|max|}}$ is not tempered, we can still apply Step 2 to $\EE^{|max|}$. The resulting extended multi-segment $(\EE^{|max|})^-$ still satisfies the condition $(L)$; hence it is already absolutely maximal. Repeating this process until $\psi_{\EE^{|max|}}$ becomes tempered, we can also obtain $\pi(\EE^{|max|})$ without applying Proposition \ref{prop $L$-data L-packet}.

Here is an example for the algorithm.

\begin{exmp}\label{ex algorithm 5.1}
 Let $\rho$ be the trivial representation. Consider the following extended multi-segment of $\Sp_{10}$:
\begin{align*}
    \EE= \{([3,-3]_{\rho},3,1),([1,-1]_{\rho},1,-1),([0,0]_{\rho},0,-1)\}
    =& \scalebox{0.8}{\bordermatrix{
    &-3&-2&-1&0&1&2&3 \cr
    &\lhd &\lhd & \lhd & \oplus& \rhd & \rhd &\rhd \cr
    & &&\lhd&\ominus &\rhd && \cr
        & &&&\ominus &&& \cr}}_{\rho}
        =\EE^{|max|}.
\end{align*}
By \cite[Example 10.14.6]{HLL22}, we have that 
$\pi=L(\Delta_{\rho}[-3,-3],\Delta_{\rho}[-1,-2], \Delta_{\rho}[0,-1]; \pi(0^{+})).$
Now we apply Algorithm \ref{algo rep(E)}.
\begin{align*}
    \EE_1&= add_{1}^{-1}(\EE)=\scalebox{0.8}{\bordermatrix{
    &-2&-1&0&1&2 \cr
     &\lhd & \lhd & \oplus& \rhd & \rhd  \cr
     &&\lhd&\ominus &\rhd & \cr
        &&&\ominus && \cr}}_{\rho}, \quad \EE_1^{|max|}=\scalebox{0.8}{\bordermatrix{
    &-1&0&1&2 \cr
      & \lhd & \oplus& \ominus & \rhd  \cr
     &&\ominus & &  \cr}}_{\rho};\\
         \EE_2&= add_{1}^{-1}(\EE_1^{|max|})=\scalebox{0.8}{\bordermatrix{
    &0&1 \cr
    & \oplus& \ominus   \cr
     &\ominus &   \cr}}_{\rho}= \EE_2^{|max|}, \quad \EE_2'=\scalebox{0.8}{\bordermatrix{
    &0&1 \cr
    & \oplus&    \cr
     &\lhd &\rhd   \cr}}_{\rho};\\
     \EE_3&=add_2^{-1}(\EE_2')= \scalebox{0.8}{\bordermatrix{ & 0 \cr & \oplus\cr}}_{\rho}.
\end{align*}
Note that we perform a row exchange from $\EE_2$ to $\EE_2'$ to satisfy the condition in the statement of Proposition \ref{prop max triangle}. Then we have
\begin{align*}
    \pi(\EE_3)&= \pi(0^+),\\
    \pi(\EE_2)&=L(\Delta_{\rho}[0,-1]; \pi(0^+)),\\
    \pi(\EE_1)&= L(\Delta_{\rho}[-1,-2], \Delta_{\rho}[0,-1];\pi(0^{+})),\\
    \pi(\EE)&=L(\Delta_{\rho}[-3,-3],\Delta_{\rho}[-1,-2], \Delta_{\rho}[0,-1]; \pi(0^{+})).
\end{align*}
In this example, we see that $\psi_{\EE_1} \in \Psi(\pi(\EE_1))$ does not satisfy the conclusion of Proposition \ref{thm pi_rho_minus}(c) below since $ \psi_{\EE_1} \neq \psi^{max}(\pi(\EE_1))$. Also, $\pi(add_1^{-1}(\EE_1))$ vanishes, which demonstrates that Proposition \ref{prop max triangle} does not hold if we drop the absolute maximality condition.
\end{exmp}

In the example above, observe that $\pi=\pi(\EE)$ is of Arthur type since $\pi^{\rho,-}= \pi(\EE_1)$ is of Arthur type. Moreover, there exists an $\EE_1$ in the set $\{\EE'\ |\  \pi(\EE')=\pi^{\rho,-} \}$ which contains an extended segment of the form $([2,-2]_{\rho},l,\eta)$, so that we can perform $add^1$ to deform it to $([3,-3]_{\rho},l+1,\eta)$ to recover $\Delta_{\rho}[-3,-3]$ in the $L$-data of $\pi$. That is, we can recover $\EE$ from the pair $(\EE_1,\Delta_{\rho}[-3,-3])$. This motivates the new algorithm (Algorithm \ref{algo Arthur type}) to determine whether $\pi$ is of Arthur type. We first give some useful notation.

\begin{defn}\label{def EE_rho_minus}
Suppose that $\EE= \cup_{\rho} \{([A_i,B_i]_{\rho},l_i,\eta_i)\}_{i \in (I_{\rho},>)} \in \Rep^{(P')}$ is absolutely maximal. If necessary, change the admissible order on each $I_{\rho}$ so that $A_i \geq A_j$ if $i>j$ and $B_i=B_j$. For each $\rho$ such that $I_{\rho}\neq \emptyset$ and $b_{\rho} := \max \{A_i-B_i+1\ | \ i \in I_{\rho}\} >1 $, let $ j_1 = \min\{ i \in I_{\rho}\ | \ A_i-B_i+1=b_{\rho} \}$ and denote 
\[\{j \in I_{\rho} \ |\  [A_j,B_j]_{\rho}= [A_{j_1},B_{j_1}]_{\rho} \}= \{j_1, \dots ,j_s\},\]
with $j_1 < \cdots <j_s$. Then define 
$\EE^{\rho,-}:=(\sum_{k=1}^s add_{j_k}^{-1})(\EE)$, 
which satisfies $(P')$ by the assumptions.
\end{defn}

The following is the first generalization of Proposition \ref{prop max triangle}.

\begin{prop}\label{thm pi_rho_minus}
 Let $\pi \in \Pi_{A, \, gp}(G_n)$ be non-tempered and assume that $\pi=\pi^{\rho,-}+\{(\Delta_{\rho}[x,-y])^r\}$. Let $\EE$ be an absolutely maximal extended multi-segment with $\pi(\EE)=\pi$. Then the following hold.
 \begin{enumerate}
     \item [(a)] The representation corresponding to $\EE^{\rho,-}$ is non-vanishing:  $\pi(\EE^{\rho,-})\neq 0$. 
     \item [(b)] Moreover, $\pi(\EE^{\rho,-})= \pi^{\rho,-}$. In particular, $\pi^{\rho,-}\in \Pi_{\psi_{\EE^{\rho,-}}}$ is of Arthur type, where
     \[ \psi_{\EE^{\rho,-}}= \psi^{max}(\pi)-(\rho\otimes S_{y+x+1} \otimes S_{y-x+1})^{\oplus r}+(\rho\otimes S_{y+x+1} \otimes S_{y-x-1})^{\oplus r}. \]
     \item [(c)] The parameter $\psi^{max}(\pi)$ contains exactly $r$ copies of $ \rho\otimes S_{y+x+1} \otimes S_{y-x+1}$.
 \end{enumerate}
\end{prop}

\begin{proof}
We adopt the notation in Definition \ref{def EE_rho_minus}. For $1 \leq i \leq s$, let $\EE^i:=(\sum_{k=i}^s add_{j_i}^{-1})(\EE)$. Also set $\EE^{s+1}=\EE$. Note that $B_{j_1}=x$, $A_{j_1}=y$ by construction.

Part (a) follows the same proof of  {\cite[Lemma 3.28]{HLLZ25}}, which we omit here. For Part (b),  Proposition \ref{prop max triangle} implies that for $1 \leq i \leq s,$ we have
\[ \pi(\EE^{i+1})=\pi(\EE^i)+\{\Delta_{\rho}[x,-y]\},\quad {\rm and}\quad 
\pi(\EE)=\pi(\EE^{\rho,-})+\{ (\Delta_{\rho}[x,-y])^s\}. \]
 Therefore, to show $\pi(\EE^{\rho,-})=\pi^{\rho,-}$,
it is equivalent to show that $r=s$. Note that this also implies the rest of Parts (b) and (c). Observe that for any extended multi-segment $\EE'$ obtained from $\EE^{\rho,-}$ by a sequence of raising operators or $add^{-1}$, the parameter $\psi_{\EE'}$ does not contain any copy of $\rho \otimes S_{y+x+1}\otimes S_{y-x+1}$ according to the definitions of these operators. Therefore, Algorithm \ref{algo rep(E)} implies that the $L$-data of $\pi(\EE^{\rho,-})$ does not contain $\Delta_{\rho}[x,-y]$, so we have $r=s$. This completes the proof.
\end{proof}

\begin{remark}\ 
    \begin{enumerate}
        \item Part (c) of Proposition \ref{thm pi_rho_minus} may fail for an arbitrary $\psi \in \{\psi \ | \ \pi \in \Pi_{\psi}\} \setminus \{\psi^{max}(\pi)\}$ (see Example \ref{ex algorithm 5.1}). This indicates the importance of the distinguished member $\psi^{max}(\pi)$.
        \item With Proposition \ref{thm pi_rho_minus} at hand, one can rewrite Algorithm \ref{algo rep(E)} by replacing $(\EE^{|max|})^{-}$ with $(\EE^{|max|})^{\rho, -}$, for the chosen $\rho$, to improve its efficiency.
    \end{enumerate}
\end{remark}

Continue with the notation of Proposition \ref{thm pi_rho_minus}. We would like to recover the extended multi-segment $\EE$ from the data $(\EE^{\rho,-}, \Delta_{\rho}[x,-y], r)$. To this end, we introduce the following notation.

\begin{defn}\label{def criterion for Arthur type}
Let $\pi\in\Pi_{gp}(G_n)$, $x,y\in\R$ such that $x-y\in\mathbb{Z}_{< 0}$, $\rho\in\mathcal{C}^{sd}$,  and $r\in\mathbb{Z}_{>0}.$
\begin{enumerate}
    \item [(1)] We denote by $\Psi(\pi; \Delta_{\rho}[x,-y],r)$ the set of local Arthur parameters $\psi$ such that
\begin{enumerate}
    \item [$\oldbullet$] $\pi \in \Pi_{\psi}$;
    \item [$\oldbullet$] if $y-x-1>0$, then $\psi$ contains at least $r$ copies of $\rho\otimes S_{x+y+1}\otimes S_{y-x-1}$; 
    \item [$\oldbullet$] any summand of $\psi$ of the form $\rho \otimes S_{a} \otimes S_b$ satisfies $b \leq y-x+1$, and $a>x+y+1$ if $b=y-x+1$.
\end{enumerate}
For any $\psi \in \Psi(\pi; \Delta_{\rho}[x,-y],r)$, we define
\[\psi^{+}:= \psi- (\rho\otimes S_{x+y+1}\otimes S_{y-x-1})^{\oplus r}+(\rho\otimes S_{x+y+1}\otimes S_{y-x+1})^{\oplus r}.\]

\item [(2)] We denote by $ \mathscr{E}(\pi; \Delta_{\rho}[x,-y],r)$ the set of extended multi-segments $\EE \in \Rep^{(P')}$ such that $\pi(\EE)=\pi$ and $\psi_{\EE}\in \Psi(\pi; \Delta_{\rho}[x,-y],r)$. For each $\EE \in \mathscr{E}(\pi; \Delta_{\rho}[x,-y],r)$,
we define $\EE^{\rho,+}$ as follows.
\begin{enumerate}
    \item [(i)] If $y-x=1$, then we define $\EE^{\rho,+}$ by {inserting $r$ copies of $([x+1,x]_{\rho},1,1)$} in $\EE$ with admissible order $\gg$ on  $I_{\rho} \sqcup \{j_1,\dots ,j_r\}$, where $j_{k}$ corresponds to the $k$-th copy we inserted, as follows
\[\begin{cases}
j_r \gg j_{r-1} \gg \cdots \gg j_1,\\
\alpha \gg \beta  \Longleftrightarrow \alpha > \beta & \text{for }\alpha, \beta \in I_{\rho},\\
\alpha \gg j_k \Longleftrightarrow B_\alpha >x-1 & \text{for }\alpha \in I_{\rho} \ \mathrm{and} \ 1\leq k \leq r.
\end{cases}\]
\item [(ii)] If $y-x>1$, then change the admissible order if necessary so that there exists $j_1,\ldots, j_r \in I_{\rho}$ such that
\begin{enumerate}
    \item [$\oldbullet$] $j_1 < \cdots < j_r$ are adjacent under the admissible order on $I_{\rho}$,
    \item [$\oldbullet$] $[A_{j_1},B_{j_1}]_{\rho}= \cdots = [A_{j_r},B_{j_r}]_{\rho}=[y-1,x+1]_{\rho}$,
    \item [$\oldbullet$]  $j_1 = \min\{i \in I_{\rho} \ | \ B_i=B_{j_1}\}$,
    \item [$\oldbullet$] $ A_{j_1}-B_{j_1}+3 \geq A_i-B_i+1$ for all $i \in I_{\rho}$ and the equality does not hold for $i<j_1$.
\end{enumerate}
Then we define $\EE^{\rho,+}:=(\sum_{k=1}^r add_{j_k}^{1})(\EE) \in \Rep^{(P')}$.
\end{enumerate}
\end{enumerate}
\end{defn}

The following lemma presents the relation between the operations $\EE^{\rho,+}$ and $\pi^{\rho,-}$.

\begin{lemma}\label{lemma pi^rho^plus}
    Let $\pi\in\Pi_{gp}(G_n)$, $x,y\in\R$ such that $x-y\in\mathbb{Z}_{< 0}$, $r\in\mathbb{Z}_{\geq 1},$ and $\EE  \in \mathscr{E}( \pi;\Delta_{\rho}[x,-y],r )$. Then $\pi(\EE^{\rho,+})\neq 0$ and 
    \[ \pi(\EE^{\rho,+})= \pi(\EE^{\rho,+})^{\rho,-}+ \{ (\Delta_{\rho}[x,-y])^r\}. \]
\end{lemma}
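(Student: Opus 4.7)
My plan is to compute the $L$-data of $\pi(\EE^{\rho,+})$ directly and to show it equals the $L$-data of $\pi=\pi(\EE)$ together with $r$ additional copies of $\Delta_\rho[x,-y]$, and that these additional copies are the minimal $\rho$-segments in the sense of Definition \ref{def pi minus}(1). The key observation is that $\EE^{\rho,+}$ is engineered so that applying $add^{-1}$ to the $r$ new or modified ``top'' extended segments inverts the passage from $\EE$ to $\EE^{\rho,+}$, and each such step peels off one copy of $\Delta_\rho[x,-y]$ from the $L$-data via Proposition \ref{prop max triangle}.

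First, I would verify that $\EE^{\rho,+}\in\Rep^{(P')}$. In the case $y-x>1$ this is immediate from the compatibility of $add^1$ with the admissible order and $(P')$. In the case $y-x=1$, the prescribed insertion respects $(P)$ and $(P')$; the nontrivial check is that no extended segment $([A_\alpha,B_\alpha]_\rho,l_\alpha,\eta_\alpha)$ in $\EE$ with $B_\alpha=x$ satisfies $A_\alpha>x+1$, which follows from the hypothesis $\psi_\EE\in\Psi(\pi;\Delta_\rho[x,-y],r)$ forbidding summands $\rho\otimes S_a\otimes S_b$ with $b>2$. For non-vanishing $\pi(\EE^{\rho,+})\neq 0$, I apply Theorem \ref{thm non-vanishing}(ii): since $\pi(\EE)\neq 0$, only pairs involving the new or modified top segments $([y,x]_\rho,l^+_k,\eta^+_k)$ need to be checked, and the required pairwise conditions follow from the constraints defining $\Psi(\pi;\Delta_\rho[x,-y],r)$.

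The main step is to show $\pi(\EE^{\rho,+})=\pi+r\cdot\{\Delta_\rho[x,-y]\}$ as multisets of $L$-data. By construction, the $r$ extended segments of $\EE^{\rho,+}$ with $[A,B]_\rho=[y,x]_\rho$ form a contiguous block at the top of $I_\rho$, attain the maximum length $y-x+1$ (by the bound $b\leq y-x+1$ in $\Psi(\pi;\Delta_\rho[x,-y],r)$), and share the minimum value $A+B=y+x$ among segments of maximum length (by the condition $a>x+y+1$ when $b=y-x+1$). Iterating $add^{-1}$ on the last such segment and invoking Proposition \ref{prop max triangle} at each step removes one copy of $\Delta_\rho[x,-y]$ from the $L$-data; after $r$ steps $\EE$ is recovered, yielding the desired equality. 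Minimality of these $r$ copies of $\Delta_\rho[x,-y]$, in the sense of Definition \ref{def pi minus}(1), is automatic: the conditions on $\psi_{\EE^{\rho,+}}$ prevent any $\rho$-segment $\Delta_\rho[x',-y']$ with $x'-y'<x-y$ from appearing.

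The main obstacle is that Proposition \ref{prop max triangle} presupposes absolute maximality, while $\EE^{\rho,+}$ is not a priori absolutely maximal. I plan to overcome this using Lemma \ref{lem add^1 raising}: in the case $y-x>1$, any raising operator applicable on $\EE^{\rho,+}$ must already apply on $\EE$, so the local structure at the top $[y,x]_\rho$ segments satisfies the hypothesis of Proposition \ref{prop max triangle}; in the case $y-x=1$, a direct case analysis of raising operators acting on the newly inserted $([x+1,x]_\rho,1,1)$ segments---ruled out one-by-one using the admissible order from Definition \ref{def criterion for Arthur type}(2)(i) and the conditions on $\psi_\EE$---completes the verification.
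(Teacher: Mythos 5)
Your high-level skeleton matches the paper: establish non-vanishing of $\pi(\EE^{\rho,+})$, handle the absolute-maximality obstacle via Lemma~\ref{lem add^1 raising}, and then relate $\pi(\EE^{\rho,+})$ to $\pi(\EE)$ by peeling off $r$ copies of $\Delta_\rho[x,-y]$. However, the non-vanishing step is where essentially all the work in the paper's proof lies, and your treatment of it contains a genuine gap. You assert that ``since $\pi(\EE)\neq 0$, only pairs involving the new or modified top segments need to be checked, and the required pairwise conditions follow from the constraints defining $\Psi(\pi;\Delta_\rho[x,-y],r)$.'' This is not adequate: Theorem~\ref{thm non-vanishing}(ii) requires checking conditions for adjacent pairs under \emph{every} admissible order $\gg$, and the data $(l_i,\eta_i)$ of the unchanged segments in $(\EE^{\rho,+})_{\rho,\gg}$ can differ from those of $\EE_{\rho,\gg}$, because the $add^1$ operation enlarges the shifted segments and thereby alters containment relations that govern the row-exchange formulas of Definition~\ref{def row exchange}. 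The paper instead proves non-vanishing by two separate arguments that you do not reproduce: for $y-x=1$, after replacing $\EE$ by $\EE^{|max|}$, the constraint $b\leq 2$ forces $\EE^{\rho,+}$ to satisfy condition $(L)$ (Definition~\ref{def (L)}), so Proposition~\ref{prop $L$-data L-packet} applies; for $y-x>1$, one passes to $dual(\EE^{\rho,+})=\sum_k sh_{j_k}^1(dual(\EE))$ via Lemma~\ref{lem equalities of operators}(2), uses an auxiliary fact from [HLL22, Lemma~4.3(iii)] for the base case $r=1$, and runs an induction on $r$ using the ordering conditions (a)--(d) derived from Definition~\ref{def criterion for Arthur type}(2)(ii). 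None of this is routine.

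A secondary issue is your plan to ``iterate $add^{-1}$ on the last such segment and invoke Proposition~\ref{prop max triangle} at each step.'' That proposition presupposes absolute maximality, and after one $add^{-1}$ the result is not a priori absolutely maximal, so you would have to re-establish maximality after each of the $r$ steps. The paper sidesteps this by reducing (via Lemma~\ref{lem add^1 raising}) to the case where $\EE^{\rho,+}$ is absolutely maximal and then invoking Theorem~\ref{thm pi_rho_minus}, which removes all $r$ copies in a single application precisely because it already packages the iterated $add^{-1}$ for an absolutely maximal extended multi-segment. I recommend you cite Theorem~\ref{thm pi_rho_minus} rather than reproving its content, and supply the missing non-vanishing arguments.
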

\begin{proof}
    For the moment, assume that we have verified $\pi(\EE^{\rho,+})\neq 0$. {Then any raising operator applicable on $\EE^{\rho,+}$ is also applicable on $\EE$ by Lemma \ref{lem add^1 raising}, where the Condition (2) in that lemma follows from Proposition \ref{prop max triangle}}. Hence, we may assume that $\EE^{\rho,+}$ is absolutely maximal. By definition, we have $(\EE^{\rho,+})^{\rho,-}= \EE$ (see Definition \ref{def EE_rho_minus}) up to row exchanges; hence, Proposition \ref{thm pi_rho_minus} implies the rest of the conclusion. Therefore, it remains to verify $\pi(\EE^{\rho,+})\neq 0$.

    First, assume that $y-x=1$. Note that any raising operator applicable on $\EE$ preserves the condition in Definition \ref{def criterion for Arthur type}(1); hence, we can replace $\EE$ by $\EE^{|max|}$ if necessary.  {Note that $\pi(\EE^{\rho,+})\neq 0$ is equivalent to $\pi((\EE^{|max|})^{\rho,+})\neq 0$ since $\EE^{\rho,+}$ can be obtained from $(\EE^{|max|})^{\rho,+}$ by a sequence of inverse of raising operators.}  Since we have $b \leq y-x+1=2$ for any summand $\rho \otimes S_a \otimes S_b$ of $\psi_{\EE}$, the extended multi-segment $\EE^{\rho,+}$ automatically satisfies the condition $(L)$ (see Definition \ref{def (L)}). Consequently, $\pi(\EE^{\rho,+})\neq 0$ by Proposition \ref{prop $L$-data L-packet}.

    Next, assume that $y-x \geq 1$. Note that $\pi(\EE^{\rho,+}) \neq 0$ if and only if $\pi(dual(\EE^{\rho,+}))\neq 0$. By Lemma \ref{lem equalities of operators}(2), we have (using the notation in Definition \ref{def criterion for Arthur type})
 $$dual(\EE^{\rho,+})= \sum_{k=1}^r sh_{j_k}^{1}(dual(\EE)).$$
 Write $dual(\EE)= \cup_{\rho}\{([A_{i},B_i]_{\rho'},l_i,\eta_i)\}_{ i \in (I_{\rho},>')}$. With the definition of $dual$, the assumptions in Definition \ref{def criterion for Arthur type}(2)(ii) translates into the following conditions:
 \begin{enumerate}
    \item [(a)] $j_1 >'\cdots >' j_r$ are adjacent under the admissible order $>'$ on $I_{\rho}$,
    \item [(b)] $[A_{j_1},B_{j_1}]_{\rho}= \cdots = [A_{j_r},B_{j_r}]_{\rho}=[y-1,-x-1]_{\rho}$,
    \item [(c)]  $j_1 = \max\{i \in (I_{\rho},>') \ | \ B_i=B_{j_1}\}$,
    \item [(d)] $ A_{j_1}+B_{j_1}+3 \geq A_i+B_i+1$ for all $i \in I_{\rho}$ and the equality does not hold for $i>'j_1$.
\end{enumerate}
Note that conditions (c) and (d) imply that  $B_{i}>B_{j_1}$ and $A_{j_1}\geq A_{i}$, for any $i>' j_1$. Thus if $r=1$, the desired conclusion $\pi(\EE^{\rho,+}) \neq 0$ is a consequence of \cite[Lemma 4.3(iii)]{HLL22}. We prove the general case by applying induction on $r$. Write $A:=A_{j_1} $ and $B:=B_{j_1}$ in the rest of the proof.

Let $I_{\rho,1}:= \{i \in I_{\rho},\  j_r>' i\}$, $I_{\rho,2}:= \{i \in I_{\rho},\ i>' j_1\}$, and $\widetilde{\EE_{\rho,k}}:= \{([A_{i},B_i]_{\rho},l_i,\eta_i)\}_{ i \in (I_{\rho,k},>')}$ for $k=1,2$. We have 
\[ dual(\EE_{\rho})= \widetilde{\EE_{\rho,1}}+ \{([A,B]_{\rho},\ast, \ast)\}+\{([A,B]_{\rho},\ast, \ast)^{r-1}\}+ \widetilde{\EE_{\rho,2}}, \]
where $r-1$ denotes the multiplicity. The conditions (c) and (d) imply that we may apply row exchanges on $\EE_{\rho}$ to get 
\[dual(\EE_{\rho})'= \widetilde{\EE_{\rho,1}}+\{([A,B]_{\rho},\ast, \ast)\}+\widetilde{\EE_{\rho,2}}'+\{([A,B]_{\rho},\ast, \ast)^{r-1}\}. \]
The case for $r=1$ implies that 
$\widetilde{\EE_{\rho,1}}+\{([A+1,B+1]_{\rho},\ast, \ast)\}+\widetilde{\EE_{\rho,2}}'$
satisfies the non-vanishing criterion in Theorem \ref{thm non-vanishing}. The same is true for
\[ \widetilde{\EE_{\rho,1}}+\{([A,B]_{\rho},\ast, \ast)\}+\ \widetilde{\EE_{\rho,2}}'+\{([A+1,B+1]_{\rho},\ast, \ast)^{r-1}\} \]
by the induction hypothesis. Then, it follows that 
\[dual(\EE_{\rho})^+:= \widetilde{\EE_{\rho,1}}+\{([A+1,B+1]_{\rho},\ast, \ast)\}+\widetilde{\EE_{\rho,2}}'+\{([A+1,B+1]_{\rho},\ast, \ast)^{r-1}\}  \]
also satisfies the non-vanishing criterion. Applying row exchanges to $dual(\EE_{\rho})^+$ recovers $dual(\EE^{\rho,+})$. By Theorem \ref{thm dual}, we conclude that $\pi(\EE^{\rho,+})\neq 0$. This completes the proof of the lemma.
\end{proof}

The following theorem gives a criterion to determine whether a representation $\pi \in \Pi(G_n)$ is of Arthur type using the information of $\Psi(\pi^{\rho,-})$, where $\pi^{\rho,-} \in \Pi(G_m)$ for some $m<n$. 

\begin{thm}\label{thm algo for Arthur type}
Let $\pi\in\Pi_{gp}(G_n)$ and $\pi=\pi^{\rho,-}+\{(\Delta_{\rho}[x,-y])^r\}$. Then $\pi$ is of Arthur type if and only if $\Psi(\pi^{\rho,-}; \Delta_{\rho}[x,-y],r)$ is non-empty. Moreover, for any $\EE \in  \mathscr{E}(\pi^{\rho,-}; \Delta_{\rho}[x,-y],r)$, we have $\pi=\pi(\EE^{\rho,+})$. In other words, for any $ \psi \in \Psi(\pi^{\rho,-}; \Delta_{\rho}[x,-y],r)$, we have $ \psi^{+} \in \Psi(\pi)$.
\end{thm}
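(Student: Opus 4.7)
The plan is to deduce the theorem by combining Theorem \ref{thm pi_rho_minus} (for the forward direction) with Lemma \ref{lemma pi^rho^plus} (for the reverse direction), using the equivalence of the two formulations in the ``moreover'' clause as a bookkeeping device. This equivalence is immediate from the definitions: for any $\EE \in \mathscr{E}(\pi^{\rho,-}; \Delta_{\rho}[x,-y], r)$ with $\psi_{\EE} = \psi$, the definition of $\EE^{\rho,+}$ as a composition of $add^{1}_{j_k}$ operators gives $\psi_{\EE^{\rho,+}} = \psi^{+}$, so $\pi = \pi(\EE^{\rho,+})$ holds for every such $\EE$ if and only if $\psi^{+} \in \Psi(\pi)$ holds for every such $\psi$.

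For the forward direction (``$\pi$ of Arthur type $\Longrightarrow$ $\Psi(\pi^{\rho,-}; \Delta_{\rho}[x,-y], r) \neq \emptyset$''), I would fix an absolutely maximal extended multi-segment $\EE_{0}$ with $\pi(\EE_{0}) = \pi$ and apply Theorem \ref{thm pi_rho_minus} to obtain a non-vanishing $\EE_{0}^{\rho,-}$ satisfying $\pi(\EE_{0}^{\rho,-}) = \pi^{\rho,-}$ and
\[
\psi_{\EE_{0}^{\rho,-}} \;=\; \psi^{max}(\pi) \;-\; (\rho \otimes S_{y+x+1} \otimes S_{y-x+1})^{\oplus r} \;+\; (\rho \otimes S_{y+x+1} \otimes S_{y-x-1})^{\oplus r},
\]
together with the crucial assertion of Theorem \ref{thm pi_rho_minus}(c) that $\psi^{max}(\pi)$ contains \emph{exactly} $r$ copies of $\rho \otimes S_{y+x+1} \otimes S_{y-x+1}$. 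The first two conditions in Definition \ref{def criterion for Arthur type}(1) for $\psi_{\EE_{0}^{\rho,-}}$ are then visible from this formula. For the third condition, the selection rule in Definition \ref{def EE_rho_minus} removes precisely those extended segments of $\EE_{0}$ with $[A_{i}, B_{i}]_{\rho} = [y, x]_{\rho}$; any surviving summand $\rho \otimes S_{a} \otimes S_{b}$ with $b = y - x + 1$ must therefore come from an extended segment having $B_{i} > x$, so that $A_{i} = B_{i} + (y - x) > y$ and $a_{i} = A_{i} + B_{i} + 1 > x + y + 1$, as required.

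For the reverse direction and part (ii) simultaneously, I would pick any $\EE \in \mathscr{E}(\pi^{\rho,-}; \Delta_{\rho}[x,-y], r)$. Lemma \ref{lemma pi^rho^plus} yields $\pi(\EE^{\rho,+}) \neq 0$ and the decomposition $\pi(\EE^{\rho,+}) = \pi(\EE^{\rho,+})^{\rho,-} + \{(\Delta_{\rho}[x,-y])^{r}\}$. The construction $(\cdot)^{\rho,+}$ is set up as the formal inverse of $(\cdot)^{\rho,-}$: after replacing $\EE^{\rho,+}$ by its absolute maximization (which by Lemma \ref{lem add^1 raising} is obtainable via raising operators that remain applicable to $\EE$), the operation $(\cdot)^{\rho,-}$ returns $\EE$ up to row exchanges. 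Hence $\pi(\EE^{\rho,+})^{\rho,-} = \pi(\EE) = \pi^{\rho,-}$, which gives $\pi(\EE^{\rho,+}) = \pi$ and establishes both the reverse direction and (ii). The main technical obstacle lies in the third condition of the forward direction: only the exact-multiplicity statement of Theorem \ref{thm pi_rho_minus}(c), which genuinely requires the absolute maximality of $\EE_{0}$, ensures that no stray summand of the form $\rho \otimes S_{x+y+1} \otimes S_{y-x+1}$ with small $a$ survives in $\psi_{\EE_{0}^{\rho,-}}$ and ruins membership in $\Psi(\pi^{\rho,-}; \Delta_{\rho}[x,-y], r)$.
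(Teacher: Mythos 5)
Your proof is correct and follows the same two-step strategy as the paper: deduce the forward direction from Theorem \ref{thm pi_rho_minus} and the reverse direction (together with the ``moreover'' clause) from Lemma \ref{lemma pi^rho^plus}, which is exactly what the paper does. The useful extra content you provide is the explicit verification that $\psi_{\EE_0^{\rho,-}}$ satisfies all three defining conditions of $\Psi(\pi^{\rho,-};\Delta_\rho[x,-y],r)$ in Definition \ref{def criterion for Arthur type}; the paper simply asserts $\psi^-\in\Psi(\pi^{\rho,-};\Delta_\rho[x,-y],r)$ without spelling this out. Two small points on your verification of the third condition: the step ``any surviving summand with $b=y-x+1$ must come from an extended segment having $B_i>x$'' deserves one more sentence — it follows because $A_i-B_i=y-x=A_{j_1}-B_{j_1}$ forces $B_i\ge B_{j_1}=x$ (otherwise admissibility of the order would give $i<j_1$, contradicting the minimality of $j_1$ among indices achieving $b_\rho$), and $B_i=x$ would force $[A_i,B_i]_\rho=[y,x]_\rho$, hence $i\in\{j_1,\dots,j_s\}$. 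Also, your closing diagnosis slightly misattributes the role of Theorem \ref{thm pi_rho_minus}(c): the third condition is already guaranteed by Definition \ref{def EE_rho_minus}'s wholesale removal of all segments equal to $[A_{j_1},B_{j_1}]_\rho$ together with the admissibility argument above; what part (c) really buys you is the identification $[A_{j_1},B_{j_1}]_\rho=[y,x]_\rho$ and the equality $s=r$ that validates writing the formula in part (b) with $r$ rather than the a priori count $s$.
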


\begin{proof}
If $\pi$ is of Arthur type, then $\pi^{\rho,-}\in \Pi_{\psi^{-}}$ by Proposition \ref{thm pi_rho_minus}, where $\psi^{-} \in\Psi(\pi^{\rho,-}; \Delta_{\rho}[x,-y],r)$. In particular, $\Psi(\pi^{\rho,-}; \Delta_{\rho}[x,-y],r)$ is non-empty. 

Conversely, assume that $\Psi(\pi^{\rho,-}; \Delta_{\rho}[x,-y],r)$ is non-empty and take  $\EE\in \mathscr{E}(\pi^{\rho,-}; \Delta_{\rho}[x,-y],r)$.
Then Lemma \ref{lemma pi^rho^plus} implies that $\pi(\EE^{\rho,+})=\pi$; hence, $\pi \in \Pi_{\psi_{\EE^{\rho,+}}}$. Note that $(\psi_{\EE})^{+}= \psi_{\EE^{\rho,+}}$. This completes the proof of the theorem.
\end{proof}

We remark that the set $\{\psi^+ \ | \ \psi \in \Psi(\pi^{\rho,-}; \Delta_{\rho}[x,-y];r)\}$ is often a proper subset of $\Psi(\pi)$.
Based on the above theorem, we give a new algorithm to determine whether a good parity representation $\pi$ is of Arthur type. If $\pi$ is of Arthur type, then it outputs the set
$$\mathscr{E}(\pi):=\{\EE \ | \ \pi(\EE)= \pi\}/ \text{(row exchanges)}.$$

\begin{algo}[Algorithm to determine Arthur type using $\pi^{\rho,-}$]\label{algo Arthur type}
Given a representation $\pi$ of $G_n$ of good parity, proceed as follows:

\begin{enumerate}
    \item [\textbf{Step 1:}] If $\pi$ is tempered, then $\pi$ is of Arthur type. According to Theorem \ref{thm Arthur tempered}, write $\pi=\pi(\phi, \varepsilon)$, where
    \[\phi= \bigoplus_{\rho}\bigoplus_{i\in I_{\rho}} \rho \otimes S_{2a_i+1}.\]
    We equip $I_{\rho}$ with a total order $>$ such that $a_i$ is non-decreasing. Then let 
    \[ \EE:= \cup_{\rho} \{ ([a_i,a_i],0, \varepsilon( \rho \otimes S_{2a_i+1})) \}_{i \in (I_{\rho, >})},\]
    and output $\mathscr{E}(\EE):=\{ \EE' \ | \ \pi(\EE')\cong \pi(\EE) \}/\text{(row exchanges)}$ by \cite[Theorem 7.4]{HLL22}. We regard the trivial representation of $G_0$ as a tempered representation, and output $\{\EE\}$, where $\EE=\emptyset$.
    \item [\textbf{Step 2:}] If $\pi$ is not tempered, say $\pi^{\rho,-}$ is obtained from $\pi$ by removing $r$ copies of $\Delta_{\rho}[x,-y]$ $($see Definition $\ref{def pi minus}(1))$. Then apply the algorithm on $\pi^{\rho,-}$ to construct the set (possibly empty)
\[ \{\EE' \ | \ \pi(\EE') \cong \pi^{\rho,-}\}.\]
The representation $\pi$ is of Arthur type if and only if the set $\mathscr{E}(\pi^{\rho,-}; \Delta_{\rho}[x,-y],r) $ is non-empty. If this set is non-empty, take any $\EE$ in this set and then output $\mathscr{E}(\EE^{\rho,+})$ by \cite[Theorem 7.4]{HLL22}.
\end{enumerate}
\end{algo}

\begin{remark}
Since repeated application of $\pi^{\rho,-}$ eventually yields a tempered representation, the algorithm must terminate. Unlike the algorithms in \cite[Algorithm 3.3]{Ato23} and \cite[Algorithm 7.9]{HLL22}, the new algorithm avoids constructing potentially non-tempered local Arthur packets and does not rely on computing highest derivatives.
\end{remark}

As an example, we now apply the new algorithm to \cite[Example 7.11]{HLL22}. Rather than computing $\Psi(\pi^{\rho,-}; \Delta_{\rho}[x,-y], r)$ in full detail, we simply list an element $\psi$ from this set whenever it is nonempty.  

\begin{exmp}\label{ex arthur type}\ Let $\rho$ be the trivial representation.

1. Let 
    $\pi= L\left(\Delta_{\rho}\left[ \half{-1},\half{-5}\right], \Delta_{\rho}\left[ \half{-1},\half{-1}\right],\Delta_{\rho}\left[ \half{3},\half{-5}\right]; \pi\left(\half{1}^+,\half{3}^+, \half{5}^+\right)\right)$
    be a representation of $SO_{31}$
    and 
    \begin{align*}
        \pi_1&:=L\left( \Delta_{\rho}\left[ \half{-1},\half{-1}\right],\Delta_{\rho}\left[ \half{3},\half{-5}\right]; \pi\left(\half{1}^+,\half{3}^+, \half{5}^+\right)\right),\\
        \pi_2&:=L\left( \Delta_{\rho}\left[ \half{3},\half{-5}\right]; \pi\left(\half{1}^+,\half{3}^+, \half{5}^+\right)\right),\\
        \pi_3&:= \pi\left(\half{1}^+,\half{3}^+, \half{5}^+\right).
    \end{align*}
    First, note that $\pi_3$ is tempered; hence, it is of Arthur type. We have  $\pi_3=\pi(\EE_3)$, $\psi_{\EE_3} \in \Psi(\pi_3; \Delta_{\rho}[\half{3},\half{-5}],1)$, and $\pi_2=\pi(\EE_3^{\rho,+})$, where 
    \[ { \EE_3=  \scalebox{0.8}{\bordermatrix{ &\half{1}&\half{3}&\half{5} \cr 
    &\oplus && \cr 
    &&\oplus& \cr
    &&&\oplus \cr}}_{\rho},\ \EE_3^{\rho,+}= \scalebox{0.8}{\bordermatrix{ &\half{1}&\half{3}&\half{5} \cr
    &\oplus && \cr 
    &&\oplus& \cr
    &&{\color{red}\lhd}&{\color{red}\rhd} \cr
    &&&\oplus \cr}}_{\rho}.}\]
Therefore, $\pi_2$ is of Arthur type. Similarly, we have  $\pi_2= \pi(\EE_2)$, $\psi_{\EE_2} \in \Psi\left(\pi_2; \Delta_{\rho}\left[\half{-1},\half{-1}\right],1\right),$
and $\pi_1=\pi(\EE_2^{\rho,+})$, where 
    \[{ \EE_2= \scalebox{0.8}{\bordermatrix{ &\half{1}&\half{3}&\half{5} \cr
    &\oplus && \cr 
    &&\oplus& \cr
    &&\lhd&\rhd \cr
    &&&\oplus \cr}}_{\rho},\ \EE_2^{\rho,+}= \scalebox{0.8}{\bordermatrix{ &\half{-1}&\half{1}&\half{3}&\half{5} \cr
    &{\color{red}\lhd} & {\color{red}\rhd} & &\cr
    &&\oplus && \cr 
    &&&\oplus& \cr
    &&&\lhd&\rhd \cr
    &&&&\oplus \cr}}_{\rho}.}\]
Thus $\pi_1$ is of Arthur type. Finally, applying \cite[Theorem 7.4]{HLL22}, we obtain that the set $\{\EE \ | \ \pi(\EE)=\pi(\EE_2^{\rho,+})=\pi_1\}/\text{(row exchanges)}$ consists of four extended multi-segments
\[\scalebox{0.8}{\bordermatrix{ &\half{-1}&\half{1}&\half{3}&\half{5} \cr
    &\lhd & \rhd & &\cr
    &&\oplus && \cr 
    &&&\oplus& \cr
    &&&\lhd&\rhd \cr
    &&&&\oplus \cr}}_{\rho},\ \scalebox{0.8}{\bordermatrix{ &\half{-1}&\half{1}&\half{3}&\half{5} \cr
    &\lhd & \rhd & &\cr
    &&\oplus && \cr 
    &&&\oplus& \ominus\cr
    &&&\ominus&\oplus \cr}}_{\rho},\ \scalebox{0.8}{\bordermatrix{ &\half{-1}&\half{1}&\half{3}&\half{5} \cr
    &\oplus & \ominus & &\cr
    &\ominus&\oplus && \cr 
    &&&\oplus& \cr
    &&&\lhd&\rhd \cr
    &&&&\oplus \cr}}_{\rho},\ \scalebox{0.8}{\bordermatrix{ &\half{-1}&\half{1}&\half{3}&\half{5} \cr
    &\oplus & \ominus & &\cr
    &\ominus&\oplus && \cr 
    &&&\oplus& \ominus\cr
    &&&\ominus&\oplus \cr}}_{\rho}.  \]
    Thus, $\Psi(\pi_1; \Delta_{\rho}[\half{-1},\half{-5}],1)$ is empty by definition, and we conclude that $\pi$ is not of Arthur type.

\quad

2. Consider 
    $\pi= L\left(\Delta_{\rho}\left[ \half{-1},\half{-5}\right], \Delta_{\rho}\left[ \half{-1},\half{-1}\right],\Delta_{\rho}\left[ \half{3},\half{-5}\right]; \pi\left(\half{1}^-,\half{3}^+, \half{5}^-\right)\right)$
    and let 
    \begin{align*}
        \pi_1&:=L\left( \Delta_{\rho}\left[ \half{-1},\half{-1}\right],\Delta_{\rho}\left[ \half{3},\half{-5}\right]; \pi\left(\half{1}^-,\half{3}^+, \half{5}^-\right)\right),\\
        \pi_2&:=L\left( \Delta_{\rho}\left[ \half{3},\half{-5}\right]; \pi\left(\half{1}^-,\half{3}^+, \half{5}^-\right)\right),\\
        \pi_3&:= \pi\left(\half{1}^-,\half{3}^+, \half{5}^-\right).
    \end{align*}
    Following the same computation as in the previous example, we see that $\pi_1$ is of Arthur type and $\pi_1=\pi(\EE_2^{\rho,+})$, where 
    \[ \EE_2^{\rho,+}=\scalebox{0.8}{ \bordermatrix{ &\half{-1}&\half{1}&\half{3}&\half{5} \cr
    &\lhd & \rhd & &\cr
    &&\ominus && \cr 
    &&&\oplus& \cr
    &&&\lhd&\rhd \cr
    &&&&\ominus \cr}}_{\rho}. \]
Then we obtain that $\psi_{\EE_1} \in \Psi(\pi_1; \Delta_{\rho}[\half{-1},\half{-5}],1)$ and $\pi=\pi(\EE_1^{\rho,+})$, where
\[ \EE_1= \scalebox{0.8}{\bordermatrix{ &\half{-1}&\half{1}&\half{3}&\half{5} \cr
    &\lhd & \rhd & &\cr
    &&\ominus &\oplus& \cr 
    &&&\lhd&\rhd \cr
    &&&&\ominus \cr}}_{\rho},\ \EE_1^{\rho,+}= \scalebox{0.8}{\bordermatrix{ &\half{-1}&\half{1}&\half{3}&\half{5} \cr
    &\lhd & \rhd & &\cr
    &{\color{red}\lhd}&\ominus &\oplus&{\color{red}\rhd} \cr 
    &&&\lhd&\rhd \cr
    &&&&\ominus \cr}}_{\rho}.  \]
    Therefore, $\pi$ is of Arthur type.
\end{exmp}

\subsection{\texorpdfstring{Reduction along $\pi_{\rho,-}$}{}}

 In this subsection, we give another algorithm to determine whether $\pi$ is of Arthur type, from the information of $\Psi(\pi_{\rho,-})$. This is inspired by a related question from Atobe for which we thank him, and we include it here for future reference.

\begin{defn} \label{def E lower minus}
Assume that $\EE= \cup_{\rho} \{ ([A_i,B_i],l_i, \eta_i)\}_{i \in (I_{\rho},>)}\in \Rep^{(P')}$. If there exists an $i \in I_{\rho}$ such that $A_i \neq B_i$, then after row exchanges if necessary, there exists a unique $B \in \half{1}\Z$ and a decomposition $I_{\rho}= I_{\rho,1} \sqcup I_{\rho,2} \sqcup I_{\rho,3}$ such that for any $i_1 \in I_{\rho,1}, i_2 \in I_{\rho,2}, i_3 \in I_{\rho,3}$, the following hold:
 \begin{enumerate}
     \item [$\oldbullet$] $A_{i_1}=B_{i_1}\leq B$,
     \item [$\oldbullet$] $ B=B_{i_2}< A_{i_2}$,
     \item [$\oldbullet$] $B<B_{i_3}$.
 \end{enumerate}
 If $l_{i_2} \geq 1$ for all $i_2 \in I_{\rho,2}$, then we define $\EE_{\rho,-}:=\sum_{i_2 \in I_{\rho,2}} add_{i_2}^{-1} (\EE).$
\end{defn}

We first show that if $\pi(\EE)$ is non-tempered, then the above conditions must hold (after row exchange if necessary) for $\EE^{|max|}.$ Moreover,  $\pi((\EE^{|max|})_{\rho,-})$ is a nonzero representation.

\begin{lemma}\label{len E max lower minus}
    Suppose that $\EE \in \Rep^{(P')}$ is absolutely maximal and $\pi(\EE)$ is non-tempered. Then the conditions in Definition \ref{def E lower minus} are satisfied and $\pi(\EE_{\rho,-})\neq 0$. 
\end{lemma}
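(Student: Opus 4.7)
The plan is to establish the three assertions of the lemma — the existence of the decomposition of $I_\rho$, the conditions $l_{i_2} \geq 1$ and $A_{i_2} \geq B + 2$ for $i_2 \in I_{\rho,2}$, and the non-vanishing of $\pi(\EE_{\rho,-})$ — in that order.

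First, since $\pi(\EE)$ is non-tempered and $\EE$ is absolutely maximal (so $\psi_\EE = \psi^{max}(\pi(\EE))$ is non-tempered), there must exist some $\rho$ and some $i \in I_\rho$ with $A_i > B_i$. Setting $B := \min\{B_i : i \in I_\rho,\, A_i > B_i\}$ produces the decomposition $I_\rho = I_{\rho,1} \sqcup I_{\rho,2} \sqcup I_{\rho,3}$ of Definition \ref{def E lower minus}. The property $(P')$ arranges the indices in non-decreasing order of $B_i$, while row exchanges among indices with a common value $B_i = B$ are unconstrained by admissibility, so one can push the singletons at level $B$ (which belong to $I_{\rho,1}$) in front of the non-singletons (i.e.\ $I_{\rho,2}$), producing the desired admissible ordering $I_{\rho,1} < I_{\rho,2} < I_{\rho,3}$.

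Second — and this is the heart of the argument — I would verify, for each $i_2 \in I_{\rho,2}$, the conditions $l_{i_2} \geq 1$ and $A_{i_2} \geq B + 2$ required for $add_{i_2}^{-1}$ to yield a valid extended segment. Both are proved by contradiction: a failure of either condition is shown to produce a raising operator applicable on $\EE$, contradicting absolute maximality. When $l_{i_2} = 0$ and $A_{i_2} \geq B + 2$, the all-circles extended segment $([A_{i_2}, B]_\rho, 0, \eta_{i_2})$ has length $\geq 3$ and can be split via Lemma \ref{cor ui inverse type 3'} into two shorter extended segments yielding the same representation; this split is precisely the inverse of a $ui$ of type $3'$, hence a raising operator. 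The subcase $A_{i_2} = B + 1$ (length-two extended segment) is handled by exhibiting an applicable $dual \circ ui \circ dual$ or $dual_k^-$ operator using a companion row — either a neighboring singleton in $I_{\rho,1}$ at an adjacent level, or a longer extended segment in $I_{\rho,3}$; a careful tracking of the signs $\eta_i$ and the parity quantities $\alpha_i$ identifies the specific raising operator in each scenario.

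Third, once the arithmetic conditions hold, $\EE_{\rho,-}$ is a legitimate element of $\Rep^{(P')}$, and I would derive $\pi(\EE_{\rho,-}) \neq 0$ by adapting the strategy used for Theorem \ref{thm pi_rho_minus}(a), which itself rests on Proposition \ref{prop max triangle}: list the indices $I_{\rho,2} = \{j_1, \ldots, j_s\}$ in a suitable order and apply $add_{j_k}^{-1}$ one at a time, showing by induction on $k$ that each intermediate extended multi-segment still meets the non-vanishing criterion of Theorem \ref{thm non-vanishing}(i). The key invariants — property $(P')$, the values $B_i + l_i$ and $A_i - l_i$ on the modified rows, and the sign conditions — are preserved by $add^{-1}$, reducing the verification of Theorem \ref{thm non-vanishing}(i)(a) to an inductive step on the shift parameter. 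The main obstacle is the subcase $A_{i_2} = B + 1$ in the second step, because Lemma \ref{cor ui inverse type 3'}'s splitting argument no longer applies to length-two extended segments; one must then construct the contradicting raising operator using a carefully chosen partner elsewhere in $\EE$, which requires the kind of delicate sign analysis that underlies the proof of Proposition \ref{prop max triangle} but is complicated by the possibility that $I_{\rho,2}$ contains several rows of varying lengths simultaneously.
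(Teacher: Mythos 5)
Your plan for the first part (producing the decomposition of $I_\rho$) is fine and matches the paper, but the second and third parts have genuine problems.

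The main gap is that you assert the condition $A_{i_2}\geq B+2$ is \emph{required} for $add_{i_2}^{-1}$ to yield a valid extended segment, and then claim to prove it from absolute maximality. This is false: Definition~\ref{def E lower minus} only imposes $l_{i_2}\geq 1$, and the degenerate case $A_{i_2}=B_{i_2}+1$, $l_{i_2}=1$ genuinely occurs for absolutely maximal extended multi-segments. A concrete counterexample is the singleton $\EE_\rho=\{([\tfrac12,-\tfrac12]_\rho,1,*)\}$ (the symbol $\scalebox{0.8}{$\lhd\ \rhd$}$, i.e.\ $\EE_{\half{1}}$ from Proposition~\ref{prop corank 1 f(pi)=1}(iii) with $\alpha=\half{1}$): here $I_{\rho,2}$ has one element with $A_{i_2}=\half{1}=B+1$, no raising operator ($ui^{-1}$, $dual\circ ui\circ dual$, or $dual_k^{-}$) is applicable, and one can check $\Psi(\pi(\EE))$ is a singleton so $\EE$ is absolutely maximal. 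After $add^{-1}$ the row simply becomes empty and is deleted; the definition and the paper's proof both implicitly permit this. By trying to rule this case out you are committed to constructing a raising operator that cannot exist, and the vague appeal to ``a companion row'' in $I_{\rho,1}$ or $I_{\rho,3}$ cannot rescue this — in the counterexample there is no other row at all.

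Beyond that false assertion, your case split for proving $l_{i_2}\geq 1$ is organized along the wrong axis. You split on segment length ($A_{i_2}-B\geq 2$ versus $=1$), whereas the relevant dichotomy is the value of $B$: the $ui^{-1}$ of type~$3'$ argument via Lemma~\ref{cor ui inverse type 3'} with $r=0$ needs $2B\geq 0$, which fails precisely when $B=-\half{1}$. The paper handles $B=-\half{1}$, $l_1=0$ by directly exhibiting $dual_1^{-}$ as the applicable raising operator (no companion row needed), and handles $B\geq 0$, $l_1=0$ by the $ui^{-1}$ split — the segment length plays no role in that dichotomy. The paper also first reorders so $A_1\leq\cdots\leq A_k$ and invokes Theorem~\ref{thm non-vanishing}(ii) to get $l_1\leq\cdots\leq l_k$, reducing the whole claim to $l_1\geq 1$ — a monotonicity trick you don't use. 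Finally, for the non-vanishing of $\pi(\EE_{\rho,-})$ your plan to apply $add^{-1}$ row by row and verify Theorem~\ref{thm non-vanishing}(i) at each step is only sketched; it faces the difficulty that the intermediate extended multi-segments are not absolutely maximal, so Proposition~\ref{prop max triangle} no longer applies. The paper avoids this by reducing to the case $I_{\rho,1}=\emptyset$, $B_i\leq -\half{1}$, dualizing, converting $add^{-1}$ on $I_{\rho,2}$ into $sh^{-1}$ via Lemma~\ref{lem equalities of operators}(2), and invoking a non-vanishing result for shifts from \cite[Proposition 6.13(i)]{HLL22} together with \cite[Lemma 10.16]{HLL22}.
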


\begin{proof}
Keep the notation in Definition \ref{def E lower minus} and write $(I_{\rho,2},>)=\{1<\cdots<k\}$. Apply row exchanges if necessary such that we have $A_1\leq \cdots\leq  A_k$. Then we must have that $B_i+l_i \geq -\half{1}$  and  $ l_1\leq \cdots \leq l_{k}$ by Part (i)(b) and Part (ii)(3) of Theorem \ref{thm non-vanishing}.
We claim that the absolute maximality of $\EE$ implies that $l_1 \geq 1$. Indeed, suppose the contrary that $l_1=0$. If $B_1=-\half{1}$, then one can check that the raising operator $dual_1^{-}$ is applicable, which contradicts the absolute maximality of $\EE$. If $B_1 \geq \half{1}$, then we can apply $ui^{-1}$ to break $([A_1,B_1]_{\rho},0, \ast)$ into 
\[ \{([B_1,B_1]_{\rho},0, \ast),([A_1,B_1+1]_{\rho},0, \ast) \}. \] Note that the resulting extended multi-segment still gives a nonzero representation by  \cite[Corollary 5.8]{HLL22}.
Again, this contradicts to the absolute maximality. This verifies the claim, which guarantees that the conditions of Definition \ref{def EE_rho_minus} hold.

Next, we check that $\pi( \EE_{\rho,-}) \neq 0$. According to Theorem \ref{thm non-vanishing}(i), we may assume that $I_{\rho,1}$ is empty and $B_i\leq -\half{1}$. Note that $\pi(\EE_{\rho,-})\neq 0$ is equivalent to $\pi(dual(\EE_{\rho,-}))\neq 0$.  Write $dual(\EE_{\rho})=\FF_{3}+\FF_2,$ where
$\FF_3= \{ ([A_i,-B_i]_{\rho},l_i',\eta_i') \}_{i \in (I_{\rho,3},>')}$ and $\FF_2= \{ ([A_i,-B_i]_{\rho},l_i',\eta_i') \}_{i \in (I_{\rho,2},>')}.$ 
 By Lemma \ref{lem equalities of operators}(2), it is enough to show that $\pi(dual(\EE)^{\rho} \cup (\FF_3+ sh^{-1}(\FF_2))) \neq 0$. Indeed, this follows from \cite[Proposition 6.13(i)]{HLL22}, since the absolute maximality of $ \EE$ implies that there is no inverse of a raising operator applicable on $\FF_2+\FF_3$ (\cite[Lemma 10.16]{HLL22}). This completes the proof of the lemma.
 \end{proof}

The following proposition, which is a generalization of Proposition \ref{prop max triangle}, 
relates the $L$-data of $\pi(\EE)$ with $\pi(\EE_{\rho,-}),$ provided that the latter is nonzero. 

 \begin{prop}\label{prop reduction along min x} 
If $\pi(\EE_{\rho,-}) \neq 0$, then 
$\pi(\EE)= \pi(\EE_{\rho,-})+\{\Delta_{\rho}[B_{i_2},-A_{i_2}]\}_{i_2 \in I_{\rho,2}}.$ 
Moreover, $\pi(\EE_{\rho,-})= \pi(\EE)_{\rho,-}$ (see Definition \ref{def pi minus}(2)).
 \end{prop}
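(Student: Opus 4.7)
The plan is to prove the equality $\pi(\EE) = \pi(\EE_{\rho,-}) + \{\Delta_\rho[B_{i_2}, -A_{i_2}]\}_{i_2 \in I_{\rho,2}}$ by induction on $|I_{\rho,2}|$, combined with the reduction formulas from Proposition~\ref{prop max triangle} and Theorem~\ref{thm pi_rho_minus}. First I would reduce to the case that $\EE$ is absolutely maximal: since raising operators preserve $\pi(\EE)$, one may pass to $\EE^{|max|}$, after verifying that the minimum value $B$ and the multiset $\{[A_{i_2}, B_{i_2}]\}_{i_2 \in I_{\rho,2}}$ depend only on $\pi(\EE)$ (as justified by the moreover part below).

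For the inductive step with $\EE$ absolutely maximal, let $b_\rho = \max\{A_i - B_i + 1 \mid i \in I_\rho\}$ and $j = \min\{i \in I_\rho \mid A_i - B_i + 1 = b_\rho\}$, after row exchange satisfying the convention of Proposition~\ref{prop max triangle}. If $B_j = B$ (so $j \in I_{\rho,2}$), Proposition~\ref{prop max triangle} gives
\[
\pi(\EE) = \pi(add_j^{-1}(\EE)) + \{\Delta_\rho[B, -A_j]\}.
\]
Since $(add_j^{-1}(\EE))_{\rho,-} = \EE_{\rho,-}$ and $|I_{\rho,2}(add_j^{-1}(\EE))| = |I_{\rho,2}| - 1$, the induction hypothesis applied to $add_j^{-1}(\EE)$ closes this case. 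If instead $B_j > B$ (so $j \in I_{\rho,3}$), Theorem~\ref{thm pi_rho_minus} peels off the top segments to give $\pi(\EE) = \pi(\EE^{\rho,-}) + \{(\Delta_\rho[B_j, -A_j])^r\}$, where $r$ is the multiplicity of $[A_j, B_j]$ in $\EE_\rho$. Because $(\cdot)^{\rho,-}$ and $(\cdot)_{\rho,-}$ act on the disjoint index subsets $I_{\rho,3}$ and $I_{\rho,2}$, they commute: $(\EE^{\rho,-})_{\rho,-} = (\EE_{\rho,-})^{\rho,-}$. Combining the induction hypothesis applied to $\EE^{\rho,-}$ with Theorem~\ref{thm pi_rho_minus} applied to $\EE_{\rho,-}$ (to reinsert the top segments) yields the desired equality.

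For the moreover part, I would verify that the minimum leading value in the non-tempered portion of the $L$-data of $\pi(\EE)$ is exactly $B$, and that the segments at this minimum are precisely $\{\Delta_\rho[B, -A_{i_2}]\}_{i_2 \in I_{\rho,2}}$. Tracing Algorithm~\ref{algo rep(E)}, each $add^{-1}$ step contributes a segment to the $L$-data whose leading value is the current $B$-coordinate; the singletons in $I_{\rho,1}$ contribute no non-tempered data because their $l_i$ must vanish; and any segment originating from $I_{\rho,3}$ has leading value strictly greater than $B$. Hence $\pi(\EE)_{\rho,-}$ removes precisely $\{\Delta_\rho[B, -A_{i_2}]\}_{i_2 \in I_{\rho,2}}$, matching $\pi(\EE_{\rho,-})$ from the first part.

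The main obstacle is justifying that Theorem~\ref{thm pi_rho_minus} can be applied to $\EE_{\rho,-}$ in the case $B_j > B$, which requires $\EE_{\rho,-}$ to retain sufficient absolute-maximality structure. Since the only changes occur at the minimum $B$-level, well-separated from the biggest-segment structure situated at strictly greater $B$-values, no new raising operator of type $dual\circ ui\circ dual$ or $dual_k^{-}$ becomes applicable to $\EE_{\rho,-}$ — this can be verified using Lemma~\ref{lem add^1 raising} together with direct inspection of the operator definitions.
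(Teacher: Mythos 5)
Your proof follows the same overall strategy as the paper's — induction plus a case split according to whether the largest-segment index $j$ falls in $I_{\rho,2}$ or $I_{\rho,3}$, using Proposition~\ref{prop max triangle} and/or Theorem~\ref{thm pi_rho_minus} as building blocks, and then Algorithm~\ref{algo rep(E)} for the moreover part — so it is genuinely on the right track. However, there are two concrete gaps. First, your stated induction is on $|I_{\rho,2}|$ alone, but in the $B_j > B$ case the recursion passes to $\EE^{\rho,-}$, which has exactly the same $|I_{\rho,2}|$; the quantity that decreases there is the rank, not $|I_{\rho,2}|$. The paper handles this with a nested induction (outer on $|I_{\rho,2}|$, inner on rank $n$), and you would need to do the same for your recursion to be well-founded. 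Second, your opening reduction to absolutely maximal $\EE$ is circular: you propose to justify that the multiset $\{[A_{i_2},B_{i_2}]\}_{i_2\in I_{\rho,2}}$ depends only on $\pi(\EE)$ "by the moreover part below," but the moreover part is a consequence of the very equality you are setting out to prove. Moreover, replacing $\EE$ by $\EE^{|max|}$ does not automatically preserve the hypothesis $\pi(\EE_{\rho,-})\neq 0$, which is about the original $\EE$. The paper sidesteps both problems at once by invoking Lemma~\ref{lem add^1 raising} to see that $\EE_\rho^{|max|} = \EE_{\rho,1}+\EE_{\rho,2}+(\EE_{\rho,3})^{|max|}$, i.e., raising operators only touch $\EE_{\rho,3}$, so one may harmlessly assume only $\EE_{\rho,3}$ is absolutely maximal while leaving $\EE_{\rho,1}$, $\EE_{\rho,2}$, hence $\EE_{\rho,-}$, intact. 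Finally, your proposed fix for "the main obstacle" (applying Theorem~\ref{thm pi_rho_minus} to $\EE_{\rho,-}$) leans on Lemma~\ref{lem add^1 raising}, but that lemma states that raising operators applicable on the $add^1$-ed object (here $\EE$) are applicable on the smaller one (here $\EE_{\rho,-}$), which is the wrong direction for concluding that $\EE_{\rho,-}$ is absolutely maximal; a direct argument is needed. The paper avoids leaning so heavily on Theorem~\ref{thm pi_rho_minus} by peeling one segment at a time with Proposition~\ref{prop max triangle} and invoking the rank induction, which is a somewhat lighter requirement at each stage.
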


\begin{proof}
 Write $\EE_{\rho}=\EE_{\rho,1}+\EE_{\rho,2}+\EE_{\rho,3}$ where
$\EE_{\rho,j}:= \{ ([A_{i},B_{i}],l_{i},\eta_{i}) \}_{i \in (I_{\rho,j},>)},$
for $j=1,2,3$. To describe the relation between $\pi(\EE)$ and $ \pi(\EE_{\rho,-})$, we apply induction on  $r:=|I_{\rho,2}|$. When $r=0$, there is nothing to prove.

When $r \geq 1$, we apply induction on $n$, the rank of the group. {By Lemma \ref{lem add^1 raising}, we have
$\EE_{\rho}^{|max|}= \EE_{\rho,1}+\EE_{\rho,2}+ (\EE_{\rho,3})^{|max|}$}.
Hence, we may assume that $\EE_{\rho,3}=(\EE_{\rho,3})^{|max|}$. Let $j$ be the index chosen in Proposition \ref{prop max triangle}. If $j \in I_{\rho,2}$, then the conclusion follows from Proposition \ref{prop max triangle} and the induction hypothesis for $|I_{\rho,2}|=r-1$. If $j\in I_{\rho,3}$, define 
\begin{align*}
    \EE_1&:=\EE^{\rho} \cup (\EE_{\rho,1}+ \EE_{\rho,2}+ add_{j}^{-1}(\EE_{\rho,3})),\\
    \EE_2&:=\EE^{\rho} \cup (\EE_{\rho,1}+ add^{-1}(\EE_{\rho,2})+ add_{j}^{-1}(\EE_{\rho,3})).
\end{align*}
We have $\pi(\EE_1) \neq 0$ by Proposition \ref{prop max triangle} and  $\pi(\EE_{\rho,-})\neq 0$ by assumption. Also, one can check that $\pi(\EE_2)\neq 0$ since $\EE_2= (add_j^{-1}(\EE))_{\rho,-}$. Now Proposition \ref{prop max triangle} and the induction hypothesis on the rank $n$ imply that
\begin{align*}
    \pi(\EE)&=\pi(\EE_1)+\{ \Delta_{\rho}[B_j,-A_j] \}\\
    &=\pi(\EE_2)+\{\Delta_{\rho}[B_{i_2},-A_{i_2}]\}_{i_2 \in I_{\rho,2}}+\{ \Delta_{\rho}[B_j,-A_j] \}\\
    &=\pi(\EE_{\rho,-})+\{\Delta_{\rho}[B_{i_2},-A_{i_2}]\}_{i_2 \in I_{\rho,2}}.
\end{align*}
Finally, by applying Algorithm \ref{algo rep(E)} on $\EE_{\rho,-}$, one can see that any segment $\Delta_{\rho}[x,-y]$ in the $L$-data of $\pi(\EE_{\rho,-})$ must satisfy that $x > B$. Therefore, $\pi(\EE_{\rho,-})=\pi(\EE)_{\rho,-}$. This completes the proof of the proposition. 
\end{proof}

\begin{remark}\label{rmk socle Atobe}
    When $ I_{\rho,1}$ is empty, \cite[Theorem 5.1(2)]{Ato20b} states that 
\begin{align}\label{eq Ato socle}
    \pi(\EE)= S_{\rho\vert\cdot\rvert^{y_1}} \circ \cdots \circ S_{\rho\vert\cdot\rvert^{y_s}} (\pi(\EE_{\rho,-})),
\end{align}
where
$\bigsqcup_{i \in I_{\rho,2}} [B_i,-A_i]_{\rho}=\{\rho\vert\cdot\rvert^{y_1},\dots , \rho\vert\cdot\rvert^{y_s}\}$ 
and $y_1 \geq \cdots \geq y_s$. Here $S_{\rho\vert\cdot\rvert^{z}}(\pi)$ is the unique irreducible subrepresentation of the parabolic induction 
$ \rho\vert\cdot\rvert^{z} \rtimes \pi. $
Then the key point of Proposition \ref{prop reduction along min x} is that the effect of the composition of socles in \eqref{eq Ato socle} is to insert $\{\Delta_{\rho}[B_{i_2},-A_{i_2}]\}_{i \in I_{\rho,2}}$ in the $L$-data of $\pi(\EE_{\rho,-})$.
\end{remark}

We have the following algorithm to compute $\pi(\EE)$, which is an analogue of Algorithm \ref{algo rep(E)}. Here, we use Proposition \ref{prop reduction along min x} as the analogue for Proposition \ref{prop max triangle}.

\begin{algo}[Algorithm to compute the $L$-data of $\pi(\EE)$ applying $(\EE^{|max|})_{\rho,-}$]\label{algo rep(E) 2}
Given an extended multi-segment $\EE \in \Rep$, proceed as follows.
\begin{enumerate}
    \item [\textbf{Step 1:}]Compute $\EE^{|max|}= \cup_{\rho} \{([A_i,B_i]_{\rho},l_i,\eta_i) \}_{i \in (I_{\rho},>)}$. If $\EE^{|max|}$ satisfies the condition $(L)$, then the $L$-data of $\pi(\EE)$ is given by Proposition \ref{prop $L$-data L-packet}.
    \item [\textbf{Step 2:}] If $\EE^{|max|}$ does not satisfy the condition $(L)$, then choose a $\rho$ such that $\EE_{\rho}$ does not satisfy the condition $(L)$. Construct $(\EE^{|max|})_{\rho,-}$ as in Definition \ref{def E lower minus}. Repeating the algorithm for $(\EE^{|max|})_{\rho,-}$, which is of smaller rank than $\EE$, we obtain the $L$-data of $\pi((\EE^{|max|})_{\rho,-})$. Then, by Proposition \ref{prop reduction along min x}, we have
    \[ \pi(\EE)=\pi((\EE^{|max|})_{\rho,-})+ \{\Delta_{\rho}[B_{i_2},-A_{i_2}]\}_{i \in I_{\rho,2}}.\]
\end{enumerate}
\end{algo}

\begin{remark}
We may replace the $\EE^{|max|}$ in the above algorithm by $\EE^{\ast}$ given in \cite[Algorithm 5.6]{Ato20b}, with a modification that we need to replace the definition of $B^{\min}$ and $I_{\rho}^{\min}$ in Step 1 of \cite[Algorithm 5.6]{Ato20b} by $B^{\min}=\min \{B_j\ | \ j \in I_{\rho}, A_j\neq B_j\}$ and $I_{\rho}^{\min}=\{ j \in I_{\rho} \ | \ B_j=B^{\min}, A_j \neq B_j \}$. Note that if $\{B_j\ | \ j \in I_{\rho}, A_j\neq B_j\}=\emptyset,$ then $X_{\rho}(\pi(\EE))$ is already tempered.
\end{remark}

Algorithms \ref{algo rep(E)}, \ref{algo rep(E) 2} are identical on Example \ref{ex algorithm 5.1} since $(\EE^{|max|})^{\rho,-}=(\EE^{|max|})_{\rho,-}$ for those extended multi-segments. The following example below shows the difference between these two algorithms.
\begin{exmp}
Let 
$$\EE= \{([1,0]_{\rho},1,1),([3,1]_{\rho},1,1)\}= \scalebox{0.8}{\bordermatrix{
    &0&1&2&3 \cr
    &\lhd&\rhd&&\cr
    &&\lhd&\oplus&\rhd\cr}}_{\rho}=\EE^{|max|},$$ 
    and set
    $\EE_1= \scalebox{0.8}{\bordermatrix{
    &1&2&3 \cr
    &\lhd&\oplus&\rhd\cr}}_{\rho}$, $\EE_2= \scalebox{0.8}{\bordermatrix{
    &0&1&2&3 \cr
    &\lhd&\rhd&&\cr
    &&&\oplus&\cr}}_{\rho}$, and $\EE_3= \scalebox{0.8}{\bordermatrix{
    &2 \cr
    &\oplus\cr}}_{\rho}.$
    Then $\EE_{1}=\EE_{\rho,-}$, $\EE_{2}=\EE^{\rho,-}$, and $\EE_3=(\EE_{\rho,-})_{\rho,-}=(\EE^{\rho,-})^{\rho,-}$. Applying Algorithm \ref{algo rep(E)}, we have that 
    \begin{align*}
        \pi(\EE_3)&=\pi(2^+),\\ \pi(\EE_2)&= L(\Delta_{\rho}[0,-1]; \pi(2^+)), \\
    \pi(\EE)&=L(\Delta_{\rho}[1,-3], \Delta_{\rho}[0,-1]; \pi(2^+)).
    \end{align*}
    Applying Algorithm \ref{algo rep(E) 2} we have that 
    \begin{align*}
        \pi(\EE_3)&=\pi(2^+), \\
        \pi(\EE_1)&= L(\Delta_{\rho}[1,-3]; \pi(2^+)),\\
        \pi(\EE)&=L(\Delta_{\rho}[1,-3], \Delta_{\rho}[0,-1]; \pi(2^+)).
    \end{align*}
    Thus, the representations involved in the intermediary steps of Algorithms \ref{algo rep(E)} and \ref{algo rep(E) 2} are different.
\end{exmp}

Next, we would like to consider the analogues of Theorem \ref{thm algo for Arthur type} and Algorithm \ref{algo Arthur type} in the case of $\pi_{\rho,-}$. The following is the analogue of Definition \ref{def criterion for Arthur type}.

\begin{defn}\label{def criterion for Arthur type 2}
Let $\pi\in\Pi_{gp}(G_n)$ be of the form 
$\pi= L(\Delta_{\rho_1}[x_1,-y_1], \dots , \Delta_{\rho_f}[x_f,-y_f]; \pi(\phi, \varepsilon)).$
Define
\begin{align*}
    x_{\rho,-}&:= \min\{ x_i\ | \ \rho_i \cong \rho \},\\
    \Delta_{\rho,-}&:=\{ \Delta_{\rho_i}[x_i,-y_i] \ | \ 1 \leq i \leq f, \rho_i \cong \rho, x_{i}=x_{\rho,-}\},\\
    \psi_{\rho,-}&:=\sum_{i \in \Delta_{\rho,-}} \rho \otimes S_{x_i+y_i+1}\otimes S_{y_i-x_i-1},\\
    \psi_{\rho,+}&:=\sum_{i \in \Delta_{\rho,-}} \rho \otimes S_{x_i+y_i+1}\otimes S_{y_i-x_i+1}.
\end{align*}
Note that $\Delta_{\rho,-}$ is a multi-set but not necessarily a set.
\begin{enumerate}
    \item [(1)] Denote by $\Psi(\pi_{\rho,-}; \Delta_{\rho,-})$ the set of local Arthur parameters $\psi$ such that
\begin{enumerate}
    \item [$\oldbullet$] $\pi_{\rho,-} \in \Pi_{\psi}$,
    \item [$\oldbullet$] $\psi \supseteq \psi_{\rho,-}$,
    \item [$\oldbullet$]  and any summand $\rho \otimes S_a \otimes S_b$ of $\psi$ with $a-b\leq 2x_{\rho,-}$ satisfies that $ b=1$.
\end{enumerate}
For any $\psi \in \Psi(\pi_{\rho,-}; \Delta_{\rho,-})$, we define that $\psi_{+}:= \psi- \psi_{\rho,-}+ \psi_{\rho,+}$.
\item [(2)] Let $\mathscr{E}(\pi_{\rho,-}; \Delta_{\rho,-})$ denote the set of extended multi-segments $\EE=\cup_{\rho} \{ [A_i,B_i]_{\rho}, l_i, \eta_i\}_{i \in (I_\rho,>)}\in \Rep^{(P')}$ such that $\pi(\EE)=\pi_{\rho,-}$ and $\psi_{\EE}\in \Psi(\pi_{\rho,-}; \Delta_{\rho,-})$, there exists a unique decomposition $I_{\rho}= I_{\rho,1} \sqcup I_{\rho,2} \sqcup I_{\rho,3}$ such that for any $i_1 \in I_{\rho,1}, i_2 \in I_{\rho,2}, i_{3} \in I_{\rho,3}$,
\begin{enumerate}
    \item [$\oldbullet$] $A_{i_1}=B_{i_1} \leq x_{\rho,-}$,
    \item [$\oldbullet$] 
    $B_{i_2}=x_{\rho,-}$, 
    \item [$\oldbullet$]  and $B_{i_3} >x_{\rho,-}$.
\end{enumerate}
 Let $m$ be the multiplicity of $\Delta_{\rho}[x_{\rho,-},-x_{\rho,-}-1]$ in the $L$-data of $\pi$. For $k\in \{1,2,3\}$, set $\EE_{\rho,k}:= \{([A_i,B_i]_{\rho},l_i,\eta_i)\}_{i \in (I_{\rho,k},>)}$. We define 
\[ \EE_{\rho,+}:=\EE^{\rho} \cup (\EE_{\rho,1}+ \{([x_{\rho,-}+1, x_{\rho,-}]_{\rho},1,1)^{m}\} + add^1(\EE_{\rho,2})+ \EE_{\rho,3}).\]
\end{enumerate}
\end{defn}

The following theorem gives a criterion for whether $\pi$ is of Arthur type using $\Psi(\pi_{\rho,-})$.

\begin{thm}\label{thm algo for Arthur type 2}
If $\pi$ is a representation of $G_n$ of good parity, then $\pi$ is of Arthur type if and only if $\Psi(\pi_{\rho,-}; \Delta_{\rho,-})$ is non-empty. Moreover, for any $\EE \in \mathscr{E}(\pi_{\rho,-}; \Delta_{\rho,-})$, we have  $\pi=\pi(\EE_{\rho,+})$. In other words, for any $ \psi \in \Psi(\pi_{\rho,-}; \Delta_{\rho,-})$, we have  $\psi_{+} \in \Psi(\pi)$.
\end{thm}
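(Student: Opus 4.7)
The plan is to mirror the proof of Theorem \ref{thm algo for Arthur type}, substituting Proposition \ref{prop reduction along min x} and Lemma \ref{len E max lower minus} for the roles previously played by Theorem \ref{thm pi_rho_minus} and Proposition \ref{prop max triangle}. The tempered case is trivial since then $\Delta_{\rho,-}$ is empty and $\pi_{\rho,-}=\pi$, so I will assume $\pi$ is non-tempered. The pivotal intermediate result I will need is an analogue of Lemma \ref{lemma pi^rho^plus}: for any $\EE \in \mathscr{E}(\pi_{\rho,-}; \Delta_{\rho,-})$, the construction $\EE_{\rho,+}$ produces a nonzero representation, and applying the reduction $(\cdot)_{\rho,-}$ to $\EE_{\rho,+}$ recovers $\pi_{\rho,-}$ together with exactly the segments in $\Delta_{\rho,-}$.

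For the forward direction, suppose $\pi$ is of Arthur type and pick an absolutely maximal $\EE^{|max|} \in \Rep^{(P')}$ with $\pi(\EE^{|max|})=\pi$. Since $\pi$ is non-tempered, Lemma \ref{len E max lower minus} guarantees that $(\EE^{|max|})_{\rho,-}$ is defined and $\pi((\EE^{|max|})_{\rho,-}) \neq 0$. Proposition \ref{prop reduction along min x} then yields $\pi((\EE^{|max|})_{\rho,-}) = \pi(\EE^{|max|})_{\rho,-} = \pi_{\rho,-}$, so $\psi_{(\EE^{|max|})_{\rho,-}} \in \Psi(\pi_{\rho,-})$. To confirm membership in $\Psi(\pi_{\rho,-}; \Delta_{\rho,-})$, I will check the remaining two conditions of Definition \ref{def criterion for Arthur type 2}(1): the containment $\psi \supseteq \psi_{\rho,-}$ will be traced through the $add^{-1}$ operations used to form $(\EE^{|max|})_{\rho,-}$, matching each segment of $\Delta_{\rho,-}$ with a Jordan block in the resulting parameter; the condition that every summand $\rho \otimes S_a \otimes S_b$ with $a-b \leq 2x_{\rho,-}$ satisfies $b=1$ will follow from the absolute maximality of $\EE^{|max|}$, since otherwise one could break an extended segment with $B \leq x_{\rho,-}$ and $b>1$ via $ui^{-1}$ or $dual_k^-$, contradicting Definition \ref{def raising operator}.

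For the reverse direction, fix $\EE \in \mathscr{E}(\pi_{\rho,-}; \Delta_{\rho,-})$ and construct $\EE_{\rho,+}$ per Definition \ref{def criterion for Arthur type 2}(2). I will first establish $\pi(\EE_{\rho,+}) \neq 0$: the $m$ inserted singletons $([x_{\rho,-}+1, x_{\rho,-}]_{\rho}, 1, 1)$ have $b=2$ and are automatically compatible with the non-vanishing criterion of Theorem \ref{thm non-vanishing} at their placement, while the non-vanishing after $add^1$ on $\EE_{\rho,2}$ will be verified by an inductive argument on $|I_{\rho,2}|$ parallel to Lemma \ref{lemma pi^rho^plus}(ii), passing through the dual via Lemma \ref{lem equalities of operators}(2) so that $add^1$ becomes $sh^1$. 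The condition that any summand with $a-b \leq 2x_{\rho,-}$ has $b=1$ is precisely what prevents dual-side obstructions when shifting. Once non-vanishing is in hand, apply Proposition \ref{prop reduction along min x} to $\EE_{\rho,+}$: by construction $(\EE_{\rho,+})_{\rho,-}$ undoes the $add^1$ on the middle rows and the contributed segments are exactly those in $\Delta_{\rho,-}$, with the inserted singletons contributing the $m$ copies of $\Delta_{\rho}[x_{\rho,-}, -x_{\rho,-}-1]$ and the $add^1$-shifted rows contributing the longer segments $\Delta_{\rho}[x_{\rho,-}, -y_i]$. Combining with $\pi_{\rho,-}+\Delta_{\rho,-} = \pi$ (by Definition \ref{def pi minus}(2)) yields $\pi(\EE_{\rho,+})=\pi$, completing the reverse direction and establishing $\psi_+ \in \Psi(\pi)$.

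The main obstacle I anticipate is the non-vanishing of $\pi(\EE_{\rho,+})$ in the mixed regime where inserted singletons, pre-existing $\EE_{\rho,1}$ elements, and $add^1$-shifted $\EE_{\rho,2}$ rows must coexist under a compatible admissible order. I expect the key tool will be Lemma \ref{lem add^1 raising}, which transfers non-vanishing information from $\EE$ to $\EE_{\rho,+}$ componentwise provided no shifted row is contained in (or contains) an $\EE_{\rho,3}$ row in the sense of Lemma \ref{lem add^1 raising}(2); together with Lemma \ref{cor ui inverse type 3'} to resolve any $l_i=0$ obstructions arising in $\EE_{\rho,3}$. A secondary technical point is showing the existence and uniqueness of the decomposition $I_\rho = I_{\rho,1} \sqcup I_{\rho,2} \sqcup I_{\rho,3}$ required in Definition \ref{def criterion for Arthur type 2}(2)—this should follow from the third condition defining $\Psi(\pi_{\rho,-}; \Delta_{\rho,-})$, which forces any extended segment of $\EE$ with $B \leq x_{\rho,-}$ to have $A=B$ and thereby rules out ambiguity in the partition.
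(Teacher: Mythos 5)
Your plan is essentially the paper's proof: the forward direction uses Lemma \ref{len E max lower minus} and Proposition \ref{prop reduction along min x} exactly as you describe (with membership in $\Psi(\pi_{\rho,-};\Delta_{\rho,-})$ read off from the structure of $(\EE^{|max|})_{\rho,-}$), and the reverse direction passes through $dual$ via Lemma \ref{lem equalities of operators}(2) to convert $add^1$ into $sh^1$ before establishing non-vanishing and then invoking Proposition \ref{prop reduction along min x} a second time to conclude $\pi=\pi(\EE_{\rho,+})$. One small correction on the tooling: Lemma \ref{lem add^1 raising} is not what supplies non-vanishing of $\pi(\EE_{\rho,+})$, since it \emph{presupposes} $\pi(\EE^+)\neq 0$ and only transfers applicability of raising operators; after the dual-and-shift reduction the paper cites \cite[Lemma 4.3(i)]{HLL22} directly, which is the cleaner way to get the non-vanishing your inductive argument aims for.
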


\begin{proof}
Assume that $\pi$ is of Arthur type and $\EE$ is an absolutely maximal extended multi-segment such that $\pi=\pi(\EE)$. Then Lemma \ref{len E max lower minus} and Proposition \ref{prop reduction along min x} implies that $\pi_{\rho,-}= \pi(\EE_{\rho,-})$, which is of Arthur type. Moreover, it follows from the definition that $\psi_{\EE_{\rho,-}}$ is in $\Psi(\pi_{\rho,-}; \Delta_{\rho,-})$. 

Conversely, if $\EE\in \mathscr{E}(\pi_{\rho,-}; \Delta_{\rho,-})$ and $\pi(\EE)= \pi_{\rho,-}$. We first check that $\pi(\EE_{\rho,+}) \neq 0$.  Clearly, Theorem \ref{thm non-vanishing}(i)(b) is satisfied. Thus, by Part (i)(a) of the same theorem, we may replace $\EE_{\rho,+}$ by $sh^d(\EE_{\rho,+})$ for some large $d$ so that $B_i\geq 0$ for any $i \in I_{\rho}$. Now we check Theorem \ref{thm non-vanishing}(ii) for $\EE_{\rho,+}$, where the only non-trivial part is to check it for the subset $ \FF:=add^1(\EE_{\rho,2})+ \EE_{\rho,3}$ of $\EE_{\rho,+}$ (see Definition \ref{def criterion for Arthur type 2}(2) for the notation).

Since $\pi(\EE)\neq 0$,  $\EE_{\rho,2}+ \EE_{\rho,3}$ satisfies Theorem \ref{thm non-vanishing}(ii). Thus, we can take another extended multi-segment $\EE'$ such that $\pi(\EE')\neq 0$ and $(\EE')_{\rho}= \EE_{\rho,2}+ \EE_{\rho,3}$; hence , $\pi(dual(\EE'))\neq 0$. Write $dual(\EE')_{\rho}= \widetilde{\EE_{\rho,3}} + \widetilde{\EE_{\rho,2}}$. Let $(\EE')_{\rho,+}= (\EE')^{\rho} \cup \FF$. Then $\FF$ satisfies Theorem \ref{thm non-vanishing}(ii) if and only if $\pi((\EE')_{\rho,+})\neq 0$, which is equivalent to $\pi(dual((\EE')_{\rho,+}))\neq 0$. Now since 
\[ dual((\EE')_{\rho,+})= dual((\EE')^{\rho}) \cup dual(add^1(\EE_{\rho,2})+ \EE_{\rho,3} )=  dual((\EE')^{\rho}) \cup ( \widetilde{\EE_{\rho,3}} + sh^1(\widetilde{\EE_{\rho,2}}))\]
by Lemma \ref{lem equalities of operators}(2), we have $\pi(dual((\EE')_{\rho,+}))\neq 0$ by \cite[Lemma 4.3(i)]{HLL22}. This completes the verification that $\pi(\EE_{\rho,+})\neq 0$.

Finally, since $(\EE_{\rho,+})_{\rho,-}=\EE$, Proposition \ref{prop reduction along min x} implies that $\pi= \pi(\EE_{\rho,+})$ is of Arthur type. Note that $(\psi_{\EE})_{+}=\psi_{\EE_{\rho,+}}$. This completes the proof of the theorem.
\end{proof}

Applying the theorem above, we obtain an  algorithm analogous to Algorithm \ref{algo Arthur type}. 

\begin{algo}[Algorithm to determine Arthur type using $\pi_{\rho,-}$]\label{algo Arthur type 2}
Given a representation $\pi$ of $G_n$ which is of good parity, proceed as follows:

\begin{enumerate}
    \item [\textbf{Step 1:}] If $\pi$ is tempered, then $\pi$ is of Arthur type. Write $\pi=(\phi, \varepsilon)$, where
    $\phi= \bigoplus_{\rho}\bigoplus_{i\in I_{\rho}} \rho \otimes S_{2a_i+1},$
    where we equip $I_{\rho}$ with a total order $>$ such that $a_i$ is non-decreasing. Then let 
    \[ \EE= \cup_{\rho} \{ ([a_i,a_i],0, \varepsilon( \rho \otimes S_{2a_i+1})) \}_{i \in (I_{\rho, >})},\]
    and output $\mathscr{E}(\EE)$ given by \cite[Theorem 7.4]{HLL22}. 
    We regard the trivial representation of $G_0$ as a tempered representation, and output $\{\EE\}$, where $\EE=\emptyset$.
    \item [\textbf{Step 2:}] If $\pi$ is not tempered, then apply the algorithm on $\pi_{\rho,-}$ (see Definition \ref{def pi minus}) to see whether it is of Arthur type. 

If $\pi_{\rho,-}$ is not of Arthur type, then neither is $\pi$ and the procedure ends.

If $\pi_{\rho,-}$ is of Arthur type, then $\pi$ is of Arthur type if and only if the set $\mathscr{E}(\pi_{\rho,-}; \Delta_{\rho,-})$ is nonempty.  If this set is non-empty, choose any $\EE$ in this set and output $\mathscr{E}(\EE_{\rho,+})$ by \cite[Theorem 7.4]{HLL22}.
\end{enumerate}
\end{algo}

For comparison, we apply Algorithm \ref{algo Arthur type 2} to the same representations in Example \ref{ex arthur type}.
\begin{exmp} In this example, $\rho$ still denotes the trivial representation.

1. Consider 
    $\pi= L\left(\Delta_{\rho}\left[ \half{-1},\half{-5}\right], \Delta_{\rho}\left[ \half{-1},\half{-1}\right],\Delta_{\rho}\left[ \half{3},\half{-5}\right]; \pi\left(\half{1}^+,\half{3}^+, \half{5}^+\right)\right).$ 
    Then
    \begin{align*}
        \pi_{\rho,-}=L\left( \Delta_{\rho}\left[ \half{3},\half{-5}\right]; \pi\left(\half{1}^+,\half{3}^+, \half{5}^+\right)\right)\ {\rm and}\ 
        \Delta_{\rho,-}=\left\{\Delta_{\rho}\left[ \half{-1},\half{-5}\right], \Delta_{\rho}\left[ \half{-1},\half{-1}\right] \right\}.
    \end{align*}
    By the same computation as in Example \ref{ex arthur type}, we see that $\pi_{\rho,-}$ is of Arthur type, and the set $\{\EE \ | \ \pi(\EE)=\pi_{\rho,-}\}/\text{(row exchanges)}$ consists of two extended multi-segments
    \[  \scalebox{0.8}{\bordermatrix{ &\half{1}&\half{3}&\half{5} \cr
    &\oplus && \cr 
    &&\oplus& \cr
    &&\lhd&\rhd \cr
    &&&\oplus \cr}}_{\rho},\ \scalebox{0.8}{\bordermatrix{ &\half{1}&\half{3}&\half{5} \cr
    &\oplus && \cr 
    &&\oplus& \ominus\cr
    &&\ominus&\oplus \cr}}_{\rho}.\]
    Thus, $\Psi(\pi_{\rho,-}; \Delta_{\rho,-})$ is empty by definition and $\pi$ is not of Arthur type.

    \quad
    
    2. Consider 
    $\pi= L\left(\Delta_{\rho}\left[ \half{-1},\half{-5}\right], \Delta_{\rho}\left[ \half{-1},\half{-1}\right],\Delta_{\rho}\left[ \half{3},\half{-5}\right]; \pi\left(\half{1}^-,\half{3}^+, \half{5}^-\right)\right).$
    Then
    \begin{align*}
        \pi_{\rho,-}=L\left( \Delta_{\rho}\left[ \half{3},\half{-5}\right]; \pi\left(\half{1}^-,\half{3}^+, \half{5}^-\right)\right)\ {\rm and}\ 
        \Delta_{\rho,-}=\left\{\Delta_{\rho}\left[ \half{-1},\half{-5}\right], \Delta_{\rho}\left[ \half{-1},\half{-1}\right] \right\}.
    \end{align*}
    Again, by the same computation as in Example \ref{ex arthur type}, we see that $\pi_{\rho,-}$ is of Arthur type and $\pi_{\rho,-}=\pi(\EE)$ where
    \[ \EE=  \scalebox{0.8}{\bordermatrix{ &\half{1}&\half{3}&\half{5} \cr
    &\ominus&\oplus& \cr
    &&\lhd&\rhd \cr
    &&&\ominus \cr}}_{\rho}.\]
    Since $\psi_{\EE}\in \Psi(\pi_{\rho,-}; \Delta_{\rho,-})$, $\pi$ is also of Arthur type and $\pi=\pi(\EE_{\rho,+})$, where
\[ \EE_{\rho,+}=  \scalebox{0.8}{\bordermatrix{ &\half{-1}&\half{1}&\half{3}&\half{5} \cr
    &{\color{red}\lhd} & {\color{red}\rhd} & &\cr
    &{\color{red}\lhd}&\ominus &\oplus&{\color{red}\rhd} \cr 
    &&&\lhd&\rhd \cr
    &&&&\ominus \cr}}_{\rho}.\]
\end{exmp}

\subsection{Tadi{\' c}'s family}\label{sec Tadic family}
 In this subsection, applying Algorithm \ref{algo Arthur type}, we give a new proof that the 2-parameter family of representations $\pi_{m,n}$ defined in \cite{Tad22} (see also \eqref{pimn} below) are of Arthur type. Throughout this subsection, we fix an irreducible supercuspidal representation 
$\pi_{sc}=\pi(\phi_{sc},\varepsilon_{sc}) \in \mathcal{C}_{cl}$, and an irreducible supercuspidal representation $\rho \in \mathcal{C}^{sd}$, 
 such that the reducibility point of the pair $(\rho, \pi_{sc})$ is $\alpha:=\alpha_{\rho,\pi_{sc}} \geq 3/2$. The cases that $ \alpha =0,1/2,1$ can be proved similarly, which we omit here. Take any extended multi-segment $\EE_{sc}$ such that $\pi(\EE_{sc})=\pi_{sc}$.

In terms of Langlands classification, the $L$-data of $\pi_{m,n}$ is given by
\begin{equation}\label{pimn}
    \pi_{m,n}= L(\Delta_{\rho}[-\alpha-m,-\alpha-m],\dots,\Delta_{\rho}[-\alpha,-\alpha], \Delta_{\rho}[-\alpha+1,-\alpha+1];\pi_{temp}),
\end{equation}
where $m,n \in \mathbb{Z}_{\geq 0}$, $\pi_{temp}=\pi(\phi,\varepsilon)$ with
$\phi=\phi_{sc}- \rho\otimes S_{2(\alpha-1)+1}+ {\rho\otimes}S_{2(\alpha+n)+1},$ 
for any $\rho'\in\mathcal{C}^{sd}$ and $y\in\frac{1}{2}\mathbb{Z}_{\geq0}$,
\[
\varepsilon(\rho' \otimes S_{2y+1})=\left\{\begin{array}{ll}
   \varepsilon_{sc}(\rho \otimes S_{2(\alpha-1)+1}),  & \mathrm{if} \ y=\alpha+n \ \mathrm{and}  \ \rho'=\rho,  \\
    \varepsilon_{sc}(\rho' \otimes S_{2y+1}), & \mathrm{otherwise.}
\end{array}\right.
\]

If $\alpha=3/2$, then $\pi_{temp}=\pi(\EE_{temp})$, where
\[\EE_{temp}= \EE_{sc}^{\rho} \cup \scalebox{0.8}{\bordermatrix{ & \alpha+n \cr 
&\ominus \cr }}_{\rho}.\]
Note that the sign of the circle at $\alpha+n$ must be $-$  by Theorem \ref{thm characterizatioin of supercuspidal}.
Then applying Algorithm \ref{algo rep(E)}, we see that $\pi_{m,n}=\pi(\EE_{m,n}),$ where
\[ \EE_{m,n}= \EE_{sc}^{\rho} \cup \scalebox{0.8}{\bordermatrix{
&- \alpha-m & \cdots &-\half{1} & \half{1} & \cdots & \alpha+m & \cdots &\alpha+ n \cr 
&\lhd &\cdots &\lhd&\rhd &\cdots&\rhd& &\cr 
&&&&&&&&\ominus\cr
}}_{\rho}.\]
Therefore, $\pi_{m,n}$ is of Arthur type. Here $\alpha+m$ can exceed $\alpha+n$ in the symbol.

For $\alpha \geq 2$, we claim that there exists an extended multi-segment $\EE$ such that $\pi(\EE)=\pi_{temp}$ and the first extended segment of $\EE_{\rho}$ is $([\alpha-2,-(\alpha-2)]_{\rho},\ast,\ast)$. We give an explicit construction of $\EE$ below. Note that the results in \cite[\S 8]{HLL22} already guarantee the existence of such $\EE$ abstractly.

When $\alpha=2$ (resp. $\alpha=5/2$), we have  $\pi_{temp}=\pi(\EE_{temp})$, where
\[  \EE_{temp}=\EE_{sc}^{\rho}\cup \scalebox{0.8}{\bordermatrix{ & 0& \cdots &\alpha+n \cr 
&\odot&  & \cr 
&&&\odot\cr }} \ \text{(resp. }  \EE_{temp}= \EE_{sc}^{\rho}\cup \scalebox{0.8}{\bordermatrix{ &-\half{1}& \half{1}& \cdots &\alpha+n \cr 
&\oplus&\ominus&  & \cr 
&&&&\oplus\cr }}\text{)}, \]
which confirms the claim in this case. When $\alpha \geq 3 $, recall that $\epsilon_{\rho}\in \{ 0,\half{1} \}$ is the number such that $\alpha+\epsilon_{\rho}\in \Z$. We have $\pi_{temp}=\pi(\EE_{temp})$, where
\[\EE_{temp}=\EE_{sc}^{\rho}\cup\{([ \alpha-2,\epsilon_{\rho}]_{\rho}, 0, \varepsilon(\rho\otimes S_{2\epsilon_{\rho}+1})),([\alpha+n,\alpha+n],0,\varepsilon(\rho\otimes S_{2(\alpha+n)+1}))\}.\]
Then we apply $dual \circ ui^{-1} \circ dual$ on $\EE_{temp}$ as follows. First, applying the formula of $dual$, we obtain that
\[ dual(\EE_{temp})= \{([\alpha+n,-\alpha-n]_{\rho}, \ast,\ast), ([\alpha-2, -\epsilon_{\rho}]_{\rho}, 0,\ast)\}.\]
 Then we apply $ui^{-1}$ of type 3$'$ (see Lemma \ref{cor ui inverse type 3'}) to break $([\alpha-2, -\epsilon_{\rho}]_{\rho}, 0,\ast)$ into
\[ \{ ([\alpha-3, -\epsilon_{\rho}]_{\rho}, 0,\ast),([\alpha-2, \alpha-2]_{\rho}, 0,\ast)\}.\]
Taking dual again, the resulting extended multi-segment is of the form
\[ 
{ \EE=\EE_{sc}^{\rho} \cup \scalebox{0.8}{\bordermatrix{ 
&-\alpha-2 & \cdots &\epsilon_{\rho} & \cdots  &\alpha-3&\alpha-2 &\cdots& \alpha+n \cr 
&\lhd  &\cdots&\cdots&\cdots&\rhd&\rhd && \cr 
&&& \odot &\cdots &\odot &&&\cr 
&&&&&&&&\odot\cr
}}},\]
which verifies the claim.

Now applying Algorithm \ref{algo rep(E)}, we obtain that $\pi_{m,n}=\pi(\EE_{m,n})$ where $\EE_{m,n}= \EE_{sc}^{\rho} \cup (add_{1}^{m+2}(\EE_{\rho}))$; hence, $\pi_{m,n}$ is of Arthur type. If we define $\EE_{m,n}'$ by breaking $([\alpha-3,\epsilon_{\rho}]_{\rho},0,\ast)$ in $\EE_{m,n}$ into $ \{([z,z]_{\rho},0, \ast)\}_{z=\epsilon_{\rho}}^{\alpha-3}$ (i.e., applying $ui^{-1}$), namely, 
\[ 
{ \EE_{m,n}'=\EE_{sc}^{\rho} \cup \scalebox{0.8}{\bordermatrix{ 
&-\alpha-m & \cdots &\epsilon_{\rho} & \cdots  &\alpha-3& \cdots&\alpha+m &\cdots& \alpha+n \cr 
&\lhd  &\cdots&\cdots&\cdots&\rhd& \cdots&\rhd && \cr 
&&& \odot & & &&&\cr 
&&&  &\ddots & &&&\cr 
&&&  & &\odot &&&\cr 
&&&&&&&&&\odot\cr
}}},\]
then $\pi_{m,n} \in \Pi_{\psi_{\EE_{m,n}'}}$, where
\begin{align*}
    \psi_{\EE_{m,n}'}=& (\phi_{sc}- \rho\otimes S_{2(\alpha-1)+1}- \rho\otimes S_{2(\alpha-2)+1} ) \otimes S_1
    + \rho\otimes S_{1} \otimes S_{2(
\alpha+m)+1} + \rho\otimes S_{2(\alpha+n)+1} \otimes S_1,
\end{align*}
which matches $\psi_{2\alpha+1+2m, 2 \alpha+1+2n}$ in \cite[Theorem 4.2(1)]{Tad22}. Finally, it is not hard to check that $dual((\EE_{m,n})_{\rho})=(\EE_{n,m})_{\rho}$ if $\alpha\in \Z$, and $dual((\EE_{m,n})_{\rho})=dual_2^{+}( (\EE_{n,m})_{\rho})$ if $\alpha \in \half{1}+\Z$, which implies that $\widehat{\pi_{m,n}}=\pi_{n,m}$.

\subsection{Preservation and independence for Arthur representations}\label{sec proof of preservation}
With the notation developed in this section, we state and prove the analogues of Theorem \ref{thm weakly real} and Conjectures \ref{conj Tadic preservation}, \ref{conj independence} for Arthur representations.
Recall that we say a representation $\pi \in \Pi(G)$ is of Arthur type if $\pi \in \Pi_A(G)$ (not $\Pi_{A+}(G)$). 

\begin{thm}\label{thm preservation and independence}
    Let $\pi \in \Pi(G_n)$ and recall that $X^{sd}= \sqcup_{\rho \in \mathcal{C}^{sd}} X_{\rho}$. 
    \begin{enumerate}
        \item [(a)] Assume that $\pi= \theta \rtimes X^{sd}(\pi)$ where $\theta$ is Hermitian. Then $\pi$ is of Arthur type if and only if both $\theta$ and $X^{sd}(\pi)$ are of Arthur type.
        \item [(b)] Assume that $\sigma \in \mathcal{C}_{cl}$ and $\pi \in \Irr(X_{\rho_1}\sqcup\cdots \sqcup X_{\rho_r}; \sigma)$ is weakly real. Then $\pi$ is of Arthur type if and only if $X_{\rho_i}(\pi)$ are all of Arthur type. Moreover, suppose further that $\pi$ is of good parity and write $\sigma=\pi(\EE_{sc})$ with  $\EE_{sc}=\cup_{\rho} (\EE_{sc})_{\rho}$. If $\pi$ is of Arthur type with $\pi=\pi(\cup_{\rho} \EE_{\rho})$, then 
        \begin{align}\label{eq preservation A-type 1}
            X_{\rho_i}(\pi)= \pi\left( \EE_{\rho_i} \cup (\EE_{sc})^{\rho_i}  \right).
        \end{align}
        Conversely, if $X_{\rho_i}(\pi)$ is of Arthur type and $ X_{\rho_i}(\pi)= \pi(\cup_{\rho}\EE_{i,\rho})$, then
        \begin{align}\label{eq preservation A-type 2}
            \pi= \pi\left( \bigcup_{i=1}^{r} \EE_{i,\rho_i} \cup \bigcup_{\rho' \not\cong \rho_1,\ldots, \rho_r} (\EE_{sc})_{\rho'}  \right). 
        \end{align}
        \item [(c)] Assume that $\pi_1\in \Irr(\rho_1;\sigma_1)$, $\rho_2 \in \mathcal{C}^{sd}$, and $\sigma_2\in \mathcal{C}_{cl}$, such that $\alpha_{\rho_1,\sigma_1}= \alpha_{\rho_2,\sigma_2}$. Then $\pi_1$ is of Arthur type if and only if $E(\pi_1)$ is of Arthur type. Moreover, suppose further that $\pi_1$ is of good parity and write $\sigma_k=\pi(\EE_{k,sc})$ with  $\EE_{k,sc}=\cup_{\rho} (\EE_{k,sc})_{\rho}$.      If $\pi_1$ is of Arthur type with 
        \[\pi_1=\pi\left(\{([A_i,B_i]_{\rho_1},l_i,\eta_i)\}_{i \in (I_{\rho_1},>)} \cup \bigcup_{\rho \not \cong \rho_1}\EE_{1,\rho}\right), \]
        then (note that we replace $[A_i,B_i]_{\rho_1}$ by $[A_i,B_i]_{\rho_2}$)
        \[ E(\pi_1)= \pi\left( \{([A_i,B_i]_{\rho_2},l_i,\eta_i)\}_{i \in (I_{\rho_1},>)}  \cup   \bigcup_{\rho \not\cong \rho_2} (\EE_{k,sc})_{\rho} \right).\]
    \end{enumerate}
\end{thm}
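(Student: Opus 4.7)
The plan for Part (a) is to invoke the reduction to good parity in Theorem \ref{thm red from nu to gp}. If $\pi \in \Pi_{\psi}$ for some bounded $\psi \in \Psi(G_n)$, then $\psi_{>0}$ is empty and decomposition \eqref{eq decomp red to gp} gives $\pi = \tau_{\psi_{ngp}} \rtimes \pi_{gp}$ with $\pi_{gp} \in \Pi_{\psi_{gp}}$. Split $\psi_{ngp}$ into its non-self-dual part $\psi^{nsd}$ and its self-dual (bad parity) part $\psi^{sd}$, obtaining an irreducible induction
\[
\pi = \tau_{\psi^{nsd}} \rtimes \bigl(\tau_{\psi^{sd}} \rtimes \pi_{gp}\bigr).
\]
The inner factor is supported on $X^{sd} \cup \{\sigma\}$ while $\tau_{\psi^{nsd}}$ is supported on $\mathcal{C} - X^{sd}$, so by uniqueness in Theorem \ref{thm Jantzen}, $X^{sd}(\pi) = \tau_{\psi^{sd}} \rtimes \pi_{gp}$, which is of Arthur type. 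Comparing supercuspidal supports then forces $\theta \cong \tau_{\psi^{nsd}}$, which is also of Arthur type. Conversely, the Hermitian hypothesis on $\theta$ together with its support on non-self-dual cuspidals forces any Arthur-type expression of $\theta$ to be a product of pairs $u_\rho(a,b) \times u_{\rho^\vee}(a,b)$; if additionally $X^{sd}(\pi) \in \Pi_{\psi'}$, then $\pi$ lies in the packet attached to $\psi_\theta + \psi'$ by \eqref{non-unitary A-packet}.

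For Part (b), I first reduce to good parity via Theorem \ref{thm red from nu to gp}: since $\pi$ is weakly real, the factor $\tau$ in $\pi = \tau \rtimes \pi_{gp}$ is supported on self-dual (bad parity) lines, and both $\tau$ and $\pi_{gp}$ split further by cuspidal line $X_{\rho_i}$. This reduces the claim to good parity representations, where I use the extended multi-segment parameterization (Theorem \ref{thm Atobe's reformulation}): write $\pi = \pi(\cup_\rho \EE_\rho)$ and $\sigma = \pi(\cup_\rho (\EE_{sc})_\rho)$. By the construction of good-parity local Arthur packets, which operates independently on each cuspidal line, together with uniqueness of Jantzen decomposition, I obtain $X_{\rho_i}(\pi) = \pi(\EE_{\rho_i} \cup (\EE_{sc})^{\rho_i})$, yielding \eqref{eq preservation A-type 1}. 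Conversely, given extended multi-segments $\cup_\rho \EE_{i,\rho}$ for each $X_{\rho_i}(\pi)$, the assembled extended multi-segment in \eqref{eq preservation A-type 2} is nonzero by applying Theorem \ref{thm non-vanishing} line by line, and its associated representation has the correct Langlands data by Proposition \ref{prop $L$-data L-packet}.

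For Part (c), I reduce to the tempered good-parity case via the Langlands classification, since $\pi_1$ and $E(\pi_1)$ share the same non-tempered Langlands data up to relabeling $\rho_1 \leftrightarrow \rho_2$. In the tempered case, write $\pi_1 = \pi(\phi_1, \varepsilon_1) = \pi(\cup_\rho \EE_{1,\rho})$ via Theorem \ref{thm Atobe's reformulation}, where each extended segment on the $\rho_1$-line has the form $([A_i,A_i]_{\rho_1}, 0, \eta_i)$. Since $\alpha_{\rho_1,\sigma_1} = \alpha_{\rho_2,\sigma_2}$, the parities $\epsilon_{\rho_1}$ and $\epsilon_{\rho_2}$ agree, so replacing $\rho_1$ by $\rho_2$ on the $\rho_1$-line with appropriately adjusted signs produces a valid extended multi-segment (by Theorem \ref{thm non-vanishing} applied to that line alone) whose associated representation is precisely $E(\pi_1)$. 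The non-tempered case then follows by inserting the same collection of Langlands segments $\Delta_{\rho_2}[x_i,y_i]$.

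The main obstacle is the sign bookkeeping in Part (c): the definition of $\varepsilon_2$ involves the correction factor $\varepsilon_{1,sc}(\rho_1\otimes S_{2\alpha-1})\varepsilon_{2,sc}(\rho_2\otimes S_{2\alpha-1})$, and one must verify that transferring the $\rho_1$-line extended segments to the $\rho_2$-line with the characters $\eta_i$ matches this prescription precisely. This will require a direct computation using Proposition \ref{prop $L$-data L-packet} together with the specification of the Whittaker datum implicit in Theorem \ref{thm Arthur tempered}, and is the key technical step of the proof.
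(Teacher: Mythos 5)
Your argument for Part (a) has a genuine gap in the direction "$\pi$ of Arthur type $\Rightarrow$ $\theta$ of Arthur type." You claim that comparing supercuspidal supports forces $\theta \cong \tau_{\psi^{nsd}}$, but an irreducible representation of $\GL_d(F)$ is not determined by its supercuspidal support, and the paper's own Remark following Theorem~\ref{thm weakly real} illustrates precisely this: $\rho\vert\cdot\rvert^{1/3}\times\rho\vert\cdot\rvert^{-1/3}\rtimes\sigma = \rho\vert\cdot\rvert^{1/3}\times\rho\vert\cdot\rvert^{1/3}\rtimes\sigma$ admits two distinct choices of GL-side factor with the same supercuspidal support. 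Notice that you invoke the Hermitian hypothesis on $\theta$ only in the trivial (sufficient) direction, where it is not needed; the paper instead uses it in the necessary direction, via unitary parabolic reduction, to conclude $\theta$ is unitary and thus equals a \emph{unique} $\Pi_{\psi_\theta}$ by Tadi\'c's classification of the unitary dual of $\GL_d(F)$. That uniqueness, not supercuspidal support, is what pins down $\theta$ and yields the parametrization $\Psi(\pi)=\{\psi_\theta+\psi^{sd}+\psi_\theta^\vee\}$; without it, you cannot conclude that the bounded $\psi$ decomposes through the given $\theta$.

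In Part (b), the forward direction is stated too loosely ("the construction operates independently on each cuspidal line, together with uniqueness of Jantzen decomposition") to constitute a proof; the paper's argument first handles the tempered case via the classification in \S\ref{sec classification of tempered}, then the general case by realizing both $\pi$ and the candidate $\pi_i$ as subrepresentations of explicit standard modules via Algorithm~\ref{algo rep(E)} and comparing Langlands data. For the converse, your appeal to Proposition~\ref{prop $L$-data L-packet} to read off the Langlands data of the assembled extended multi-segment is unjustified, since that proposition requires condition $(L)$, which need not hold for $\bigcup_i \EE_{i,\rho_i}\cup\bigcup_{\rho'}(\EE_{sc})_{\rho'}$. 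The paper sidesteps this by verifying non-vanishing via Theorem~\ref{thm non-vanishing}, then computing $X_{\rho_i}$ of the resulting representation and identifying $\pi'=\pi$ through the Jantzen bijection (Theorem~\ref{thm Jantzen}(ii)). Your outline for Part (c) is consistent with the paper's (omitted) argument.
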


\begin{proof}
    Part (a) is a consequence of Theorem \ref{thm red from nu to gp}. Indeed, it implies that if $\theta$ and $X^{sd}(\pi)$ are both of Arthur type, then so is $\pi$. Conversely, assume that $\pi$ is of Arthur type. By the unitary parabolic reduction (see \cite[\S 2.15]{Tad23} for example), $\theta$ must be a unitary representation of $\GL_d(F)$ for some $d$; hence, $\theta \in \Pi_{\psi_{\theta}}$ for a unique $\psi_{\theta} \in \Psi^{+}_{1/2}(\GL_d(F))$ by the classification of unitary dual for general linear group. Then Theorem \ref{thm red from nu to gp} implies that
    \begin{align*}
         \{\psi \in \Psi^{+}_{1/2}(G_n) \ | \ \pi \in \Pi_{\psi}\}= \{ \psi_{\theta}+ \psi^{sd}+ \psi_{\theta}^{\vee} \ | \ X^{sd}(\pi)\in \Pi_{\psi^{sd}} \}.
    \end{align*}
    If there exists a $\psi$ in the left hand side above that lies in $ \Psi(G_n)$ and write $\psi=\psi_{\theta}+ \psi^{sd}+ \psi_{\theta}^{\vee}$, then we must have that $\psi_{\theta} \in \Psi(\GL_d(F))$ and $\psi^{sd} \in \Psi(G_{n-d})$. We conclude that $\theta \in \Pi_A(\GL_d(F))$ and $X^{sd}(\pi)\in \Pi_{A}(G_{n-d})$. This completes the verification of Part (a).

    For Part (b), we may assume that $\pi$ is of good parity again by Theorem \ref{thm red from nu to gp}. Thus, it suffices to show Equations \eqref{eq preservation A-type 1} and \eqref{eq preservation A-type 2}. 

   {We first prove \eqref{eq preservation A-type 1} in the special case that $\pi$ is tempered. In this case, by the process in \S \ref{sec classification of tempered} below, there exists $\theta_j$ supported in $X_{\rho_j}$ such that
    \[ \pi \hookrightarrow \left(\bigtimes_{j \neq i} \theta_j \right)\rtimes \pi( (\EE_{sc})^{\rho_i} \cup \EE_{\rho_i}). \]
    Moreover, $\pi( (\EE_{sc})^{\rho_i} \cup \EE_{\rho_i})= X_{\rho_i}(\pi)$. This proves \eqref{eq preservation A-type 1} in this case.

    Now we consider general $\pi$. Applying Algorithm \ref{algo rep(E)} to $\EE$, we obtain an injection
    \begin{align}\label{eq injection independence}
        \pi \hookrightarrow \bigtimes_{1\leq j \leq r} \theta_j \rtimes \pi(\EE_{temp})
    \end{align}
    where
    \begin{itemize}
        \item $\theta_j= \bigtimes_{k \in I_j}\Delta_{\rho_j}[x_k, y_k]$ is supported in $X_{\rho_j}$, and
        \item $\psi_{\EE_{temp}}$ is tempered. 
    \end{itemize}
    Note that each $\Delta_{\rho_j}[x_k, y_k]$ is obtained from Step 2 of Algorithm \ref{algo rep(E)} with $\rho=\rho_j$, and the right hand side of \eqref{eq injection independence} is indeed a standard module, which gives the $L$-data of $\pi$. Applying the same process to the right hand side of \eqref{eq preservation A-type 1}, denoted by $\pi_i$ for short, we obtain that
    \begin{align}\label{eq injection independence 2}
       \pi_i:= \pi\left( \EE_{\rho_i} \cup \bigcup_{\rho \not\cong \rho_i} (\EE_{sc})_{\rho}  \right) \hookrightarrow \theta_i \rtimes \pi( (\EE_{sc})^{\rho_i} \cup (\EE_{temp})_{\rho_i} )= \theta_i \rtimes X_{\rho_i}(\pi(\EE_{temp})),
    \end{align}
    where the equality follows from the previous special case for the tempered representation $\pi(\EE_{temp})$. Again, note that the right hand side of \eqref{eq injection independence 2} is a standard module, which gives the $L$-data of $\pi_i$. Now comparing the $L$-data of $\pi$ and $\pi_i$, we conclude that $X_{\rho_i}(\pi)= \pi_i$ by \cite[Corollary 8.6]{Jan97}. This completes the proof of \eqref{eq preservation A-type 1}.

    Conversely, assume that $X_{\rho_i}(\pi)$ are all of Arthur type with $X_{\rho_i}(\pi)=\pi( \cup_{\rho} \EE_{i,\rho})$. Let
    \[ \EE=  \bigcup_{i=1}^{r} \EE_{i,\rho_i} \cup \bigcup_{\rho \not\cong \rho_1,\ldots, \rho_r} (\EE_{sc})_{\rho}.\]
    Since $\pi( \cup_{\rho} \EE_{i,\rho}) \neq 0$, its $\rho_i$-part $\EE_{i,\rho_i}$ must satisfy  the non-vanishing condition in Theorem \ref{thm non-vanishing}. We see that $\EE$ also satisfies the non-vanishing condition, and we let $\pi':= \pi(\EE)$ be the corresponding (nonzero) irreducible representation. Then for any $1 \leq i \leq r$, we obtain that
    \[X_{\rho_i}(\pi')= \pi( \EE_{i,\rho_i} \cup \bigcup_{\rho \not\cong \rho_i} (\EE_{sc})_{\rho}) = X_{\rho_i}(X_{\rho_i}(\pi))= X_{\rho_i}(\pi), \]
    where the first and second equality follows from \eqref{eq injection independence} for $\pi'$ and $X_{\rho_i}(\pi)$ respectively. We conclude that $\pi=\pi'$ by Theorem \ref{thm Jantzen}(ii). This completes the proof of \eqref{eq preservation A-type 2} and Part (b). Part (c) follows from the same argument of Part (b), which we omit. This completes the proof of the theorem.    
}
\end{proof}

\begin{remark}
Applying Theorem \ref{thm red from nu to gp}, Theorem \ref{thm preservation and independence} is true if we replace $\Pi_{A}$ by $\Pi_{A+}$. Additionally, the good parity condition in the second part of Part (b) can be dropped, but it complicates the notation.
\end{remark}

Theorem \ref{thm preservation and independence} allows us to prove the analogues of Conjectures \ref{conj Tadic preservation} and \ref{conj independence} for $\Pi_{\overline{A}}(G_n)$ and $\Pi_{\overline{A}}^{\lim}(G_n)$.

\begin{cor}\label{cor preseveration for Pi A bar}
The analogues of Conjectures \ref{conj Tadic preservation} and \ref{conj independence} hold for both $\Pi_{\overline{A}}(G_n)$ and $\Pi_{\overline{A}}^{\lim}(G_n)$.
\end{cor}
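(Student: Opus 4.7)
The plan is to reduce to the Arthur-type base case of Theorem \ref{thm preservation and independence} by induction on the level $k$ in the filtration $\Pi_{\overline{A}}(G_n) = \bigcup_{k \geq 0} \Pi_{\overline{A}}^{(k)}(G_n)$ from Definition \ref{def closure A+}, which is applicable here via the identification $\Pi_{\overline{A}}(G_n) = \Pi_{\overline{A+,u}}(G_n)$ of Theorem \ref{cor closure A+,u=A,u}. The base case $k = 0$, where $\pi \in \Pi_{A+,u}(G_n)$, reduces via Theorem \ref{thm red from nu to gp} and the Tadi\'c classification of the unitary dual of $\GL$ to the case $\pi \in \Pi_A(G_n)$, for which Theorem \ref{thm preservation and independence} supplies both analogues.

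The inductive step hinges on the following compatibility lemma, which I would establish first: if $\Pi_{\underline{x}} = \bigtimes_i u_{\rho_i}(a_i, b_i)\vert\cdot\rvert^{x_i} \rtimes \pi_A$ is irreducible and weakly real with $\pi_A \in \Pi_{A,gp}$ and all $\rho_i \in \mathcal{C}^{sd}$, then for every cuspidal line $X_\rho$ we have
\begin{equation*}
X_\rho(\Pi_{\underline{x}}) = \left(\bigtimes_{i:\, \rho_i \cong \rho} u_\rho(a_i, b_i)\vert\cdot\rvert^{x_i}\right) \rtimes X_\rho(\pi_A),
\end{equation*}
with both sides irreducible throughout the connected component of $\underline{x}$ in the irreducibility locus. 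This follows from Theorem \ref{thm Jantzen}(iii) applied to the regular partition $X_\rho \sqcup (\mathcal{C} \setminus X_\rho)$, since each $u_{\rho_i}(a_i,b_i)\vert\cdot\rvert^{x_i}$ is supported in $X_{\rho_i}$, together with $X_\rho(\pi_A) \in \Pi_{A,gp}$ from Theorem \ref{thm preservation and independence}(b). Given the triple $(\Pi_{\underline{x}}, \underline{y}, \underline{z})$ defining $\pi \in \Pi_{\overline{A}}^{(k)}$, with $\Pi_{\underline{y}} = \tau \rtimes \pi^{(k-1)}$ and $\pi^{(k-1)} \in \Pi_{\overline{A}}^{(k-1)}$, applying $X_\rho$ yields a sub-family $\widetilde{\Pi}_{\widetilde{\underline{x}}}$, a restricted parameter point $\widetilde{\underline{y}}$, and $\widetilde{\underline{z}}$, together with the decomposition $X_\rho(\Pi_{\underline{y}}) = \tau_\rho \rtimes X_\rho(\pi^{(k-1)})$ where $\tau_\rho$ is the $X_\rho$-supported Jantzen factor of $\tau$. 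By the induction hypothesis, $X_\rho(\pi^{(k-1)}) \in \Pi_{\overline{A}}^{(k-1)}$, so this is a valid triple certifying $X_\rho(\pi) \in \Pi_{\overline{A}}^{(k)}$. The converse direction uses the bijectivity of $\Psi_{X_1, X_2}$ in Theorem \ref{thm Jantzen}(ii) to reassemble triples for the $X_{\rho_i}(\pi)$'s into a single triple for $\pi$. The independence analogue is handled in parallel: $E$ only acts on the $X_{\rho_1}$-component by replacing $\rho_1$ with $\rho_2$ throughout the triple (including inside the Arthur parameter of $\pi_A$, using Theorem \ref{thm preservation and independence}(c)), leaves the complementary data fixed, and preserves both irreducibility and the level $k$.

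The main obstacle will be verifying the compatibility lemma in full generality, in particular that connected components of irreducibility in the multi-parameter space factor correctly through the cuspidal line decomposition. This requires a normalization step grouping the indices $i$ by cuspidal line (possible via Theorem \ref{thm Jantzen}(iii)) and checking that the explicit irreducibility criterion for $u_\rho(a,b)\vert\cdot\rvert^x \rtimes \sigma$ from \cite[Theorem 4.4]{Ato22d} respects restriction to a single line. A further subtlety is the weakly real reduction via Theorem \ref{thm weakly real}: non-self-dual factors of $\pi$ contribute only to unitary parabolic induction outside all $X_{\rho_i}$'s and can be absorbed by a single preliminary application of the compatibility lemma before starting the induction on $k$.
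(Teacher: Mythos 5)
Your approach is essentially the paper's: both argue by induction on the level $k$ of the filtration, use Theorem \ref{thm preservation and independence}(b),(c) as the base case, and justify the inductive step by showing that the irreducibility conditions in the definition of $\Pi_{\overline{A}}^{(k)}$ factor through cuspidal lines via Theorem \ref{thm Jantzen}(iii) and are preserved under the replacement $\rho_1\leftrightarrow\rho_2$. The only material difference is your detour through Definition \ref{def closure A+} and Theorem \ref{cor closure A+,u=A,u}: the paper inducts directly on the filtration $\{\Pi_{\overline{A}}^{(k)}\}$ of Definition \ref{def closure A}, so its base case is simply $\Pi_A(G_n)$ with no intermediate reduction from $\Pi_{A+,u}$ required. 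The irreducibility-preservation step you flag as an obstacle for $E$ is resolved in the paper by citing \cite[Theorem 1.2]{BS25}, \cite[Theorem 1.1]{Tad14}, and \cite[Corollary 1.3]{Ato22d}, which give purely combinatorial irreducibility criteria that are insensitive to the choice of the underlying self-dual $\rho$.
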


\begin{proof}
We claim that the following holds.
\begin{itemize}
    \item [(i)]For any $\pi \in \Pi(G_n)$,
    $\pi \in \Pi_{\overline{A}}^{(k)}(G_n)$ if and only if $X_{\rho}(\pi)\in \Pi_{\overline{A}}^{(k)}(G_n)$ for any $\rho$.
    \item [(ii)]For any $\pi_1 \in \Irr(X_{\rho_1}; \sigma_1)$, $\pi_1 \in \Pi_{\overline{A}}^{(k)}(G_n)$ if and only if $E(\pi_1)\in \Pi_{\overline{A}}^{(k)}(G_n)$.
\end{itemize}  
Indeed, the base case that $k=0$ follows from Theorem \ref{thm preservation and independence}(b) and (c). For claim (i), the irreducibility of the parabolic induction $\Ind_{P}^G \tau\otimes\chi$ in Definition \ref{def closure A}(1) is equivalent to the irreducibility of the collection of the parabolic induction $\{ \Ind_{P}^G X_{\rho} (\tau \otimes \chi)\}_{\rho}$ by Theorem \ref{thm Jantzen}(iii). For statement (ii), the parabolic induction 
\[ \bigtimes_{i=1}^s u_{\rho_1}(a_i,b_i)\lvert\cdot\rvert^{x_i} \rtimes \pi_{1,A}\]
is irreducible if and only if the parabolic induction
\[ \bigtimes_{i=1}^s u_{\rho_2}(a_i,b_i)\lvert\cdot\rvert^{x_i} \rtimes E(\pi_{1,A})\]
is irreducible by \cite[Theorem 1.2]{BS25}, \cite[Theorem 1.1]{Tad14} and \cite[Corollary 1.3]{Ato22d} due to the combinatorial nature of the extended multi-segment and derivatives.  This completes the proof of the claim.
  
Now the claim implies the analogues hold for $\Pi_{\overline{A}}(G_n)$. The same holds for  $\Pi_{\overline{A}}^{\lim}(G_n)$ since the process of taking limit commutes with the map $\pi\mapsto X_{\rho_i}(\pi)$ and $\pi_1 \mapsto E(\pi_1)$. This completes the proof of the corollary.
\end{proof}

\section{Inductive Classification of Arthur representations}
\label{sec inductive approach}

In this section, based on the algorithms introduced in \S \ref{sec new algorithm}, we describe an inductive approach to classify all Arthur representations of arbitrary corank for $G_n$, with an explicit algorithm (Algorithm \ref{alg Pi A}).

\subsection{\texorpdfstring{Classification of tempered representations of corank $r$}{}}\label{sec classification of tempered}

Given a tempered representation $\pi_{temp}$ of $G_n$, in this subsection we construct  an embedding
\begin{align}\label{eq injection of tempered}
     \pi_{temp} \hookrightarrow \rho_1\times \cdots \times \rho_k \rtimes \pi_{sc},
\end{align} 
where $\rho_i \in \mathcal{C}$ and $\pi_{sc}\in \mathcal{C}_{cl}$. This determines the corank of the tempered representation $\pi_{temp}$. Then, by reversing this process, we describe how to (non-uniquely) parameterize tempered representations of corank $r$ inductively. Our arguments rely on the local Langlands correspondence established in \cite{Art13} (see Theorem \ref{thm Arthur tempered}) and the computation of derivatives for tempered representations in  
\cite[Theorem 3.6]{Ato22a}.

We begin by fixing some notation and convention. By Theorem \ref{thm Arthur tempered}, a tempered representation $\pi$ of $G_n$ corresponds to a pair $(\phi, \varepsilon)$, where $\phi$ is a tempered local $L$-parameter and $\varepsilon \in \widehat{\mathcal{S}}_{\phi}$. We regard $\phi$ as a tempered local Arthur parameter (specifically identifying it with $\phi\otimes S_1$) and decompose it as
\[\phi= (\phi_{ngp}+\phi_{ngp}^{\vee})+ \phi_{gp},\]
by Equation \eqref{eq decomp red to gp}. Further,  we write
$\phi_{gp}= \bigoplus_{i \in I} \rho_i \otimes S_{2x_i+1}.$ 
The character $\varepsilon$ is identified with a function $ \varepsilon: I \to \{\pm 1\}$ subject to certain constraints. We extend $\varepsilon$ to a function on $\{\rho\otimes S_{a}\}$, the set of all (nonzero) irreducible 
representations of $W_F \times \SL_2(\BC)$ which are bounded on $W_F$, by defining 
\begin{align*}
    \varepsilon(\rho \otimes S_{a}):= \begin{cases}
        \varepsilon(i) & \text{ if }\rho\otimes S_{a} \cong \rho_i \otimes S_{2x_i+1} \text{ for some }i \in I_{gp},\\
       0& \text{ otherwise.}
    \end{cases}
\end{align*}
 Let $m_{\phi}(\rho\otimes S_{a})$ denote the multiplicity of $ \rho\otimes S_{a}$ in $\phi$. By convention, we set $\varepsilon(\rho\otimes S_0):= -1$ and $m_{\phi}(\rho \otimes S_0):= +\infty$, which will be used in Theorem \ref{thm temp algo} below.

The following theorem is a combination of Theorem \ref{thm characterizatioin of supercuspidal} and the computation of derivatives of tempered representations.

\begin{thm}\label{thm temp algo}
Let $\pi=\pi(\phi,\varepsilon)$ be a tempered representation of good parity. Then, $\pi$ is not supercuspidal if and only if there exist $\rho\in \CC^{sd}, x\in \half{1}\Z_{\geq 0}$,  with $m:=m_{\phi}(\rho \otimes S_{2x+1})> 0$, and at least one of the following holds.
\begin{enumerate}
    \item [(I)] $x>0$ and $ \varepsilon(\rho\otimes S_{2x+1})\varepsilon(\rho\otimes S_{2x-1}) \neq -1.$ In this case, there exists a unique tempered representation $\pi_{temp}$ of smaller rank such that
   \[\pi \hookrightarrow \underbrace{\rho\vert\cdot\rvert^{x}\times \cdots \times \rho\vert\cdot\rvert^{x}}_{m \text{ copies}} \rtimes \pi_{temp} , \]
    and we write $\pi= \Temp{I}{x}{m}(\pi_{temp}).$ More precisely, we have $\pi_{temp}=\pi(\phi',\varepsilon')$, where 
    $\phi'= \phi- (\rho \otimes S_{2x+1})^{\oplus m}+(\rho \otimes S_{2x-1})^{\oplus m},$
    and for any $\rho'\otimes S_{a}$,
  \[ \varepsilon'(\rho'\otimes S_{a})=\begin{cases}
    \varepsilon(\rho\otimes S_{2x+1}) & \text{ if }\rho'\otimes S_{a} \cong \rho \otimes S_{2x-1},\\
      0 & \text{ if }\rho'\otimes S_{a} \cong \rho \otimes S_{2x+1},\\
    \varepsilon(\rho'\otimes S_{a}) & \text{ otherwise.}\end{cases}\]

    \item [(II)] $x>0$, $m=m_{\phi}(\rho\otimes S_{2x+1})>1$ is \textbf{odd}, and
    $ \varepsilon(\rho\otimes S_{2x+1})\varepsilon(\rho\otimes S_{2x-1})=-1 .$ In this case, there exists a unique tempered representation $\pi_{temp}$ of smaller rank such that
     \[\pi \hookrightarrow \underbrace{\rho\vert\cdot\rvert^{x}\times \cdots \times \rho\vert\cdot\rvert^{x}}_{m -1\text{ copies}} \rtimes \pi_{temp}, \]
    and we write $\pi= \Temp{II}{x}{m}(\pi_{temp}).$ More precisely, we have $\pi_{temp}=\pi(\phi',\varepsilon')$, where 
    $\phi'= \phi- (\rho \otimes S_{2x+1})^{\oplus (m-1)}+(\rho \otimes S_{2x-1})^{\oplus (m-1)},$
    and for any $\rho'\otimes S_{a}$, $\varepsilon'(\rho'\otimes S_{a})=\varepsilon(\rho'\otimes S_{a})$.
    
    \item[(III)] $x>0$, $m=m_{\phi}(\rho\otimes S_{2x+1})>1$ is \textbf{even}, and $ \varepsilon(\rho\otimes S_{2x+1})\varepsilon(\rho\otimes S_{2x-1})=-1 .$ In this case, there exists a unique tempered representation $\pi_{temp}$ of smaller rank such that
    \begin{align*}
        \pi \hookrightarrow \underbrace{\rho\vert\cdot\rvert^{x}\times \cdots \times \rho\vert\cdot\rvert^{x}}_{m -1\text{ copies}} \times \rho\vert\cdot\rvert^{x-1} \times \rho\vert\cdot\rvert^{x-2} \times \cdots \times \rho\vert\cdot\rvert^{-x} \rtimes \pi_{temp} ,
    \end{align*}
    and we write $\pi= \Temp{III}{x}{m}(\pi_{temp}).$ More precisely, we have  $\pi_{temp}=\pi(\phi',\varepsilon')$, where 
    \[ \phi'= \phi- (\rho \otimes S_{2x+1})^{\oplus m}+(\rho \otimes S_{2x-1})^{\oplus (m-2)},\]
    and for any $\rho'\otimes S_{a}$, $\varepsilon'(\rho'\otimes S_{a})=\varepsilon(\rho'\otimes S_{a})$.
    
    \item [(IV)] $x=0$ and $m=m_{\phi}(\rho\otimes S_1)>1$ is odd. In this case, there exists a unique tempered representation $\pi_{temp}$ of smaller rank such that
    \[\pi \hookrightarrow \underbrace{\rho\times \cdots \times \rho}_{(m-1)/2 \text{ copies}} \rtimes \pi_{temp}, \] 
    and we write $\pi= \Temp{IV}{}{m}(\pi_{temp}).$ More precisely, we have  $\pi_{temp}=\pi(\phi',\varepsilon')$, where 
    $\phi'= \phi- (\rho \otimes S_{1})^{\oplus (m-1)},$ 
    and for any $\rho'\otimes S_{a}$, $\varepsilon'(\rho'\otimes S_{a})=\varepsilon(\rho'\otimes S_{a})$. 
    
    \item [(V)] $x=0$ and $m=m_{\phi}(\rho\otimes S_1)>1$ is even.  In this case, there exists a unique tempered representation $\pi_{temp}$ of smaller rank such that
    \[\pi \hookrightarrow \underbrace{\rho\times \cdots \times \rho}_{m/2 \text{ copies}} \rtimes \pi_{temp}, \] 
    and we write $\pi= \Temp{V}{\varepsilon(\rho\otimes S_{1})}{m}(\pi_{temp}).$ More precisely, we have  $\pi_{temp}=\pi(\phi',\varepsilon')$, where 
    $\phi'= \phi- (\rho \otimes S_{1})^{\oplus m},$ 
    and for any $\rho'\otimes S_{a}$, 
     \[ \varepsilon'(\rho'\otimes S_{a})=\begin{cases}0 & \text{ if }\rho'\otimes S_{a} \cong \rho \otimes S_{1},\\
  \varepsilon(\rho'\otimes S_{a}) & \text{ otherwise.}\end{cases}\] 
\end{enumerate}
\end{thm}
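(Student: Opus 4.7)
The theorem gives a case-by-case inductive description of non-supercuspidal tempered representations of good parity, so my plan is to combine M\oe{}glin's classification of tempered supercuspidals (Theorem \ref{thm characterizatioin of supercuspidal}) with Atobe's explicit derivative formula for tempered representations \cite[Theorem 3.6]{Ato22a}. The five cases (I)--(V) should correspond to the distinct ways the hypotheses of the supercuspidal criterion can fail, and in each case Atobe's formula supplies the non-vanishing derivative that produces the stated embedding.

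For the necessity direction, I would argue that if $\pi$ is non-supercuspidal, then by Theorem \ref{thm characterizatioin of supercuspidal} at least one of the following fails: (a) discreteness of $\phi$, (b) the no-gaps condition, (c) sign alternation on adjacent Jordan blocks, or (d) the sign rule $\varepsilon(\rho\otimes S_2)=-1$ (which, via the convention $\varepsilon(\rho\otimes S_0):=-1$ and $m_\phi(\rho\otimes S_0):=+\infty$, is bundled into the alternation rule extended to $S_0$). Failures of (a) at $x>0$ split into (II) or (III) according to the parity of $m$; failures of (a) at $x=0$ split into (IV) or (V) likewise; and any instance of (b), (c), or (d) produces a pair $(\rho,x)$ with $\varepsilon(\rho\otimes S_{2x+1})\varepsilon(\rho\otimes S_{2x-1})\neq -1$, placing it into case (I). Reversing these implications shows that if none of (I)--(V) hold then M\oe{}glin's three conditions are met and $\pi$ is supercuspidal.

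For the sufficiency direction, under each of (I)--(V), I would compute the highest $\rho|\cdot|^x$-derivative (respectively, a composite derivative) of $\pi$ using \cite[Theorem 3.6]{Ato22a}, verify non-vanishing, identify it with $\pi(\phi',\varepsilon')$ by matching Jordan blocks and component-group characters, and conclude the embedding by Frobenius reciprocity. The recipe for $(\phi',\varepsilon')$ in each case follows Atobe's prescription: in (I) one shortens all $m$ copies of $\rho\otimes S_{2x+1}$ down to $\rho\otimes S_{2x-1}$, with the surviving sign being $\varepsilon(\rho\otimes S_{2x+1})$; in (II) only $m-1$ blocks are shortened and the component group character is unchanged on the remaining block; in (III) the even multiplicity combined with the sign constraint forces the reduction $m\to m-2$ and the additional peeling of the Steinberg segment $\Delta_\rho[x-1,-x]$; and (IV)--(V) are the analogues at $x=0$, pairing off copies of the self-dual $\rho$ on the classical side. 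Uniqueness of $\pi_{temp}$ in each case follows from the multiplicity-one property of the relevant highest derivative together with Frobenius reciprocity.

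The main obstacle will be Case (III), where the even multiplicity and the sign condition $\varepsilon(\rho\otimes S_{2x+1})\varepsilon(\rho\otimes S_{2x-1})=-1$ jointly force a non-trivial compound reduction: one must peel off $m-1$ copies of $\rho|\cdot|^x$ \emph{together with} the full Steinberg segment $\rho|\cdot|^{x-1}\times\cdots\times\rho|\cdot|^{-x}$, and verify non-vanishing along the entire chain of derivatives $\rho|\cdot|^x,\rho|\cdot|^{x-1},\ldots,\rho|\cdot|^{-x}$ by iterated application of Atobe's formula. A secondary subtlety arises in Case (V), where $\pi_{temp}$ depends on $\varepsilon(\rho\otimes S_1)$ (reflected in the subscript of $\Temp{V}{\varepsilon(\rho\otimes S_1)}{m}$): since $\rho\rtimes\sigma$ can be reducible for tempered $\sigma$, one must carefully identify which irreducible constituent of the induction corresponds to the given component-group character, a bookkeeping that hinges on the precise normalization chosen in Theorem \ref{thm Arthur tempered}.
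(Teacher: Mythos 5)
Your strategy — combine M\oe{}glin's supercuspidal criterion (Theorem \ref{thm characterizatioin of supercuspidal}) with Atobe's derivative formula and Frobenius reciprocity — is exactly the paper's, and your reading of how the failures of the supercuspidal criterion split into cases (I)--(V) matches the intended classification. For cases (I), (II), (IV), (V) the paper simply cites \cite[Proposition~3.6(1),(2),(3)]{Ato22a} together with Frobenius reciprocity, which is what you describe.

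The one place where your route diverges from the paper's is Case~(III), and this is also where your proposal is slightly off track. You propose to verify non-vanishing ``along the entire chain of derivatives $\rho|\cdot|^{x},\rho|\cdot|^{x-1},\ldots,\rho|\cdot|^{-x}$ by iterated application of Atobe's formula.'' The difficulty is that Atobe's Proposition~3.6 is a statement about derivatives of \emph{tempered} representations, and after the first $m-1$ applications of the $\rho|\cdot|^{x}$-derivative you land on the non-tempered Langlands quotient $L(\Delta_\rho[x-1,-x];\pi(\phi',\varepsilon'))$; so further iteration requires derivative formulas for non-tempered representations, which is unnecessary overhead. The paper instead uses a single application of \cite[Proposition~3.6(2)]{Ato22a} to produce
$\pi \hookrightarrow (\rho|\cdot|^{x})^{\times(m-1)} \rtimes L(\Delta_\rho[x-1,-x];\pi(\phi',\varepsilon'))$,
and then observes that the Langlands quotient embeds into its standard module and hence, via the defining embedding of the Steinberg representation $\Delta_\rho[x-1,-x]$, into $\rho|\cdot|^{x-1}\times\cdots\times\rho|\cdot|^{-x}\rtimes\pi(\phi',\varepsilon')$. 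No additional derivative computations or non-vanishing checks are required. Your plan could be made to work, but the standard-module embedding is the cleaner and intended route; I would replace your iterated-derivative step with it.

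One further minor imprecision: you say ``failures of (a) at $x>0$ split into (II) or (III) according to the parity of $m$,'' but when discreteness fails at $(\rho,x)$ with $x>0$ and $\varepsilon(\rho\otimes S_{2x+1})\varepsilon(\rho\otimes S_{2x-1})\neq -1$, the resulting scenario belongs to Case~(I), not (II)/(III). Since the theorem only requires \emph{at least one} of (I)--(V) to hold, this does not break your argument, but the classification is not as disjoint as your phrasing suggests.
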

\begin{proof}
The classification of the five cases follows from Theorem \ref{thm characterizatioin of supercuspidal}.
Assume that $\rho\otimes S_{2x+1} \subseteq \phi_{gp}$. Then the injection in cases (I), (II), (IV), (V) directly follows from \cite[Proposition 3.6(1), (2), (3)]{Ato22a} and the Frobenius reciprocity. For case (III), we have 
\begin{align*}
    \pi \hookrightarrow & \underbrace{\rho\vert\cdot\rvert^{x}\times \cdots \times \rho\vert\cdot\rvert^{x}}_{m -1\text{ copies}}
         \rtimes L(\Delta_{\rho}[x-1,-x];\pi( \phi- (\rho\otimes S_{2x+1})^{\oplus m}+\rho\otimes S_{2x-1})^{\oplus (m-2)}, \varepsilon'))\\
         \hookrightarrow  & \underbrace{\rho\vert\cdot\rvert^{x}\times \cdots \times \rho\vert\cdot\rvert^{x}}_{m -1\text{ copies}}
         \times \rho\vert\cdot\rvert^{x-1} \times \rho\vert\cdot\rvert^{x-2} \times \cdots \times \rho\vert\cdot\rvert^{-x}\\
         &\ \ \ \ \ \ \rtimes \pi( \phi- (\rho\otimes S_{2x+1})^{\oplus m}+\rho\otimes S_{2x-1})^{\oplus (m-2)}, \varepsilon'),
\end{align*}
by \cite[Proposition 3.6(2)]{Ato22a}. This completes the proof of the theorem.
\end{proof}

By iteratively applying Theorem \ref{thm temp algo}, we obtain an injection of the form \eqref{eq injection of tempered}. Reversing the process yields the following corollary, which classifies tempered representations of corank $r$ inductively, assuming that the classification of tempered representations of corank $k$ is known for all $k<r$. 
\begin{cor}\label{cor corank of tempered}
Let $r > 0$. Every tempered representation $\pi$ of corank $r$ is one of the following forms: 
\begin{enumerate}
    \item [$\oldbullet$]$\pi=\Temp{I}{x}{m}(\pi_{temp})$, where $x>0$, $m>0$, and $\pi_{temp}$ has corank $r-m$;
    \item [$\oldbullet$]$\pi=\Temp{II}{x}{m}(\pi_{temp})$, where $x>0$, $m>1$ is odd, and $\pi_{temp}$ has corank $r-(m-1)$;
    \item [$\oldbullet$]$\pi=\Temp{III}{x}{m}(\pi_{temp})$, where $x>0$, $m>1$ is even, and $\pi_{temp}$ has corank $r-(m+2x-1)$;
    \item [$\oldbullet$]$\pi=\Temp{IV}{}{m}(\pi_{temp})$, where $m>0$ is odd  and $\pi_{temp}$ has corank $r-(m-1)/2$;
     \item [$\oldbullet$]$\pi=\Temp{V}{\epsilon}{m}(\pi_{temp})$, where $m>0$ is even, $\epsilon\in \{0, \pm 1\}$, and $\pi_{temp}$ has corank $r-m/2$. 
\end{enumerate}
\end{cor}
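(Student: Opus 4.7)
The plan is to deduce the corollary by reversing the procedure of Theorem \ref{thm temp algo}. Since $\pi$ has corank $r > 0$, it is not supercuspidal (supercuspidal representations have corank $0$). I first reduce to the good parity case by invoking Theorem \ref{thm red from nu to gp}: writing the tempered parameter $\phi$ of $\pi$ as $\phi = (\phi_{ngp} + \phi_{ngp}^{\vee}) + \phi_{gp}$ gives $\pi = \tau_{\phi_{ngp}} \rtimes \pi_{gp}$ with $\pi_{gp}$ tempered of good parity. Any non-good-parity contribution to the corank can be extracted by a parallel but simpler argument (each non-good-parity summand $\rho \otimes S_a$ yields an irreducible tempered induction whose supercuspidal factors separate off cleanly), so without loss of generality $\pi$ is of good parity.

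Once $\pi$ is of good parity, Theorem \ref{thm temp algo} asserts that one of the five scenarios (I)--(V) holds for some $\rho \in \mathcal{C}^{sd}$ and $x \in \tfrac{1}{2}\Z_{\geq 0}$, producing a uniquely determined tempered $\pi_{temp}$ of smaller rank and an embedding
\[
\pi \hookrightarrow \rho\lvert\cdot\rvert^{x_1} \times \cdots \times \rho\lvert\cdot\rvert^{x_k} \rtimes \pi_{temp}.
\]
Reading the embeddings directly off Theorem \ref{thm temp algo} yields $k = m,\; m-1,\; (m-1)+2x,\; (m-1)/2,\; m/2$ respectively in Cases (I)--(V); in Case (III) the additional $2x$ factors come from the further embedding $\Delta_\rho[x-1,-x] \hookrightarrow \rho\lvert\cdot\rvert^{x-1} \times \cdots \times \rho\lvert\cdot\rvert^{-x}$. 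Applying the corresponding operator from Theorem \ref{thm temp algo} (with sign $\epsilon = \varepsilon(\rho \otimes S_1)$ in Case (V)) then produces $\pi$ in the claimed form.

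The final step, and the principal subtlety, is to verify that $\pi_{temp}$ has corank exactly $r - k$. Composing the embedding above with one witnessing the corank of $\pi_{temp}$ gives $\mathrm{corank}(\pi) \leq k + \mathrm{corank}(\pi_{temp})$, so $\mathrm{corank}(\pi_{temp}) \geq r - k$. For the reverse inequality, a length-$r$ embedding $\pi \hookrightarrow \rho_1 \times \cdots \times \rho_r \rtimes \pi_{sc}$ determines the supercuspidal support of $\pi$; the change in parameter between $\pi$ and $\pi_{temp}$ forced by Theorem \ref{thm temp algo} accounts for exactly $k$ of these factors, leaving $r - k$ that descend via Frobenius reciprocity to an embedding of $\pi_{temp}$. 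The main obstacle I expect is formalizing this descent in Case (III), where the $2x$ factors arising from the Steinberg segment must be correctly distinguished from the $m-1$ repeated copies of $\rho\lvert\cdot\rvert^x$ in any such length-$r$ embedding.
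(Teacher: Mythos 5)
Your proof is essentially correct and follows the same route the paper intends (the paper gives no explicit proof, simply stating that "reversing the process" in Theorem~\ref{thm temp algo} yields the corollary). The core is exactly as you describe: apply Theorem~\ref{thm temp algo}, read off the number $k$ of $\GL$-factors in each of the five embeddings, and record that $\pi_{temp}$ has corank $r-k$. Your count $k = m,\; m-1,\; (m-1)+2x,\; (m-1)/2,\; m/2$ is right.

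However, you make the final corank verification look harder than it is. The corank of a representation is determined by its supercuspidal support: it is simply the number of $\GL$-factors appearing there, an invariant of $\pi$. Given the embedding $\pi \hookrightarrow \rho\lvert\cdot\rvert^{x_1}\times\cdots\times\rho\lvert\cdot\rvert^{x_k}\rtimes\pi_{temp}$ from Theorem~\ref{thm temp algo} and any embedding $\pi_{temp}\hookrightarrow \rho_1'\times\cdots\times\rho_s'\rtimes\pi_{sc}'$ witnessing its corank $s$, transitivity of parabolic induction gives $\pi\hookrightarrow \rho\lvert\cdot\rvert^{x_1}\times\cdots\times\rho\lvert\cdot\rvert^{x_k}\times\rho_1'\times\cdots\times\rho_s'\rtimes\pi_{sc}'$, so the supercuspidal support of $\pi$ has exactly $k+s$ $\GL$-pieces, i.e.\ $r = k+s$. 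There is no need for the two-sided inequality argument or the "descent via Frobenius reciprocity" step, and the "principal subtlety" you flag in Case~(III) — matching the $2x$ Steinberg factors against the $m-1$ repeated factors in an arbitrary length-$r$ embedding — evaporates: one never compares two different embeddings, one just counts factors in a single composite embedding. Your opening paragraph reducing to good parity is also dispensable: Theorem~\ref{thm temp algo} is already stated (and the corollary is applied throughout §§7--10) only for good-parity tempered representations, so you may simply work under that standing hypothesis.
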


At the end of this subsection, we summarize the constraints on the triple $(z,x,m)$ and the local $L$-parameter $(\phi',\epsilon')$ of $\pi_{temp}$ required for $T_{z,m}^x(\pi_{temp})$  to be well-defined.

\begin{remark}\label{rmk well-defined for Temp}
    Let $\pi_{temp}= \pi(\phi', \varepsilon')$.
    \begin{enumerate}
        \item [$\oldbullet$] The representation $\Temp{I}{x}{m}(\pi_{temp})$ is well-defined if and only if $x>0$, $m>0$, $m_{\phi'}(\rho\otimes S_{2x+1})=0$, and $m_{\phi'}(\rho\otimes S_{2x-1})\geq m$. 
         \item [$\oldbullet$] The representation $\Temp{II}{x}{m}(\pi_{temp})$ is well-defined if and only if $x>0$, $m>0$ is odd, $m_{\phi'}(\rho\otimes S_{2x+1})=1$, $m_{\phi'}(\rho\otimes S_{2x-1})\geq m$, and $\varepsilon'(\rho\otimes S_{2x+1})\varepsilon'(\rho\otimes S_{2x-1})=-1$.
        \item [$\oldbullet$] The representation $\Temp{III}{x}{m}(\pi_{temp})$ is well-defined if and only if $x>0$, $m>0$ is even, $m_{\phi'}(\rho\otimes S_{2x+1})=0$, and $m_{\phi'}(\rho\otimes S_{2x-1})\geq m-1$.
         \item [$\oldbullet$] The representation $\Temp{IV}{}{m}(\pi_{temp})$ is well-defined if and only if $m>0$ is odd and $m_{\phi'}(\rho\otimes S_{1})=1$.
         \item [$\oldbullet$] The representation $\Temp{V}{\epsilon}{m}(\pi_{temp})$ is well-defined if and only if $m>0$ is even and $m_{\phi'}(\rho\otimes S_{1})=0$.
    \end{enumerate}
\end{remark}

\subsection{\texorpdfstring{Classification of non-tempered Arthur representations of corank $r$}{}}
\label{subsec inductive approach}

Let $\pi \in \Pi(G_n)$ be a non-tempered representation of corank $r$, expressed as $\pi^{\rho,-}+\{(\Delta_{\rho}[x,-y])^{t}\}$ with $t>0$, as in Definition \ref{def pi minus}. Then $\pi^{\rho,-}$ is of corank $r-t(x+y+1)$, which is strictly smaller than $r$. Therefore, according to Theorem \ref{thm algo for Arthur type} and Algorithm \ref{algo Arthur type}, given $r \in \Z_{>0}$, we can classify all good parity non-tempered Arthur representations of corank $r$ assuming the following:
\begin{enumerate}
    \item [(i)] We have classified all good parity Arthur representations of corank $s$, for any $s<r$.
    \item [(ii)] We understand $\Psi(\pi^{\rho,-})$ ``well enough" for those $\pi^{\rho,-}$ of corank $s<r$ based on the classification in (i). 
\end{enumerate}
We then can classify all non-tempered Arthur representations of corank $r$ applying Theorem \ref{thm red from nu to gp}.

We now clarify what is meant by ``well enough" in (ii) above. If we have an exhaustion of  $\Psi(\pi^{\rho,-})$ for all $\pi^{\rho,-}$ of corank $s<r$, then we can classify the corank $r$ representations by applying Theorem \ref{thm algo for Arthur type}. For example, when $r=1$, then $\pi^{\rho,-}$ must be tempered and we have a complete characterization of $\Psi(\pi^{\rho,-})$ in \cite[\S 8]{HLL22}. However, the exhaustion of $\Psi(\pi^{\rho,-})$ is usually complicated and not necessary for our purpose. Indeed, 
as demonstrated in Example \ref{ex arthur type}, in order to show that $\pi$ is of Arthur type, it is sufficient to construct a special element $\psi^{-}\in \Psi(\pi^{\rho,-};\Delta_{\rho}[x,-y],t)$ or an extended multi-segment $\EE^{\rho,-}\in \mathscr{E}(\pi^{\rho,-};\Delta_{\rho}[x,-y],t)$.

\begin{remark}
    We remark that the above classification can alternatively be conducted by applying Theorem~\ref{thm algo for Arthur type 2} and Algorithm~\ref{algo Arthur type 2} instead of Theorem~\ref{thm algo for Arthur type} and Algorithm~\ref{algo Arthur type}. 
However, this alternative approach is generally more intricate because $\Delta_{\rho,-}$ in Definition~\ref{def criterion for Arthur type 2} is more flexible than the segments $\Delta_{\rho}[x,-y]^{t}$ removed in the definition of $\pi^{\rho,-}$. Consequently, enumerating all $\pi$ of corank $r$ from a fixed $\pi_{\rho,-}$ becomes more challenging. However, for $r \leq 3$, the difference is negligible.
\end{remark}

In the following, we make it precise on the above algorithm
to exhaust $\Pi_{A,\,gp}(G_n) \cap \Irr(\rho; \pi_{sc})$ (see Definition \ref{def Jantzen decomposition}), which can be extended to general case by Theorem \ref{thm preservation and independence} and Theorem \ref{thm red from nu to gp}.

\begin{algo}\label{alg Pi A}
Let $\pi_{sc}$ be a supercuspidal representation of $G_n$, $\rho$ be a {selfdual} supercuspidal representation of $\GL_d(F)$, and let $r \in \Z_{\geq 0}$. In this algorithm, we output the set
\[\Pi_{A, \, gp}(\pi_{sc}, \rho, r):= \Pi_{A, \, gp}(G_{n+rd}) \cap \Irr(X_{\rho}; \pi_{sc}).\]

\begin{enumerate}
    \item [Step 1.] If $r=0$, then output $\{\pi_{sc}\}$.
    \item [Step 2.]List the finite set of all tempered $L$-parameters $\phi$ of $G_{n+rd}$ such that
    \[ \lambda_{\phi}= \lambda_{\phi_{\pi_{sc}}} + \sum_{i=1}^{r} \rho\lvert \cdot \rvert^{y_i}+\rho\lvert \cdot \rvert^{-y_i}, \]
    for some $y_i \in \R$.       
    For each tempered representation $\sigma$ in the union of these tempered packets, compute its supercuspidal support by Theorem \ref{thm temp algo}. Let $\Omega_{temp}(\pi_{sc},\rho,r)$ be the subset of these tempered representations which are contained in $\Irr(X_{\rho}; \pi_{sc})$.
    
    \item [Step 3.] Assume that we have already computed $\Pi_{A,\, gp}(\pi_{sc},\rho, s)$ for $0 \leq s<r$. Initiate $\Omega_{non-temp}(\pi_{sc},\rho,r)$ to be the empty set.

    For any $0 \leq s <r$ and any $\pi^{\rho,-} \in \Pi_{A,\, gp}(\pi_{sc}, \rho, s)$, let (see Convention \ref{conv L-data})
    \[\pi_x:= \pi^{\rho,-}+ \{\Delta_{\rho}[ \half{r-s-1}-x, \half{1-r+s}-x ] \},\]
    and let $\epsilon\in \{1, \half{1}\}$ such that $\epsilon+ \half{1-r+s}+ \alpha_{\pi_{sc},\rho} \in \Z$.  For $x \in \{\alpha_{\pi_{sc},\rho}+ r+ \half{1-r+s}, \alpha_{\pi_{sc},\rho}+ r+ \half{1-r+s}-1,\ldots, \epsilon \}$, determine whether $\pi_x$ is of Arthur type by Algorithm \ref{algo Arthur type}. Append $\pi_x$ into $\Omega_{non-temp}(\pi_{sc},\rho,r)$ if it is of Arthur type.    
    \item [Step 4.]  Output $ \Pi_{A, \, gp}(\pi_{sc},\rho,r):= \Omega_{temp}(\pi_{sc},\rho,r) \cup \Omega_{non-temp}(\pi_{sc},\rho,r)$.
\end{enumerate}
\end{algo}

\section{\texorpdfstring{An explicit algorithm to determine $\Pi_{\overline{A}}(G_n)$}{}}\label{algorithm for A bar}

In this section, based on the algorithms given in \S \ref{sec new algorithm} and \S \ref{sec inductive approach}, 
we develop an explicit algorithm (Algorithm \ref{alg A bar}) to compute $\Pi_{\overline{A}}(G_n)$, thus generating the candidate set $\Pi_{\overline{A}}^{\lim}(G_n)$, 
for the split symplectic or odd special orthogonal group $G_n$. First, we introduce an algorithm for the set 
$\Pi_{ind}(G_n)$, which is the subset of $\Pi(G_n)$ consisting of representations that are irreducibly induced from an ``essentially" Arthur type representation of a Levi subgroup (see \eqref{eq def of cand}, also recalled below).

\subsection{\texorpdfstring{An algorithm to determine $\Pi_{ind}(G_n)$}{}}\label{sec algo cand}

Recall that $  \Pi_{ind}(G_n)$ is the subset of $\Pi(G_n)$ consisting of representations $\pi$ that can be realized as an irreducible induction of the form
\begin{align}\label{eq decomp Pi A cand}
   \bigtimes_{i \in I} u_{\rho_i}(a_i, b_i)\lvert\cdot \rvert^{x_i} \rtimes \pi_{A},
\end{align}
where $\pi_A$ is of good parity and Arthur type and $x_i \in \R$. 
Note that we have $\Pi_{\overline{A}}(G_n) \subseteq \Pi_{ind}(G_n)$ by Definition \ref{def closure A}. 
In this subsection, we explain how to determine whether a given representation $\pi$ lies in $\Pi_{ind}(G_n)$ or not, and in the affirmative case, output all expressions in the form of \eqref{eq decomp Pi A cand}.

First, we consider the case that $\pi$ is of good parity. 

\begin{algo}[$\Pi_{ind}(G_n) \cap \Pi_{gp}(G_n)$]\label{algo pi ind gp} Let $\pi$ be an irreducible representation of $G_n$ of good parity.
\begin{itemize}
    \item [Step 1.]we list a finite set $\Sigma(\pi)$ consisting of pairs $(\{u_{\rho_i}(a_i,b_i)\lvert \cdot\rvert^{x_i}\}_{i \in I}, \pi_A)$ such that $\pi_A$ is (of good parity and) of Arthur type and
\[ \phi_{\pi}=  \left(\bigoplus_{i \in I } \bigoplus_{r=0}^{b_i-1} \rho_i \lvert\cdot\rvert^{\half{1-b_i}+r + x_i} \otimes S_{a_j} + (\rho_i \lvert\cdot\rvert^{\half{1-b_i}+r + x_i} \otimes S_{a_i})^{\vee}\right)+ \phi_{\pi_{A}}. \]
Note that if \eqref{eq decomp Pi A cand} holds, then $(\{u_{\rho_i}(a_i,b_i)\lvert \cdot\rvert^{x_i}\}_{i \in I}, \pi_A) \in \Sigma(\pi)$  (see \cite[Lemma 10.3]{HLLS24} for example).
\item [Step 2.]For each pair in $\Sigma(\pi)$, we check whether the corresponding induction \eqref{eq decomp Pi A cand}
is irreducible or not by \cite[Theorem 1.2]{BS25} and \cite[Corollary 1.3]{Ato22d}. If it is irreducible, we can compute its $L$-data by taking consecutive derivatives of the induction in a systematic way, as considered in \cite{Lo}, to check whether the irreducible induction is equal to $\pi$ or not. 
Clearly, $\pi$ lies in $\Pi_{ind}(G_n)$ if and only if there exists a pair in $\Sigma(\pi)$ such that the induction is irreducible and equals $\pi$.
\end{itemize}
\end{algo}

\begin{remark}
Let $(\{u_{\rho_i}(a_i,b_i)\lvert \cdot\rvert^{x_i}\}_{i \in I}, \pi_A) \in \Sigma(\pi)$ and set 
\[ \Pi_{\epsilon}:=   \bigtimes_{i \in I} u_{\rho_i}(a_i, b_i)\lvert\cdot \rvert^{x_i+\epsilon_i} \rtimes \pi_{A}, \]
for $\epsilon= (\epsilon_i) \in \BR^{|I|}$. We explain in this remark how to check whether $\pi= \Pi_{0}$ or not without using \cite{Lo}, under the assumption that $\pi$ is unitary and of good parity. 

Applying \cite[Theorem 1.2]{BS25} and \cite[Corollary 1.3]{Ato22d}, we can check whether $\Pi_{0}$ is irreducible or not. We assume that it is irreducible from now on. Then there exists a small neighborhood $U\subset \BR^n$ of $0$ such that $\Pi_{\epsilon}$ is irreducible for all $\epsilon \in U$. Note that $\{\Pi_{\epsilon}\}_{\epsilon \in U}$ forms a continuous family of Hermitian representations since they are all weakly real. 

We claim that if $\Pi_{0}=\pi$, then the induced representation
\[ \Pi_{I'}:=  \bigtimes_{i \in I'} u_{\rho_i}(a_i, b_i)\lvert\cdot \rvert^{x_i} \rtimes \pi_{A} \]
must be of Arthur type for any $I' \subseteq I$. Indeed, suppose that $\Pi_{0}=\pi$, which is unitary by assumption. Then $\{\Pi_{\epsilon}\}_{\epsilon \in U}$ is a continuous family of unitary representations. For any $I' \subseteq I$, we may take an $\epsilon'=(\epsilon_i')\in U$ such that $\epsilon_i'=0$ for $i \in I'$ and $\epsilon_i' \not\in \Z$ for $i \in I\setminus I'$. The unitarity of $\Pi_{\epsilon'}$ and \cite[Theorem 3.1]{AM25} then imply that $\Pi_{I'}$ is of Arthur type.

Now we state the algorithm to check whether $\Pi_0=\pi$. Take any filtration $\emptyset= I_0 \subset I_1 \subset \cdots \subset I_{|I|}=I$ with $|I_{k+1} \setminus I_{k}|=1$. Suppose that we have computed the $L$-data of $\Pi_{I_k}$ and have verified that $\Pi_{I_{k}}$ is of Arthur type. Then the $L$-data of $\Pi_{I_{k+1}}$ can be computed from $\Pi_{I_k}$ by \cite[Corollary 1.3]{Ato22d}, and we can check whether $\Pi_{I_{k+1}}$ is of Arthur type by Algorithm \ref{algo Arthur type}. If $\Pi_{I_{k}}$ is not of Arthur type for some $k$, then $\Pi_0 \neq \pi$. Otherwise, we have computed the $L$-data of $\Pi_{I_{|I|}}= \Pi_{0}$, and we can check whether $\Pi_0=\pi$ directly.
\end{remark}

Next, we consider general representations $\pi$ that are not necessarily of good parity. The following lemma shows that if $\pi \in  \Pi_{ind}(G_n)$, then the expression of $\pi$ in the form of \eqref{eq decomp Pi A cand} is essentially unique modulo the good parity parts.

{ 
\begin{lemma}\label{lemma component of Pi A bar reduction}
For $i=1,2$, let
\begin{align}\label{eq pi_i algorithm no repetition}
    \pi_i=  \bigtimes_{j \in I_i} u_{\rho_j}(a_j, b_j)\lvert\cdot \rvert^{x_j} \rtimes \pi_{i,0}
\end{align}
be an irreducible induction, where $\pi_{i,0}$ is of good parity (not necessarily of Arthur type). Decompose $I_{i}= I_{i,gp}\sqcup I_{i,ngp}$, where $ I_{i,gp}$ is the maximal subset of $I_i$ such that
\[ \pi_{i,gp}:= \bigtimes_{j \in I_{i,gp}} u_{\rho_j}(a_j, b_j)\lvert\cdot \rvert^{x_j} \rtimes \pi_{i,0} \]
is of good parity. Then $\pi_1\cong \pi_2$ if and only if
\begin{enumerate}
    \item [(a)] $\pi_{1,gp}\cong \pi_{2,gp}$, and,
    \item [(b)] there exists a bijection $\iota: I_{1,ngp} \to I_{2,ngp}$ such that $u_{\rho_{j}}(a_j, b_j)\lvert \cdot\rvert^{x_j} \cong u_{\rho_{\iota(j)}}(a_{\iota(j)}, b_{\iota(j)})\lvert \cdot\rvert^{x_{\iota(j)}}$ or $(u_{\rho_{j}}(a_j, b_j)\lvert \cdot\rvert^{x_j})^{\vee} \cong u_{\rho_{\iota(j)}}(a_{\iota(j)}, b_{\iota(j)})\lvert \cdot\rvert^{x_{\iota(j)}}$.
\end{enumerate}
\end{lemma}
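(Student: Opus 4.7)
The plan is to establish both implications separately: the sufficiency ($\Leftarrow$) is routine and relies on standard properties of parabolic induction, while the necessity ($\Rightarrow$) uses Jantzen's decomposition together with the local Langlands correspondence.

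For the sufficiency, assuming (a) and (b), I would transform the induction \eqref{eq pi_i algorithm no repetition} for $\pi_1$ into that for $\pi_2$ via two standard operations. First, parabolic induction from disjoint Levi factors commutes up to isomorphism when the full induction is irreducible, so one may permute the $\GL$-factors freely. Second, for any irreducible representation $\tau$ of a general linear group and any representation $\sigma$ of the classical group, $\tau \rtimes \sigma \cong \tau^\vee \rtimes \sigma$, which allows us to replace each $u_{\rho_j}(a_j, b_j)\lvert\cdot \rvert^{x_j}$ by its contragredient. Combined with $\pi_{1,gp} \cong \pi_{2,gp}$ from (a), the bijection $\iota$ in (b) then produces an isomorphism $\pi_1 \cong \pi_2$.

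For the necessity, let $\sigma_i$ denote the supercuspidal representation of the classical group appearing in the supercuspidal support of $\pi_{i,0}$. Since $\pi_1 \cong \pi_2$ have the same supercuspidal support, $\sigma_1 = \sigma_2$, and I form the regular partition $\mathcal{C} = X^{gp} \sqcup X^{ngp}$ with
\[
X^{gp} := \bigsqcup_{\rho \in \mathcal{C}^{sd}} \left\{\rho\lvert\cdot\rvert^y : y \in \alpha_{\rho,\sigma_1} + \Z\right\}.
\]
By the maximality defining $I_{i,gp}$, the Speh factors indexed by $I_{i,gp}$ are supported on $X^{gp}$, while those indexed by $I_{i,ngp}$ are supported on $X^{ngp}$. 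The irreducibility of \eqref{eq pi_i algorithm no repetition} yields an injection
\[
\pi_i \hookrightarrow \left(\bigtimes_{j \in I_{i,ngp}} u_{\rho_j}(a_j,b_j)\lvert\cdot \rvert^{x_j}\right) \rtimes \pi_{i,gp},
\]
with the $\GL$-factor supported on $X^{ngp}$ and $\pi_{i,gp}$ supported on $X^{gp} \cup \{\sigma_1\}$. Theorem \ref{thm Jantzen}(i) then identifies $\pi_{i,gp} = X^{gp}(\pi_i)$, and the uniqueness in Theorem \ref{thm Jantzen}(ii) gives $\pi_{1,gp} \cong \pi_{2,gp}$, proving (a).

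For (b), I would apply the local Langlands correspondence from \cite{Art13}. The irreducibility of the parabolic induction yields the $L$-parameter decomposition
\[
\phi_{\pi_i} = \phi_{\beta_{i,ngp}} \oplus \phi_{\pi_{i,gp}} \oplus \phi_{\beta_{i,ngp}}^\vee,
\]
where $\beta_{i,ngp} := \bigtimes_{j \in I_{i,ngp}} u_{\rho_j}(a_j,b_j)\lvert\cdot \rvert^{x_j}$. Combined with $\pi_1 \cong \pi_2$ and part (a), this gives the multi-set equality $\phi_{\beta_{1,ngp}} \oplus \phi_{\beta_{1,ngp}}^\vee = \phi_{\beta_{2,ngp}} \oplus \phi_{\beta_{2,ngp}}^\vee$. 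The main obstacle will be extracting from this symmetrized equality the individual Speh-representation parameters $\{u_{\rho_j}(a_j,b_j)\lvert\cdot\rvert^{x_j}\}_{j \in I_{i,ngp}}$ up to the contragredient ambiguity. I plan to do this one cuspidal line at a time: for non-self-dual $\rho$, summands on the $\rho$-line originate either from Speh representations involving $\rho$ in $\beta_{i,ngp}$ or from the duals of Speh representations involving $\rho^\vee$, and the freedom in distributing them between the two sources corresponds exactly to the contragredient ambiguity in (b); for self-dual $\rho$ of non-good-parity type, the symmetry $y \mapsto -y$ on the $\rho$-line plays the analogous role. The combinatorics is elementary but requires careful bookkeeping when several Speh representations share overlapping supercuspidal supports.
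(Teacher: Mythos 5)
Your proof of sufficiency and your proof of part (a) using the Jantzen decomposition both match the paper's approach. The gap is in part (b), where you correctly identify the obstacle but do not actually resolve it. The multi-set equality
\[
\phi_{\beta_{1,ngp}} \oplus \phi_{\beta_{1,ngp}}^\vee = \phi_{\beta_{2,ngp}} \oplus \phi_{\beta_{2,ngp}}^\vee
\]
by itself does \emph{not} determine the individual Speh factors, even up to contragredient, because a given collection of summands $\rho\lvert\cdot\rvert^y \otimes S_a$ can be partitioned into Speh ``ladders'' $\bigoplus_{r=0}^{b-1}\rho\lvert\cdot\rvert^{\frac{1-b}{2}+r+x}\otimes S_a$ in multiple inequivalent ways (e.g.\ a width-$2$ ladder versus two width-$1$ ladders), and your ``one cuspidal line at a time'' bookkeeping cannot distinguish these possibilities on $L$-parameter grounds alone. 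Calling the combinatorics ``elementary'' papers over exactly the hard point.

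The paper's proof of (b) is not purely combinatorial: it makes essential use of the \emph{irreducibility hypothesis} on the inductions \eqref{eq pi_i algorithm no repetition}. After reducing to a single cuspidal line, it picks a distinguished Speh factor index $k$ by a lexicographic choice, reads off a corresponding maximal ladder length $\beta_\ell$ from $\phi_{\pi_1}$, and shows $\beta_\ell = b_k$ by a contradiction: if $\beta_\ell > b_k$, then there would be a second Speh factor whose ladder overlaps the $k$-th one in a way that, by Tadi\'c's irreducibility criterion for products of generalized Speh representations (\cite[Theorem 1.1]{Tad14}), forces
\[
u_{\rho_k}(a_k,b_k)\lvert\cdot\rvert^{x_k} \times u_{\rho_j}(a_j,b_j)\lvert\cdot\rvert^{x_j}
\]
to be reducible, contradicting irreducibility of the induction. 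This is the mechanism that pins down the $b$-parameters and makes the reconstruction well-defined; it is missing from your proposal. Until you incorporate the irreducibility constraint (via \cite{Tad14} or an equivalent criterion) into the extraction step, the proof of (b) is incomplete.
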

\begin{proof}
    If both Part (a) and Part (b) hold, then $\pi_1=\pi_2$ in the Grothendieck group. Since $\pi_i$ is irreducible, we have $\pi_1 \cong \pi_2$.

    Conversely, suppose that $\pi_1 \cong \pi_2$. Then $\pi_{i,gp}= X_{i,gp}(\pi_i)$ under the Jantzen decomposition (Theorem \ref{thm Jantzen}), where $X_{i,gp}=\{\rho\lvert\cdot\rvert^x \in \mathcal{C} \ | \ \rho\lvert\cdot\rvert^x \rtimes \pi_{i,0} \text{ is of good parity}, x \in \mathbb{R}\}$. Thus, we have $\pi_{1,gp}\cong \pi_{2, gp}$, which proves Part (a). 
    
    Now we prove Part (b) by showing that the set $\{ (u_{\rho_{j}}(a_j, b_j)\lvert \cdot\rvert^{x_j}, (u_{\rho_{j}}(a_j, b_j)\lvert \cdot\rvert^{x_j})^{\vee} )\}_{j \in I_{1,ngp}}$ can be computed from the $L$-parameter of $\pi_1$ (and similar for $\pi_2$). We assume that  $|I_{1,ngp}|>0$ in the rest of the proof.

    We first do some reductions. First, after replacing $\pi_{1,0}$ by $\pi_{1,gp}$ and $I_{1}$ by $I_{1,ngp}$, we may assume that $I_{1}=I_{1,ngp}$. Next, fix a $\rho \in \mathcal{C}^u$ such that $\rho \cong \rho_j$ for some $j \in I_{1, ngp}$ and set $X_{\rho, \rho^{\vee}}:= \{\rho\lvert \cdot \rvert^{x} \ | \ x \in \R\}\cup \{\rho^{\vee}\lvert \cdot \rvert^{x} \ | \ x \in \R\}$, which is self-contragredient. By replacing $\pi_i$ with $X_{\rho, \rho^{\vee}}(\pi_i)$ via the Jantzen decomposition, we may assume that $I_{i}=I_{i,ngp}$ and for any $j \in I_{1,ngp}$, either $ \rho_j \cong \rho$ or $ \rho_j \cong \rho^{\vee}$. Finally, replacing $u_{\rho_{j}}(a_j, b_j)\lvert \cdot\rvert^{x_j}$ with its contragredient if necessary, we may further assume that $x_j \leq 0$ for any $j \in I_{1,ngp}$. 
    
    Now we fix a $k \in I_{1, ngp}$ such that the following conditions hold.
    \begin{enumerate}
        \item [(i)] $\half{-a_k-b_k}+1+x_{k} =\min\{\half{-a_j-b_j}+1+x_{j} \}_{j \in I_{1, ngp}}$.
        \item [(ii)] $\half{a_k+b_k}-1+x_{k} =\max\{\half{a_j+b_j}-1+x_{j}\}_{j \in I_{1,ngp}'}$, where $I_{1,ngp}'=\{j \in I_{1, ngp}\ | \  \half{-a_j-b_j}+1+x_{j}=\half{-a_k-b_k}+1+x_{k}  \}$.
        \item [(iii)] $a_k= \min\{ a_j \}_{j \in I_{1, ngp}''}$, where $I_{1,ngp}''=\{j \in I_{1, ngp}',\ \half{a_k+b_k}-1+x_{k} =\half{a_j+b_j}-1+x_{j}  \}.$
        \item [(iv)] Let $I_{1, ngp}''':= \{ j \in I_{1, ngp}''\ | \ a_j=a_k\} $. If $ \{\rho_j\}_{j\in I_{1,ngp}'''}= \{\rho, \rho^{\vee}\}$ as a set, then we require that $\rho_k= \rho$.
    \end{enumerate}
    Clearly, these four conditions uniquely determine the quadruple $(a_k, b_k, x_k, \rho_k)$.

    Write the $L$-parameter of $\pi_1$ as 
    \begin{align}\label{eq phi_1 algorithm 1}
        \phi_1= \bigoplus_{j \in J } \left(\rho_j\lvert\cdot\rvert^{y_j} \otimes S_{c_j} + (\rho_j\lvert\cdot\rvert^{y_j} \otimes S_{c_j})^{\vee} \right)+ \phi_{\pi_{1,0}}, 
    \end{align}
    where $y_j \leq 0$ for any $j \in J$. Recall that $\rho_j \cong \rho$ or $\rho_j \cong \rho^{\vee}$ by our assumption that $\pi_1= X_{\rho,\rho^{\vee}}(\pi_1)$. For each $j \in J$, we set $\alpha_j:= c_j$, define $\beta_j$ to be the maximal integer such that 
        $\phi_1$ contains 
        \[  \bigoplus_{r=0}^{\beta_j-1} \rho_j \lvert \cdot \rvert^{y_j+ r} \otimes S_{c_j} + (\rho_j \lvert \cdot \rvert^{y_j+ r}\otimes S_{c_j})^{\vee},\]
        and let $\xi_j:= \half{1-c_j}+y_j+ \half{\alpha_j+ \beta_j}-1$. We fix an index $l \in J$ such that 
        \begin{enumerate}
            \item [(i$'$)] $\half{-\alpha_l- \beta_l}+1+ \xi_l= \half{1-c_l}+ y_l= \min\{\half{1-c_j}+ y_j\}_{j \in J}$.
            \item [(ii$'$)] $ \half{\alpha_l+ \beta_l}-1+ \xi_l= \max\{\half{\alpha_j+ \beta_j}-1+ \xi_j \}_{j \in J'}$, where $J'= \{j \in J\ | \ \half{-\alpha_j- \beta_j}+1+ \xi_j= \half{-\alpha_l- \beta_l}+1+ \xi_l \}$.
            \item [(iii$'$)] $ \alpha_l= \min \{\alpha_j\}_{j \in J''}$, where $J''= \{j \in J'\ | \ \half{\alpha_j+ \beta_j}-1+ \xi_j=\half{\alpha_l+ \beta_l}-1+ \xi_l\}$.
            \item [(iv$'$)] Let $J''':= \{j \in J''\ | \ \alpha_j =\alpha_l\}.$ If $\{ \rho_j\}_{j \in J'''}= \{\rho, \rho^{\vee}\}$ as a set, then we require that $\rho_l= \rho$.
        \end{enumerate}
        Again, these four conditions uniquely determine the quadruple $(\alpha_l, \beta_l, \xi_l, \rho_l)$. 
        
        We claim that $(\alpha_l, \beta_l, \xi_l, \rho_l)= (a_k, b_k, x_k, \rho_k)$.
Indeed, from the irreducible induction \eqref{eq pi_i algorithm no repetition} and our assumption that $I_{1,ngp}= I_{1}$, we have
        \begin{align}\label{eq phi_1 algorithm 2}
            \phi_1= \left(\bigoplus_{j \in I_{1,ngp}} \bigoplus_{r=0}^{b_j-1} \rho_j \lvert\cdot\rvert^{\half{1-b_j}+r + x_j} \otimes S_{a_j} + (\rho_j \lvert\cdot\rvert^{\half{1-b_j}+r + x_j} \otimes S_{a_j})^{\vee}\right)+ \phi_{\pi_{1,0}}. 
        \end{align}
        Here, the $j$-th term comes from $u_{\rho_j}(a_j, b_j)\lvert\cdot \rvert^{x_j}.$
        Take any $\ell \in J$ such that $\rho_{\ell}\lvert\cdot\rvert^{y_\ell}\otimes S_{c_\ell}= \rho_k \lvert \cdot\rvert^{\half{1-b_k}+x_k} \otimes S_{a_k}$, { where $k$ is the fixed index above}. We check that $(\alpha_{\ell}, \beta_\ell, \xi_\ell, \rho_\ell)= (a_k, b_k, x_k, \rho_k)$.

        First, it is immediate that $\rho_\ell=\rho_k,$ $c_\ell=\alpha_\ell=a_k,$ and $y_\ell=\half{1-b_k}+x_k.$ Next, we check that $\beta_\ell= b_k$. From \eqref{eq phi_1 algorithm 2} and the definition of $\beta_\ell$, we have $\beta_\ell \geq b_k$. Suppose the contrary that $\beta_\ell> b_k$. Then there exists an index $j \in I_{1,ngp}$ such that $\rho_\ell\lvert\cdot \rvert^{y_\ell+b_k}\otimes S_{c_\ell}+ (\rho_\ell\lvert\cdot \rvert^{y_\ell+b_k}\otimes S_{c_\ell})^{\vee}$ comes from $ u_{\rho_j}(a_j,b_j)\lvert\cdot \rvert^{x_j}$.         
        More explicitly,
        we have $\rho_j \cong \rho_\ell$, $a_j= a_k$, and there exists an $0 \leq r\leq b_j-1 $ such that
        \begin{align}\label{eq phi_1 algorithm 3}
            \half{b_k-1}+x_k+1=y_{\ell}+b_k= \half{1-b_j}+r+x_j.
        \end{align}
        Since $a_j=a_k$, we obtain that
        \begin{align}\label{eq phi_1 algorithm 4}
            \half{1-b_k} +x_k < \half{1-b_j}+x_j \leq \half{b_k-1}+x_k +1 \leq \half{b_j-1}+x_j,
        \end{align}
        where the second and third inequalities follow from \eqref{eq phi_1 algorithm 3}, and the first inequality follows from conditions (i) and (ii) for the index $k$. 
        Now, the sequence of inequalities \eqref{eq phi_1 algorithm 4} and $a_j=a_k$ imply that the parabolic induction
        \[ u_{\rho_k}( a_k, b_k) \lvert\cdot \rvert^{x_k} \times u_{\rho_j}( a_j, b_j) \lvert\cdot \rvert^{x_j} \]
        is reducible by \cite[Theorem 1.1]{Tad14}, contradicting to the irreducibility of \eqref{eq pi_i algorithm no repetition}.   
        Thus, we conclude that $\beta_\ell=b_k$. This also implies that $(\alpha_\ell, \beta_\ell, \xi_\ell, \rho_\ell)= (a_k, b_k, x_k, \rho_k)$. 
        
        Finally, since the index $k \in I_{1,ngp}$ satisfies conditions (i)-(iv), comparing \eqref{eq phi_1 algorithm 1} and \eqref{eq phi_1 algorithm 2}, we see that
        the index $\ell\in J $ satisfies conditions (i$'$)-(iv$'$); hence $(\alpha_l, \beta_\ell, \xi_\ell, \rho_\ell)=(\alpha_l, \beta_l, \xi_l, \rho_l)$. This completes the verification of the claim.

        To summarize, from the $L$-parameter of $\pi_1$, we can specify a pair of generalized Speh representations $(u_{\rho_k}(a_k, b_k)\lvert \cdot \rvert^{x_k}, (u_{\rho_k}(a_k, b_k)\lvert \cdot \rvert^{x_k})^{\vee})$ such that at least one of them is a factor in \eqref{eq pi_i algorithm no repetition}. 
        To finish the proof, we define
        \[ \pi_{1}^{-}:=\bigtimes_{j \in I_{i,ngp}\setminus \{k\}} u_{\rho_j}(a_j, b_j)\lvert\cdot \rvert^{x_j} \rtimes \pi_{1,0}, \]
        and repeatedly apply the process on $\pi_1^{-}$. An inductive argument then implies that we can compute the set $\{ (u_{\rho_{j}}(a_j, b_j)\lvert \cdot\rvert^{x_j}, (u_{\rho_{j}}(a_j, b_j)\lvert \cdot\rvert^{x_j})^{\vee} )\}_{j \in I_{1,ngp}}$ from the $L$-parameter of $\pi_1$. This completes the proof of Part (b) and the lemma.
\end{proof}
}

The above proof provides an algorithm to determine whether a general $\pi$  lies in $  \Pi_{ind}(G_n)$, which we record as follows.

\begin{algo}[$\Pi_{ind}(G_n)$] \label{remark reduction to gp}
Let $\pi$ be an arbitrary irreducible representation of a classical group $G_n$. Suppose that the good parity part $\pi_{gp}= X_{gp}(\pi)$ is of Arthur type. Then we may uniquely write down a decomposition
\[ \phi_{\pi}=  \left(\bigoplus_{i \in I } \bigoplus_{r=0}^{b_i-1} \rho_i \lvert\cdot\rvert^{\half{1-b_i}+r + x_j} \otimes S_{a_i} + (\rho_i \lvert\cdot\rvert^{\half{1-b_i}+r + x_j} \otimes S_{a_i})^{\vee}\right)+ \phi_{\pi_{gp}} \]
by repeatedly looking for the index $l$ satisfying conditions $(\emph{i}')-(\emph{iv}')$ in the proof of Part (b) above. Then $\pi$ is in $\Pi_{ind}(G_n)$ if and only if the following conditions hold:
\begin{itemize}
    \item [(1)] the good parity part $\pi_{gp} $ lies in {$\Pi_{ind}(G_m)$} for some $m \leq n$, and,
    \item [(2)] the induction
\[ \bigtimes_{i \in I} u_{\rho_i}(a_i, b_i) \lvert\cdot \rvert^{x_i} \rtimes \pi_{gp} \]
is irreducible.
\end{itemize}
 Condition \emph{(1)} can be checked by Algorithm \ref{algo pi ind gp}, while Condition \emph{(2)} can be checked by \cite[Theorem 1.2]{BS25}, \cite[Theorem 1.1]{Tad14}, and \cite{Ato22d} by writing $\pi_{gp}$ in the form \eqref{eq decomp Pi A cand}.
\end{algo}

\subsection{\texorpdfstring{An algorithm to determine $\Pi_{\overline{A}}(G_n)$}{}}

In this subsection, given any $\rho \in \mathcal{C}^{sd}$ and $\pi_{sc} \in \mathcal{C}^{cl}$, we develop an algorithm to exhaust the subset of $\Pi_{\overline{A}}(G_n)$ contained in $\Irr(X_\rho; \pi_{sc})$ (see Definition \ref{def Jantzen decomposition}). This can be extended to the general case and exhaust the whole set $\Pi_{\overline{A}}(G_n)$ by Corollary \ref{cor preseveration for Pi A bar}, thus generating the candidate set $\Pi_{\overline{A}}^{\lim}(G_n)$ (abstractly). The algorithm makes use of the unitary dual of $\GL_d(F)$ (\cite{Tad86}), which is a factor of proper Levi subgroups of $G_n$. Although this does not extend to all exceptional groups, we expect that the same algorithm should work for other classical groups.
In the following, by abusing notation, we regard $\R^0=\{0\}$.

\begin{algo}\label{alg A bar}
   Let $\pi_{sc}$ be an irreducible supercuspidal representation of $G_n$, $\rho$ be an irreducible selfdual supercuspidal representation of $\GL_d(F)$, and let $r \in \Z_{\geq 0}$. In this algorithm, we output the set
\[ \Pi_{\overline{A}}(\pi_{sc}, \rho, r):= \Pi_{\overline{A}}(G_{n+rd}) \cap \Irr(X_{\rho}; \pi_{sc}).\]
Write $\alpha=\alpha_{\pi_{sc},\rho}$ for short.
\begin{enumerate}
    \item [Step 1.] Compute $\Pi_{A, \, gp}(\pi_{sc},\rho,s)$ for $0 \leq s \leq r$ by Algorithm \ref{alg Pi A}. Let $\Omega_{A,gp}:= \cup_{0 \leq s \leq r} \Pi_{A, \, gp}(\pi_{sc},\rho,s).$ 
    \item [Step 2.] For $ 0 \leq s \leq r' \leq r$, initiate $R(\pi_{sc},\rho,r',s)$ to be the empty set. Repeat the following steps for each $\pi_A \in \Pi_{A, \, gp}(\pi_{sc},\rho,s)$: 
    \begin{enumerate}
        \item [(2-1)] Let $\Psi(r'-s)$ be the collection of ordered tuples of pairs $\psi=((a_1,b_1),\ldots, (a_{l(\psi)}, b_{l(\psi)}))\in(\mathbb{Z}_{\geq 1}^2)^{l(\psi)}$, ${l(\psi)} \in \mathbb{Z}_{>0}$, such that $\sum_{i=1}^{l(\psi)} a_ib_i=r'-s$. By convention, let $\Psi(0):=\{\emptyset\}$. 
        
        \item [(2-2)] Suppose that $r'-s>0$. For each $\psi \in \Psi(r'-s)$, let $\mathcal{H}(\psi, \pi_A)$ denote the set consisting of the following reducibility hyperplanes in $\R^{{l(\psi)}}$: 
        \begin{itemize}
           
            \item $\{x_i= t \ | \ u_{\rho}(a_i,b_i)\lvert\cdot\rvert^{t} \rtimes \pi_A \text{ is reducible}\}$,
            \item $\{x_i\pm x_j = t\ | \ u_{\rho}(a_i,b_i)\lvert\cdot\rvert^{t} \times u_{\rho}(a_j,b_j) \text{ is reducible}, 1 \leq i < j \leq {l(\psi)}\}$.
        \end{itemize}
       
        \item [(2-3)] If $r'-s>0$, let $R(\psi,\pi_A)$ denote the (finite) set of connected components of $\R^{l(\psi)} \setminus (\cup_{H \in \mathcal{H}(\psi, \pi_A)} H)$.        
        For each $C \in R(\psi,\pi_A)$, append the triple $(C, \psi, \pi_A)$ into $R(\pi_{sc},\rho,r',s)$. If $r'-s=0$, then append $\{ (\R^0, \emptyset , \pi_A) \}$ into $R(\pi_{sc},\rho,r',s)$. 
    \end{enumerate}
    \item [Step 3.] Let $R:= \cup_{0 \leq s \leq r' \leq r} R(\pi_{sc},\rho,r',s) \sqcup \{-1\}$. Define an equivalence relation $\sim$ on $R$ as follows. Let $(C,\psi, \pi_A) \in R(\pi_{sc},\rho,r',s)$ where $C \subseteq \R^{l(\psi)}$.
    \begin{itemize}
    \item [(3-1)] Suppose that $C$ is unbounded. Then we define $ (C,\psi, \pi_A) \sim -1$.
    \item [(3-2)]Suppose that $C \cap \{x_i=0\} \neq \emptyset$. Define $\psi^{-}$ by removing $(a_i,b_i)$ from $\psi$. Take any point $\underline{y} \in C \cap \{x_i=0\}$ and define $\underline{y}^{-}\in \R^{l(\psi)-1}$ by removing the $i$-th coordinate $($if ${l(\psi)}=1$, then set $\underline{y}^-=0$$)$. Let $(C^-, \psi^{-}, \pi_A)$ be the unique element in $R$ such that $\underline{y}^- \in C^- $. Then we define $(C,\psi,\pi_A) \sim (C^-, \psi^{-}, \pi_A)$.
    \item [(3-3)] Suppose that $C \cap \{x_i=t\} \neq \emptyset$ for some $t\in (\alpha+ \half{a_i+b_i})+\Z$. Let $\pi_{A}^+:= u_{\rho}(a_i,b_i)\lvert\cdot\rvert^{t} \rtimes \pi_A$, which is irreducible and of good parity. If $\pi_A^+ $ is not in $\Omega_{A,gp}$, then define $(C,\psi, \pi_A) \sim -1$. If $\pi_A^+ \in \Omega_{A,gp}$, then  define $\psi^{-}$ and take a point $\underline{y}^- \in \R^{{l(\psi)}-1}$ as in the previous case. Let $(C^-, \psi^{-}, \pi_A^+)$ be the unique element in $R$ such that $\underline{y}^- \in C^- $. Then we define $(C,\psi,\pi_A) \sim (C^-, \psi^{-}, \pi_A^+)$.
    \item [(3-4)] Suppose that $(a_i,b_i)=(a_j,b_j)$ with $i \neq j$ and $C \cap \{x_i={ \pm}x_j\}\neq \emptyset$. Define $\psi^-$ by removing both $(a_i,b_i)$ and $(a_j,b_j)$ from $\psi$. Take any point $\underline{y}=(y_1,\ldots, y_{l}) \in C  \cap \{x_i={ \pm}x_j\}$. If $|y_i|>\half{1}$, then define $(C,\psi, \pi_A) \sim -1$. If $|y_i|< 1/2$, then define $\underline{y}^{-} \in \R^{{l(\psi)}-2}$ by removing the $i$-th and $j$-th coordinates $($if ${l(\psi)}=2$, then set $\underline{y}^-=0$$)$. Let $(C^-, \psi^{-}, \pi_A)$ be the unique element in $R$ such that $\underline{y}^- \in C^- $. Then we define $(C,\psi,\pi_A) \sim (C^-, \psi^{-}, \pi_A)$.
    \end{itemize}
     Let $R_{\overline{A}}$ be the collection of $(C,\psi, \pi_A) \in R$ such that $(C, \psi, \pi_A) \sim ( \R^0, \emptyset, \pi_A')$ for some $\pi_A' \in \Omega_{A,gp}$. 
    \item [Step 4.] For each $(C,\psi=((a_1,b_1),\ldots,(a_{l(\psi)},b_{l(\psi)})),\pi_A) \in R$, let 
   {\[ \Pi ({C}, \psi, \pi_A):= \left\{ \bigtimes_{i=1}^{l(\psi)} u_{\rho}(a_i,b_i)\lvert\cdot\rvert^{y_i} \rtimes \pi_A \ \middle| \ (y_1,\ldots, y_{l(\psi)})\in {C}\right\}, \]} 
    if $l(\psi)>0,$ and $\Pi(\BR^0, \emptyset , \pi_A):= \{\pi_A\}$. Then 
    \[ \Pi_{\overline{A}}(\pi_{sc}, \rho, r)= \left(\bigcup_{(C, \psi, \pi_A) \in R_{\overline{A}}} {\Pi ({C}, \psi, \pi_A)}\right) \cap \Pi(G_{n+rd}).\]
\end{enumerate}
\end{algo}

The algorithm has been implemented using Sagemath. We can describe the set $\Pi_{\overline{A}}^{\lim}(\pi_{sc}, \rho, r):= \Pi_{\overline{A}}^{\lim}(G_{n+rd})\cap \Irr(X_{\rho};\pi_{sc})$ abstractly from $\Pi_{\overline{A}}(\pi_{sc}, \rho, r)$ as follows. For each $(C,\psi,\pi_A)\in R_{\overline{A}}$, define
\[ \Pi^{\lim} ({C}, \psi, \pi_A):= \left\{ \pi \ \middle| \ \pi \leq \bigtimes_{i=1}^{l(\psi)} u_{\rho}(a_i,b_i)\lvert\cdot\rvert^{y_i} \rtimes \pi_A \text{ for some } (y_1,\ldots, y_{l(\psi)})\in \overline{C}\right\}. \]
Then by definition,
\[  \Pi_{\overline{A}}^{\lim}(\pi_{sc}, \rho, r)= \left(\bigcup_{(C, \psi, \pi_A) \in R_{\overline{A}}} {\Pi^{\lim} ({C}, \psi, \pi_A)}\right) \cap \Pi(G_{n+rd}).\]
However, for $ (y_1,\ldots, y_{l(\psi)})\in \overline{C} \setminus C $, the induction $\bigtimes_{i=1}^{l(\psi)} u_{\rho}(a_i,b_i)\lvert\cdot\rvert^{y_i} \rtimes \pi_A$ is reducible. There is no explicit algorithm on computing the irreducible subquotients at this moment, except for some small rank examples via certain case-by-case argument.
More remarks on the algorithm are as follows.

\begin{remark}\label{rmk A+ algo}
Let $(C,\psi,\pi_A) \in R$.
    \begin{enumerate}
    \item  In Step 2, we may replace $R(\psi,\pi_A)$ by the orbit under an appropriate Weyl action to increase the efficiency.  
    \item In Step 3, if $C$ is bounded, then $C \cap H$, where $H$ is a hyperplane, is the interior of a convex hull generated by a set of vertices. One can take the point $\underline{y}$ to be the center of these vertices. 
        \item It follows from \cite[Theorem 1.2]{BS25} that Step \emph{(2-2)}  gives all reducibility hyperplane in $\R^{l(\psi)}$. As a consequence, any representation in $\Pi(C,\psi,\pi_A)$ is irreducible. Also, the algorithms to list these reducibility hyperplane are given in \cite{Ato22d} and \cite[Theorem 1.1]{Tad14}.

            \item Suppose that $ (C,\psi,\pi_A) \in R$ and is equivalent to $-1$. Then any representation in $\Pi(C,\psi,\pi_A) $ is not unitary. Indeed, Case \emph{(3-1)} follows from \cite[Theorem 2.5]{Tad88}, Cases \emph{(3-3)}, and \emph{(3-4)} follow from unitary parabolic reduction \emph{(}see \cite[\S 2.15]{Tad23}\emph{)}, the unitary dual of $\GL_n(F)$ in \cite{Tad86} and a recent result of Atobe and M{\'i}nguez \emph{(}\cite[Theorem 3.1]{AM25}\emph{)}.
        
        \item {Conjecture \ref{conj A+ intro} predicts that every representation outside $\Pi_{\overline{A}}^{\lim}(\pi_{sc}, \rho, r)$ is not unitary}. It is natural to ask whether every $(C, \psi, \pi_A) \in R \setminus R_{\overline{A}}$  is equivalent to $-1$. However, this is not always the case, as shown in Example \ref{ex A+ unit ind}. For example, the non-unitary triangles in Figure \ref{Figure components supported in Pi A bar corank 2} below (a zoomed-out version of Figure \ref{Figure corank 2}) are not equivalent to $-1$.
        
        If we add an extra criterion to check whether all irreducible representations in the socle and cosocle at the vertices of this kind of regions are of Arthur type \emph{(}computable by \cite{Ato22d}\emph{)}, then we can exclude these regions in the examples of corank 2 and 3 with small $\alpha$. 
    \end{enumerate}
\end{remark}

\subsection{Applying Algorithm \ref{alg A bar} to Example \ref{ex A+ unit ind}}

In this subsection, we illustrate the above algorithm on Example \ref{ex A+ unit ind}. Note that we do not perform the entire algorithm here, but enough cases to demonstrate the main ideas of the algorithm.
First, we recall the setting. We assume that $\alpha_{\rho,\sigma}=\half{3}$ and consider the parabolic induction
$\Pi_{x,y}:= \rho\vert\cdot\rvert^{x} \times \rho\vert\cdot\rvert^{y} \rtimes \sigma$, with $(x,y) \in \R^2.$ Note that determining the unitary subquotients for $x,y\geq 0$ is sufficient to determine the unitary subquotients for all $(x,y)\in\mathbb{R}^2$ by Remark \ref{rmk A+ algo}(1). There is also symmetry about the line $y=x$ that we could also exploit. According to Algorithm \ref{alg A bar}, we have $r=2$. 

Following Step 1 of the algorithm, we have
$\Pi_{A,\, gp}(\sigma,\rho,0)=\{\sigma\}.$
Step 2 of Algorithm \ref{alg A bar} proceeds by iterating over $r'=0,1,2$. From Step (2-1), we have that $\Psi(0)=\emptyset$ when $r'=0$, $\Psi(1)=\{((1,1))\}$ when $r'=1$, and $\Psi(2)=\{((1,1),(1,1)),((2,1)),((1,2))\}$ when $r'=2.$ 

For $r'=0,$ we add $(\R^0,\emptyset,\sigma)$ to $R(\sigma,\rho,0,0)$ by Step (2-3) of the algorithm. 
For $r'=1,$ we have $\psi_1=((1,1))$ and exactly two reducibility hyperplanes in $\mathcal{H}(\psi_1,\sigma)$ given by the point $x=\pm\frac{3}{2}.$ This is represented in the following picture
\begin{figure}[H]
\begin{center}
\scalebox{1}{\small
\begin{tikzpicture}[>=stealth]
\draw[solid, {Circle[open]}-{Circle[open]}] (-3/2,0) --  (3/2,0);
\draw[solid, ->] (3/2,0) --  (3,0);
\draw[solid, <-] (-3,0) --  (-3/2,0);
\node at (-3.25,0)[black]{$B_1$} ;
\node at (0,.5)[black]{$B_2$} ;
\node at (3.25,0)[black]{$B_3$} ;
\node at (1.5,.5)[black]{$\half{3}$} ;
\node at (-1.5,.5)[black]{$-\half{3}$} ;
\end{tikzpicture}}
\end{center}
\caption{Reducibility hyperplanes of $\rho\vert\cdot\vert^x\rtimes\sigma$}\label{Figure components corank 1}
\end{figure}
The reducibility hyperplanes carve $\mathbb{R}$ into 3 connected components $B_1, B_2, B_3$.  
Thus we add $(B_1,\psi_1,\sigma),$ $(B_2,\psi_1,\sigma),$ and $(B_3,\psi_1,\sigma)$ to $R(\sigma,\rho,1,0)$ by Step (2-3) of the algorithm. Note that only one component is bounded. By Step (3-1), we have that $(B_1,\psi_1,\sigma)\sim (B_3,\psi_1,\sigma)\sim -1.$ On the other hand, $B_2\cap\{x=0\}\neq\emptyset$. Thus, by Step (3-2), we have that $(B_2,\psi_1,\sigma)\sim (\R^0,\emptyset,\sigma).$ 

When $r'=2$ and $\psi_2=((1,1),(1,1))$ the reducibility hyperplanes in $\mathcal{H}(\psi_2,\sigma)$ are denoted by the dashed lines in the following figure (which is a zoomed-out version of Figure \ref{Figure corank 2}).
\begin{figure}[H]
\begin{center}
\scalebox{0.8}{\small
\begin{tikzpicture}[>=stealth]
\filldraw[fill=gray!30, dashed] (0,1) -- (1,0) -- (0,-1) -- (-1,0) -- cycle;
\filldraw[fill=gray!30, dashed] (0,1) -- (.5,1.5) -- (-.5,1.5) -- (0,1) -- cycle;
\filldraw[fill=gray!30, dashed] (1,0) -- (1.5,.5) -- (1.5,-.5) -- (1,0) -- cycle;
\filldraw[fill=gray!30, dashed] (-1,0) -- (-1.5,.5) -- (-1.5,-.5) -- (-1,0) -- cycle;
\filldraw[fill=gray!30, dashed] (0,-1) -- (-.5,-1.5) -- (.5,-1.5) -- (0,-1) -- cycle;
\node at (0.25,0.25) {$C_0$};
\node at (0.25,1.75) {$C_1$};
\node at (1.75,0.25) {$C_1'$};
\node at (-1.75,0.25) {$C_1'''$};
\node at (0.25,-1.75) {$C_1''$};
\draw[dashed] (-4,-3) --  (3,4) node[above] {$ y=x+1 $};
\draw[dashed] (-3,-4) --  (4,3) node[above] {$ y=x-1 $};
\draw[dashed] (-4,1.5) --  (4,1.5) node[right] {$ y=\half{3} $};
\draw[dashed] (1.5,-4) --  (1.5,4) node[left] {$ x=\half{3} $};
\draw[dashed] (-4,-1.5) --  (4,-1.5) node[right] {$ y=-\half{3} $};
\draw[dashed] (-1.5,-4) --  (-1.5,4) node[left] {$ x=-\half{3} $};
\draw[dashed] (-3,4) --  (4,-3) node[right] {$ x+y=1$};
\draw[dashed] (3,-4) --  (-4,3) node[left] {$ -x-y=1$};
\draw[->] (0,-4) --  (0,4) node[above] {$ y $};
\draw[->] (-4,0) --  (4,0) node[right] {$ x $};
\end{tikzpicture}}
\end{center}
\caption{2-dimensional regions of unitarity for $\Pi_{x,y}$}\label{Figure components supported in Pi A bar corank 2}
\end{figure}

It follows that $\mathbb{R}^2\setminus \mathcal{H}(\psi_2,\sigma)$ has 33 distinct connected components which contribute 33 elements of the form $(C,\psi_2,\sigma)$ to $R(\sigma,\rho,2,0).$ Note that only 17 components are bounded. Of the 17 components, only the bounded components intersecting the axes are equivalent to $(\R^0,\emptyset,\sigma).$ These components are shown above. { Indeed, by Step (3-2), we see that $C_0 \sim (B_2 , ((1,1)), \sigma ) \sim C_1'$, or $C_0 \sim (\R^0, \emptyset, \sigma)$ by Step (3-4). Other regions follow similarly or by symmetry.}

We explain {why the other bounded regions are not in $R_{\overline{A}}$.}
Consider the region containing the point $(1,1).$ This region is bounded and intersects the line $y=x$. By Step (3-4), we have that this region is equivalent to $-1$ since $1>\half{1}$. The other regions containing the points $(\pm1,\pm1)$ are handled similarly (or by symmetry).
Consider the region containing the point $(1,1.75).$ This region is bounded, but does not intersect the lines $y=0,$ $x=0,$ $y=x$, $y=t$ or $x=t$ for $t\in \half{1}+\Z.$ Thus, from Step 3, we see that this region is only equivalent to itself; hence it is not in  $R_{\overline{A}}$. 
At this point, Algorithm \ref{alg A bar} shows that \[\Pi_{x,y}\in\Pi_{\overline{A}}(\sigma,\rho,2),\] for any $(x,y)\in C_0\cup C_1\cup C_1' \cup C_1'' \cup C_1'''.$

Next, we consider $\psi_3=((2,1)).$ Per Step (2-2), we consider the reducibility of $u_\rho(2,1)\lvert\cdot\rvert^t\rtimes\sigma.$ The reducibility happens precisely when $t=\pm1,\pm2$ and so we have the following reducibility hyperplanes in $\mathcal{H}(\psi_3,\sigma)$.
\begin{figure}[H]
\begin{center}
\scalebox{1}{\small
\begin{tikzpicture}[>=stealth]
\draw[solid, {Circle[open]}-{Circle[open]}] (-2,0) --  (-1,0);
\draw[solid, -{Circle[open]}] (-1,0) --  (1,0);
\draw[solid, -{Circle[open]}] (1,0) --  (2,0);
\draw[solid, ->] (2,0) --  (4,0);
\draw[solid, <-] (-4,0) --  (-2,0);
\node at (-4.25,0)[black]{$D_1$} ;
\node at (-1.5,-.5)[black]{$D_2$} ;
\node at (0,-.5)[black]{$D_3$} ;
\node at (1.5,-.5)[black]{$D_4$} ;
\node at (4.25,0)[black]{$D_5$} ;
\node at (-2,.5)[black]{$-2$} ;
\node at (-1,.5)[black]{$-1$} ;
\node at (1,.5)[black]{$1$} ;
\node at (2,.5)[black]{$2$} ;
\end{tikzpicture}}
\end{center}
\caption{Reducibility hyperplanes of $u_\rho(2,1)\vert\cdot\vert^t\rtimes\sigma$}\label{Figure components corank 2 (2,1) case}
\end{figure}
Consequently, we add $(D_i,\psi_3,\sigma)$ to $R(\sigma,\rho,2,0)$ for $i=1,\dots,5.$ Note that only three components are bounded. The component $D_3$ intersects the point $x=0.$ From Step (3-2), we have that $D_3\sim(\R^0,\emptyset,\sigma).$ However, the components $D_2, D_4$ are only equivalent to themselves (note that $\half{3}+\half{1+2}\in\mathbb{Z}$). Thus we find that 
\[
u_\rho(2,1)\lvert\cdot\rvert^x\rtimes\sigma \in\Pi_{\overline{A}}(\sigma,\rho,2),
\]
for $x\in D_3.$ The case for $\psi_4=((1,2))$ is similar, which we omit here.

\section{Verification of Conjecture~\ref{conj A+ intro} for \texorpdfstring{$G_2$}{}}\label{sec:G2}

In this section, we verify that Conjecture~\ref{conj A+ intro} holds for \( G_2 = \RG_2(F) \), the group of \( F \)-points of the exceptional group of type \( \RG_2 \). This verification is based on Mui{\'c}'s classification of the unitary dual of \( G_2 \) in~\cite{Mu97}. The local Langlands correspondence for \( G_2 \) was recently established in~\cite{AX22} and~\cite{GS23b}. Although the construction of the Arthur packets \( \Pi_A(G_2) \) via the trace formula and endoscopic character identities remains incomplete, many examples of Arthur packets for \( G_2 \) have been constructed by other methods; see~\cite{GGJ02, GG05, GG06, GS23a, AHRR24} for several such cases.
For our purposes, to construct \( \Pi_u(G_2) \), we do not need the full union of Arthur packets \( \Pi_A(G_2) \), but only
a specific subset, as described below. Furthermore, in the case of $G_2$, we will show that $\Pi_{\ol{A}}^{\lim}(G_2)=\Pi_{\ol{A}}(G_2)=\Pi_u(G_2),$ i.e., we do not need to consider the limit.

To proceed, we recall some notation related to the group $G_2$ from \cite{Mu97}. We fix a Borel subgroup $B=TU$ of $G_2$. The two simple roots of $G_2$ relative to $B$ are denoted by $\alpha,\beta$, with $\alpha$ being short and $\beta$ long. For $\gamma\in \{\alpha,\beta\}$, let $P_\gamma=M_\gamma N_\gamma$ be the parabolic subgroup such that the root space of $\gamma$ is in the Levi $M_\gamma$. We identify $T\cong F^\times \times F^\times$ by 
$t\mapsto ((2\alpha+\beta)(t),(\alpha+\beta)(t)).$ For two unitary characters $\chi_1,\chi_2$ of $F^\times$, and $s_1, s_2 \in \mathbb{R}$, let $I(s_1,s_2,\chi_1,\chi_2):=\mathrm{Ind}_{TU}^{G_2}(\chi_1\lvert\cdot \rvert^{s_1}\otimes \chi_2\lvert\cdot \rvert^{s_2})$ be the induced representation. If $s_1>s_2>0$, let $J(s_1,s_2,\chi_1,\chi_2)$ be the Langlands quotient of $I(s_1,s_2,\chi_1,\chi_2)$. Similarly, for $\gamma\in\{\alpha,\beta\}$, let $I_\gamma(s,\tau)=\mathrm{Ind}_{P_\gamma}^{G_2}(\tau \lvert\det(\cdot)\rvert^s)$ for a representation $\tau$ of $\GL_2(F)$. If $\tau$ is tempered and $s>0$, let $J_\gamma(s,\tau)$ be the Langlands quotient of $I_\gamma(s,\tau)$. For a character $\chi$ of $F^\times$, let $\delta(\chi)=(\chi \circ\det)\otimes \St_{\GL_2(F)}$. For characters $\chi_1, \chi_2$ of $F^{\times}$, let $\pi(\chi_1,\chi_2)$ be the induced representation $\mathrm{Ind}_{B_{\GL_2(F)}}^{\GL_2(F)}(\chi_1\otimes \chi_2)$. Applying the Weyl elements' actions (see \cite[page 467]{Mu97}), in the Grothendieck group of $G_2$, for characters $\chi_1, \chi_2$ of $F^{\times}$, we have that 
$\mathrm{Ind}_{TU}^{G_2}(\chi_1\otimes\chi_2)=\mathrm{Ind}_{TU}^{G_2}(\chi_2\otimes\chi_1)=\mathrm{Ind}_{TU}^{G_2}(\chi_1\chi_2\otimes\chi_2^{-1}).$
Moreover, applying the isomorphisms of $\GL_2 $ with the Levi subgroups $M_\gamma$, one can obtain that 
$$I_\alpha(s,\pi(\chi_1\lvert\cdot\rvert^{s_1},\chi_2\lvert\cdot\rvert^{s_2}))=I(s+s_1,s+s_2,\chi_1,\chi_2), ~ I_\beta(s,\pi(\chi_1\lvert\cdot \rvert^{s_1},\chi_2\lvert\cdot \rvert^{s_2}))=I(s+s_2,s_1-s_2,\chi_2,\chi_1\chi_2^{-1}).$$
Thus, in the Grothendieck group, we have that
$I(s,s-1,\chi,\chi)=I_\alpha(s-1/2,\delta(\chi))+I_\alpha(s-1/2,\chi\circ\det),$ and $I(s-1,1,\chi,1_{F^\times})=I_\beta(s-1/2,\delta(\chi))+I_\beta(s-1/2,\chi\circ\det).$

We recall the following result of Mui\'c (\cite{Mu97}) on the classification of non-tempered unitary representations of $G_2$.  

\begin{thm}[Theorems 5.1, 5.2, 5.3 of \cite{Mu97}]\label{thm-Muic} 
Non-tempered unitary representations of $G_2$ are given in the following list.

\begin{enumerate}
\item[(a)] Suppose that $\chi_1,\chi_2$ are unitary characters and $s_1>s_2>0$ are real numbers. Then $J(s_1,s_2,\chi_1,\chi_2)$ is unitary if and only if one of the following conditions holds:
\begin{enumerate}
\item[(a1)] $\chi_1=\chi_2=1_{F^\times}$, $(s_1,s_2) \in \{ 2s_1+s_2\le 1\}\cup \{ s_1+2s_2\ge 1, s_1+s_2\le 1\}\cup\{s_1=2,s_2=1\};$
\item[(a2)] $\chi_1=1_{F^\times}, \chi_2$ has order $2$, $s_1+2s_2\le 1;$\footnote{In this case, \cite[Theorem 5.2]{Mu97} states that the representation $J(s_1,s_2,1_{F^\times},\chi_2)$ is also unitary when $0<s_2<s_1=1$. This seems to be a typo because $J(1,s_2,1_{F^\times},\chi_2)=I_\beta(s_2+1/2, \chi\circ \det) $ which is not unitary for $0<s_2<1$ by \cite[Lemma 5.2]{Mu97}.}
\item[(a3)] $\chi_1$ has order $2$, $\chi_2=1_{F^\times}$, and $2s_1+s_2\le 1;$
\item[(a4)] $\chi_1=\chi_2$ has order $2$, and $s_1+s_2\le 1$.
\end{enumerate}
\item[(b)] Suppose that $\chi$ is a unitary character. Let $\gamma\in \{\alpha,\beta\}$. Let $\pi$ be a tempered non-supercuspidal representation of $\GL_2(F)$. Then for $s>0$, $J_\gamma(s,\pi)$ is unitary iff one of the following conditions holds:
\begin{enumerate}
\item[(b1)] $\pi=\delta(\chi)$ with $\chi^2=1_{F^\times}$, and $s\le 1/2$;
\item[(b2)] $\gamma=\alpha, \pi=\pi(\chi,\chi^{-1})$, and $s\le 1/2;$
\item[(b3)] $\gamma=\alpha, \pi=\pi(1_{F^\times},\chi),$ $\chi$ has order $2$, and $s\le 1/3$;
\item[(b4)] $\gamma=\beta$, $\pi=\pi(\chi,\chi^{-1})$, and $s\le 1/2;$
\item[(b5)] $\gamma=\beta,\pi=\pi(\chi,\chi^{-1}),\chi^3=1_{F^\times}$, and $s=1$;
\item[(b6)] $\gamma=\beta,\pi=\pi(1_{F^\times},\chi)$, $\chi$ has order $2$, and $s\le 1$.
\end{enumerate}
\item[(c)] Suppose that $\rho$ is an irreducible supercuspidal representation of $\GL_2(F)$, $s>0$. For $\gamma\in\{\alpha,\beta\}$, the Langlands quotient $J_\gamma(s,\rho)$ is unitary iff $\rho\cong \rho^\vee$ and one of the following conditions holds:
\begin{enumerate}
\item[(c1)] The central character $\omega_\rho$ of $\rho$ is trivial and $s\le 1/2;$
\item[(c2)] $\gamma=\beta, \mathrm{Im}(\phi_\rho)\cong \FS_3$ (the symmetric group on 3 letters) and $s\le 1$. Here $\phi_\rho: W_F\to \GL_2(\BC)$ is the Langlands parameter of $\rho$.
\end{enumerate}
\end{enumerate}
\end{thm}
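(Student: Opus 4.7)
Since Theorem \ref{thm-Muic} is a direct restatement of the main classification results (Theorems 5.1, 5.2, 5.3) of Mui\'c \cite{Mu97}, my proof proposal is to outline Mui\'c's approach rather than to reprove it in full. The overall plan has three stages: enumerate the non-tempered admissible dual via Langlands, cut it down to Hermitian representatives, and then sweep out the unitary locus by deformation plus boundary analysis.

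First, I would invoke the Langlands classification for $G_2$: every non-tempered irreducible smooth representation is a Langlands quotient of a standard module induced either from the Borel $B=TU$ (giving the parameters $J(s_1,s_2,\chi_1,\chi_2)$ of (a)) or from a maximal parabolic $P_\gamma$ with tempered inducing representation (giving the parameters $J_\gamma(s,\tau)$ of (b) and (c), depending on whether $\tau$ is a subquotient of a principal series or supercuspidal). The Hermitian condition then forces the characters $\chi_i$ appearing in (a) to be real-valued unitary (hence $\chi_i^2=\mathbf{1}_{F^\times}$), the representation $\tau$ in (b) to be essentially self-dual of the form $\delta(\chi)$ or $\pi(\chi_1,\chi_2)$ with the displayed symmetry, and $\rho$ in (c) to satisfy $\rho\cong\rho^\vee$. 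This already pins down the parameter families listed in (a1)--(a4), (b1)--(b6), (c1)--(c2); everything else is manifestly non-unitary.

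Second, within each Hermitian family, I would identify the reducibility walls of the standard module as a function of $(s_1,s_2)$ or $s$. For the Borel-induced case this amounts to locating the zeros of the Plancherel measure using Shahidi's $L$-function formulas for the six positive roots of $G_2$; for the maximal-parabolic cases one uses Knapp--Stein $R$-groups together with the Bernstein--Zelevinsky classification on the $\GL_2$ Levi. These walls carve out the open regions in (a1)--(a4) and (b1)--(b4). Starting from either the trivial character or the unitary principal series (which are unitary for free), a path-connectedness/signature-preservation argument along the complementary series shows that every point in the interior of each region is unitary, and that any point outside a closed region with a reducibility wall strictly between it and the unitary axis has an indefinite invariant Hermitian form (the signature flips across the wall, as verified on a fixed $K$-finite vector).

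The hard part, and what I expect to be the main obstacle, is step three: the \emph{boundary} and \emph{isolated} cases, namely the endpoint $(s_1,s_2)=(2,1)$ in (a1), the cases (b5), (b6) with $\chi^3=\mathbf{1}$ or $\chi^2=\mathbf{1}$ at $s=1$, and (c2) at $s=1$. These are not reachable by complementary-series deformation because the standard module becomes reducible, so I would follow Mui\'c in treating them individually: compute the Jacquet modules of the reducible induction explicitly, identify the irreducible subquotients using the semisimplifications $I(s,s-1,\chi,\chi)=I_\alpha(s-1/2,\delta(\chi))+I_\alpha(s-1/2,\chi\circ\det)$ and $I(s-1,1,\chi,\mathbf{1}_{F^\times})=I_\beta(s-1/2,\delta(\chi))+I_\beta(s-1/2,\chi\circ\det)$ recalled in the preliminaries, and then realize each claimed unitary subquotient as a constituent of a known unitary object -- the minimal (Joseph) representation for $(2,1)$, exceptional theta lifts from $\mathrm{PD}^\times$ or from $\mathrm{PGSp}_6$ for (b5), (b6), (c2), and quadratic-base-change constituents for the isolated cases with $\mathrm{Im}(\phi_\rho)\cong\FS_3$. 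Non-unitarity of all \emph{other} boundary points is then ruled out by producing an explicit sign change in the Hermitian form, typically by restricting to a two-dimensional $K$-isotypic subspace where the intertwining operator acts by a matrix whose eigenvalues have opposite signs. Since the present paper only \emph{uses} Theorem \ref{thm-Muic} to verify Conjecture \ref{conj limit of Arthur} for $G_2$, no new work on step three is needed here; it suffices to cite Mui\'c's construction and then match each item on the list with a representation in $\Pi_{\overline{A}}(G_2)$ in the next step of \S\ref{sec:G2}.
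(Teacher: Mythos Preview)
Your proposal is correct in spirit and actually goes beyond what the paper does: the paper gives no proof whatsoever of Theorem~\ref{thm-Muic}, simply quoting it verbatim from \cite{Mu97} as an input to the verification in Proposition~\ref{proposition:G2}. Your outline of Mui\'c's three-stage argument (Langlands classification, Hermitian reduction, complementary-series deformation plus boundary analysis) is a reasonable sketch of how \cite{Mu97} proceeds, and your closing remark that ``no new work on step three is needed here; it suffices to cite Mui\'c's construction'' is exactly the paper's position.
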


Let $\Pi_{temp}(G_2)$ be the set of irreducible tempered representations of $G_2$ and let 
$$\widehat \Pi_{temp}(G_2)=\{\widehat\pi\mid \pi\in \Pi_{temp}(G_2)\}.$$
Moreover, let $S_1$ be the subset of $\Pi(G_2)$ consisting of representations of the form
$J_\gamma(1/2,\delta(\chi)), $ for $\gamma\in \{\alpha,\beta\}$, where $\chi$ is any unitary character of $F^\times$ with $\chi^2=1_{F^\times}$.
Note that these representations appear in \cite[Propositions 4.1, 4.3]{Mu97}.
Finally, we set $$S=\Pi_{temp}(G_2)\cup \widehat \Pi_{temp}(G_2)\cup S_1.$$
The representations in $S$ are listed in \cite[Table 1]{CK25}, from which we can record the following.

\begin{cor}\label{cor-G2}
The set of representations $S$ has the following properties:
\begin{enumerate}
\item $S\subset \Pi_u(G_2);$
\item The set $S$ is stable under the Aubert-Zelevinsky involution;
\item For each $\pi\in S$, its $L$-parameter $\phi_\pi$ is of Arthur type. 
\end{enumerate}
\end{cor}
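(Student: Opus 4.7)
The plan is to prove each of the three properties by direct inspection of the explicit list of representations in $S$ together with their $L$-parameters, as recorded in \cite[Table 1]{CK24}. The key input is that this table identifies every element of $\Pi_{temp}(G_2)$, every element of $\widehat{\Pi}_{temp}(G_2)$, and every element of $S_1$, locates each of them inside Mui\'c's classification in Theorem \ref{thm-Muic}, and writes down the associated enhanced Langlands parameter via the local Langlands correspondence of \cite{AX22, GS23b}.

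For property (1), I would first note that tempered representations are unitary by definition. For $\widehat{\Pi}_{temp}(G_2)$, one reads off from \cite[Table 1]{CK24} that each Aubert--Zelevinsky dual of a tempered representation appears explicitly in one of the cases of Theorem \ref{thm-Muic}, and is therefore unitary. (The non-trivial non-tempered duals fall mostly into families (a1), (b1), (b2), (b4), (b6), (c1), (c2) of Mui\'c's list.) The representations in $S_1$ appear explicitly in \cite[Propositions 4.1, 4.3]{Mu97}, where their unitarity is proved; alternatively, each is listed in case (b1) of Theorem \ref{thm-Muic} with $s=1/2$.

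For property (2), the involutive nature of the Aubert--Zelevinsky duality immediately gives that $\widehat{\Pi}_{temp}(G_2)$ and $\Pi_{temp}(G_2)$ are swapped by $\pi \mapsto \widehat\pi$. The remaining point is to verify that $S_1$ is stable under the involution. From \cite[Table 1]{CK24} (or equivalently by computing the Jacquet module at the Levi associated to the opposite simple root), one gets the identity $\widehat{J_\alpha(1/2,\delta(\chi))}=J_\beta(1/2,\delta(\chi))$ whenever $\chi^2=1_{F^\times}$, which shows $S_1$ is preserved.

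For property (3), the tempered $L$-parameter $\phi_\pi$ of any $\pi \in \Pi_{temp}(G_2)$ is trivially of Arthur type via the assignment $\psi_\pi(w,x,y)=\phi_\pi(w,x)$, which has trivial second $\SL_2$. The Aubert--Zelevinsky dual $\widehat\pi$ then has $L$-parameter corresponding to the Arthur parameter obtained from $\psi_\pi$ by swapping the two $\SL_2$-factors, which is still of Arthur type. For $\pi \in S_1$, one reads off from \cite[Table 1]{CK24} that $\phi_\pi$ agrees with $\phi_{\psi}$ for an explicit Arthur parameter $\psi$ whose image of $W_F\times \SL_2 \times \SL_2$ factors through a Levi of $G_2^\vee$. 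The main obstacle I anticipate is checking the last point for $S_1$: it requires a careful match of the local Langlands parameters in \cite{AX22, GS23b} with the Arthur parameters one expects from the analogous constructions on the Levi subgroup $M_\gamma$, and ensuring the diagonal restriction recovers $\phi_\pi$ in the sense of \eqref{def diag rest}. Once this is done, properties (1)--(3) all follow by inspection, completing the verification of Conjecture \ref{conj limit of Arthur} for $G_2$.
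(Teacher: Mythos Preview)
Your treatment of Parts (1) and (2) is essentially the same as the paper's: both reduce to reading off \cite[Table~1]{CK24} together with Mui\'c's results, and your identification $\widehat{J_\alpha(1/2,\delta(\chi))}=J_\beta(1/2,\delta(\chi))$ is correct.

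The gap is in Part (3) for the anti-tempered case. You assert that if $\pi$ is tempered with parameter $\phi_\pi$, then $\phi_{\widehat\pi}=\phi_{\widehat{\psi_\pi}}$, where $\widehat{\psi_\pi}$ is obtained from $\psi_\pi=\phi_\pi\otimes S_1$ by swapping the two $\SL_2$-factors. This is the expected compatibility between the Aubert--Zelevinsky involution and the local Langlands correspondence, but it is not something you can invoke as a known fact for $G_2$: the Arthur packets for $G_2$ have not been fully constructed via endoscopy (as the paper itself remarks), so the usual route to this compatibility is unavailable. The paper does \emph{not} use this principle. Instead it verifies case by case, using \cite[Table~1]{CK24} to read off the explicit $L$-parameters of the anti-tempered representations and then comparing with the list of unipotent Arthur-type parameters in \cite{CFZ21}. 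For the representations in $S_1$ (and more generally for subquotients of $I(1,0,\chi,\chi)$ with $\chi^2=1$), the paper cites specific computations in \cite[Lemma~2.10 and \S2.6, Case~(4)]{CFZ21} rather than arguing via a Levi factorization. Your stated plan---direct inspection via \cite[Table~1]{CK24}---is exactly the right one, but the general swapping argument you wrote down does not substitute for that inspection; you should carry out the case check instead.

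A minor point: the corollary only asserts properties of $S$, not Conjecture~\ref{conj limit of Arthur} itself; the latter is the content of the subsequent Proposition~\ref{proposition:G2}, which builds $\Pi_{\overline{S}}(G_2)$ from $S$.
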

\begin{proof} Part (1) and (2) can be checked directly from \cite[Table 1]{CK25} using the results of \cite{Mu97}. We remark that the following representations in \cite[Table 1]{CK25} are neither tempered nor anti-tempered and are also not in $S$: $ J_Q(5/2,\delta(1_{F^\times}))$ and $J_P(3/2,\delta(1_{F^\times}))$ in rows 3 and 4; and $J_P(1/2,\delta(\chi^{\pm1}))$ for a unitary character $\chi$ of order 3 in rows 16 and 17. 
Note that $J_P, J_Q$ are $J_{\alpha}, J_{\beta}$ here, respectively. 

Part (3) can also be checked directly. In fact, if $\pi$ is a subquotient of $ I(1,0,1_{F^\times},1_{F^\times})$, then $\phi_\pi$ is of Arthur type by \cite[Lemma 2.10]{CFZ21}. If $\pi$ is a subquotient of $I(1,0,\chi,\chi)$ with $\chi^2=1_{F^\times}$, then $\phi_\pi$ is of Arthur type by the calculation in \cite[\S2.6, Case (4)]{CFZ21}. See also \cite[Table 4.1.1]{CFZ21}. For the rest cases, $\pi$ is either in $\Pi_{temp}$ or in $\widehat{\Pi}_{temp}$. Then one can check case by case that the $L$-parameters of these anti-tempered representations are all of Arthur type by \cite[Table 1]{CK25} and the list of unipotent $L$-parameters of Arthur type in \cite{CFZ21}.
\end{proof}

 Following Definition \ref{def closure A}, starting from the set $S$, we can similarly define a set $\Pi_{\overline{S}}(G_2)=\bigcup_{k\ge 0}\Pi_S^{(k)}(G_2)$, where the set $\Pi^{(0)}_{S}(G_2)=S$.
Assuming that there is a theory of local Arthur packets as in \cite[Conjecture 6.1]{Art89}, Corollary \ref{cor-G2} shows that we should have $\Pi_S^{(0)}(G_2)\subset \Pi_A(G_2)$. Hence, $\Pi_{\overline{S}}(G_2)\subset \Pi_{\overline A}(G_2)$. Similarly, we define $\Pi_{\overline{S}}^{\lim}(G_2)$. 

\begin{prop}\label{proposition:G2}
We have that $\Pi_{\overline{S}}(G_2)=\Pi_{\overline{S}}^{\lim}(G_2)=\Pi_u(G_2)$. In particular,
 Conjecture \ref{conj A+ intro} holds for the exceptional group $G_2$ if the local Arthur conjecture as in \cite[Conjecture 6.1]{Art89} holds.
 \end{prop}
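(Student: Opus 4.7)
The plan is to verify $\Pi_{\overline{S}}(G_2)=\Pi_u(G_2)$ directly from Mui\'c's classification of the unitary dual (Theorem \ref{thm-Muic}). The inclusion $\Pi_{\overline{S}}(G_2)\subseteq \Pi_u(G_2)$ is immediate: by Corollary \ref{cor-G2}(1) we have $S\subset \Pi_u(G_2)$, and each inductive step in the definition of $\Pi_{\overline{S}}^{(k)}(G_2)$ preserves unitarity, since it involves either a connected component of irreducibility of a complementary series based at an Arthur-type representation on a Levi, or a re-writing as an irreducible unitary parabolic induction from a smaller Levi. The substance is the reverse inclusion, for which I would enumerate each family of non-tempered unitary representations in Mui\'c's list and realize it via the $\Pi_{\overline{S}}$-construction from $S$. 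Tempered representations are in $S$ by definition, so only the non-tempered cases (a), (b), (c) of Theorem \ref{thm-Muic} require work.

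For case (a), the strategy mirrors the treatment of coranks $2$ and $3$ for classical groups in the proofs of Propositions \ref{cor A+ corank 2} and \ref{cor A+ corank 3}. Each subcase (a1)--(a4) describes a convex region in $(s_1,s_2)$-space whose interior is cut out by the reducibility hyperplanes of $I(s_1,s_2,\chi_1,\chi_2)$. For each open region I would identify a point in the irreducibility component where $I(s_1,s_2,\chi_1,\chi_2)$ is irreducible and equal to a unitary induction from a smaller Levi --- typically the origin $(0,0)$, where the irreducible subquotients are tempered and thus in $S$. The lower-dimensional boundary strata are handled using the decompositions recorded in \cite[Propositions 4.1, 4.3]{Mu97}, combined with the Grothendieck-group identities $I(s,s-1,\chi,\chi)=I_\alpha(s-1/2,\delta(\chi))+I_\alpha(s-1/2,\chi\circ\det)$ and the analogue for $P_\beta$; the non-tempered boundary subquotients are then either Aubert duals of tempered representations (hence in $\widehat{\Pi}_{temp}(G_2)\subset S$) or members of $S_1\subset S$ at the endpoint $s=1/2$.

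For cases (b) and (c), the representations $J_\gamma(s,\pi)$ fit naturally into one-parameter complementary series $I_\gamma(s,\pi)$ indexed by $s\in\R$. In each subcase the unitary interval links the tempered endpoint $s=0$ (where the irreducible subquotients of $I_\gamma(0,\pi)$ are tempered and already in $S$) to the prescribed upper bound $s_0$, and within the connected component of irreducibility this places the corresponding $J_\gamma(s,\pi)$ in $\Pi_{\overline{S}}^{(1)}(G_2)$. The endpoints $s=1/2$ in (b1) give exactly the members of $S_1$, added to $S$ precisely so that these reducibility endpoints are available. In case (c1) the relevant supercuspidal $\rho$ on $M_\gamma\cong \GL_2(F)$ has $L$-parameter of Arthur type, so $\rho\in \Pi_A(M_\gamma)$ and the construction applies on the Levi side.

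The hard part will be the cluster of isolated unitary representations disconnected (in the reducibility-hyperplane complement) from the tempered dual: the trivial representation $J(2,1,1_{F^\times},1_{F^\times})$ in (a1); $J_\beta(1,\pi(\chi,\chi^{-1}))$ with $\chi^3=1_{F^\times}$ in (b5); $J_\beta(1,\pi(1_{F^\times},\chi))$ with $\chi^2=1_{F^\times}$ in (b6); and $J_\beta(1,\rho)$ for supercuspidal $\rho$ with $\mathrm{Im}(\phi_\rho)\cong \FS_3$ in (c2). For each, one needs to verify directly, using \cite[Table 1]{CK24}, that the representation is the Aubert-Zelevinsky dual of a square-integrable representation of $G_2$ (the Steinberg representation or a cuspidal unipotent or elliptic tempered representation), hence lies in $\widehat{\Pi}_{temp}(G_2)\subseteq S$. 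Once all of Mui\'c's list has been located in $\Pi_{\overline{S}}(G_2)$, the equality $\Pi_{\overline{S}}(G_2)=\Pi_u(G_2)$ follows. The ``in particular'' clause is then immediate: assuming the local Arthur conjecture, Corollary \ref{cor-G2}(3) yields $S\subseteq \Pi_A(G_2)$, hence $\Pi_u(G_2)=\Pi_{\overline{S}}(G_2)\subseteq \Pi_{\overline{A}}(G_2)\subseteq \Pi_u(G_2)$, proving Conjecture \ref{conj limit of Arthur} for $G_2$.
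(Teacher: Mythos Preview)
Your proposal follows the same case-by-case strategy as the paper and is largely correct. The one genuine gap is your treatment of the second open region in case (a1), namely $\{s_1+2s_2\ge 1,\ s_1+s_2\le 1\}$ (the region $C_2$ in the paper's Figure~\ref{figure 1 of G2}). This region is cut off from the origin by the reducibility wall $2s_1+s_2=1$, so your recipe of locating ``the origin $(0,0)$, where the irreducible subquotients are tempered and thus in $S$'' does not reach it, and a one-step complementary-series argument from a tempered base point fails. The paper handles $C_2$ by a genuine two-step bootstrapping that exploits the inductive structure of $\Pi_{\overline{S}}^{(k)}$: for $1/3<s<1/2$ the $\GL_2$-complementary series $\pi_{s}=\pi(\lvert\cdot\rvert^s,\lvert\cdot\rvert^{-s})$ lies in $\Pi_{\overline{A}}^{(1)}(\GL_2(F))$, the unitary induction $I_\beta(0,\pi_{s})=I(s,s,1_{F^\times},1_{F^\times})$ is irreducible and hence lies in $\Pi_{\overline{S}}^{(2)}(G_2)$, and then deforming the $P_\beta$-parameter moves $X_t=I_\beta(t,\pi_{s})=I(s+t,s-t,1_{F^\times},1_{F^\times})$ into the interior of $C_2$. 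This is exactly the mechanism of Example~\ref{ex A+ unit ind} (and Step~(3-4) of Algorithm~\ref{alg A bar}), so your reference to the corank-$2,3$ arguments for classical groups is the right pointer; but your explicit description of case (a) only captures the $\Pi_{\overline{S}}^{(1)}$ step and does not articulate that the inducing representation on the Levi may itself be a non-tempered member of $\Pi_{\overline{A}}^{(1)}(M)$. Once this is filled in, the remainder of your outline (boundaries via the $S_1$ representations and the isolated points via $\widehat{\Pi}_{temp}(G_2)$) matches the paper's argument.
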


 \begin{proof}
 By Definition, $\Pi_{\overline{S}}(G_2)\subset \Pi_{\overline{S}}^{\lim}(G_2)\subset \Pi_u(G_2)$. For the reverse inclusion $\Pi_u(G_2) \subset \Pi_{\overline{S}}(G_2)$, we check it case by case, following Mui\'c's classification of $G_2$ unitary dual as in Theorem \ref{thm-Muic}. 

 First, suppose that $\pi$ is a unitary subquotient of $I(s_1,s_2,1_{F^\times},1_{F^\times})$, we will show that $\pi\in \Pi_{\overline{S}}(G_2)$. This will cover the unitary representations in Theorem \ref{thm-Muic} case (a1), and case $\chi=1_{F^\times}$ of (b1), (b2), (b4), and (b5).   The unitary subquotients of $I(s_1,s_2,1_{F^\times},1_{F^\times})$ are described in Figure \ref{figure 1 of G2} below:
 
 \begin{figure}[H]
 \begin{center}
\scalebox{0.7}{\small
\begin{tikzpicture}
\filldraw[fill=gray!30, dashed] (0,2.598) -- (1.5,2.598) -- (0,1.732) -- (0,2.598) -- cycle;
\filldraw[fill=gray!30, dashed] (0,0) -- (0.75,1.299) -- (0,1.732) -- (0,0) -- cycle;
\node at (0.3,1) {$C_1$};
\node at (0.4,2.3) {$C_2$};
\draw[-](0,0)--(3,0)node[above]{$\alpha$};
\draw[-](0,0)--(-4.5,2.598)node[above]{$\beta$};
\draw[-](0,0)--(-1.5,2.598);
\draw (-2,2.8) node{$\alpha+\beta$};
\draw[-](0,0)--(1.5,2.598)node[right]{};
\draw[red, very thick, -](0,0)--(0.75,1.299)node[right]{};
\draw[-](0,2.598)--(1.5,2.598)node[right]{};
\draw[-](0,0)--(4.5,2.598)node[above]{$3\alpha+\beta$};
\draw[-](0,1.732)--(0.75,1.299)node[right]{};
\draw[dashed](-1.5,2.598)-- (3.45,-0.2598) node[right]{$2s_1+s_2=1$};
\draw[dashed](5,2.598)--(-5.5,2.598)node[left]{$s_1+s_2=1$};
\draw[-](0, 2.598)--(0,3.464);
\draw (0,3.564) node{$s_1=s_2$};
\draw[red, very thick, -] (0,0)--(0,2.598);
\draw[blue, very thick, -] (0,2.598)--(1.5,2.598);
\draw[blue, very thick, -](0,1.732)--(1.5,2.598);
\draw[blue, very thick, -](0,1.732)--(0.75,1.299);
\draw[-](0,1.732)--(1.5,2.598)node[above]{};
\draw[dashed]((1.5,2.598)--(2.625, 3.2475);
\draw (3, 3.4) node {$s_1+2s_2=1$};
\draw[dashed](1.5,2.598)--(1.5,7.794)--(1.5,8)node[above]{};
\draw[dashed](-1.5,2.598)--(1.5,7.794)node[above]{};
\node at (1.5,7.794)[black]{\textbullet};
\end{tikzpicture}
}
\end{center}
\caption{Unitary subquotients of $I(s_1,s_2,1_{F^\times},1_{F^\times})$}\label{figure 1 of G2}
\end{figure}

\noindent If $\pi$ is in the open region $C_1$, then $\pi=I(s_1,s_2,1_{F^\times},1_{F^\times})$ is irreducible. Since $I(0,0,1_{F^\times},1_{F^\times})\in S$, we have that $\pi\in \Pi_{\overline{S}}^{(1)}(G_2). $ If $\pi=J(s_1,s_2,1_{F^\times},1_{F^\times})$ is in the open region $C_2$, as in the proof of \cite[Theorem 5.2]{Mu97}, we fix an $s$ with $1/3<s<1/2$ and $t$ with $0<t<s-1/3$, and consider the irreducible induced representation 
    $$ X_t=I_\beta(t, \pi_{s,1_{F^\times}})\cong I(t-s,2s,1_{F^\times},1_{F^\times})=I(s+t,s-t,1_{F^\times},1_{F^\times}),$$
where $\pi_{s,1_{F^\times}}=\pi(\lvert\cdot\rvert^s,\lvert\cdot\rvert^{-s})$ is the complementary series representation of $\GL_2(F).$\footnote{The unitary induction $I_\beta(0,\pi_{s,1_{F^\times}}), 0\le s<1/2$ is the middle red line in Figure \ref{figure 1 of G2}. The other red line segment is the unitary induction $I_\alpha(0,\pi_{s,1_{F^\times}}), 0\le s<1/2.$ The 3 blue line segments denote the reducibility lines.} Note that $\pi_{0,1_{F^\times}}$ is a unitary induction from an Arthur representation of $\GL_1(F)\times \GL_1(F).$ Thus $\pi_{0,1_{F^\times}}, \pi_{s,1_{F^\times}}\in \Pi_{\overline{A}}^{(1)}(\GL_2(F))$. Thus, $I_\beta(0, \pi_{s,1_{F^\times}})$ and $X_t$ are in $ \Pi_{\overline{S}}^{(2)}(G_2)$ for $t$ near zero. Since for $t$ near zero, $X_t$ is in the above connected region $C_2$, we conclude that the representation $J(s_1,s_2,1_{F^\times},1_{F^\times})$ when $s_1, s_2$ in the open region $C_2$ is indeed in $\Pi_{\overline{S}}(G_2).$

Now, we assume that $\pi$ is in one side of the regions $C_1$ or $C_2$. Since the proof is similar, we only check the case where $\pi$ is in the line segment $2s_1+s_2=1, 1/3\le s_1\le 1/2$. Then $\pi$ is a subquotient of  $I(s_1,1-2s_1,1_{F^\times},1_{F^\times})= I(s_1,{s_1-1},1_{F^\times},1_{F^\times})=I_\alpha(s_1-1/2,\delta(1))+I_\alpha(s_1-1/2,1_{\GL_2(F)})$. Thus $\pi=I_\alpha(s_1-1/2,1_{\GL_2(F)})$ or $\pi=I_\alpha(s_1-1/2,\delta(1))$. Since $1_{\GL_2(F)}$ and $\delta(1)$ are Arthur representations of $\GL_2(F)$, the representations $I_\alpha(0,1_{\GL_2(F)})$ and $I_\alpha(0,\delta(1))$ are in $ \Pi_{\overline{S}}(G_2)$. Then we obtain $\pi\in \Pi_{\overline{S}}(G_2)$ by definition. Finally, at the point $s_1=2, s_2=1$, the representation $\pi\in S$ is either the Steinberg representation or the trivial representation. 
  
  Next, we consider the case where a unitary representation $\pi$ is a subquotient of $I(s_1,s_2,1_{F^\times},\chi)$, where $\chi$ is a character of order $2$. Note that up to a Weyl element action, in the Grothendieck group we have 
  $$\mathrm{Ind}_{TU}^{G_2}(\chi\lvert\cdot\rvert^{s_1}\otimes \lvert\cdot\rvert^{s_2})= \mathrm{Ind}_{TU}^{G_2}(\lvert\cdot\rvert^{s_2}\otimes \chi \lvert\cdot\rvert^{s_1}), \text{ and }\mathrm{Ind}_{TU}^{G_2}(\chi\lvert\cdot\rvert^{x}\otimes \chi\lvert\cdot\rvert^{y})= \mathrm{Ind}_{TU}^{G_2}(\lvert\cdot\rvert^{x+y}\otimes \chi\lvert\cdot\rvert^{-y}) .$$
  Thus this will cover unitary representations in Theorem \ref{thm-Muic} cases (a2), (a3), (a4), and those representations in cases (b1) (b2) (b3) (b4) (b6) when  $\chi$ has order $2$. The unitary subquotients of this family are illustrated in Figure \ref{figure 2 of G2} below:

\begin{figure}[H]
\begin{center}
\scalebox{0.7}{\small
\begin{tikzpicture}
\filldraw[fill=gray!30, dashed] (0,0) -- (1.5,2.598) -- (0,1.732) -- (0,0) -- cycle;
\filldraw[fill=gray!30, dashed] (0,0) -- (0,1.732) -- (-0.75,1.299) -- (0,0) -- cycle;
\filldraw[fill=gray!30, dashed] (0,0) -- (2.25,1.299) -- (1.5,2.598) -- (0,0) -- cycle;
\node at (0.5,1.4) {$C_1$};
\node at (-0.3,1.0) {$C_2$};
\node at (1.5,1.3) {$C_3$};
\draw[-](0,0)--(3,0)node[right]{$\alpha$};
\draw[-](0,0)--(-4.5,2.598)node[left]{$\beta$};
\draw[-](0,0)--(-1.5,2.598)node[left]{$\alpha+\beta$};
\draw[-](0,0)--(1.5,2.598)node[right]{};
\draw[-](0,0)--(4.5,2.598)node[right]{$3\alpha+\beta$};
\draw[blue, very thick, -](1.5,2.598)--(-0.75,1.299);
\draw[dashed](1.5,2.598)--(2.625, 3.2475)node[above]{$s_1+2s_2=1$};
\draw[-](0,0)--(0,3.464)node[left]{$s_1=s_2$};
\draw[blue, very thick, -](1.5,2.598)--(2.25,1.299)node[right]{};
\draw[dashed](2.25,1.299)--(2.625,0.6495)node[right]{$s_1=1$};
\end{tikzpicture}
}
\end{center}
\caption{Unitary subquotients of $I(s_1,s_2,1_{F^\times},\chi)$ with $\chi$ order $2$}\label{figure 2 of G2}
  \end{figure}

In the above figure, the region $C_1$ (open regions $C_2$ and $C_3$, respectively) represents the unitary representations described in Theorem \ref{thm-Muic} case (a2) (cases (a3) and (a4), respectively). According to \cite[Proposition 3.1]{Mu97}, these three regions lie in the same connected component as in Definition \ref{def closure A}(2), which is bounded by the lines $s_1+2s_2= \pm 1,\ s_1= \pm 1$ (we only specify the first quadrant in Figure \ref{figure 2 of G2}).
They are separated in Mui{\'c}'s statement because of the requirement of the Langlands classification. Note that $(s_1,s_2)=(0,0)$ is in this connected component, which corresponds to an irreducible unitary induction.

The relevant unitary representations described in Theorem \ref{thm-Muic} case (b) are boundaries of the above connected component. More precisely, 
\begin{itemize}
\item $J_\alpha(s,\delta(\chi))$, $0\le s\le 1/2$ (in case (b1) when $\gamma=\alpha$ and $\chi$ has order 2) is the line segment $s_1+2s_2=1$ with $0\le s_1\le 1$. In fact, $J_\alpha(s,\delta(\chi))$ is a subquotient of $I_\alpha(s,\delta(\chi))$ and thus a subquotient of $I(s+1/2,s-1/2,\chi,\chi)=I(2s,1/2-s,1_{F^\times},\chi)$;
\item $J_\beta(s,\delta(\chi))$, $0\le s\le 1/2$ (in case (b1) when $\gamma=\beta$ and $\chi$ has order 2) is the line segment $s_1=1 , -1/2\le s_2\le 0$, since $ J_\beta(s,\delta(\chi))$ is a subquotient of $I(1,s-1/2,1_{F^\times},\chi)$; (thus $s_2=s-1/2$);
\item $J_\alpha(s,\pi(\chi,\chi^{-1}))$ (in case (b2) when $\chi$ has order 2) for $0\le s\le 1/2$ is the line segment $ s_1+2s_2=0, 0\le s_1\le 1$ since $ J_\alpha(s,\pi(\chi,\chi^{-1}))$ is a subquotient of $ I(2s,-s,1_{F^\times},\chi)$;
\item $J_\alpha(s,\pi(1,\chi))$ for $0\le s\le 1/3$ (case (b3)) is the line segment $ s_1=s_2,0\le s_1\le 1/3$ since $J_\alpha(s,\pi(1,\chi)) $ is a subquotient of $I_\alpha(s,\pi(1,\chi))=I(s,s,1_{F^\times},\chi)$;
\item $J_\beta(s,\pi(\chi,\chi^{-1}))$ (in case (b4) when $\chi$ has order 2) $0\le s\le 1/2$ is the line segment $s_1=0, 0\le s_2\le 1/2,$ since $J_\beta(s,\pi(\chi,\chi^{-1})) $ is a subquotient of $I_\beta(s,\pi(\chi,\chi^{-1}))=I(0,s,1_{F^\times},\chi)$;
\item $J_\beta(s,\pi(1,\chi))$ for $0\le s\le 1$ (in case (b6)) is the line segment $s_2=0, 0\le s_1\le 1$, since $J_\beta(s,\pi(1,\chi)) $ is a subquotient of $I_\beta(s,\pi(1,\chi))=I(s,0,1_{F^\times},\chi)$.
\end{itemize}
Moreover, the representations corresponding to the 4 vertices are all in $S$. More precisely, we have the following: 
\begin{itemize}
\item When $s_1=s_2=0$, it represents the irreducible unitary induction $ I(0,0,1_{F^\times},\chi)$;
\item When $s_1=0, s_2=1/2$, it represents the two irreducible subquotients $I_\alpha(0,\delta(\chi))$ and $I_\alpha(0,\chi\circ\det)$;
\item When $s_1=1,s_2=-1/2$, it represents the two irreducible subquotients $I_\beta(0,\delta(\chi))$ and $I_\beta(0,\chi\circ\det);$
\item When $ s_1=1,s_2=0$, it represents the 4 irreducible subquotients $\pi(\chi), J_\alpha(1/2,\delta(\chi)), J_\beta(1/2,\delta(\chi))$ and $J_\beta(1,\pi(1,\chi))= \widehat{\pi(\chi)}$ of $I(1,0,1_{F^\times},\chi) $, where $\pi(\chi)$ is the unique tempered subquotient of $I(1,0,1_{F^\times},\chi)$, see \cite[Proposition 4.1]{Mu97} for details.
\end{itemize}

Now we can check if a unitary representation $\pi$ is a subquotient of $I(s_1,s_2,1_{F^\times},\chi)$ with $\chi$ order $2$, then $\pi\in\Pi_{\overline{S}}(G_2)$. If $\pi$ is in the open region $ C_i$ for $i=1,2,3$, then $\pi=I(s_1,s_2,1_{F^\times},\chi) $ is irreducible. Since $I(0,0,1_{F^\times},\chi)\in S$, we have $\pi\in \Pi_{\overline{S}}^{(1)}(G_2). $ If $\pi$ is in the line segment $s_1+2s_2=1, 0\le s_1< 1 $, we have $\pi=J_\alpha(s,\delta(\chi))=I_\alpha(s,\delta(\chi))$ for $0\le s=s_1/2<1/2$. Note that the representation $ I_\alpha(s,\delta(\chi)$ is irreducible by \cite[Theorem 3.1]{Mu97}. Since $ I_\alpha(0,\delta(\chi))\in \Pi_{\overline{S}}^{(1)}(G_2)$, we get $ \pi=J_\alpha(s,\delta(\chi))\in \Pi_{\overline{S}}^{(1)}(G_2)$. The rest cases can be checked similarly.

If $\pi$ is a unitary representation of $G_2$ which is not in the above two families, it is easier to check that $\pi\in \Pi_{\overline{S}}(G_2)$. We only give details for one example and omit those for the rest. We consider the case where $\pi$ is a subquotient of $I_\beta(s,\pi(\chi,\chi^{-1}))$ for a character $\chi$ of order $3$, then by Theorem \ref{thm-Muic}, $\pi$ is either tempered, or $\pi=J_\beta(s,\pi(\chi,\chi^{-1}))$ with $0<s\le 1/2$ or $s=1$. Note that $J_\beta(1,\pi(\chi,\chi^{-1})\in \widehat\Pi_{temp}\subset S$ by \cite[Table 2]{CK25}, we only need to consider the case where $0<s\le 1/2$. Notice that $J_\beta(1/2,\pi(\chi,\chi^{-1})=I_\alpha(0,\chi\circ\det)\in \Pi_{\overline{S}}^{(1)}(G_2)$ since $\chi\circ\det\in \Pi_A(\GL_2(F))$. Thus $J_\beta(s,\pi(\chi,\chi^{-1}))\in \Pi_{\overline{S}}(G_2)$ by definition.

The proof of the rest cases are similar and we omit the details here. This completes the proof of the proposition.
\end{proof}

Since the set $S$ is preserved by the
Aubert-Zelevinsky involution, as in the cases of classical groups, we also have a direct corollary as follows. We omit the details here.  

\begin{cor}\label{cor-dual-G2}
The unitary dual $\Pi_u(G_2)$ of the exceptional group $G_2$ is preserved under Aubert-Zelevinsky involution.
\end{cor}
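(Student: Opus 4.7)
The plan is to combine Proposition \ref{proposition:G2}, which identifies $\Pi_u(G_2)$ with $\Pi_{\overline{S}}(G_2) = \bigcup_{k\ge 0} \Pi_{\overline{S}}^{(k)}(G_2)$, with an induction on the level $k$ in the filtration built from Definition \ref{def closure A} (applied to the seed set $S$ rather than $\Pi_A(G_2)$). We show each $\Pi_{\overline{S}}^{(k)}(G_2)$ is stable under the Aubert-Zelevinsky involution $\pi\mapsto \widehat\pi$. Since $\Pi_u(G_2)=\bigcup_k \Pi_{\overline{S}}^{(k)}(G_2)$, this immediately gives the corollary.

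For the base case $k=0$, $\Pi_{\overline{S}}^{(0)}(G_2)=S=\Pi_{temp}(G_2)\cup\widehat{\Pi}_{temp}(G_2)\cup S_1$. The first two pieces are swapped by Aubert-Zelevinsky involution by definition; and by Corollary \ref{cor-G2}(2), the set $S$ is preserved under Aubert-Zelevinsky involution (this was read off \cite[Table 1]{CK24} in the proof of Corollary \ref{cor-G2}). In particular the representations in $S_1$ are all self-dual.

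For the inductive step, take $\pi\in\Pi_{\overline{S}}^{(k)}(G_2)$ with quadruple $(M,\tau,\chi_0,\chi_1)$ as in Definition \ref{def closure A}. The proper Levi subgroups of $G_2$ are $T\cong F^\times\times F^\times$ and $M_\gamma\cong\GL_2(F)$ for $\gamma\in\{\alpha,\beta\}$. In either case the Aubert-Zelevinsky involution on $M$ preserves the unitary dual of $M$: for the torus this is trivial, for $\GL_2(F)$ this is Tadi{\'c}'s theorem \cite{Tad85b} that $\Pi_u(\GL_n(F))$ is stable under the Zelevinsky dual. Moreover, Aubert-Zelevinsky involution commutes with parabolic induction and intertwines $\chi\mapsto\chi^{-1}$ on unramified characters, so
\[
\widehat{\pi} \;=\; \widehat{\mathrm{Ind}_{P}^{G_2}(\tau\otimes\chi_1)} \;=\; \mathrm{Ind}_{P}^{G_2}(\widehat{\tau}\otimes\chi_1^{-1}).
\]
The irreducibility locus $U\subset M_{un,real}$ from Definition \ref{def closure A} is stable under $\chi\mapsto\chi^{-1}$ (reducibility points are symmetric about the origin), and the connected component containing $\chi_0$ is carried to the one containing $\chi_0^{-1}$. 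The equality $\mathrm{Ind}_P^{G_2}(\tau\otimes\chi_0)=\mathrm{Ind}_{P'}^{G_2}(\tau_{M'})$ from Definition \ref{def closure A}(2) dualizes to $\mathrm{Ind}_P^{G_2}(\widehat{\tau}\otimes\chi_0^{-1})=\mathrm{Ind}_{P'}^{G_2}(\widehat{\tau_{M'}})$; by the inductive hypothesis applied to $M'$ (which is again a torus or a copy of $\GL_2$, so the analogue of the statement there is immediate from Tadi{\'c}), $\widehat{\tau_{M'}}\in\Pi_{\overline{S}}^{(k-1)}(M')$. Thus $(M,\widehat\tau,\chi_0^{-1},\chi_1^{-1})$ is an admissible quadruple witnessing $\widehat\pi\in\Pi_{\overline{S}}^{(k)}(G_2)$.

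The only mildly delicate point is checking that Aubert-Zelevinsky involution commutes cleanly with parabolic induction in the form used above; the standard version of this, combined with the fact that for the Levis of $G_2$ the two parabolics $P$ and its opposite give isomorphic irreducible inductions from a unitary inducing datum, handles the case. No new representation-theoretic input is needed beyond Proposition \ref{proposition:G2}, Corollary \ref{cor-G2}(2), and Tadi{\'c}'s preservation of the $\GL_n(F)$ unitary dual under Zelevinsky involution.
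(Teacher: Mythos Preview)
Your approach matches the paper's, which simply refers back to the inductive argument for classical groups in Corollary~\ref{cor implication of 1.1}(2) and omits the details: induct on $k$, using that $S$ is stable under the Aubert--Zelevinsky involution (Corollary~\ref{cor-G2}(2)) and that the involution commutes with parabolic induction.

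There is one slip: the Aubert--Zelevinsky involution commutes with twisting by an unramified character, so $\widehat{\tau\otimes\chi}=\widehat{\tau}\otimes\chi$, \emph{not} $\widehat{\tau}\otimes\chi^{-1}$. Thus the witnessing quadruple for $\widehat{\pi}$ should be $(M,\widehat{\tau},\chi_0,\chi_1)$ with the \emph{same} characters. This actually simplifies things: since the involution preserves irreducibility, the irreducibility locus $U$ for $\widehat{\tau}$ is identical to that for $\tau$, so no appeal to symmetry of reducibility points is needed, and the same connected component works. With this correction the rest of your argument goes through unchanged.
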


{It was checked in \cite[Section 4]{Qin25} that many cases in $\Pi_u(G_2)$ are preserved by Aubert-Zelevinsky involution, using different methods.}

\end{document}